\def\yesnumber{\refstepcounter{equation}\tag{\theequation}}
\def\matcheq#1{%
    \expandafter\ifx\csname r@#1@cref\endcsname\relax%
        \@latex@warning{Reference `#1' on page \thepage \space undefined}%
    \else%
        \cref@getcounter{#1}{\@tempa}%
        \setcounter{equation}{\@tempa-1}%
    \fi%
}
\let\defn=\textbf
\let\Dia=\Diamond
\def\supp{\mathrm{supp}}
\newlist{eqenum}{enumerate}{1}
\let\c@eqenumi=\@undefined
\newaliascnt{eqenumi}{equation}
\setlist[eqenum]{label=(\thesubsection.\arabic*),align=left,leftmargin=4em,labelindent=0pt,labelwidth=!,resume}
\pretocmd\eqenum{\edef\enit@resume@eqenum{\noexpand\c@eqenumi\the\c@eqenumi}}{}{}
\crefname{eqenumi}{}{}
\begin{document}

\title{Structural, point-free, non-Hausdorff topological realization of Borel groupoid actions}
\author{Ruiyuan Chen}
\date{}
\maketitle

\begin{abstract}
We extend the Becker--Kechris topological realization and change-of-topology theorems for Polish group actions in several directions.
For Polish group actions, we prove a single result that implies the original Becker--Kechris theorems, as well as Sami's and Hjorth's sharpenings adapted levelwise to the Borel hierarchy; automatic continuity of Borel actions via homeomorphisms; and the equivalence of ``potentially open'' versus ``orbitwise open'' Borel sets.
We also characterize ``potentially open'' $n$-ary relations, thus yielding a topological realization theorem for invariant Borel first-order structures.
We then generalize to groupoid actions, and prove a result subsuming Lupini's Becker--Kechris-type theorems for open Polish groupoids, newly adapted to the Borel hierarchy, as well as topological realizations of actions on fiberwise topological bundles and bundles of first-order structures.

Our proof method is new even in the classical case of Polish groups, and is based entirely on formal algebraic properties of category quantifiers; in particular, we make no use of either metrizability or the strong Choquet game.
Consequently, our proofs work equally well in the non-Hausdorff context, for open quasi-Polish groupoids, and more generally in the point-free context, for open localic groupoids.
\let\thefootnote=\relax
\footnotetext{2020 \emph{Mathematics Subject Classification}:
    03E15, 
    22A22, 
    22F10, 
    06D22. 
}
\footnotetext{\emph{Key words and phrases}: Polish group, Polish groupoid, topological realization, topological bundle, locale.}
\end{abstract}

\tableofcontents

\section{Introduction}
\label{sec:intro}

The interaction between topological and Borel structure is a central theme in analysis, topology, dynamics, and logic.
For instance, it is a well-known classical result that in a ``nice'' topological space, every Borel set can be made open in a finer topology that is still ``nice''.
Thus, the Borel $\sigma$-algebra remembers very little of the original topology.
Here ``nice'' can be taken for instance to mean \emph{Polish}, i.e., second-countable and completely metrizable.
See \cite[13.1]{Kcdst}.

The situation is markedly different in the presence of a group structure, where Pettis's automatic continuity theorem shows that in a Polish group, the topology can be fully recovered from the Borel structure together with the group structure.
See \cite[9.10]{Kcdst}, as well as \cite{Rautcts} for a detailed survey of automatic continuity phenomena.

The Becker--Kechris topological realization theorem \cite[5.2.1]{BKgrp} interpolates between these two extreme behaviors, by characterizing the topological information encoded in the Borel structure of a Polish group \emph{action}.
We now state one formulation of the Becker--Kechris theorem.
Not all parts below commonly appear in the literature in this form, although they are all easy consequences of \cite{BKgrp}.
We include a proof in this paper, as \cref{thm:grp-realiz-borel} (see also \cref{rmk:grp-realiz-0d-reg}).

\begin{theorem}[Becker--Kechris]
\label{thm:intro-becker-kechris}
Let $G$ be a Polish group, $X$ be a standard Borel $G$-space.
For any Borel set $A \subseteq X$, the following are equivalent:
\begin{enumerate}[label=(\roman*)]
\item \label{thm:intro-becker-kechris:open}
$A$ is open in some compatible Polish topology on $X$ making the action continuous.
\item
$A$ is a countable union of \defn{Vaught transforms}
\begin{equation*}
U_i * A_i := \{x \in X \mid \exists^* g \in G\, (g \in U_i \AND x \in gA_i)\},
\end{equation*}
where each $U_i \subseteq G$ is open and $A_i \subseteq X$ is Borel.
\item
The preimage of $A$ under the action map $G \times X -> X$ is a countable union of Borel rectangles.
\item
$A$ is \defn{orbitwise open}, i.e., its restriction to each orbit $G \cdot x$ is open in the quotient topology induced from the group topology on $G$ via the action $G ->> G \cdot x$.
\item
There are countably many Borel sets in $X$ generating all $G$-translates $g \cdot A$ under union.
\end{enumerate}
Moreover, countably many $A$ obeying these conditions may be made open as in \cref{thm:intro-becker-kechris:open} at the same time.
\end{theorem}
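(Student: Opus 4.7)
The plan is to prove the theorem by a cycle of implications, reserving the construction of the topology in \ref{thm:intro-becker-kechris:open} for last and treating it as the main step.

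First I would dispatch the easy links among (ii), (iii), and (v). The Vaught transform $U * A$ is by definition the image under the category quantifier $\exists^* : G \times X \to X$ of the rectangle $U \times A$; conversely, any Borel rectangle $U \times B$ in $G \times X$ has saturation expressible as a Vaught transform, essentially because $\{(g,x) : gx \in U*A\}$ is itself built from a rectangle via $\exists^*$. This gives (ii) $\Leftrightarrow$ (iii) directly from the formal properties of the category quantifier. For (v) $\Leftrightarrow$ (ii), one notes that each translate $gA$ is of the form $\{g\} * A$ (up to category), and conversely $U * A = \bigcup_{g \in U} gA$ up to a category-small set, so closure under countable unions of translates matches closure under Vaught transforms with open $U$.

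Next, (i) $\Rightarrow$ (iv) is immediate: if $A$ is open for a compatible topology making the action continuous, then on each orbit the quotient map $G \twoheadrightarrow Gx$ is continuous and surjective, so $A \cap Gx$ pulls back to an open set in $G$, which is precisely orbitwise openness. The converse (iv) $\Rightarrow$ (ii) is a Kuratowski--Ulam/Vaught style computation: orbitwise openness says that for each $x$, the slice $\{g : gx \in A\}$ is open in $G$; a countable basis $\{U_n\}$ for $G$ then lets one write $A = \bigcup_n U_n * B_n$, where $B_n := \{x : U_n \subseteq g^{-1}A \text{ for generic } g\}$ is Borel by the Montgomery--Novikov theorem on the definability of category quantifiers.

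The main obstacle is (ii) $\Rightarrow$ (i), and this is where I would depart from the Becker--Kechris strategy based on the strong Choquet game and proceed purely algebraically with Vaught transforms. Given countably many sets $A_n$ each expressed as $\bigcup_i U_{n,i} * A_{n,i}$ with $U_{n,i}$ ranging over a countable basis of $G$ and $A_{n,i}$ Borel, I would enlarge the collection by recursively closing under Vaught transforms $U * (-)$ (for $U$ in a countable basis), countable Boolean operations, and translates, producing a countable Boolean algebra $\mathcal{A}$ of Borel subsets of $X$ containing all the $A_n$. Take $\tau$ to be the topology on $X$ generated by $\mathcal{A}$. Three things must be verified: $\tau$ is Polish (using a Kuratowski-style change of topology applied to $\mathcal{A}$, since the generators are Borel in the standard Borel structure, and second-countability is built in); $\tau$ is compatible with the given standard Borel structure (since $\mathcal{A}$ generates the Borel $\sigma$-algebra after absorbing a countable generating family); and the action $G \times X \to X$ is continuous, which amounts to showing that for each generator $U * B \in \mathcal{A}$ the preimage in $G \times X$ is open—this is where the formal properties of the category quantifier do the essential work, since $\{(g,x) : gx \in U*B\} = \{(g,x) : \exists^* h \in U (h^{-1}g x \in B)\}$ unfolds to an open condition once $B$ is itself open in $\tau$. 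This last point, and the fact that one can carry out all these closures while preserving the ability to make specified $A_n$ open simultaneously, is the heart of the argument; the ``moreover'' clause is then automatic from including the $A_n$ among the initial generators.
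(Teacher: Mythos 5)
Your overall architecture (a cycle of implications, with the construction of the topology as the main step) is sound, and several of the easy links are essentially what the paper does: (iii)$\Rightarrow$(ii) by applying $\exists^*_\alpha$ to $\alpha^{-1}(A)$, (ii)$\Rightarrow$(iii) via the Beck--Chevalley identity, and (i)$\Rightarrow$(iv) by continuity of the orbit maps. But your main step (ii)$\Rightarrow$(i) has a genuine gap, located exactly where the whole difficulty of the theorem lives. You form a countable Boolean algebra $\mathcal{A}$ closed under Vaught transforms and let $\tau$ be ``the topology generated by $\mathcal{A}$'', then claim continuity by checking only the generators of the form $U*B$. These two halves are incompatible. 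If $\tau$ is generated by \emph{all} of $\mathcal{A}$, it is indeed Polish by change of topology, but the action is \emph{not} continuous for it: $\mathcal{A}$ contains complements $\neg(U*B)$, whose $\alpha$-preimages are closed rather than open (a set of the form $U*B$ is orbitwise open, so its complement is orbitwise closed and in general cannot have open preimage). If instead $\tau$ is generated only by the Vaught transforms $U*B$ for $B\in\mathcal{A}$ --- which is what Becker--Kechris and this paper actually use --- then continuity does follow from $\alpha^{-1}(U*B)=\bigcup_{VW\subseteq U}\bigl(V\times(W*B)\bigr)$ together with closure of $\mathcal{A}$ under $W*(-)$, but now \emph{Polishness of $\tau$ is not established}: a Kuratowski-style change of topology produces a \emph{finer} Polish topology containing prescribed Borel sets, whereas the topology generated by the Vaught transforms alone is a \emph{coarsening}, not a refinement of any Polish topology you started with, and a coarsening of a Polish topology need not be Polish. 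Proving that this coarser topology is (quasi-)Polish is precisely Claim~4 of the Becker--Kechris proof (the strong Choquet argument), and it is the one step your sketch replaces with nothing. The paper's substitute is \cref{thm:qpol-baire-subtop}: the candidate topology $\exists^*_\alpha(\mathcal{O}(G\times X))$ is the image of an $\mathcal{O}(X)$-linear retraction of $\alpha^{-1}$, so $X$ with this topology is a continuous open $T_0$ quotient of the ($\mathbf{\Pi}^0_2$, hence quasi-Polish) support of $\exists^*_\alpha$, and one invokes \cref{it:qpol-openquot}; regularity, needed to upgrade quasi-Polish to Polish as (i) demands, then requires the separate sandwich argument of \cref{lm:grp-realiz-dis-reg} (or the Becker--Kechris Boolean-algebra closure).

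Two secondary problems. Your treatment of (v) identifies the translate $g\cdot A$ with $\{g\}*A$ ``up to category'', but for nondiscrete $G$ the singleton $\{g\}$ is meager, so $\{g\}*A=\emptyset$ and the identification fails outright. The paper instead gets (iii)$\Leftrightarrow$(v) from Kunugui--Novikov uniformization: if countably many Borel sets generate all translates under union, then the $\pi_1$-fibers of $\alpha^{-1}(A)$ are unions from that countable family, and Kunugui--Novikov converts this fiberwise statement into a global decomposition of $\alpha^{-1}(A)$ into countably many Borel rectangles. The same tool is also what makes (iv)$\Rightarrow$(ii) work; your explicit candidate $B_n=\{x:\forall^*g\in U_n\,(gx\in A)\}$ does not obviously satisfy $U_n*B_n\subseteq A$, since membership of $x$ in $U_n*B_n$ only yields that $\{k:kx\in A\}$ is nonmeager in $U_nU_n^{-1}$, not that it contains $1$.
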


Here $\exists^*$ is the Baire category quantifier ``there exist non-meagerly many''.
The Vaught transform, denoted $U * A$ above, is more commonly denoted $A^{\triangle U^{-1}}$; see e.g., \cite{Kcdst}, \cite{BKgrp}, \cite{Gidst}.
The above ``multiplicative'' notation, reminiscent of the product set $U \cdot A$ of which it is the Baire-categorical analog, will be more convenient for our purposes in this paper.

Note that the conditions in the above statement clearly hold if $A$ is invariant.
The last sentence in the above statement also yields the change-of-topology theorem of \cite[5.1.8]{BKgrp}, that if $X$ is already a Polish $G$-space, then there is a finer Polish $G$-space topology making $A$ open.

In \cref{thm:grp-realiz}, we give a stronger version of \cref{thm:intro-becker-kechris} that allows us to place an upper bound on the resulting topology on $X$; see there for the precise statement.
For example, we recover as special cases Sami's~\cite{Sbeckec} and Hjorth's~\cite{Hbeckec} finer change-of-topology theorems adapted to each level of the Borel hierarchy, as \cref{thm:grp-realiz-dis} (see also \cref{thm:grp-realiz-dis-0d-reg}):

\begin{theorem}[Sami, Hjorth]
\label{thm:intro-sami-hjorth}
Let $G$ be a Polish group, $X$ be a Polish $G$-space, and $\xi \ge 2$ be a countable ordinal.
Then any countably many $G$-invariant $\*\Sigma^0_\xi$ sets may be made open in a finer Polish topology contained in $\*\Sigma^0_\xi(X)$ for which the action is still continuous.

Moreover, if $G$ is non-Archimedean, then the new topology may be taken to be zero-dimensional.
\end{theorem}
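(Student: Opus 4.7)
The plan is to deduce this from the strengthened Becker--Kechris theorem \cref{thm:grp-realiz}, which augments \cref{thm:intro-becker-kechris} by allowing one to prescribe an upper bound on the complexity of the resulting Polish topology. The starting observation is that any $G$-invariant Borel set $A$ automatically satisfies every equivalent condition of \cref{thm:intro-becker-kechris}: each orbit is entirely contained in $A$ or disjoint from it (so $A$ is orbitwise open), the set of translates $\{gA : g \in G\}$ is the singleton $\{A\}$, and $A$ equals its own Vaught transform $G * A$. Hence the given countably many invariant $\*\Sigma^0_\xi$ sets trivially meet the hypotheses, and \cref{thm:intro-becker-kechris} already yields a finer compatible Polish $G$-topology $\tau$ in which they are all open; the substance of Sami--Hjorth is to additionally force $\tau \subseteq \*\Sigma^0_\xi(X)$.

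For that complexity bound, I would use that \cref{thm:grp-realiz} constructs $\tau$ from a countable base of Vaught transforms $U * B$ with $U \subseteq G$ open and $B$ ranging over any prescribed Borel $\sigma$-algebra on $X$ that is closed under the operation $U * (-)$. Take that $\sigma$-algebra to be $\*\Sigma^0_\xi(X)$, which already contains the input sets. Closure under $U * (-)$ holds precisely for $\xi \ge 2$, since $\exists^*$ commutes with countable unions and the operation $B \mapsto U * B$ may be reduced to a countable disjunction over a countable base for $U$, keeping everything in $\*\Sigma^0_\xi$ once $\xi \ge 2$ (and failing at $\xi = 1$, which is exactly why the hypothesis is imposed). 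Every basic open of $\tau$ then sits in $\*\Sigma^0_\xi(X)$, hence so does $\tau$.

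For the moreover, a non-Archimedean Polish group admits a neighborhood basis at the identity consisting of open subgroups $V_n \le G$, so every open set of $G$ decomposes as a disjoint union of clopen cosets $gV_n$. I would restrict the generating Vaught transforms to those of the form $(gV_n) * B$ and exploit that $V_n$ is a subgroup to ensure the base operation is clopen-valued in $\tau$, yielding a zero-dimensional base for the same topology. The main obstacle, already absorbed into \cref{thm:grp-realiz}, is the nontrivial step of actually producing a Polish refinement from a base of Vaught transforms without exiting $\*\Sigma^0_\xi(X)$: this is where Sami and Hjorth classically had to work hardest, and where the category-quantifier calculus of the present paper is meant to provide a uniform substitute that works simultaneously in the non-Hausdorff, groupoid, and localic settings.
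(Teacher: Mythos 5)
Your reduction to \cref{thm:grp-realiz} with $\@S := \*\Sigma^0_\xi(X)$ is exactly how the paper obtains the quasi-Polish version of this statement (\cref{thm:grp-realiz-dis}): invariant sets trivially satisfy all the equivalent conditions, $\*\Sigma^0_\xi(X)$ is a compatible $\sigma$-topology, and it is closed under $\@O(G) \oast (-)$ by \cref{it:grp-vaught-borel}, which rests on the difference formula \cref{it:grp-vaught-diff}. The gap is that \cref{thm:grp-realiz} only produces a \emph{quasi-Polish} topology, while the statement asks for a \emph{Polish} one. A quasi-Polish topology contained in $\*\Sigma^0_\xi(X)$ need not be regular, and upgrading it to a Polish topology while staying inside $\*\Sigma^0_\xi(X)$ and keeping the action continuous is not automatic; this is the entire content of \cref{sec:grp-0d-reg}. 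There one iteratively builds a countable lattice $\@A \subseteq \bigcup_{\zeta < \xi} \*\Sigma^0_\zeta(X)$ closed under $\@U * (-)$, adjoins complements of its members to get a zero-dimensional Polish topology $\@T'$, and then must show that $\@O(G) \oast \@T'$ (the coarsening that restores continuity of the action) is still Polish. The crux is the regularity estimate \cref{lm:grp-realiz-dis-reg}, which shows that the closure of $V * ((W * A) \setminus (V^{-1}VV^{-1}VW * B))$ is contained in $VV^{-1}VW * (A \setminus B)$, so that each basic open set $U * (A \setminus B)$ is a union of basic open sets whose closures it contains. Nothing in your proposal supplies this regularity for a general Polish $G$.

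Your sketch of the non-Archimedean case is essentially the paper's (the easy case of \cref{thm:grp-realiz-dis-lat}): when $\@U$ consists of cosets of open subgroups, each $(W * A) \setminus (W * B)$ is $WW^{-1}$-invariant by \cref{it:grp-vaught-subgrp}, hence clopen in $\@O(G) \oast \@T'$, and zero-dimensionality (hence regularity, hence Polishness) follows. But that argument does not extend to arbitrary $G$, so as written you have proved the quasi-Polish statement plus the non-Archimedean case only. A minor point: for a Polish $G$-space, continuity of the action already gives $\@O(G) \oast \@O(X) = \@O(X)$, so closure under Vaught transforms does not ``fail at $\xi = 1$''; the hypothesis $\xi \ge 2$ is needed because the regularization step adjoins complements of open sets, which forces the refined topology into $\*\Sigma^0_2$ at least.
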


For later reference, we also state here another classical result \cite[9.16(i)]{Kcdst}, generalizing Pettis's automatic continuity theorem to actions, that will follow from \cref{thm:grp-realiz}.
This result is perhaps not usually viewed as a ``topological realization theorem''; however, we can (somewhat perversely) regard it as saying that we can find a topological realization ``compatible with'' (i.e., equal to) the preexisting topology.
See \cref{thm:grp-top-realiz}.

\begin{theorem}[classical]
\label{thm:intro-grp-top-realiz}
Let $G$ be a Polish group, $X$ be a Polish space with a Borel action of $G$ via homeomorphisms of $X$.
Then the action is jointly continuous.
\end{theorem}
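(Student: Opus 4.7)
The plan is to obtain \cref{thm:intro-grp-top-realiz} as the degenerate case of \cref{thm:grp-realiz} in which the realized Polish topology is forced to equal the preexisting topology $\tau$ on $X$. Concretely, I will apply \cref{thm:grp-realiz} with $\tau$ itself serving as the upper bound on the resulting topology.

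Fix a countable basis $\{U_n\}_{n \in \mathbb{N}}$ of $\tau$. Because $G$ acts by $\tau$-homeomorphisms, every translate $g \cdot U_n$ is again $\tau$-open, hence a union of basic sets from $\{U_m\}_m$. Thus each $U_n$ already satisfies condition (v) of \cref{thm:intro-becker-kechris}, with the countable family of witnesses lying entirely inside $\tau$. This is the closure property that should let $\tau$ serve as an upper bound in \cref{thm:grp-realiz}.

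Applying \cref{thm:grp-realiz} to the family $\{U_n\}_n$ with upper bound $\tau$ then produces a Polish topology $\tau'$ on $X$ making the action jointly continuous, with $\{U_n\}_n \subseteq \tau' \subseteq \tau$. Since $\{U_n\}_n$ already generates $\tau$, the sandwich forces $\tau' = \tau$, and the original action is therefore jointly continuous.

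The main obstacle is to confirm that $\tau$ genuinely satisfies whatever formal ``upper-bound'' condition \cref{thm:grp-realiz} imposes on the data. I expect this to amount to exactly the closure observation above: every piece of Borel data used to make the $U_n$ open --- their $G$-translates and the associated Vaught transforms $U * U_n$ for open $U \subseteq G$ --- can be chosen to lie inside $\tau$, which is automatic from $G$ acting by $\tau$-homeomorphisms. With that verified, the reduction to \cref{thm:grp-realiz} goes through with no further work.
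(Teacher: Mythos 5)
Your overall route is the same as the paper's: apply \cref{thm:grp-realiz} with the given topology $\@S := \@O(X)$ as the upper bound, feed in a countable basis $\{U_n\}_n$, and observe that the resulting topology is sandwiched between the basis and $\@O(X)$, hence equals $\@O(X)$. Your verification that each $U_n$ meets one of the equivalent conditions is also fine: condition \cref{thm:grp-realiz:trans} holds because each $g$ acts by a homeomorphism, so every translate $g \cdot U_n$ is open and hence a union of basic open sets, all of which lie in $\@S = \@O(X)$.

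The gap is in the step you call the ``main obstacle'' and then declare automatic. \cref{thm:grp-realiz} carries the standing hypothesis $\@O(G) \oast \@S \subseteq \@S$, i.e., that the Vaught transform $U * A$ is \emph{open} for every open $U \subseteq G$ and $A \in \@O(X)$. This does not follow merely from the translates $gA$ being open: $U * A = \{x \mid \exists^* g \in U\, (x \in gA)\}$ is a category quantifier over $g$, not a union over $g \in U$, so openness of each individual $gA$ gives no direct handle on it. (In particular, for a merely Borel action an open set need not be orbitwise open, so one cannot invoke \cref{it:grp-vaught-im} to identify $U * A$ with the open set $U \cdot A$.) The paper's proof of \cref{thm:grp-top-realiz} supplies exactly the missing argument: since each $g$ acts by a homeomorphism, $\alpha^{-1}(A)$ is $\pi_1$-fiberwise open, so Kunugui--Novikov (\cref{thm:kunugui-novikov}) yields $\alpha^{-1}(A) = \bigcup_i (B_i \times W_i)$ with $B_i \in \@B(G)$ and $W_i$ basic open in $X$; then $U * A = \exists^*_{\pi_2}\bigl((U^{-1} \times X) \cap \alpha^{-1}(A)\bigr) = \bigcup \{W_i \mid U^{-1} \cap B_i \text{ nonmeager}\}$ is open, by Frobenius reciprocity \cref{it:fib-baire-frob} and union-preservation \cref{it:fib-baire-union}. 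This uniformization step is where the homeomorphism hypothesis is really used; once it is in place, your reduction goes through exactly as you describe.
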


\subsection{Topological realization of relations and structures}
\label{sec:intro-struct}

We now describe the main new results of this paper, which generalize in several directions the Becker--Kechris \cref{thm:intro-becker-kechris} as well as the related results described above.

\Cref{thm:intro-becker-kechris} characterizes the Borel sets $A \subseteq X$ in a Borel $G$-space which are ``potentially open'' in some topological realization.
In \cref{sec:grp-struct}, we consider more generally Borel relations $R \subseteq X^n$ of arbitrary finite arity $n \in \#N$, and more generally relations between different $G$-spaces.
The following generalizes \cref{thm:intro-becker-kechris} to characterize ``potentially open'' relations, and is part of \cref{thm:grp-binary-realiz-borel} (see also \cref{thm:grp-nary-realiz-borel,rmk:grp-realiz-0d-reg}).

\begin{theorem}[characterization of ``potentially open'' relations]
\label{thm:intro-grp-nary-realiz-borel}
Let $G$ be a Polish group, $X_i$ be countably many standard Borel $G$-spaces.
For an $n$-ary Borel relation $R \subseteq X_{i_1} \times \dotsb \times X_{i_n}$, the following are equivalent:
\begin{enumerate}[label=(\roman*)]
\item \label{thm:intro-grp-nary-realiz-borel:open}
$R$ is open in the product topology for some compatible Polish topologies on each $X_i$ making the action continuous.
\item
$R$ is a countable union of Vaught transforms $U_j * (A_{j,1} \times \dotsb \times A_{j,n})$ of Borel rectangles by Borel (or open) sets $U_j \subseteq G$, under the diagonal action $G \curvearrowright X_{i_1} \times \dotsb \times X_{i_n}$.
\item
The preimage of $R$ under the diagonal action map $G \times X_{i_1} \times \dotsb \times X_{i_n} -> X_{i_1} \times \dotsb \times X_{i_n}$ is a countable union of Borel rectangles.
\item
The preimage of $R$ under the diagonal action map is a countable union of rectangles $U_j \times A_{j,1} \times \dotsb \times A_{j,n}$ where $U_j \subseteq G$ is open and each $A_{j,k} \subseteq X_{i_k}$ is Borel orbitwise open.
\end{enumerate}
Moreover, countably many such $R$ (of varying arities) may be made open as in \cref{thm:intro-grp-nary-realiz-borel:open} at once.
\end{theorem}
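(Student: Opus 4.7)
The plan is to reduce this characterization to the unary Becker--Kechris result of \Cref{thm:intro-becker-kechris}, applied both separately to each $X_{i_k}$ and, for one direction, to the diagonal $G$-action on the product $Y := X_{i_1} \times \dotsb \times X_{i_n}$ (which is again a standard Borel $G$-space). I would prove the cycle (i) $\Rightarrow$ (iv) $\Rightarrow$ (iii) $\Rightarrow$ (ii) $\Rightarrow$ (i).

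For (i) $\Rightarrow$ (iv), second countability of each chosen Polish topology on $X_{i_k}$ lets me decompose the open preimage of $R$ in $G \times Y$ as a countable union of basic open rectangles $U_j \times B_{j,1} \times \dotsb \times B_{j,n}$; each $B_{j,k}$ is open in a compatible Polish $G$-space topology on $X_{i_k}$ and hence Borel orbitwise open by the unary theorem. The implication (iv) $\Rightarrow$ (iii) is immediate. For (iii) $\Rightarrow$ (ii), I would apply the unary equivalence of (ii) and (iii) to $R \subseteq Y$ under the diagonal $G$-action, obtaining $R = \bigcup_j U_j * B_j$ with $U_j \subseteq G$ open and $B_j \subseteq Y$ Borel; decomposing each $B_j$ as a countable union of Borel rectangles and using that $U_j * (-)$ commutes with countable unions of its argument then yields (ii).

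The main step is (ii) $\Rightarrow$ (i). Given $R = \bigcup_j U_j * (A_{j,1} \times \dotsb \times A_{j,n})$ with each $U_j \subseteq G$ open and each $A_{j,k} \subseteq X_{i_k}$ Borel, I would apply the ``moreover'' clause of the unary theorem separately to each $X_i$ to pick a compatible Polish topology on $X_i$ that makes its $G$-action continuous and makes \emph{all} of the countably many relevant $A_{j,k}$ open at once. In such topologies, for each fixed tuple $x = (x_1, \ldots, x_n)$ the set $\{g \in U_j : x \in g(A_{j,1} \times \dotsb \times A_{j,n})\}$ is open in $G$ by joint continuity, so ``$\exists^* g$'' collapses to ``$\exists g$'' and each Vaught transform reduces to the ordinary action product $U_j \cdot (A_{j,1} \times \dotsb \times A_{j,n})$, which is open in the product topology; hence $R$ itself is open. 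The final ``moreover'' clause of the $n$-ary theorem follows by applying the same recipe to the union, across all given relations, of the countable families of Borel generators sitting inside each $X_i$.

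I expect the main obstacle to be this last step (ii) $\Rightarrow$ (i), since one must arrange for $R$ itself---not merely its preimage under the action---to become open in a \emph{product} topology assembled from independently chosen Polish topologies on the factors $X_i$. The crucial observation unlocking it is that once each Borel generator $A_{j,k}$ is chosen to be open in its factor, the Baire-categorical quantifier in the Vaught transform with open parameter $U_j$ collapses to an ordinary existential quantifier, turning the transform into an ordinary action product that is automatically open in the product topology.
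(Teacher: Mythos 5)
Your steps (i)$\Rightarrow$(iv)$\Rightarrow$(iii) are fine, and (iii)$\Rightarrow$(ii) is essentially right, though as written it contains a slip: not every Borel subset of $X_{i_1} \times \dotsb \times X_{i_n}$ is a countable union of Borel rectangles, so you cannot first extract an arbitrary Borel $B_j \subseteq Y$ from the unary theorem and then ``decompose'' it. Instead apply $R = \exists^*_\alpha(\alpha^{-1}(R))$ directly to the given rectangle decomposition of $\alpha^{-1}(R)$, which already consists of $(n+1)$-fold rectangles.

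The main step (ii)$\Rightarrow$(i) has a genuine gap. In condition (ii) the sets $A_{j,k}$ are \emph{arbitrary} Borel subsets of $X_{i_k}$; they need not satisfy the equivalent conditions of \cref{thm:intro-becker-kechris} (i.e., need not be orbitwise open), so the ``moreover'' clause of the unary theorem cannot be invoked to make them open in a compatible Polish topology with continuous action. Already for $n=1$, condition (ii) allows $R = U * A$ with $A$ a Borel set that is not orbitwise open; $U * A$ can be made open, but $A$ itself cannot, so your chosen topologies simply do not exist. Once that fails, the rest of the argument (collapsing $\exists^*$ to $\exists$ and observing that $U_j \cdot (\text{open rectangle})$ is open) has nothing to apply to. What is actually needed --- and what the paper isolates as the one substantial implication, \cref*{thm:grp-binary-realiz:vaught-vaught*}$\Rightarrow$\cref*{thm:grp-binary-realiz:vaught2} of \cref{thm:grp-binary-realiz} --- is to convert a \emph{diagonal} Vaught transform $U * (A_{1} \times \dotsb \times A_{n})$ of a rectangle of arbitrary Borel sets into a countable union of rectangles of \emph{coordinatewise} Vaught transforms $(V_1 * B_1) \times \dotsb \times (V_n * B_n)$, equivalently of Borel orbitwise open sets; only then can the unary theorem legitimately be applied factor by factor. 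The paper achieves this by writing each $A_k = \exists^*_{\alpha_{X_{i_k}}}(\alpha_{X_{i_k}}^{-1}(A_k))$, replacing $\alpha_{X_{i_k}}^{-1}(A_k)$ modulo fiberwise meager by a countable union of open$\times$Borel rectangles via the fiberwise Baire property, and then combining Pettis's theorem, the Beck--Chevalley condition, and two applications of the Kuratowski--Ulam theorem (\cref{thm:kuratowski-ulam}), together with the observation that a set of the form $M \times G$ or $G \times M$ with $M$ meager is orbitwise meager for the diagonal action on $G^2$. None of this machinery is present in, or circumvented by, your proposal.
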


Again, these conditions clearly hold if $R$ is invariant and a countable union of Borel rectangles.
In other words, we obtain a topological realization theorem for \emph{(multi-sorted) Borel relational structures}, in the sense of first-order logic, equipped with a $G$-action via automorphisms.

As in the unary case, we in fact prove a stronger version of the above result that takes an upper bound on the topologies; see \cref{thm:grp-binary-realiz}.
We may apply this to obtain a change-of-topology result for relations, generalizing \cref{thm:intro-sami-hjorth}; see \cref{thm:grp-nary-realiz-dis} (and \cref{rmk:grp-realiz-0d-reg}).

\begin{theorem}[change of topology for relations]
\label{thm:intro-grp-nary-realiz-dis}
Let $G$ be a Polish group, $X_i$ be countably many Polish $G$-spaces.
Then countably many invariant relations between them of various arities, each of which is a countable union of $\*\Sigma^0_\xi$ rectangles, may be made open in the products of finer Polish topologies on each $X_i$ contained in $\*\Sigma^0_\xi(X_i)$ for which the action is still continuous.
\end{theorem}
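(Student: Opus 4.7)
The plan is to derive this from the strengthened version of \cref{thm:intro-grp-nary-realiz-borel} that tracks the Borel complexity of the new topology (referenced above as \cref{thm:grp-binary-realiz}), in exact parallel with how \cref{thm:intro-sami-hjorth} is obtained from the analogous strengthening (\cref{thm:grp-realiz}) in the unary case. The reduction amounts to feeding the rectangle factors of the given relations into that theorem.

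For each invariant $R_j$, I would first fix a decomposition $R_j = \bigcup_k \prod_\ell A_{j,k,\ell}$ into countably many $\*\Sigma^0_\xi$ rectangles in the original Polish topologies. The $G$-invariance of $R_j$ under the diagonal action then allows us to absorb each rectangle into its Vaught transform and re-express
\begin{equation*}
R_j \;=\; \bigcup_k G * (A_{j,k,1} \times \dotsb \times A_{j,k,n_j}).
\end{equation*}
Here ``$\supseteq$'' is immediate, since any single $g$ witnessing $g^{-1}\vec{x}$ in a rectangle places $\vec{x}$ back in $R_j$ by invariance. For ``$\subseteq$'', given $\vec{x} \in R_j$, invariance yields $\{g \in G : g^{-1}\vec{x} \in R_j\} = G$, which decomposes as the countable union of the Baire-measurable sets $\{g : g^{-1}\vec{x} \in \prod_\ell A_{j,k,\ell}\}$, at least one of which is non-meagre by Baire category.

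Now I would apply the strengthened relation theorem simultaneously to all the rectangle factors $A_{j,k,\ell}$, specifying the upper bound on each sort $X_i$ to be $\*\Sigma^0_\xi$ of the original topology. This produces finer Polish topologies on the $X_i$, contained in $\*\Sigma^0_\xi(X_i)$, for which the actions remain continuous and every $A_{j,k,\ell}$ is open; then each rectangle, and hence each $R_j$, is open in the product topology on $\prod_\ell X_{i_\ell^j}$.

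The main obstacle is hidden in the strengthened relation theorem itself, which has to bound the Borel class of the resulting topology while handling all sorts at once. This bound rests on the classical fact that Vaught transforms by open sets preserve the $\*\Sigma^0_\xi$ class for $\xi \ge 2$, which is also the technical input driving the unary Sami--Hjorth sharpening. Once that theorem is available in its multi-sort, complexity-tracking form, the derivation above is essentially formal.
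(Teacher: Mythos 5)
Your opening reduction is correct and is exactly the paper's: invariance of $R_j$ gives $R_j = G * R_j = \bigcup_k G * (A_{j,k,1} \times \dotsb \times A_{j,k,n_j})$ (the paper derives this algebraically from \cref{it:grp-vaught-invar} and \cref{it:grp-vaught-union}; your pointwise Baire-category verification is the same computation). The gap is in the next step, where you claim the finer topologies can be chosen so that \emph{every rectangle factor} $A_{j,k,\ell}$ becomes open, and deduce openness of the rectangles and hence of $R_j$. An arbitrary $\*\Sigma^0_\xi$ set cannot be made open in a compatible topology for which the action remains continuous: by \cref{thm:grp-realiz} and \cref{thm:grp-realiz-borel}, only orbitwise open sets --- equivalently, countable unions of Vaught transforms --- admit this, and only the relations $R_j$ are assumed invariant, not their individual factors. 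For instance, for a transitive Polish $G$-space the orbitwise topology coincides with the original one (Effros), so a closed non-open $\*\Sigma^0_\xi$ factor can never be made open in any such refinement. Nothing in your argument constrains the factors beyond being $\*\Sigma^0_\xi$, so the step fails.

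The correct use of \cref{thm:grp-binary-realiz} is to feed each set $G * (A_{j,k,1} \times \dotsb)$ in \emph{as a whole} into condition \cref{thm:grp-binary-realiz:vaught-vaught*} (with $\@S(X_i) := \*\Sigma^0_\xi(X_i)$); the conclusion \cref{thm:grp-binary-realiz:open} then makes $R_j$ open via a \emph{different} rectangle decomposition, by rectangles of Vaught transforms $(U_i * A_i) \times (V_i * B_i)$ as in condition \cref{thm:grp-binary-realiz:vaught2} --- converting your decomposition into that one is precisely the substantial implication \cref{thm:grp-binary-realiz:vaught-vaught*}$\implies$\cref{thm:grp-binary-realiz:vaught2}, not something obtained by opening up the original factors. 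Two further points you elide: first, condition \cref{thm:grp-binary-realiz:vaught-vaught*} is not literally ``$R \in \@B(G) \oast (\*\Sigma^0_\xi \otimes \*\Sigma^0_\xi)$'' (the paper explicitly does not know whether that simpler condition can be added), so one must verify $\*\Sigma^0_\xi(X_i) \subseteq \exists^*_{\alpha_{X_i}}((\@O(G) \otimes \*\Sigma^0_\xi(X_i))^*_{\pi_2})$, which the paper does using continuity of the given action together with the fiberwise Baire property (\cref{thm:fib-baire-borel}); this, rather than the fact that $\@O(G) \oast \*\Sigma^0_\xi \subseteq \*\Sigma^0_\xi$ which you cite, is the key verification in the proof of \cref{thm:grp-nary-realiz-dis}. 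Second, \cref{thm:grp-binary-realiz} yields quasi-Polish topologies; to obtain Polish ones as the statement promises, one must afterwards run the regularity argument of \cref{thm:grp-realiz-dis-0d-reg}, per \cref{rmk:grp-realiz-0d-reg}.
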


\subsection{Quasi-Polish $G$-spaces}
\label{sec:intro-qpol}

In \cite{dBqpol}, de~Brecht introduced a natural non-Hausdorff generalization of Polish spaces, called \defn{quasi-Polish spaces}, and proved that they obey nearly all of the basic descriptive set-theoretic properties of Polish spaces.
Moreover, several additional techniques are available for quasi-Polish spaces, and not Polish ones, which are particularly useful when working with Polish group actions.
(A quasi-Polish \emph{group} is automatically Polish, because topological groups are uniformizable.)

For instance, one equivalent characterization of quasi-Polish spaces is that they are precisely the continuous open $T_0$ quotients of Polish spaces; see \cref{it:qpol-openquot}.
From this, one may deduce that quasi-Polish spaces are precisely the $T_0$ quotients of spaces of orbits of Polish group actions on Polish spaces, also known as \emph{topological ergodic decompositions} of such actions; see \cite{Cbermd}.

For another instance, another fundamental result of Becker--Kechris \cite[2.6.1]{BKgrp} shows that $\@F(G)^\#N$ is a \emph{universal Borel $G$-space}, where $\@F(G)$ is the Effros Borel space of closed subsets of $G$.
The Becker--Kechris topological realization \cref{thm:intro-becker-kechris} then implies that $\@F(G)^\#N$ can be made into a \emph{Polish} $G$-space.
There are also various other known explicit examples of natural Polish $G$-spaces which are universal as Borel $G$-spaces, typically shown by embedding $\@F(G)^\#N$; see \cite{Gidst}, \cite{Knotes}.
In the quasi-Polish context, this picture is simplified: the Effros Borel space (of any quasi-Polish space) can be equipped with a canonical quasi-Polish topology to form the \defn{lower powerspace}; then $\@F(G)^\#N$ becomes a universal \emph{quasi-Polish} $G$-space.
See \cref{thm:grp-lowpow-univ}.

The above topological realization theorems are all equally valid for quasi-Polish $G$-spaces.
In fact, their proofs (as described in \cref{sec:intro-loc} below) naturally take place in the quasi-Polish context, with the Polish results stated above obtained via an additional argument at the very end; that is the point of the \cref{rmk:grp-realiz-0d-reg} referenced repeatedly above.

\subsection{Groupoid actions}
\label{sec:intro-gpd}

A \defn{groupoid} $G$ is a generalization of a group, where the elements $g \in G$, now called \emph{morphisms}, have a specified source or ``domain'' as well as target or ``codomain'' from among a set of \emph{objects} $G_0$, and composition is only defined for adjacent morphisms.
A \defn{groupoid action} on a family of sets $(X_x)_{x \in G_0}$, one for each object, has each morphism $g : x -> y \in G$ mapping from $X_x$ to $X_y$.
We may represent the family $(X_x)_{x \in G_0}$ formally as a \emph{bundle} $p : X := \bigsqcup_{x \in G_0} X_x -> G_0$, where each $X_x$ is recovered as the fiber $p^{-1}(x)$; this allows us to make sense of ``continuous actions'', ``Borel actions'', etc.
See \cref{sec:gpd-prelim} for the precise definitions.

Groupoids and their actions appear naturally in many contexts in dynamics and logic; see e.g., \cite{Rgpd}, \cite{SWgpd}, \cite{Lgpd}, \cite{GLgpd}, \cite{Cscc}, \cite{Cgpd}, \cite{Bgpd}.
Most relevantly for this paper, Lupini in \cite{Lgpd} developed analogs of much of the theory in \cite{BKgrp} for open%
\footnote{A topological groupoid is \defn{open} if the source map $G -> G_0$ is an open map; this is a standard assumption in a large part of the theory of topological groupoids.}
Polish groupoid actions, including the topological realization and change-of-topology theorems, as well as the result on universal actions mentioned in the preceding subsection.

In this paper, we generalize further to open \emph{quasi-Polish} groupoids $G$ and quasi-Polish $G$-spaces, for which we prove versions of all results aforementioned in this Introduction.
This is a substantial leap over \cite{Lgpd}, due to the pervasive use of metrizability in the classical theory.
Indeed, in \cite{Lgpd}, Lupini (following the earlier \cite{Rgpd}) already considered a slight generalization of Polish groupoids, allowing the space of morphisms to be \emph{$\sigma$-locally} Polish; such spaces still admit many of the classical metric techniques, such as the \emph{strong Choquet game} central to the original proof of the Becker--Kechris theorem \cite[\S5.2]{BKgrp}.
By contrast, the proofs in this paper look quite different from those in \cite{BKgrp}, and have a more abstract, ``algebraic'' flavor, as explained in the next subsection.

We will not restate, here in this Introduction, the versions for quasi-Polish groupoids of all aforementioned results, which largely consist of substituting ``group'' with ``groupoid'' everywhere and inserting some technical assumptions; see \cref{thm:gpd-realiz,thm:gpd-realiz-dis,thm:gpd-realiz-borel,thm:gpd-binary-realiz-borel,thm:gpd-nary-realiz-borel,thm:gpd-nary-realiz-dis}.
However, some new features of the groupoid setting are worth mentioning.

Whereas \cref{thm:intro-grp-top-realiz} may not appear much related to topological realization, the following generalization to groupoids is clearly an instance thereof, and is in fact an application of the stronger (upper-bounded) form of the groupoid version of \cref{thm:intro-becker-kechris}.
In \cref{def:fib-bor-qpol} we introduce the notion of a \defn{standard Borel bundle of quasi-Polish spaces} $f : X -> Y$ over a standard Borel base space $Y$, which intuitively means that each fiber $f^{-1}(y)$ is equipped with a quasi-Polish topology ``in a Borel way as $y$ varies''.
We then have the following; see \cref{thm:gpd-top-realiz}.

\begin{theorem}[topological realization of Borel $G$-bundles of spaces]
Let $G$ be an open quasi-Polish groupoid, $p : X -> G_0$ be a standard Borel bundle of quasi-Polish spaces equipped with a Borel action of $G$ via fiberwise homeomorphisms.
Then there is a compatible quasi-Polish topology on $X$ making $p$ and the action continuous, which also restricts to the originally given topology on each fiber.
\end{theorem}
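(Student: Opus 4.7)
The plan is to reduce to the upper-bounded form of the groupoid Becker--Kechris realization theorem \cref{thm:gpd-realiz}, applied to a countable family of Borel subsets of $X$ that encodes the fiberwise topology. By the definition of a standard Borel bundle of quasi-Polish spaces (\cref{def:fib-bor-qpol}), I would first fix a countable family $(U_i)_i$ of Borel subsets of $X$ whose fiberwise restrictions $U_i \cap p^{-1}(y)$ form a basis for the prescribed quasi-Polish topology on each fiber $X_y$, uniformly in $y \in G_0$. To handle continuity of $p$, augment this family by the preimages $p^{-1}(V_j)$ of a countable basis $(V_j)_j$ for $G_0$; making these sets open will automatically make $p$ continuous.

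Next, identify the right upper bound. Let $\mathcal{F}$ denote the class of Borel sets $A \subseteq X$ such that $A \cap p^{-1}(y)$ is open in the given topology of $X_y$ for every $y \in G_0$. This class is closed under countable unions and finite intersections, contains every $U_i$ and every $p^{-1}(V_j)$, and, crucially, is closed under Vaught transforms $A \mapsto U * A$ by open $U \subseteq G$: the fiberwise-homeomorphism hypothesis ensures that each ``slice'' $g \cdot (A \cap p^{-1}(s(g)))$ is open in $p^{-1}(t(g))$, and the open-groupoid hypothesis lets the category quantifier $\exists^* g$ along the source fibration preserve openness at the fiber level, by the standard algebraic properties of $\exists^*$ exploited throughout the paper.

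Now I would apply \cref{thm:gpd-realiz} to the countable family consisting of all $U_i$ and all $p^{-1}(V_j)$, with $\mathcal{F}$ as the prescribed upper bound. The output is a compatible quasi-Polish topology $\tau$ on $X$ contained in $\mathcal{F}$, containing every $U_i$ and every $p^{-1}(V_j)$, and for which the $G$-action on $X$ is continuous. Continuity of $p$ is then immediate from openness of the $p^{-1}(V_j)$. For the fiberwise coincidence: on each $X_y$, the subspace topology $\tau|_{X_y}$ contains every $U_i \cap p^{-1}(y)$, so it contains the original fiber topology; conversely, the inclusion $\tau \subseteq \mathcal{F}$ forces every $\tau$-open set to restrict to a set open in the original topology of $X_y$, giving the reverse inclusion.

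The main obstacle I anticipate is verifying that $\mathcal{F}$ is indeed a legitimate upper bound in the sense required by \cref{thm:gpd-realiz}---in particular, its closure under the Vaught-transform operation in the quasi-Polish (point-free) groupoid setting. This is precisely the step where the open-groupoid hypothesis on $G$ and the fiberwise-homeomorphism hypothesis on the action both do genuine work; once that closure is established, the remainder is a direct, essentially formal, application of the machinery developed earlier in the paper.
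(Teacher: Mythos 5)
Your overall strategy---take $\mathcal{F} := \@{BO}_p(X)$, the Borel $p$-fiberwise open sets, as the upper bound, prove it is closed under Vaught transforms, and then invoke the upper-bounded groupoid realization theorem on a countable Borel fiberwise basis together with $p^{-1}$ of a basis of $G_0$---is the same as the paper's, and the closure claim $\@O(G) \oast \@{BO}_p(X) \subseteq \@{BO}_p(X)$ is correct (the paper proves it by applying Kunugui--Novikov to $\alpha^{-1}(A)$, which is $\pi_1$-fiberwise open by the fiberwise-homeomorphism hypothesis, and then computing $U * A$ via Beck--Chevalley; your sketch of this step is vague but points in the right direction). The genuine gap is in the final step: you apply \cref{thm:gpd-realiz} to make the fiberwise-basic sets $U_i$ \emph{globally} open, but that theorem only makes open those Borel sets satisfying its equivalent conditions, e.g.\ $A \in \@O(G) \oast \@S$ (equivalently, $A$ orbitwise open); mere membership in the upper bound $\@S$ is not sufficient. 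What Kunugui--Novikov actually gives for an arbitrary Borel fiberwise open $A$ is $A \in \@B(G) \oast \@{BO}_p(X)$, with merely \emph{Borel} sets of morphisms as the first factors, and in the groupoid setting Pettis's theorem only upgrades Borel to \emph{$\tau$-fiberwise} open, so $\@B(G) \oast \@S$ is genuinely larger than $\@O(G) \oast \@S$. (For instance, if $G$ has trivial automorphism groups, every Borel set is $p$-fiberwise orbitwise open, yet orbitwise openness remains a nontrivial restriction.) So your application of \cref{thm:gpd-realiz} to the $U_i$ fails in general, and indeed the global openness you ask for is strictly more than the theorem asserts or than is achievable.

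The repair is exactly what the paper does: use the fiberwise version \cref{thm:gpd-fib-realiz} with $\@S := \@{BO}_p(X)$, whose relevant condition is $A \in \@B(G) \oast \@S$ and is therefore satisfied by every Borel fiberwise open set. Its conclusion is only that the $U_i$ become \emph{$p$-fiberwise} open in the new topology, not globally open---but that is all that is needed, since the theorem only requires the new topology to \emph{restrict} on each fiber to the given one; combined with the containment of the new topology in $\@{BO}_p(X)$, this yields the fiberwise coincidence exactly as in your last paragraph. You should also note that compatibility of $\@{BO}_p(X)$ as a $\sigma$-topology is not automatic from closure under countable unions and finite intersections; it is supplied by \cref{thm:fib-bor-qpol}.
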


We may combine this with the groupoid version of \cref{thm:intro-grp-nary-realiz-borel}, namely \cref{thm:gpd-nary-realiz-borel}, to obtain as part of \cref{thm:gpd-top-nary-realiz}:

\begin{theorem}[topological realization of Borel $G$-bundles of topological structures]
Let $G$ be an open quasi-Polish groupoid, $p_i : X_i -> G_0$ be countably many standard Borel bundles of quasi-Polish spaces, $R \subseteq X_{i_1} \times_{G_0} \dotsb \times_{G_0} X_{i_n}$ be an invariant $n$-ary Borel fiberwise (over $G_0$, in the fiber product topology) open relation.
Then there are compatible quasi-Polish topologies on each $X_i$ making $p_i$ and the action continuous and restricting to the topology on each fiber, such that $R \subseteq X_{i_1} \times_{G_0} \dotsb \times_{G_0} X_{i_n}$ becomes open.
Moreover, countably many such $R$ (of varying arities) may be made open at once.
\end{theorem}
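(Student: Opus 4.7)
The plan is to obtain the desired topologies in two stages, combining the bundle topological realization theorem (the immediately preceding theorem, \cref{thm:gpd-top-realiz}) with the groupoid $n$-ary relational realization \cref{thm:gpd-nary-realiz-borel}, using a version of the latter that allows an upper bound to be placed on the resulting topology on each $X_i$.

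For the first stage, I apply \cref{thm:gpd-top-realiz} to each bundle $p_i : X_i \to G_0$ separately, producing a compatible quasi-Polish topology $\tau_i^{(0)}$ on $X_i$ that makes $p_i$ and the $G$-action continuous and restricts on each fiber to the prescribed fiber topology. With respect to the $\tau_{i_k}^{(0)}$, the fiber of $X_{i_1} \times_{G_0} \cdots \times_{G_0} X_{i_n}$ over $y \in G_0$ carries the product of the prescribed fiber topologies, so the hypothesis that $R$ is fiberwise open becomes the genuine statement that $R$ is open on each such fiber, even though $R$ is not yet known to be open in the fiber product topology itself.

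For the second stage, I apply the upper-bounded form of \cref{thm:gpd-nary-realiz-borel} to simultaneously refine the $\tau_i^{(0)}$ into finer quasi-Polish topologies $\tau_i^{(1)}$ in which $R$ (and countably many similar $R$) becomes open while $p_i$ and the action remain continuous. To preserve fiber topologies, the key is to bound $\tau_i^{(1)}$ above by the $\sigma$-topology $\mathcal{T}_i$ generated by Borel subsets of $X_i$ that are fiberwise open with respect to the prescribed fiber topologies. Any member of $\mathcal{T}_i$ restricts fiberwise to an open set, so $\tau_i^{(1)}|_{X_{i,y}}$ is contained in the given fiber topology; combined with the reverse inclusion inherited from $\tau_i^{(0)} \subseteq \tau_i^{(1)}$, this forces equality, as required.

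The main obstacle is verifying that $R$ is ``potentially open'' relative to these upper bounds, i.e., that it satisfies one of the equivalent conditions in \cref{thm:gpd-nary-realiz-borel} using only factors drawn from the $\mathcal{T}_{i_k}$. Since each morphism $g \in G$ acts as a homeomorphism $X_{i,s(g)} \to X_{i,t(g)}$ between the prescribed fiber topologies, fiberwise openness of Borel sets is preserved under $G$-translation; combined with the existence of a countable basis for $\tau_{i_k}^{(0)}$ whose members are fiberwise open, this should let one express $R$ as a countable union of Vaught transforms of Borel rectangles whose factors lie in the $\mathcal{T}_{i_k}$, verifying the required hypothesis. The ``moreover'' clause handling countably many $R$ at once is then immediate from the corresponding clause of \cref{thm:gpd-nary-realiz-borel}.
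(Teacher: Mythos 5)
Your overall two-stage strategy coincides with the paper's: first realize each bundle $p_i : X_i \to G_0$ via \cref{thm:gpd-top-realiz}, then apply the upper-bounded relational realization theorem (\cref{thm:gpd-binary-realiz} and its $n$-ary analog) with upper bounds $\@S(X_i) := \@{BO}_{p_i}(X_i)$, re-opening a countable basis of the first-stage topologies to pin down the fiber topologies from below. (One small point: $\tau_i^{(0)} \subseteq \tau_i^{(1)}$ is not automatic from the relational theorem; it must be arranged via its ``moreover'' clause by explicitly listing a countable basis of $\tau_i^{(0)}$ among the sets to be made open.)

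The gap is in the step you yourself flag as the main obstacle. What your sketch delivers --- Kunugui--Novikov applied to the fiberwise open $R$, plus invariance --- is
\begin{equation*}
R \in \@O(G) \oast \bigl(\@{BO}_{p_{i_1}}(X_{i_1}) \otimes_{G_0} \dotsb \otimes_{G_0} \@{BO}_{p_{i_n}}(X_{i_n})\bigr),
\end{equation*}
a countable union of diagonal Vaught transforms of rectangles whose factors lie merely in $\@S(X_{i_k})$. But this is \emph{not} one of the equivalent conditions of \cref{thm:gpd-binary-realiz}: the paper explicitly remarks after \cref{thm:grp-binary-realiz} that it does not know whether ``$R \in \@O(G) \oast (\@S(X) \otimes \@S(Y))$'' can be added to the list for general $\@S$. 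The listed conditions \cref{thm:gpd-binary-realiz}\cref{thm:gpd-binary-realiz:vaught-vaught*} and \cref{thm:gpd-binary-realiz:vaught-vaught} require the factors to lie in the a priori smaller classes $\exists^*_{\alpha_{X_i}}((\@O(G) \otimes_{G_0} \@S(X_i))^*_{\pi_2})$, resp.\ $\@O(G) \oast \@S(X_i)$. So the argument needs the additional claim that, for this particular choice of $\@S$, these classes exhaust all of $\@{BO}_{p_i}(X_i)$; this claim is the technical heart of the proof and is absent from your proposal. Concretely: your observation that $G$ acts by fiberwise homeomorphisms yields, via Kunugui--Novikov, only $\alpha_{X_i}^{-1}(A) = \bigcup_j (U_j \times_{G_0} A_j)$ with $U_j \in \@B(G)$ \emph{Borel}; one must then invoke the $\sigma$-fiberwise Baire property on $G$ (\cref{thm:fib-baire-borel}) to replace each $U_j$, modulo $\sigma$-fiberwise meager, by $\bigcup_k (\sigma^{-1}(B_{jk}) \cap V_{jk})$ with $V_{jk} \in \@O(G)$, and absorb each $\sigma^{-1}(B_{jk})$ into the $X_i$-factor as $p_i^{-1}(B_{jk})$, to conclude that $\alpha_{X_i}^{-1}(A)$ is $=^*_{\pi_2}$ to a set in $\@O(G) \otimes_{G_0} \@{BO}_{p_i}(X_i)$, hence $A \in \exists^*_{\alpha_{X_i}}((\@O(G) \otimes_{G_0} \@{BO}_{p_i}(X_i))^*_{\pi_2})$. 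Without this upgrade from Borel to open subsets of $G$, the application of \cref{thm:gpd-binary-realiz} does not go through.
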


As an application, in \cref{thm:gpd-etale-realiz} we rederive and generalize the core result of \cite[1.5]{Cscc}, a topological realization theorem for $G$-bundles of \emph{countable} structures as \emph{étale} $G$-bundles, which was originally proved in that paper (for bundles without any structure) using \emph{ad hoc} methods.

\subsection{Proof strategy: point-free topology}
\label{sec:intro-loc}

Over the past decade or so, it has become known that the topologies and Borel structures occurring in descriptive set theory can be usefully regarded as purely \emph{algebraic} structures.
Consider a topology $\@O(X)$ (of open sets) on a set $X$: it is a poset under inclusion, and is equipped with the operations of finite meets $\cap$ and arbitrary joins $\bigcup$, where $\cap$ distributes over $\bigcup$.
An abstract poset equipped with such operations is called a \defn{frame}.
The frames which are \emph{countably presented}, i.e., have a presentation $\ang{G \mid R}$ with countably many generators $G$ and relations $R$, are precisely the topologies of quasi-Polish spaces; imposing regularity yields the Polish topologies.
See \cite{Hqpol}.

Many basic constructions in descriptive set theory have conceptually simple descriptions from this algebraic perspective.
For instance, the lower powerspace of closed subsets $\@F(X)$ (mentioned in \cref{sec:intro-qpol} above) corresponds to forgetting about finite meets in $\@O(X)$ and then reintroducing them freely; see \cite{Vlog}, \cite{Spowloc}.
And the process of refining the topology to make Borel sets clopen corresponds to freely adjoining complements for existing elements of $\@O(X)$, thereby approaching the free Boolean $\sigma$-algebra generated by $\@O(X)$ which is the Borel $\sigma$-algebra $\@B(X)$; see \cite{Jnegopen}.

Moreover, such ``algebraic'' constructions tend to generalize straightforwardly to quasi-Polish spaces.
This is because, with the focus now on the (open, Borel, etc.)\ \emph{sets}, rather than \emph{points}, the usual sequential metric approximations that pervade classical arguments become quite unnatural.
In fact, if we forget about points altogether, then the resulting arguments often do not depend on countability at all.
A \defn{locale} $X$ is a ``topological space without points'', which is just to say, the same thing as an \emph{abstract} frame $\@O(X)$, whose elements by convention we call ``open sets of $X$''; see \cite[C1.1--1.2]{Jeleph}, \cite{Jstone}, \cite{PPloc}.
Over the past 40 years, much of classical descriptive set theory has been generalized to arbitrary locales, without any countability restrictions; see \cite{Cborloc}.

The work in this paper was originally motivated by attempting to find a point-free ``algebraic'' proof of the Becker--Kechris theorems, that would generalize to quasi-Polish $G$-spaces and more generally $G$-locales.
Now, the original proof in \cite[\S5.2]{BKgrp} is already point-free to a large extent, at least modulo some superficially point-based reasoning that can be relatively easily ``algebraicized''.
However, there is a glaring exception: the last step, \cite[\S5.2, Proof of 5.2.1, Claim 4]{BKgrp}, shows that the topology obtained is Polish via the \emph{strong Choquet game}, which is inextricably a sequential argument, and more subtlely, is best-suited to Hausdorff spaces (see \cite[\S10]{dBqpol}, \cite[\S11]{Cqpol}).

\subsection{Organization of paper}
\label{sec:intro-contents}

The plan of this paper, therefore, is to first carefully reprove the classical Becker--Kechris theorem for actions of Polish groups $G$ in a point-free manner that does not depend on countability, metrizability, or the Hausdorff axiom.
Such a proof will then work essentially verbatim for quasi-Polish $G$-spaces and more generally $G$-locales.
The generalization to groupoids is only slightly more involved, with some extra bookkeeping to keep track of fibers.
The results for $n$-ary relations described in \cref{sec:intro-struct,sec:intro-gpd} will also follow easily from our proof method.

Our proof is based entirely on formal algebraic properties of the Baire category quantifier $\exists^*$, in a fairly general context, namely with respect to a ``Borel bundle of spaces'' $f : X -> Y$ as in \cref{sec:intro-gpd}.
We recall and develop in \cref{sec:prelim} the theory of such bundles, which is well-known in some special cases (e.g., a product bundle $\pi_1 : Y \times Z -> Y$), but does not appear to have been written down before in the generality which we need.

In \cref{sec:grp}, we prove in full detail all of the topological realization theorems for Polish group actions described above.
This entails recalling/redeveloping the basic theory of Vaught transforms, again in a point-free manner, in \cref{sec:grp-vaught}.
While large parts of \cref{sec:grp} cover well-known results (slightly generalized to the quasi-Polish setting), our proofs are different from the standard ones, and will be reused in the following sections.

In \cref{sec:gpd}, we prove the various topological realization theorems for open quasi-Polish groupoid actions.
This section assumes familiarity with the previous one, to which we refer for identical proofs.
The main focus of \cref{sec:gpd} is on the new ``fiberwise'' subtleties and variations that arise.

In \cref{sec:loc}, we generalize everything to localic group(oid) actions.
The bulk of this section is devoted to developing point-free ``fiberwise'' topology and Baire category as in \cref{sec:prelim}, given which the arguments from \cref{sec:grp,sec:gpd} simply work verbatim.

We hope that our approach of first working out the details in the classical context of Polish group actions, and then indicating the tweaks needed for the more general contexts, will help to make this paper more accessible.
Familiarity with basic descriptive set theory is assumed throughout the paper.
In \cref{sec:loc} only, we additionally assume basic familiarity with category theory, lattices, and Boolean algebras.
A quick review of the needed locale theory is provided, although some conceptual familiarity here would be helpful as well.

\paragraph*{Acknowledgments}

I would like to thank Aristotelis Panagiotopoulos for some enlightening discussions regarding Sami's change-of-topology theorem \cite{Sbeckec}, as well as Matthew de~Brecht, Anush Tserunyan, and the anonymous referee for several helpful corrections and suggestions.
Research partially supported by NSF grant DMS-2224709.

\section{Topological preliminaries}
\label{sec:prelim}

\subsection{Topologies, Borel structures, and $\sigma$-topologies}
\label{sec:top}

In this paper, we will be dealing extensively with different topologies and Borel structures.
We therefore begin by carefully fixing some basic conventions.

For a topological space $X$, we will denote its \defn{topology}, i.e., lattice of open sets, by
\begin{equation}
\label{eq:top}
\@O(X) \subseteq \@P(X).
\end{equation}
We will occasionally need to consider multiple topologies on the same underlying set, which will be denoted $\@S, \@T, \@T', \dotsc \subseteq \@P(X)$; we reserve the notation $\@O(X)$ for a distinguished topology that $X$ is considered to be ``equipped'' with, for which we are willing to abuse notation as usual and denote the topological space by $X$ instead of $(X,\@O(X))$.

By a \defn{Borel space}, we will mean what is commonly called a \emph{measurable space}, i.e., a set equipped with an arbitrary $\sigma$-algebra of subsets (not assumed to be induced by any topology), which are called \defn{Borel}.
Similarly to \cref{eq:top}, we will denote the \defn{Borel $\sigma$-algebra} of a Borel space $X$ by
\begin{equation}
\@B(X) \subseteq \@P(X).
\end{equation}
For a topological space $X$, we equip it by default with the $\sigma$-algebra generated by $\@O(X)$, as usual.
A \defn{standard Borel space} is one whose $\sigma$-algebra is generated by a Polish topology.

We will be working with non-metrizable spaces, for which we use the modified \defn{Borel hierarchy} due to Selivanov~\cite{Sdst}.
The main difference from the usual definition is in level 2:
\begin{align}
\label{eq:top-bor}
A \in \*\Sigma^0_2(X)  &\coloniff  A = \bigcup_i (U_i \setminus V_i), &
A \in \*\Pi^0_2(X)  &\coloniff  A = \bigcap_i (U_i => V_i)
\end{align}
for countably many open $U_i, V_i \in \@O(X) =: \*\Sigma^0_1(X)$, where
$(U_i => V_i) := \{x \in X \mid x \in U_i \implies x \in V_i\}$.
For higher countable ordinals $\xi > 2$, we may define $\*\Sigma^0_\xi$ in the same way but taking $U_i, V_i \in \*\Sigma^0_{\zeta_i}(X)$ for $\zeta_i < \xi$ (and $\*\Pi^0_\xi$ to be the complements of $\*\Sigma^0_\xi$ sets); but here, it is enough to take $U_i = X$ as in the usual definition for metrizable spaces.
As usual, $\*\Delta^0_\xi := \*\Sigma^0_\xi \cap \*\Pi^0_\xi$.


The following simple facts take the place of the $T_1$ or Hausdorff axioms in many arguments in the non-Hausdorff context, and will be freely used without mention:
\begin{eqenum}
\item
Points in $T_0$ first-countable spaces are $\*\Pi^0_2$.
\item
The equality relation in a $T_0$ second-countable space $X$ is $\*\Pi^0_2$ in $X^2$.
\end{eqenum}

The following common generalization of topologies and $\sigma$-algebras will allow us to unify some analogous statements between the topological versus Borel contexts:

\begin{definition}
\label{def:stop}
A \defn{$\sigma$-topology} on a set $X$ is a collection of subsets $\@S \subseteq \@P(X)$ closed under finite intersections and \emph{countable} unions, whose elements are called \defn{$\sigma$-open}.

Note that a second-countable $\sigma$-topology is the same thing as a second-countable topology, since an arbitrary union reduces to a countable union of basic opens.

The definition of the Borel hierarchy makes sense also in $\sigma$-topological spaces.
If $X$ is a $\sigma$-topological space, then $\*\Sigma^0_\xi(X)$ is a finer $\sigma$-topology for each $\xi < \omega_1$, as is their union $\@B(X)$.
\end{definition}

\begin{definition}
\label{def:ostar}
If $\star$ is a binary operation on sets, then for two families of sets $\@S$ and $\@T$, we write
\begin{equation*}
\textstyle
\@S \ostar \@T := \{\bigcup_{i \in I} (A_i \star B_i) \mid I \text{ countable} \AND A_i \in \@S \AND B_i \in \@T\}.
\end{equation*}
(Typically $\@S, \@T$ are closed under countable unions, over which $\star$ distributes.)

For instance,
if $\@S \subseteq \@P(X)$ and $\@T \subseteq \@P(Y)$ are $\sigma$-topologies, then $\@S \otimes \@T \subseteq \@P(X \times Y)$ is the \defn{product $\sigma$-topology}, consisting of all countable unions of rectangles.
Thus if $\@S, \@T$ are second-countable topologies, then $\@S \otimes \@T$ is the product topology.
See also \cref{def:grp-vaught,def:gpd-vaught}.
\end{definition}

We now recall some basic notions and facts surrounding Baire category.
In a topological space $X$, a subset $A \subseteq X$ is \defn{comeager} if it contains a countable intersection of dense open sets, and \defn{meager} if its complement $\neg A$ is comeager.
We write
\begin{align}
\label{eq:exists*}
\exists^* A  &\coloniff  \exists^* x \in X\, (x \in A)  \coloniff  A \text{ is nonmeager}, \\
\label{eq:forall*}
\forall^* A  &\coloniff  \forall^* x \in X\, (x \in A)  \coloniff  A \text{ is comeager}  \iff  \neg \exists^* \neg A.
\end{align}
We say $X$ is a \defn{Baire space} if the Baire category theorem holds in it, i.e., every comeager set is dense, or equivalently every nonempty open set is nonmeager; and $X$ is \defn{completely Baire} if every closed subspace $Y \subseteq X$ is Baire, or equivalently every $\*\Pi^0_2$ subspace is Baire (see e.g., \cite[7.2]{Cqpol}).
For $A, B \subseteq X$, we have the relations of \defn{containment and equality mod meager}:
\begin{align}
\label{eq:subset*}
A \subseteq^* B  &\coloniff  A \setminus B \text{ is meager}  \iff  \forall^*(A => B), \\
\label{eq:=*}
A =^* B  &\coloniff  A \subseteq^* B \subseteq^* A  \iff  A \triangle B \text{ is meager}.
\end{align}
We say that $A \subseteq X$ has the \defn{Baire property} if it is $=^*$ to an open set; such sets include all open sets and form a $\sigma$-algebra, hence include all Borel sets.
Explicitly, we have the following formulas which show how to inductively construct, for each Borel set $A \subseteq X$, an open set $U_A =^* A$:
\begin{align}
\label{eq:bp-union}
U_{\bigcup_i A_i} &:= \bigcup_i U_{A_i}, \\
\label{eq:bp-diff}
U_{A \setminus B} &:= U_A \cap (\neg U_B)^\circ = \bigcup_{W \in \@W; W \cap U_B = \emptyset} (W \cap U_A)
\end{align}
where $\@W$ is any open basis for $X$ (and $(-)^\circ$ denotes interior).
These formulas will be particularly useful for working uniformly with ``bundles'' of spaces; see \cref{sec:fib}.

\subsection{Quasi-Polish spaces}
\label{sec:qpol}

The main topological setting of this paper is de~Brecht's \defn{quasi-Polish spaces} \cite{dBqpol} (see also \cite{Cqpol}), which can be defined via \cref{it:qpol-pi02} or \cref{it:qpol-openquot} below.
We list here some basic properties of quasi-Polish spaces we will freely use; for proofs, see the aforementioned references.
\begin{eqenum}

\item \label{it:qpol-t0-cb-baire}
All quasi-Polish spaces are $T_0$, second-countable, and completely Baire.

\item \label{it:qpol-pol}
A topological space is Polish iff it is quasi-Polish and regular ($T_3$).

\item \label{it:qpol-s}
The \defn{Sierpinski space} $\#S = \{0, 1\}$, where $\{1\}$ is open but not closed, is quasi-Polish.

\item \label{it:qpol-dis}
If $X$ is a quasi-Polish space, and $A_i \in \*\Sigma^0_\xi(X)$ are countably many sets, then there is a finer quasi-Polish topology containing each $A_i$ and contained in $\*\Sigma^0_\xi(X)$.
In more detail,
\begin{enumerate}[label=(\alph*)]
\item \label{it:qpol-dis:clopen}
adjoining a single $\*\Delta^0_2$ set to the topology of $X$ preserves quasi-Polishness;
\item \label{it:qpol-dis:join}
if the intersection of countably many quasi-Polish topologies contains a quasi-Polish topology, then their union generates a quasi-Polish topology.
\end{enumerate}

\item \label{it:qpol-sbor}
A quasi-Polish space can be made zero-dimensional Polish by adjoining countably many closed sets to the topology, hence is in particular standard Borel (in the usual sense).

\item \label{it:qpol-prod}
Countable products of quasi-Polish spaces are quasi-Polish.

\item \label{it:qpol-loc}
A space with a countable cover by open quasi-Polish subspaces is quasi-Polish.

\item \label{it:qpol-pi02}
A subspace of a quasi-Polish space is quasi-Polish iff it is $\*\Pi^0_2$ (in the sense of \cref{eq:top-bor}).
In fact, quasi-Polish spaces are precisely the $\*\Pi^0_2$ subspaces of $\#S^\#N$, up to homeomorphism.

\item \label{it:qpol-openquot}
A continuous open $T_0$ quotient of a quasi-Polish space is quasi-Polish.
In fact, nonempty quasi-Polish spaces are precisely the continuous open $T_0$ quotients of $\#N^\#N$.

\end{eqenum}
This last property ultimately underlies all of the topological realization results in this paper.

\begin{definition}
\label{def:qpol-compat}
As usual, for a standard Borel space $X$, we say that a quasi-Polish topology $\@O(X)$ on $X$ is \defn{compatible} (with the Borel structure) if $\@O(X) \subseteq \@B(X)$; it then follows (by the Lusin--Suslin theorem) that $\@O(X)$ generates $\@B(X)$ as a $\sigma$-algebra.

We also say that a $\sigma$-topology $\@S$ on $X$ is \defn{compatible} (with the Borel structure) if every countable subset of $\@S$ is contained in a compatible quasi-Polish topology contained in $\@S$.
It follows that $\@S \subseteq \@B(X)$, that $\@S$ generates $\@B(X)$, and that $\@S$ contains at least one quasi-Polish topology.
\end{definition}

\begin{example}
If $\@S$ is second-countable, then to say that $\@S$ is a compatible $\sigma$-topology is the same as to say that $\@S$ is a compatible quasi-Polish topology.
\end{example}

\begin{example}
\label{ex:qpol-compat-borel}
For a quasi-Polish space $X$, each $\*\Sigma^0_\xi(X)$ is a finer compatible $\sigma$-topology (by \cref{it:qpol-dis}), as is their union $\@B(X)$.
\end{example}

\begin{example}
If $X, Y$ are standard Borel spaces with compatible $\sigma$-topologies $\@S(X) \subseteq \@B(X)$ and $\@S(Y) \subseteq \@B(Y)$, then the product $\sigma$-topology $\@S(X) \otimes \@S(Y) \subseteq \@B(X \times Y)$ (from \cref{def:ostar}) is a compatible $\sigma$-topology on $X \times Y$.
Indeed, given countably many countable unions of rectangles $A_i \times B_i \in \@S(X) \otimes \@S(Y)$, we may find quasi-Polish topologies $\{A_i\}_i \subseteq \@O(X) \subseteq \@S(X)$ and $\{B_i\}_i \subseteq \@O(Y) \subseteq \@S(Y)$; then $\@O(X) \otimes \@O(Y)$ contains each $A_i \times B_i$.
\end{example}

\subsection{Fiberwise topology}
\label{sec:fib}

Given any function $f : X -> Y$ between sets, we may regard $X$ as a \defn{bundle} over $Y$, i.e., as a family of sets, the \defn{fibers} $f^{-1}(y)$, indexed over $y \in Y$.
In general, when we refer to an \defn{$f$-fiberwise} concept or property, we will mean that it occurs simultaneously on each fiber.

\begin{definition}
\label{def:fib-top}
An \defn{$f$-fiberwise topology} on $X$ will mean a family of topologies, one on each fiber $f^{-1}(y)$.
We identify such a family of topologies with a global topology on $X$, namely given by the disjoint union of the fibers.
Terms like \defn{$f$-fiberwise open}, \defn{$f$-fiberwise meager}, etc., have a self-explanatory meaning.
We denote the $f$-fiberwise open sets, i.e., the corresponding global topology, by $\@O_f(X)$.
We also call $f : X -> Y$ equipped with an $f$-fiberwise topology $\@O_f(X)$ a \defn{bundle of topological spaces over $Y$}.

If $X$ is already equipped with a (global) topology $\@O(X)$, we may \defn{restrict} it to each fiber to get an $f$-fiberwise topology, whose corresponding global topology $\@O_f(X)$ \emph{refines} the original $\@O(X)$.

We say that a family $\@U \subseteq \@O_f(X)$ of $f$-fiberwise open subsets is an \defn{$f$-fiberwise open subbasis} if for each $y \in Y$, the restrictions $f^{-1}(y) \cap U$ of all $U \in \@U$ form an open subbasis for $f^{-1}(y)$.
(This does \emph{not} mean that $\@U$ is a subbasis for the global topology $\@O_f(X)$.)
Thus, a countable such $\@U$ exists iff $X$ is \defn{$f$-fiberwise second-countable} in the self-explanatory sense.

We say that $\@U$ is an \defn{$f$-fiberwise open basis} if, moreover, $\@U$ covers $X$ and the intersection of any two sets in $\@U$ is a union of other sets in $\@U$; in other words, if $\@U$ forms a basis for \emph{some} global topology on $X$, restricting to the fiberwise topology (but in general coarser than $\@O_f(X)$).
Note that this is a bit stronger than requiring $\@U$ to restrict to a basis on each fiber.
Clearly, the closure under finite intersections of any fiberwise subbasis is a fiberwise basis.
\end{definition}

\begin{definition}
\label{def:fib-pb}
Recall that for any functions $f : X -> Y$ and $g : Z -> Y$, we have the \defn{pullback} or \defn{fiber product}
\begin{equation*}
Z \times_Y X := \{(z,x) \in Z \times X \mid g(z) = f(x)\},
\end{equation*}
which fits into a commutative square
\begin{equation}
\label{diag:fib-pb}
\begin{tikzcd}
Z \times_Y X \dar["\pi_1"'] \rar["\pi_2"] & X \dar["f"] \\
Z \rar["g"] & Y
\end{tikzcd}
\end{equation}
where $\pi_1, \pi_2$ are the projections, such that each $\pi_1$-fiber $\pi_1^{-1}(z)$ is in canonical bijection (via $\pi_2$) with the $f$-fiber $f^{-1}(g(z))$.
In this situation, we also call $\pi_1$ the \defn{pullback of $f$ along $g$}.

If $X$ has an $f$-fiberwise topology, we may therefore transfer it via $\pi_2$ to a \defn{pullback $\pi_1$-fiberwise topology} on $Z \times_Y X$.
Henceforth whenever we have a pullback of a bundle of topological spaces, we will by default regard it as being equipped with the pullback fiberwise topology.
\end{definition}

The following trivial facts, as well as their various analogs (e.g., \cref{it:fib-baire-bc} below), will play a key role.
For a pullback square as above, the \defn{Beck--Chevalley condition} says that for $A \subseteq X$,
\begin{equation}
\label{eq:fib-bc}
g^{-1}(f(A)) = \pi_1(\pi_2^{-1}(A)).
\end{equation}
A special case is \defn{Frobenius reciprocity}: for $Z \subseteq Y$ and $g$ the inclusion,
\begin{equation}
\label{eq:fib-frob}
Z \cap f(A) = f(f^{-1}(Z) \cap A).
\end{equation}

\begin{definition}
\label{def:fib-baire}
Let $f : X -> Y$ be a bundle of topological spaces.
For any $A \subseteq X$, we define the \defn{Baire category quantifiers}
\begin{align*}
\exists^*_f(A) &:= \{y \in Y \mid \exists^* x \in f^{-1}(y)\, (x \in A)\}, \\
\forall^*_f(A) &:= \{y \in Y \mid \forall^* x \in f^{-1}(y)\, (x \in A)\} = \neg \exists^*_f(\neg A)
\end{align*}
(where the $\exists^*, \forall^*$ are with respect to the fiberwise topology on $f^{-1}(y)$).
For $A, B \subseteq X$, we write
\begin{align*}
A \subseteq^*_f B
&\coloniff  \forall y \in Y\, (f^{-1}(y) \cap A \subseteq^* f^{-1}(y) \cap B)
\iff  Y = \forall^*_f(A => B), \\
A =^*_f B
&\coloniff  \forall y \in Y\, (f^{-1}(y) \cap A =^* f^{-1}(y) \cap B)
\iff  Y = \forall^*_f(A <=> B).
\end{align*}
Note that these reduce to the ``absolute'' Baire category notions \cref{eq:exists*}--\cref{eq:=*} when $Y = 1$.
\end{definition}

We now record some basic facts about fiberwise Baire category, most of which are usually stated in the special case of a product bundle $X = Y \times Z$ (and $f = \pi_1$) but generalize straightforwardly; see \cite[\S8.J]{Kcdst}, \cite[\S7]{Cqpol}.
First, from the fact that $\exists^*_f$ is defined fiberwise, we clearly have:
\begin{eqenum}

\item \label{it:fib-baire-bc}
(\defn{Beck--Chevalley condition})
For a pullback square as in \cref{diag:fib-pb},
\begin{equation*}
g^{-1}(\exists^*_f(A)) = \exists^*_{\pi_1}(\pi_2^{-1}(A)).
\end{equation*}

\item \label{it:fib-baire-frob}
(\defn{Frobenius reciprocity})
In particular, for $Z \subseteq Y$,
\begin{equation*}
Z \cap \exists^*_f(A) = \exists^*_f(f^{-1}(Z) \cap A).
\end{equation*}

\item \label{it:fib-baire-surj}
In particular, for $Z \subseteq \exists^*_f(X)$,
\begin{equation*}
Z = \exists^*_f(f^{-1}(Z)).
\end{equation*}

\end{eqenum}
The next few properties are direct fiberwise translations of basic properties of ``global'' Baire category from \cref{sec:top}:
\begin{eqenum}

\item \label{it:fib-baire-im}
$\exists^*_f(A) \subseteq f(A)$, with equality if $A$ is fiberwise open and $X$ is fiberwise Baire.

\item \label{it:fib-bp-open}
Thus, if $X$ is fiberwise Baire and $A =^*_f U \in \@O_f(X)$, then $\exists^*_f(A) = \exists^*_f(U) = f(U)$.

\item \label{it:fib-baire-union}
$\exists^*_f$ preserves countable unions; $\forall^*_f$ preserves countable intersections.

\end{eqenum}
The following are fiberwise translations of the formulas \cref{eq:bp-union} and \cref{eq:bp-diff}:
\begin{eqenum}

\item \label{it:fib-bp-union}
For countably many $A_i \subseteq X$, if each $A_i =^*_f U_{A_i}$, then $\bigcup_i A_i =^*_f U_{\bigcup_i A_i} := \bigcup_i U_{A_i}$.

\item \label{it:fib-bp-diff}
If $A =^*_f U_A \in \@O_f(X)$ and $B =^*_f U_B \in \@O_f(X)$, then for any $f$-fiberwise basis $\@W \subseteq \@O_f(X)$,
\begin{align*}
A \setminus B =^*_f U_{A \setminus B} := \bigcup_{W \in \@W} (W \cap U_A \setminus f^{-1}(f(W \cap U_B))).
\end{align*}

\end{eqenum}
Applying $\exists^*_f$ to this last formula yields, assuming $X$ is fiberwise Baire and for a countable fiberwise basis $\@W$, using \cref{it:fib-bp-open} and \cref{eq:fib-frob},
\begin{align}
\label{it:fib-baire-diff}
\exists^*_f(A \setminus B)
&= \bigcup_{W \in \@W} (\exists^*_f(W \cap A) \setminus \exists^*_f(W \cap B)).
\end{align}
By induction on $\xi$, these last few properties yield the following; see \cite[22.22]{Kcdst}, \cite[7.5]{Cqpol}.

\begin{proposition}
\label{thm:fib-baire-borel}
Let $f : X -> Y$ be a continuous open fiberwise Baire map from a second-countable space $X$ to an arbitrary topological space $Y$.
Then
\begin{enumerate}[label=(\alph*)]
\item  (fiberwise Baire property)
For any $A \in \*\Sigma^0_\xi(X)$, there is a fiberwise open $U_A \subseteq X$, of the form $U_A = \bigcup_i (f^{-1}(B_i) \cap U_i)$ where $B_i \in \*\Sigma^0_\xi(Y)$ and $U_i \in \@O(X)$, such that $A =^*_f U_A$.
\item
Thus, $\exists^*_f(\*\Sigma^0_\xi(X)) \subseteq \*\Sigma^0_\xi(Y)$.
\qed
\end{enumerate}
\end{proposition}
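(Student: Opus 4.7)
My plan is to prove (a) by transfinite induction on $\xi$, with (b) following immediately by applying $\exists^*_f$. Since $X$ is second-countable, fix once and for all a countable open basis $\@W \subseteq \@O(X)$ closed under finite intersections; it restricts to a basis on each fiber, so qualifies as a countable $f$-fiberwise basis in the sense of \cref{def:fib-top}. The base case $\xi = 1$ is trivial: any open $A$ is already of the required form via $A = f^{-1}(Y) \cap A$, with $Y \in \@O(Y) = \*\Sigma^0_1(Y)$, and $A =^*_f A$ vacuously.

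For the inductive step at $\xi \ge 2$, decompose $A = \bigcup_i (U_i' \setminus V_i')$ with $U_i', V_i' \in \*\Sigma^0_{\zeta_i}(X)$ and $\zeta_i < \xi$ as in \cref{eq:top-bor}. The inductive hypothesis supplies fiberwise open $U_{U_i'} =^*_f U_i'$ and $U_{V_i'} =^*_f V_i'$ in the claimed form at level $\zeta_i$, and then \cref{it:fib-bp-diff,it:fib-bp-union} combine to produce
\[
U_A := \bigcup_i \bigcup_{W \in \@W} \bigl( W \cap U_{U_i'} \setminus f^{-1}(f(W \cap U_{V_i'})) \bigr)
\]
satisfying $A =^*_f U_A$. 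The main obstacle, and the step where openness of $f$ is crucial, is verifying that $U_A$ inherits the required form. Writing $U_{V_i'} = \bigcup_k (f^{-1}(C_{i,k}) \cap V_{i,k})$ with $C_{i,k} \in \*\Sigma^0_{\zeta_i}(Y)$ and $V_{i,k} \in \@O(X)$, Frobenius reciprocity \cref{eq:fib-frob} plus openness of $f$ yield $f(W \cap U_{V_i'}) = \bigcup_k (C_{i,k} \cap f(W \cap V_{i,k})) \in \*\Sigma^0_{\zeta_i}(Y)$. Expanding $U_{U_i'} = \bigcup_k (f^{-1}(B_{i,k}) \cap W_{i,k})$ analogously and substituting, each summand of $U_A$ rewrites as a union of sets of the form $f^{-1}(B_{i,k} \setminus f(W \cap U_{V_i'})) \cap W \cap W_{i,k}$, whose $Y$-coefficient is a difference of two $\*\Sigma^0_{\zeta_i}(Y)$ sets, hence lies in $\*\Sigma^0_\xi(Y)$ by \cref{eq:top-bor}.

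Part (b) is then immediate: since $X$ is fiberwise Baire and $A =^*_f U_A$ with $U_A$ fiberwise open, \cref{it:fib-bp-open} gives $\exists^*_f(A) = f(U_A)$. Expanding $U_A = \bigcup_j (f^{-1}(B_j) \cap U_j)$ with $B_j \in \*\Sigma^0_\xi(Y)$ and $U_j \in \@O(X)$, Frobenius \cref{eq:fib-frob} plus openness of $f$ one more time yields $f(U_A) = \bigcup_j (B_j \cap f(U_j))$, a countable union of intersections of $\*\Sigma^0_\xi(Y)$ sets with open sets, hence itself in $\*\Sigma^0_\xi(Y)$. All substantive ingredients (the fiberwise Baire property formulas, Frobenius reciprocity, and openness of $f$) are already in hand, so the only real work is the bookkeeping in the middle paragraph.
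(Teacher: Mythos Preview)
Your proof is correct and follows exactly the approach the paper indicates: the paper's proof is literally ``by induction on $\xi$, these last few properties yield the following'', where ``these last few properties'' are precisely \cref{it:fib-bp-union}, \cref{it:fib-bp-diff}, and \cref{it:fib-baire-diff} together with \cref{it:fib-bp-open} and Frobenius reciprocity, which is what you unpack. Your bookkeeping in the middle paragraph is the intended content of that induction.
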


\subsection{Borel fiberwise topology}
\label{sec:fib-bor}

We now consider bundles of spaces $f : X -> Y$ where the base space $Y$ is standard Borel, and the fiberwise topology is ``uniformly Borel''.

The key tool enabling a well-behaved theory of such ``Borel fiberwise topology'' is the following classical result.
It is usually stated for the case of a product bundle $X = Y \times Z$, with $\@S$ consisting of the cylinders $Y \times U$ for $U$ in some countable open basis for $Z$; see \cite[28.7]{Kcdst}.
However, essentially the same proof yields the general form below, which was pointed out in \cite[8.14]{Cgpd}.

\begin{theorem}[Kunugui--Novikov uniformization]
\label{thm:kunugui-novikov}
Let $f : X -> Y$ be a Borel map between standard Borel spaces, $\@S$ be a countable family of Borel subsets of $X$.
If a Borel set $A \subseteq X$ is $f$-fiberwise a union of sets in $\@S$, then $A = \bigcup_{S \in \@S} (f^{-1}(B_S) \cap S)$ for some Borel sets $B_S \subseteq Y$.
\end{theorem}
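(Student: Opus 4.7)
The plan is to first write $A$ as the natural coanalytic-indexed decomposition $A = \bigcup_{S \in \mathcal{S}} (f^{-1}(T_S) \cap S)$, and then shrink each $T_S$ down to a Borel $B_S$ while preserving the union. For this, I would take the largest $T_S$ compatible with the required inclusion $f^{-1}(T_S) \cap S \subseteq A$, namely
\[ T_S := \{y \in Y \mid S \cap f^{-1}(y) \subseteq A\} = Y \setminus f(S \setminus A), \]
which is $\boldsymbol{\Pi}^1_1$ since $S \setminus A$ is Borel and $f$ is a Borel map. The identity $A = \bigcup_{S \in \mathcal{S}} (f^{-1}(T_S) \cap S)$ is straightforward: the $\supseteq$ inclusion is immediate from the definition of $T_S$; for $\subseteq$, given $x \in A$ with $y := f(x)$, the fiberwise hypothesis produces some $S \in \mathcal{S}$ with $x \in S$ and $S \cap f^{-1}(y) \subseteq A$, so $y \in T_S$.

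The main step is replacing each coanalytic $T_S$ by a Borel $B_S \subseteq T_S$ without losing the union. To this end, index $\mathcal{S} = \{S_n\}_{n \in \omega}$ and consider the $\boldsymbol{\Pi}^1_1$ relation
\[ P := \{(x,n) \in X \times \omega \mid x \notin A \vee (x \in S_n \wedge f(x) \in T_{S_n})\}, \]
which has a witness $n$ for every $x \in X$ by the preceding paragraph (take an arbitrary $n$ when $x \notin A$). The $\boldsymbol{\Pi}^1_1$ number uniformization theorem (Kreisel selection) then supplies a Borel function $\phi : X \to \omega$ with $(x, \phi(x)) \in P$ everywhere. For each $S = S_n$, set $U_S := f(\phi^{-1}(n) \cap A)$: this is $\boldsymbol{\Sigma}^1_1$, and by construction lies inside $T_S$, hence is disjoint from the analytic set $f(S \setminus A) = Y \setminus T_S$. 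Lusin's separation theorem then supplies a Borel $B_S$ with $U_S \subseteq B_S \subseteq T_S$. The inclusion $\bigcup_S (f^{-1}(B_S) \cap S) \subseteq A$ follows from $B_S \subseteq T_S$; the reverse inclusion holds because any $x \in A$ lies in $S_{\phi(x)}$ with $f(x) \in U_{S_{\phi(x)}} \subseteq B_{S_{\phi(x)}}$.

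The hard part is the Borel-selector step. A naive attempt to apply Lusin separation directly to the analytic sets $f(A \cap S)$ and $f(S \setminus A)$ fails because these two images can overlap badly, and $T_S$, being only coanalytic, need not itself contain any ``large'' Borel subset. The uniformization theorem does exactly the work needed to assign, for each $x$, a definite witness $S = S_{\phi(x)} \in \mathcal{S}$ in a Borel-measurable way; pushing $\phi^{-1}(n) \cap A$ forward under $f$ then yields an analytic (rather than merely coanalytic) candidate $U_S \subseteq T_S$, at which point Lusin separation finally produces the desired Borel set $B_S$.
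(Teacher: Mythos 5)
Your proof is correct, and it is essentially the standard Kunugui--Novikov argument that the paper itself defers to by citation (to \cite[28.7]{Kcdst} and \cite[8.14]{Cgpd}) rather than reproving: decompose $A$ via the coanalytic sets $T_S = Y \setminus f(S \setminus A)$, use $\boldsymbol{\Pi}^1_1$ number uniformization (equivalently, Novikov's generalized separation/reduction) to select Borel witness pieces, and interpolate with Lusin separation. All steps check out, including the implicit observation that the sections of the $\boldsymbol{\Pi}^1_1$ uniformizing set are also $\boldsymbol{\Sigma}^1_1$ and hence Borel, so that $\phi$ is indeed a Borel function.
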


\begin{definition}
\label{def:fib-bor-qpol}
Let $f : X -> Y$ be a Borel map between standard Borel spaces, and suppose $X$ is equipped with an $f$-fiberwise topology.
We call $X$ a \defn{standard Borel bundle of quasi-Polish spaces over $Y$} if $X$ is fiberwise quasi-Polish, and is ``uniformly fiberwise second-countable'' in that it has a countable fiberwise open (sub)basis consisting of \emph{Borel} sets $\@U \subseteq \@B(X)$.
For such $X$, we let
\begin{equation*}
\@{BO}_f(X) := \@B(X) \cap \@O_f(X)
\end{equation*}
denote the $\sigma$-topology of Borel $f$-fiberwise open sets in $X$.
\end{definition}

It is perhaps not obvious that this is the ``correct'' definition of a ``uniformly quasi-Polish'' bundle of spaces.
The definition of quasi-Polish space requires not only that the topology is ``countably generated'' (i.e., second-countable), but also ``countably presented'' (i.e., $\*\Pi^0_2$; see \cref{it:qpol-pi02}); why do we require uniformity of only the former but not the latter?
The following shows that it is automatic (recall the notion of a \emph{compatible $\sigma$-topology} from \cref{def:qpol-compat}):

\begin{proposition}[topological realization of Borel bundles]
\label{thm:fib-bor-qpol}
Let $f : X -> Y$ be a standard Borel bundle of quasi-Polish spaces over a standard Borel space $Y$.
\begin{enumerate}[label=(\alph*)]
\item \label{thm:fib-bor-qpol:basis}
For any countable Borel fiberwise open basis $\@U \subseteq \@{BO}_f(X)$, $\@{BO}_f(X)$ consists of precisely all sets of the form $\bigcup_{U \in \@U} (f^{-1}(B_U) \cap U)$ for Borel sets $B_U \subseteq Y$.
\item \label{thm:fib-bor-qpol:top}
There are compatible quasi-Polish topologies $\@O(X)$ and $\@O(Y)$ making $f$ continuous, such that $\@O(X)$ restricts to the fiberwise topology on $X$.
Moreover, we may choose $\@O(X)$ to include any countably many $U_i \in \@{BO}_f(X)$; in particular, $\@{BO}_f(X)$ is a compatible $\sigma$-topology on $X$.
\end{enumerate}
\end{proposition}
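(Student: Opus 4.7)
The plan is to derive (a) directly from Kunugui--Novikov uniformization and then use (a) to reduce (b) to a construction of $\@O(X)$ via a Borel embedding into a standard product.

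For (a), the ``$\supseteq$'' inclusion is immediate: any set of the form $\bigcup_{U \in \@U}(f^{-1}(B_U) \cap U)$ is Borel as a countable union of Borel sets, and restricts on each fiber to a union of basic fiberwise opens from $\@U$. For the converse, given $A \in \@{BO}_f(X)$, the fiberwise openness of $A$ combined with $\@U$ being a fiberwise basis means that on each fiber $A \cap f^{-1}(y)$ is a union of fiberwise restrictions of sets in $\@U$; this is exactly the hypothesis of \cref{thm:kunugui-novikov} with $\@S = \@U$, yielding Borel $B_U \subseteq Y$ with $A = \bigcup_U (f^{-1}(B_U) \cap U)$.

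For (b), I would first use (a) to write each prescribed $U_i \in \@{BO}_f(X)$ as $\bigcup_U (f^{-1}(B_{i,U}) \cap U)$ for Borel $B_{i,U} \subseteq Y$. Starting from a compatible Polish topology on $Y$ via \cref{it:qpol-sbor} and refining via \cref{it:qpol-dis} to adjoin every $B_{i,U}$ yields a compatible quasi-Polish topology $\@O(Y) \subseteq \@B(Y)$. To topologize $X$, I would use the evaluation-type map
\[
\iota : X \longrightarrow Y \times \#S^{\@U}, \qquad x \longmapsto \bigl(f(x),\, (\chi_U(x))_{U \in \@U}\bigr),
\]
whose codomain is quasi-Polish by \cref{it:qpol-s,it:qpol-prod}. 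This $\iota$ is Borel by construction, and injective because the $T_0$ fibers of $f$ have $\@U$ as a fiberwise subbasis, so each point in a fiber is determined by which $U \in \@U$ contain it. By Lusin--Suslin, $\iota(X)$ is Borel, and \cref{it:qpol-dis} lets me adjoin it as an open set to produce a finer quasi-Polish topology on $Y \times \#S^{\@U}$; as an open subspace, \cref{it:qpol-loc} ensures $\iota(X)$ is itself quasi-Polish. Transporting this subspace topology along $\iota$ equips $X$ with the desired $\@O(X)$.

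The key observation is that $\@O(X)$ is generated by $\@U \cup f^{-1}(\@O(Y))$: the ambient generators pull back to precisely these sets, while adjoining $\iota(X)$ as open contributes nothing upon pullback since $\iota^{-1}(\iota(X) \cap W) = \iota^{-1}(W)$. From this presentation all claims follow readily: $\@O(X) \subseteq \@B(X)$ is immediate, $f$ is continuous, each $U_i$ is open because every $B_{i,U} \in \@O(Y)$, and restricting to any fiber $f^{-1}(y)$ collapses every $f^{-1}(V)$ to $\emptyset$ or the whole fiber while the $U \in \@U$ restrict to the given fiberwise basis, recovering exactly the original fiberwise topology. Since $\@U \cup f^{-1}(\@O(Y)) \subseteq \@{BO}_f(X)$, the whole generated topology lies in $\@{BO}_f(X)$, yielding the compatibility of $\@{BO}_f(X)$ as a $\sigma$-topology. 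The main obstacle I anticipate is arranging $\@O(X)$ to restrict to \emph{exactly} the given fiberwise topology rather than a strict refinement: a naive approach of adjoining $\@U$ and $f^{-1}(\@O(Y))$ to a preexisting Polish topology on $X$ would almost certainly add extra fiberwise opens, whereas the embedding approach allows only fiberwise-open data and pullbacks from $Y$ into $\@O(X)$, absorbing all other Borel complexity into the single $\iota(X)$-adjunction that, by the pullback identity above, is invisible from within $X$.
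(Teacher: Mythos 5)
Part \cref{thm:fib-bor-qpol:basis} of your argument is correct and is exactly the paper's proof: it is an immediate application of Kunugui--Novikov with $\@S = \@U$.

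Part \cref{thm:fib-bor-qpol:top} has a genuine gap at the step where you adjoin $\iota(X)$ as an open set. \cref{it:qpol-dis} does \emph{not} assert that the topology \emph{generated by} the old topology together with a given Borel set is quasi-Polish; that is only guaranteed for $\*\Delta^0_2$ sets (\cref{it:qpol-dis}\cref{it:qpol-dis:clopen}). For a Borel set of higher complexity, the finer quasi-Polish topology produced by \cref{it:qpol-dis} is obtained by iterating through the Borel hierarchy and necessarily contains many auxiliary open sets beyond $V \cup (W \cap \iota(X))$ --- for instance, complements of the subbasic sets $Y \times \pi_U^{-1}(1)$ may become open along the way. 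Your pullback identity $\iota^{-1}(\iota(X) \cap W) = \iota^{-1}(W)$ only covers sets $W$ in the topology generated by the original one plus $\iota(X)$, so the ``key observation'' that $\@O(X)$ is generated by $\@U \cup f^{-1}(\@O(Y))$ is unjustified; the extra open sets can pull back to sets that are not fiberwise open, and then $\@O(X)$ strictly refines the given fiberwise topology, which is precisely the failure mode you flag as the main obstacle.

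The paper avoids this by never making $\iota(X)$ open: it instead shows, via Saint~Raymond's uniformization theorem for Borel fiberwise $G_\delta$ sets (and the quasi-Polish generalization proved in the lemma following the proposition), that the image is fiberwise $\*\Pi^0_2$ and hence of the special form $\bigcap_i \bigcup_j (B_{ij} \times V_{ij})$ with $B_{ij} \subseteq Y$ Borel and $V_{ij} \subseteq \#S^{\@U}$ open. One then refines the topology only on the $Y$-factor to make the $B_{ij}$ open, after which $\iota(X)$ is an honest $\*\Pi^0_2$ subspace of $Y \times \#S^{\@U}$, quasi-Polish by \cref{it:qpol-pi02}, whose subspace topology is generated by rectangles and therefore pulls back exactly to $\@U \cup f^{-1}(\@O(Y))$. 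Some such uniformization of the $\*\Pi^0_2$ presentation of the image is the missing ingredient in your argument.
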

\begin{proof}
\cref{thm:fib-bor-qpol:basis} is an immediate consequence of Kunugui--Novikov (\cref{thm:kunugui-novikov}).

\cref{thm:fib-bor-qpol:top}
Let $\@U \subseteq \@{BO}_f(X)$ be a countable Borel fiberwise open basis for $X$, including each given $U_i$.
First, suppose that each $U \in \@U$ is in fact fiberwise clopen.
We then have a fiberwise embedding
\begin{align*}
e : X &--> Y \times 2^\@U \\
x &|--> (f(x), (\chi_U(x))_{U \in \@U})
\end{align*}
where each $\chi_U$ is the characteristic function of $U$; this is a fiberwise embedding because $\@U$ is a fiberwise basis.
By the Lusin--Suslin theorem (see e.g., \cite[15.1]{Kcdst}), $e(X) \subseteq Y \times 2^\@U$ is Borel.
Since $e$ is a fiberwise embedding, its image is ($\pi_1$-)fiberwise $G_\delta$; thus by Saint~Raymond's uniformization theorem for Borel fiberwise $G_\delta$ sets \cite[35.45]{Kcdst}, together with Kunugui--Novikov,
\begin{align*}
e(X) = \bigcap_i \bigcup_j (B_{ij} \times V_{ij})
\end{align*}
for some countably many Borel $B_{ij} \subseteq Y$ and open $V_{ij} \subseteq 2^\@U$.
Find any compatible zero-dimensional Polish topology on $Y$ making these $B_{ij}$ clopen.
Then $e(X) \subseteq Y \times 2^\@U$ is $G_\delta$, hence we may pull back the subspace topology along $e$ to $X$, yielding a compatible zero-dimensional Polish topology.

If each $U \in \@U$ is merely fiberwise open, we may run the above argument replacing the role of $2^\@U$ with $\#S^\@U$, using the following quasi-Polish generalization of Saint~Raymond's theorem, to get a compatible quasi-Polish topology on $X$, homeomorphic to a $\*\Pi^0_2$ subspace of $Y \times \#S^\@U$.
%
\end{proof}

\begin{lemma}
Let $Y$ be a standard Borel space, $Z$ be a quasi-Polish space, $A \subseteq Y \times Z$ be a $\pi_1$-fiberwise $\*\Pi^0_2$ set.
Then there are countably many Borel $B_{ij} \subseteq Y$ and open $U_i, V_{ij} \subseteq Z$ such that
\begin{align*}
A = \bigcap_i \paren[\big]{(Y \times U_i) => \bigcup_j (B_{ij} \times V_{ij})}.
\end{align*}
\end{lemma}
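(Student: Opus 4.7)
The plan is to apply Kunugui--Novikov uniformization to $\neg A$ (which is $\pi_1$-fiberwise $\*\Sigma^0_2$) against the natural countable family of Borel differences, and then convert the resulting decomposition into the stated implication form via a simple algebraic identity.

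First I would fix a countable open basis $\@W$ for $Z$. Since each fiber $A_y \subseteq Z$ is $\*\Pi^0_2$, the complement $\neg A_y$ is $\*\Sigma^0_2$, hence a countable union of basic differences $U \setminus V$ with $U, V \in \@W$. Thus $\neg A$ is $\pi_1$-fiberwise a union of members of the countable Borel family
\[
\@S := \{Y \times (U \setminus V) \mid U, V \in \@W\},
\]
and applying Kunugui--Novikov (\cref{thm:kunugui-novikov}) yields Borel sets $B_{U,V} \subseteq Y$ with
\[
\neg A = \bigcup_{(U,V) \in \@W^2} B_{U,V} \times (U \setminus V).
\]
Taking complements and using the identity $\neg(P \cap \neg Q) = (P \Rightarrow Q)$ rewrites this as
\[
A = \bigcap_{(U,V) \in \@W^2} \bigl((B_{U,V} \times U) \Rightarrow (Y \times V)\bigr).
\]

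This is nearly the stated form, except that the antecedent is a Borel rectangle rather than pure $Y \times U_i$. The remaining step invokes the algebraic identity
\[
(B \times U) \Rightarrow (Y \times V) = (Y \times U) \Rightarrow \bigl((Y \setminus B) \times Z \cup Y \times V\bigr),
\]
valid for arbitrary $B \subseteq Y$ and $U, V \subseteq Z$, since both sides unfold to $(Y \setminus B) \times Z \cup Y \times \neg U \cup Y \times V$. Substituting term-by-term produces
\[
A = \bigcap_{(U,V)} \bigl((Y \times U) \Rightarrow D_{U,V}\bigr)
\]
with $D_{U,V} := (Y \setminus B_{U,V}) \times Z \cup Y \times V$, a two-term union of (Borel)$\times$(open) rectangles, matching the required form after reindexing $(U,V) \mapsto i$.

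The hard part will be none of the above, really: the only mildly subtle point is spotting the algebraic rewrite in the last display, which converts a Borel-rectangle antecedent into a pure $Y \times U$ antecedent at the price of enlarging the consequent. Everything else is Kunugui--Novikov applied to the natural countable family, together with formal boolean manipulations.
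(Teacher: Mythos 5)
There is a genuine gap at the very first step. You claim that each fiber $\neg A_y$, being $\*\Sigma^0_2$, is a countable union of differences $U \setminus V$ with $U, V$ drawn from a fixed countable basis $\@W$. This is false: the first term of a difference can always be shrunk to basic sets, since $(\bigcup_m W_m) \setminus V = \bigcup_m (W_m \setminus V)$, but the subtracted set $V$ cannot, because $W \setminus \bigcup_k V_k = \bigcap_k (W \setminus V_k)$ is an \emph{intersection} of basic differences, not a union. Concretely, take $Z = \mathbb{R}$ with the basis of rational intervals and let $F$ be the Cantor set: $F$ is closed, hence $\*\Sigma^0_2$ (indeed a single difference of two open sets), but every nonempty set of the form $(a,b) \setminus (c,d)$ contains a nonempty open interval and so cannot be contained in $F$; hence $F$ is not a countable union of basic differences. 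So $\neg A$ need not be $\pi_1$-fiberwise a union of sets from your countable family $\@S$, and Kunugui--Novikov does not apply. (Everything downstream of that point --- the complementation, and the algebraic rewrite turning the antecedent $B \times U$ into $Y \times U$ --- is correct, but it rests on this false decomposition.)

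The failure is not incidental: uniformizing a fiberwise $\*\Pi^0_2$ (equivalently, in the metrizable case, fiberwise $G_\delta$) set as a countable intersection of Borel fiberwise open sets is precisely the content of Saint~Raymond's uniformization theorem \cite[35.45]{Kcdst}, which is substantially harder than Kunugui--Novikov; if your reduction worked, it would prove Saint~Raymond's theorem in a paragraph. The paper's proof instead takes a continuous open surjection $g : Z' ->> Z$ from a Polish space \cref{it:qpol-openquot}, applies Saint~Raymond's theorem to the pullback $A' = (Y \times g)^{-1}(A)$, which is fiberwise $G_\delta$ in $Y \times Z'$, and then transfers the resulting decomposition back down to $Y \times Z$ via $A = \forall^*_{Y \times g}(A')$ and the fiberwise Baire category calculus of \cref{sec:fib}. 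Any correct proof will need an ingredient of comparable strength in place of your step 1.
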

\begin{proof}
By \cref{it:qpol-openquot}, let $Z'$ be Polish and $g : Z' ->> Z$ be a continuous open surjection.
Then $A' := (Y \times g)^{-1}(A) \subseteq Y \times Z'$ is $\pi_1$-fiberwise $G_\delta$, hence by Saint~Raymond's theorem
\begin{align*}
A' = \bigcap_{i'} \bigcup_j (B_{i'j} \times V'_{i'j})
\end{align*}
for countably many Borel $B_{i'j} \subseteq Y$ and open $V'_{i'j} \subseteq Z'$.
Then
\begin{align*}
A
&= \forall^*_{Y \times g}(A') &&\text{by \cref{it:fib-baire-surj}} \\
&= \bigcap_{i'} \forall^*_{Y \times g}(\bigcup_j (B_{i'j} \times V'_{i'j})) &&\text{by \cref{it:fib-baire-union}} \\
&= \bigcap_{i'} \neg \bigcup_{U \in \@U} \paren[\big]{(Y \times g)(Y \times U) \setminus \exists^*_{Y \times g}((Y \times U) \cap \bigcup_j (B_{i'j} \times V'_{i'j}))} &&\text{by \cref{it:fib-baire-diff}} \\
\intertext{(with the $(Y \times g)$-fiberwise open basis $Y \times \@U$ for $Y \times Z$, where $\@U$ is any open basis for $Z$)}
&= \bigcap_{i'} \bigcap_{U \in \@U} \paren[\big]{(Y \times g)(Y \times U) => \bigcup_j \exists^*_{Y \times g}(B_{i'j} \times (U \cap V'_{i'j}))} \\
&= \bigcap_{i'} \bigcap_{U \in \@U} \paren[\big]{(Y \times g(U)) => \bigcup_j (B_{i'j} \times g(U \cap V'_{i'j}))} &&\text{by \cref{it:fib-baire-im} and \cref{it:fib-baire-bc}}.
\end{align*}
This is clearly of the desired form, where $i$ runs over all pairs $(i',U)$.
\end{proof}

Among standard Borel bundles of quasi-Polish spaces $f : X -> Y$, the best-behaved are those for which ``fiberwise nonemptiness of a Borel fiberwise open $U \subseteq X$'' can be detected in a uniformly Borel way.
These bundles may be characterized as follows:

\begin{proposition}
\label{thm:fib-bov-qpol}
Let $f : X -> Y$ be a standard Borel bundle of quasi-Polish spaces over a standard Borel space $Y$.
The following are equivalent:
\begin{enumerate}[label=(\roman*)]
\item \label{thm:fib-bov-qpol:overt}
For any Borel fiberwise open $U \subseteq X$, $f(U) \subseteq Y$ is Borel.
\item \label{thm:fib-bov-qpol:basis}
There exists a countable Borel fiberwise open basis $\@U \subseteq \@{BO}_f(X)$ such that for every $U \in \@U$, $f(U) \subseteq Y$ is Borel.
\item \label{thm:fib-bov-qpol:open}
There are compatible quasi-Polish topologies $\@O(X)$ and $\@O(Y)$ making $f$ continuous and open, such that $\@O(X)$ restricts to the fiberwise topology on $X$.
\end{enumerate}
\end{proposition}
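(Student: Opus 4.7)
The plan is to prove the equivalence cyclically as \cref{thm:fib-bov-qpol:overt} $\Rightarrow$ \cref{thm:fib-bov-qpol:basis} $\Rightarrow$ \cref{thm:fib-bov-qpol:open} $\Rightarrow$ \cref{thm:fib-bov-qpol:overt}. The first of these is immediate from \cref{def:fib-bor-qpol}, which already furnishes a countable Borel fiberwise open basis $\@U \subseteq \@{BO}_f(X)$; hypothesis \cref{thm:fib-bov-qpol:overt} upgrades this to yield $f(U) \in \@B(Y)$ for each $U \in \@U$.

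For \cref{thm:fib-bov-qpol:open} $\Rightarrow$ \cref{thm:fib-bov-qpol:overt}, I would use the fiberwise Baire category quantifier as a ``Borel image operator''. Given $\@O(X), \@O(Y)$ as in \cref{thm:fib-bov-qpol:open}, the space $X$ is second-countable (being quasi-Polish), each fiber is quasi-Polish hence Baire, and $f$ is continuous, open, and fiberwise Baire. For any $U \in \@{BO}_f(X)$, \cref{thm:fib-baire-borel} then gives $\exists^*_f(U) \in \@B(Y)$, and \cref{it:fib-baire-im} gives $f(U) = \exists^*_f(U)$ since $U$ is fiberwise open and the fibers are Baire. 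Hence $f(U) \in \@B(Y)$.

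The main work is in \cref{thm:fib-bov-qpol:basis} $\Rightarrow$ \cref{thm:fib-bov-qpol:open}: to arrange compatible quasi-Polish topologies under which $f$ is not merely continuous but also open. My plan is, first, to use \cref{it:qpol-dis} to fix a compatible quasi-Polish topology $\@O(Y)$ on $Y$ that contains the countably many Borel sets $f(U)$ for $U \in \@U$. Then I would apply \cref{thm:fib-bor-qpol}\cref{thm:fib-bor-qpol:top} to produce a compatible quasi-Polish topology $\@O(X)$ restricting to the given fiberwise topology, making $f$ continuous with respect to (a possibly further refined) $\@O(Y)$, and containing $\@U$. The construction in the proof of \cref{thm:fib-bor-qpol}\cref{thm:fib-bor-qpol:top} pulls back from a fiberwise embedding $X \hookrightarrow Y \times \#S^{\@U}$, so $\@O(X)$ is generated by $\@U$ together with preimages $f^{-1}(V)$ for $V \in \@O(Y)$, and hence admits a basis of sets $U \cap f^{-1}(V)$ with $U \in \@U$. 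Openness of $f$ on this basis then drops out of Frobenius reciprocity (\cref{eq:fib-frob}): $f(U \cap f^{-1}(V)) = f(U) \cap V$, which lies in $\@O(Y)$ since both $f(U)$ and $V$ do.

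The main obstacle to watch out for is the interaction of the various refinements: after adjoining $f(U)$ to $\@O(Y)$ and the corresponding preimages to $\@O(X)$, it must remain true that $\@O(X)$ restricts to the prescribed fiberwise topology on each fiber. This turns out to be clean rather than genuinely subtle, since every newly adjoined $f^{-1}(V)$ restricts to $\emptyset$ or to the whole fiber and so contributes nothing new fiberwise. Everything else --- compatibility, second-countability, and the quasi-Polish property --- is secured by \cref{it:qpol-dis} together with \cref{thm:fib-bor-qpol}.
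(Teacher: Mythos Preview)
Your proposal is correct and follows essentially the same route as the paper. The paper organizes the implications as \cref{thm:fib-bov-qpol:overt},\cref{thm:fib-bov-qpol:open}$\Rightarrow$\cref{thm:fib-bov-qpol:basis} (trivially), \cref{thm:fib-bov-qpol:basis}$\Rightarrow$\cref{thm:fib-bov-qpol:overt} (via \cref{thm:fib-bor-qpol}\cref{thm:fib-bor-qpol:basis} and Frobenius), and \cref{thm:fib-bov-qpol:overt}$\Rightarrow$\cref{thm:fib-bov-qpol:open} (via \cref{thm:fib-bor-qpol}, then refining $\@O(Y)$ to contain the images $f(U)$, noting $X' \cong X \times_Y Y'$ is still quasi-Polish, and concluding openness from $f(U \cap f^{-1}(V)) = f(U) \cap V$); your \cref{thm:fib-bov-qpol:basis}$\Rightarrow$\cref{thm:fib-bov-qpol:open} is this same argument with the refinement of $\@O(Y)$ done up front rather than afterward. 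The one genuine difference is your \cref{thm:fib-bov-qpol:open}$\Rightarrow$\cref{thm:fib-bov-qpol:overt} via $f(U) = \exists^*_f(U)$ and \cref{thm:fib-baire-borel}, whereas the paper goes through \cref{thm:fib-bov-qpol:basis} and uses the Kunugui--Novikov decomposition plus Frobenius directly; both work, and yours is a reasonable alternative that exercises the Baire-category machinery the paper develops anyway.
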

We call the bundle \defn{Borel-overt} if these equivalent conditions hold (borrowing a term from constructive topology; see e.g., \cite{Sovert}).
\begin{proof}
Clearly \cref{thm:fib-bov-qpol:overt} and \cref{thm:fib-bov-qpol:open} each implies \cref{thm:fib-bov-qpol:basis}.

\cref{thm:fib-bov-qpol:basis}$\implies$\cref{thm:fib-bov-qpol:overt} follows from \cref{thm:fib-bor-qpol}\cref{thm:fib-bor-qpol:basis} and Frobenius reciprocity \cref{eq:fib-frob}.

\cref{thm:fib-bov-qpol:overt}$\implies$\cref{thm:fib-bov-qpol:open}:
By \cref{thm:fib-bor-qpol}, find some compatible topologies on $X, Y$ making $f$ continuous.
For each basic open $U \subseteq X$, $f(U) \subseteq Y$ is Borel; take a finer quasi-Polish topology on $Y$ (using \cref{it:qpol-dis}) making all of these sets open, and adjoin the preimages of all new open sets in $Y$ to $\@O(X)$.
If $X', Y'$ denote $X, Y$ with these new topologies, then $X' \cong X \times_Y Y'$ whence $X'$ is still quasi-Polish; and a basic open set in $X'$ is of the form $U \cap f^{-1}(V)$ where $U \in \@O(X)$ and $V \in \@O(Y')$, whence $f(U \cap f^{-1}(V)) = f(U) \cap V \subseteq Y'$ is open, showing that $f : X' -> Y'$ is open.
\end{proof}

\begin{remark}
Not every standard Borel bundle of quasi-Polish spaces $f : X -> Y$ is Borel-overt, e.g., if $f$ is a continuous map with non-Borel image (see \cite[14.2]{Kcdst}).
\end{remark}

Borel-overt bundles are the ones for which ``fiberwise Baire category is Borel'':

\begin{corollary}[of \cref{thm:fib-baire-borel}]
\label{thm:fib-bov-baire}
Let $f : X -> Y$ be a standard Borel-overt bundle of quasi-Polish spaces over a standard Borel $Y$.
Then
\begin{enumerate}[label=(\alph*)]
\item
(Borel fiberwise Baire property)
Every $A \in \@B(X)$ is $=^*_f$ to some $U_A \in \@{BO}_f(X)$.
\item
$\exists^*_f(\@B(X)) \subseteq \@B(Y)$.
\qed
\end{enumerate}
\end{corollary}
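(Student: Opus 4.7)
The plan is to reduce both parts to \cref{thm:fib-baire-borel} by first replacing the fiberwise bundle structure with compatible \emph{global} quasi-Polish topologies on $X$ and $Y$ that realize it, and then reading off both conclusions directly from that proposition.

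I would begin by invoking \cref{thm:fib-bov-qpol}\cref{thm:fib-bov-qpol:open} to fix compatible quasi-Polish topologies $\@O(X)$ and $\@O(Y)$ making $f$ continuous and open, with $\@O(X)$ restricting on each fiber to the prescribed fiberwise topology; this step is precisely where Borel-overtness is needed. Each fiber remains quasi-Polish, hence completely Baire by \cref{it:qpol-t0-cb-baire}, so $f$ is fiberwise Baire; and $X$ is second-countable. Thus all hypotheses of \cref{thm:fib-baire-borel} are in force with respect to this topology.

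For (a), given $A \in \@B(X)$, compatibility of $\@O(X)$ yields $A \in \*\Sigma^0_\xi(X)$ for some $\xi < \omega_1$. \Cref{thm:fib-baire-borel} then produces a set $U_A = \bigcup_i (f^{-1}(B_i) \cap U_i)$ with $B_i \in \*\Sigma^0_\xi(Y)$, $U_i \in \@O(X)$, and $A =^*_f U_A$. This $U_A$ is Borel, since $B_i \in \@B(Y)$ and $U_i \in \@B(X)$ by compatibility, and it is fiberwise open in the original sense because each $U_i$ lies in the global topology $\@O(X)$ which restricts on every fiber to the prescribed fiberwise topology. Hence $U_A \in \@{BO}_f(X)$, as required.

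For (b), the cleanest finish is to quote the second clause of \cref{thm:fib-baire-borel}: $\exists^*_f(A) \in \*\Sigma^0_\xi(Y) \subseteq \@B(Y)$, the last inclusion again by compatibility of $\@O(Y)$. Equivalently one could observe $\exists^*_f(A) = \exists^*_f(U_A) = f(U_A) = \bigcup_i (B_i \cap f(U_i))$ using \cref{it:fib-bp-open}, \cref{it:fib-baire-im}, and Frobenius reciprocity \cref{eq:fib-frob}, with each $f(U_i)$ Borel by \cref{thm:fib-bov-qpol}\cref{thm:fib-bov-qpol:overt}. There is no serious obstacle here; the only point requiring attention is that the chosen global topology $\@O(X)$ restricts to the prescribed fiberwise topology, so that ``fiberwise open'' in the two senses agree, and this is exactly what \cref{thm:fib-bov-qpol}\cref{thm:fib-bov-qpol:open} delivers.
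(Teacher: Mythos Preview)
Your proposal is correct and is precisely the argument the paper has in mind: the corollary is stated as immediate from \cref{thm:fib-baire-borel}, with the unstated bridge being exactly your use of \cref{thm:fib-bov-qpol}\cref{thm:fib-bov-qpol:open} to obtain global quasi-Polish topologies making $f$ continuous open and restricting to the given fiberwise topology. Your care in checking that the two senses of ``fiberwise open'' agree is the one nontrivial point, and you handle it correctly.
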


Finally in this subsection, we recall the Kuratowski--Ulam theorem, which has the following conceptual formulation in terms of bundles:

\begin{theorem}[Kuratowski--Ulam]
\label{thm:kuratowski-ulam}
Let $X --->{f} Y --->{g} Z$ be Borel maps between Borel spaces.
Suppose that $X$ is equipped with a $(g \circ f)$-fiberwise quasi-Polish topology, and $Y$ is equipped with a $g$-fiberwise quasi-Polish topology, such that both topologies are fiberwise compatible with the subspace Borel structures on each fiber, $f$ is fiberwise continuous and fiberwise open over $Z$, and $\exists^*_f, \exists^*_g$ preserve Borel sets.
(For example, $f, g$ could both be continuous open maps between quasi-Polish spaces, or more generally $g$ could be a standard Borel-overt bundle of such.)
\begin{equation*}
\begin{tikzcd}
X \rar["f"] \drar["g \circ f"'] & Y \dar["g"] \\
& Z
\end{tikzcd}
\end{equation*}
Then
\begin{equation*}
\exists^*_g \circ \exists^*_f = \exists^*_{g \circ f} : \@B(X) --> \@B(Z).
\end{equation*}
\end{theorem}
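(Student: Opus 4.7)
The plan is to reduce the equation fiberwise over $Z$ to the classical Kuratowski--Ulam theorem for a continuous open map between quasi-Polish spaces, then prove the latter by the standard argument of handling open sets first and extending via the fiberwise Baire property.

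Fix $z \in Z$, and write $X_z := (g \circ f)^{-1}(z)$, $Y_z := g^{-1}(z)$, and $f_z := f|_{X_z} : X_z \to Y_z$; by hypothesis $X_z$ and $Y_z$ are quasi-Polish and $f_z$ is continuous and open. Applying Beck--Chevalley (\cref{it:fib-baire-bc}) to the inclusion $\{z\} \hookrightarrow Z$ and then to $Y_z \hookrightarrow Y$, I would rewrite
\begin{align*}
z \in \exists^*_{g\circ f}(A) &\iff A \cap X_z \text{ is nonmeager in } X_z, \\
z \in \exists^*_g(\exists^*_f(A)) &\iff \exists^*_f(A) \cap Y_z = \exists^*_{f_z}(A \cap X_z) \text{ is nonmeager in } Y_z.
\end{align*}
Thus the identity to prove reduces, for each $z$, to the classical statement for the continuous open map $f_z$ between quasi-Polish spaces: \emph{a Borel set $B \subseteq X_z$ is nonmeager iff $\exists^*_{f_z}(B)$ is nonmeager in $Y_z$}.

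For the open case, if $U \in \@O(X_z)$, then each fiber $f_z^{-1}(y)$ is a $\*\Pi^0_2$-subspace of $X_z$, hence quasi-Polish and in particular Baire, so by \cref{it:fib-baire-im} we have $\exists^*_{f_z}(U) = f_z(U)$. Since $f_z$ is continuous and open, $f_z(U)$ is open in $Y_z$. In a Baire space a nonempty open set is nonmeager and vice versa, so $U$ nonmeager iff $U$ nonempty iff $f_z(U)$ nonempty iff $\exists^*_{f_z}(U)$ nonmeager, as desired.

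To extend to arbitrary Borel $B$, I would apply the fiberwise Baire property (\cref{thm:fib-baire-borel}) to $f_z$, obtaining a fiberwise open $U_B$ with $B =^*_{f_z} U_B$, so that $\exists^*_{f_z}(B) = \exists^*_{f_z}(U_B) = f_z(U_B)$. The matching global side follows from the \emph{global} Baire property of the quasi-Polish space $X_z$, via a transfinite induction on Borel rank in the $f_z$-fiberwise Borel hierarchy: countable unions are handled immediately by \cref{it:fib-baire-union}, and differences of fiberwise open sets are handled by the formula \cref{it:fib-baire-diff}, which when applied on both sides expresses everything in terms of $\exists^*$ on fiberwise open sets where the open case applies. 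The main obstacle is precisely this inductive step: the subtlety is that $B =^*_{f_z} U_B$ relates fiberwise meagerness but not global meagerness in $X_z$, so one cannot naively ``replace'' $B$ by $U_B$. The resolution is the standard Kuratowski--Ulam induction in which, at each stage, a set of Borel rank $\xi$ is written as a countable union of differences of lower-rank sets, and the formulas \cref{it:fib-baire-diff} for $\exists^*_{f_z}$ and the analogous \emph{global} Baire-difference formula are matched term-by-term through a common basis. Once the fiberwise statement is established for all $z$, the original identity $\exists^*_g \circ \exists^*_f = \exists^*_{g \circ f}$ on $\@B(X)$ follows, and the Borel-measurability of $\exists^*_{g \circ f}$ is inherited from that of $\exists^*_g$ and $\exists^*_f$.
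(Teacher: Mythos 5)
Your reduction to the fiber over a single $z \in Z$, and your handling of the open case there, are correct and agree with how the paper sets things up (the paper reduces to $Z = 1$ and then defers to its references; its actual written-out proof appears in the localic setting as \cref{thm:loc-kuratowski-ulam}). The gap is in the extension to Borel sets. The entire content of Kuratowski--Ulam is the comparison between \emph{global} meagerness in $X_z$ and the fiberwise quantifier $\exists^*_{f_z}$, and the one indispensable lemma is: \emph{if $N \subseteq X_z$ is closed nowhere dense, then $\exists^*_{f_z}(N)$ is meager in $Y_z$} (equivalently, $\exists^*_{f_z}$ is monotone with respect to global $\subseteq^*$, not merely $\subseteq^*_{f_z}$). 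Your proposed induction does not contain this lemma and cannot close without it: in the difference step, comparing $\bigcup_W \bigl(\exists^*_{f_z}(W \cap C) \setminus \exists^*_{f_z}(W \cap D)\bigr)$ with the image of the global witness $\bigcup_{W \cap U_D = \emptyset}(W \cap U_C)$ from \cref{eq:bp-diff}, the discrepancy is controlled by sets of the form $\exists^*_{f_z}(M)$ with $M$ globally meager (e.g.\ $M$ contained in the boundary of $U_D$, or $M = C \triangle U_C$), and neither \cref{it:fib-baire-union} nor \cref{it:fib-baire-diff} tells you that such $\exists^*_{f_z}(M)$ are meager in $Y_z$: those identities are purely fiberwise and never see global category in $X_z$.

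The missing step is short but genuinely topological: for closed nowhere dense $N$ and basic open $W$, the open set $f_z(W \setminus N)$ is dense in $f_z(W)$ (if $V$ is open with $V \cap f_z(W \setminus N) = \emptyset$, then $f_z^{-1}(V) \cap W$ is an open subset of $N$, hence empty, so $V \cap f_z(W) = \emptyset$); hence $\exists^*_{f_z}(N) = \bigcup_W \bigl(f_z(W) \setminus f_z(W \setminus N)\bigr)$ is meager. Once you have this, no transfinite induction is needed: take the global Baire-property witness $B =^* U$ with $U$ open, conclude $\exists^*_{f_z}(B) =^* \exists^*_{f_z}(U) = f_z(U)$ by applying the lemma to $B \triangle U$, and finish as in your open case. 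This two-step organization (nowhere-dense lemma via the density argument, then the Baire property) is exactly the structure of the paper's proof of \cref{thm:loc-kuratowski-ulam}.
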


Since $\exists^*$ is defined fiberwise, it is equivalent to just consider the case where $Z = 1$ is a singleton, where the statement becomes: for a continuous open $f : X -> Y$ between quasi-Polish spaces, a Borel set $A \subseteq X$ is nonmeager in $X$ iff for nonmeagerly many $y \in Y$, $f^{-1}(y) \cap A$ is nonmeager in $f^{-1}(y)$.
The classical case is when $f$ is a product projection; see \cite[8.41]{Kcdst}.
It was pointed out in \cite[A.1]{MTrep} that essentially the same proof works for a continuous open $f$ between Polish spaces, and in \cite[7.6]{Cqpol} that the quasi-Polish case works just as well.
See also \cref{thm:loc-kuratowski-ulam} below.

\subsection{Lower powerspaces}
\label{sec:lowpow}

\begin{definition}
\label{def:lowpow}
For a topological space $X$, its \defn{lower powerspace} $\@F(X)$ is the space of closed subsets of $X$, equipped with the \defn{lower Vietoris topology} generated by the subbasic open sets
\begin{equation*}
\Dia U := \{F \in \@F(X) \mid F \cap U \ne \emptyset\} \quad \text{for $U \in \@O(X)$}.
\end{equation*}
\end{definition}

We record some elementary properties:
\begin{eqenum}

\item \label{it:lowpow-union}
$\Dia : \@O(X) -> \@O(\@F(X))$ preserves unions; thus, restricting to a basis for $\@O(X)$ still yields a subbasis for $\@O(\@F(X))$.
In particular, if $X$ is second-countable, then so is $\@F(X)$.

\item \label{it:lowpow-unit}
If $X$ is $T_0$, we have a continuous embedding $\down : X -> \@F(X)$, $x |-> \-{\{x\}}$ (where $\-{(-)}$ denotes closure), with $\down^{-1}(\Dia U) = U$.

\item \label{it:lowpow-funct}
A continuous map $f : X -> Y$ induces the continuous image-closure map $\-f : \@F(X) -> \@F(Y)$, $F |-> \-{f(F)}$, with $\-f^{-1}(\Dia V) = \Dia f^{-1}(V)$.
Thus, if $f$ is an embedding, then so is $\-f$.

\item \label{it:lowpow-prod}
For two spaces $X, Y$, the Cartesian product map $\times : \@F(X) \times \@F(Y) -> \@F(X \times Y)$ is continuous, with $\times^{-1}(\Dia(U \times V)) = \Dia U \times \Dia V$.

\end{eqenum}


\begin{proposition}[{\cite[Theorem~5]{dBKpowsp}}]
If $X$ is a quasi-Polish space, then so is $\@F(X)$.
\end{proposition}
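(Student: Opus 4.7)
The plan is to reduce to the universal case $X = \#N^\#N$ via the characterization~\cref{it:qpol-openquot} of nonempty quasi-Polish spaces as the continuous open $T_0$ quotients of Baire space; the case $X = \emptyset$ is trivial, since $\@F(\emptyset)$ is a one-point space. Along the way it is automatic that $\@F(X)$ is $T_0$ and second-countable for any $T_0$ second-countable $X$: second-countability follows from~\cref{it:lowpow-union}, and two distinct closed sets $F \ne F'$ are separated by $\Dia U$ for any basic open $U$ contained in the complement of whichever one misses a chosen point of their symmetric difference.

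The first main step is to handle the universal case $X = \#N^\#N$ directly. Closed subsets $F \subseteq \#N^\#N$ correspond bijectively to pruned trees $T_F = \{s \in \#N^{<\#N} \mid [s] \cap F \ne \emptyset\}$, with each subbasic open $\Dia[s]$ pulling back to $\{T : s \in T\}$. Passing to characteristic functions embeds $\@F(\#N^\#N)$ into $\#S^{\#N^{<\#N}} \cong \#S^\#N$, with image cut out by the tree condition (for all $t \sqsubseteq s$, $f(s) = 1 \Rightarrow f(t) = 1$) together with the pruned condition (for each $s$, $f(s) = 1 \Rightarrow f(s \frown i) = 1$ for some $i \in \#N$). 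Both are countable intersections of implications between opens, the right-hand side of the pruned implication being a countable union of subbasic opens, hence open; so the image is $\*\Pi^0_2$, and~\cref{it:qpol-pi02} gives that $\@F(\#N^\#N)$ is quasi-Polish.

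For general nonempty quasi-Polish $X$, one invokes~\cref{it:qpol-openquot} to obtain a continuous open surjection $q : \#N^\#N \twoheadrightarrow X$. The induced image-closure map $\-q : \@F(\#N^\#N) \to \@F(X)$ is continuous by~\cref{it:lowpow-funct} and surjective via $F' \mapsto q^{-1}(F')$, using that $q$ is surjective. The key remaining calculation is that $\-q$ is open, for which one verifies that for any basic opens $U_1, \dotsc, U_k \subseteq \#N^\#N$,
\[
\-q(\Dia U_1 \cap \dotsb \cap \Dia U_k) = \Dia q(U_1) \cap \dotsb \cap \Dia q(U_k),
\]
where the $\supseteq$ direction lifts witnesses $y_i \in F' \cap q(U_i)$ to $x_i \in U_i \cap q^{-1}(F')$ (so that the single lift $F := q^{-1}(F')$ meets every $U_i$ simultaneously), and the $\subseteq$ direction uses openness of $q$ to see that each $q(U_i)$ is open in $X$ and meets $\-{q(F)}$ whenever $F$ meets $U_i$. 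A second application of~\cref{it:qpol-openquot} then gives that $\@F(X)$ is quasi-Polish.

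The main technical subtlety is precisely this openness of $\-q$ on finite intersections of subbasic opens rather than subbasic opens alone: one must simultaneously lift several witnesses to a common closed set, and the choice $F = q^{-1}(F')$ is what makes this work, using both surjectivity and openness of $q$. The rest of the argument is essentially bookkeeping around the functorial properties of $\@F$ already recorded in~\cref{it:lowpow-union,it:lowpow-funct}.
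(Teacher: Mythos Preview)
The paper does not give its own proof of this proposition; it simply cites \cite[Theorem~5]{dBKpowsp}. Your argument is a correct self-contained proof: the embedding of $\@F(\#N^\#N)$ into $\#S^{\#N^{<\#N}}$ via pruned trees is a homeomorphism onto a $\*\Pi^0_2$ set as you describe, and the reduction for general $X$ via a continuous open surjection $q : \#N^\#N \twoheadrightarrow X$ is sound, including the key verification that $\-q$ is open on basic opens (the choice $F = q^{-1}(F')$ indeed handles the simultaneous lifting).

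One minor stylistic remark: in the $\subseteq$ direction of the openness computation, you do not actually need openness of $q$ to conclude $\-q(F) \in \Dia q(U_i)$ from $F \cap U_i \ne \emptyset$ --- any point of $q(F \cap U_i)$ already lies in $\-{q(F)} \cap q(U_i)$ regardless. Openness of $q$ is used only to know that $q(U_i)$ is open in $X$, so that the right-hand side is indeed an open set in $\@F(X)$. This does not affect correctness.
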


We will also need the following generalization:

\begin{definition}
\label{def:fib-lowpow}
For a continuous map $f : X -> Y$, regarded as a bundle of topological spaces, its \defn{fiberwise lower powerspace} $\@F_Y(X) = \@F_f(X)$ is the space of pairs $(y,F)$ where $y \in Y$ and $F \in \@F(f^{-1}(y))$, regarded as a bundle via the first projection $\pi_1 : \@F_Y(X) -> Y$, and equipped with the topology generated by the subbasic open sets
\begin{align*}
\Dia_V U :={}& \{(y,F) \in \@F_Y(X) \mid y \in V \AND F \cap U \ne \emptyset\} \\
={}& \pi_1^{-1}(V) \cap \Dia_Y U \quad \text{for $U \in \@O(X)$ and $V \in \@O(Y)$}.
\end{align*}
In other words, $\@F_Y(X)$ is equipped with the topology induced by the embedding
\begin{equation}
\label{eq:fib-lowpow}
\begin{aligned}
\@F_Y(X) &`--> Y \times \@F(X) \\
(y,F) &|--> (y,\-F).
\end{aligned}
\end{equation}
By abuse of notation, we will often refer to an element of $\@F_Y(X)$ in the fiber over a fixed $y \in Y$ as just a closed set $F \in \@F(f^{-1}(y))$ in that fiber, rather than the pair $(y,F)$.
\end{definition}

\begin{proposition}[{\cite[2.2]{Cgpd}}]
\label{thm:fib-lowpow}
If $X$ above is completely Baire while $Y$ is $T_0$ first-countable, then the image of the above embedding is
\begin{align*}
\bigcap_{U \in \@O(X), V \in \@O(Y)} ((V \times \Dia U) <=> (Y \times \Dia(f^{-1}(V) \cap U))),
\end{align*}
where the intersection may be taken over any bases of $\@O(X), \@O(Y)$.
Thus if moreover $X, Y$ are second-countable, then the image is $\*\Pi^0_2$; and if $X, Y$ are quasi-Polish, then so is $\@F_Y(X)$.
\end{proposition}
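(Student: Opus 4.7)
The plan is to prove the image description by double inclusion. The forward inclusion is direct and uses only that $f^{-1}(V)$ is open: given $(y,F') = (y,\-F)$ with $F \subseteq f^{-1}(y)$ closed in the fiber, the identity $\-F \cap U \cap f^{-1}(V) \neq \emptyset \iff F \cap U \cap f^{-1}(V) \neq \emptyset$ combined with $F \subseteq f^{-1}(y)$ collapses both sides of the biconditional to the single condition ``$y \in V$ and $F \cap U \neq \emptyset$''. No hypotheses on $X$ or $Y$ are used here.

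For the reverse inclusion, given $(y,F')$ in the intersection I set $F := F' \cap f^{-1}(y)$ (automatically closed in $f^{-1}(y)$) and aim to show $\-F = F'$, with closure taken in $X$; the nontrivial direction is $F' \subseteq \-F$. Fixing $x \in F'$ and an open $U \ni x$, for each open $V \ni y$ the set $D_V := F' \cap U \cap f^{-1}(V)$ is dense in the subspace $F' \cap U$: given any nonempty relatively open $F' \cap U \cap W \subseteq F' \cap U$, applying the biconditional hypothesis to the pair $(U \cap W, V)$ makes the left side ``$y \in V$ and $F' \cap U \cap W \neq \emptyset$'' true, and the right side is exactly $F' \cap U \cap W \cap f^{-1}(V) \neq \emptyset$, witnessing density. (A routine distributivity check extends the hypothesis from basis elements to arbitrary opens such as $U \cap W$ and $V$.) First countability of $Y$ now provides a countable neighborhood basis $(V_n)$ of $y$; since $X$ is completely Baire, $F'$ is Baire, and so is its open subspace $F' \cap U$, hence $\bigcap_n D_{V_n}$ is dense in $F' \cap U$ and in particular contains some point $x'$ satisfying $f(x') \in \bigcap_n V_n$.

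The main obstacle is the last step: deducing $f(x') = y$ in the non-Hausdorff $T_0$ setting, since $\bigcap_n V_n$ is only the specialization upper set of $y$, not $\{y\}$. If $f(x') \neq y$, the $T_0$ axiom yields an open $V_0$ separating them. In the case $y \in V_0$, $f(x') \notin V_0$, choosing $n$ large enough that $V_n \subseteq V_0$ contradicts $f(x') \in V_n$; in the case $f(x') \in V_0$, $y \notin V_0$, the biconditional applied to $(U, V_0)$ has false left side but true right side witnessed by $x'$, a contradiction. Hence $f(x') = y$ and so $x' \in F \cap U$, completing the reverse inclusion.

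For the remaining claims, second countability makes the intersection countable; each summand is the biconditional of two open sets, hence $\*\Pi^0_2$ by the non-metrizable definition in \cref{eq:top-bor}, and countable intersections of $\*\Pi^0_2$ sets are $\*\Pi^0_2$. The quasi-Polish conclusion then follows from the cited fact that $\@F(X)$ is quasi-Polish when $X$ is, together with closure of quasi-Polish spaces under countable products (\cref{it:qpol-prod}) and $\*\Pi^0_2$ subspaces (\cref{it:qpol-pi02}).
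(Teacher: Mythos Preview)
Your proof is correct and follows essentially the same approach as the paper's: both use the $\Rightarrow$ direction of the biconditionals to show each $f^{-1}(V) \cap F'$ is dense (you localize to $F' \cap U$, the paper works globally on $F'$), then invoke the completely Baire hypothesis and first countability to intersect, and finally use the $\Leftarrow$ direction together with $T_0$ to pin down $f(x') = y$. The only cosmetic difference is that the paper front-loads the $\Leftarrow$ step as a single global statement $F' \subseteq f^{-1}(\overline{\{y\}})$ (taking $V = \neg\overline{\{y\}}$, $U = X$), whereas you unpack the same content as the second case of your final $T_0$ dichotomy.
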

\begin{proof}
Note first that the intersection remains the same if we only consider $U, V$ in some bases of $\@O(X), \@O(Y)$, since the expressions on both sides of the $<=>$ are ``bilinear'', i.e., preserve unions in $U, V$.
Thus, we henceforth allow them to be arbitrary open sets.

It is straightforward that the image is always contained in the above intersection, and that conversely, if $(y,F)$ belongs to the intersection, then $F \subseteq f^{-1}(\-{\{y\}})$ (using the $\Leftarrow$ set for $V := \neg \-{\{y\}}$ and $U := X$).
It remains to check that if $(y,F)$ belongs to the $=>$ sets, then $f^{-1}(y) \cap F \subseteq F$ is dense, which follows from Baire category since for each basic neighborhood $V$ of $y$, the $=>$ sets yield that $f^{-1}(V) \cap F \subseteq F$ is dense.
\end{proof}

Analogously to \cref{it:lowpow-union}--\cref{it:lowpow-prod}, we have
\begin{eqenum}

\item \label{it:fib-lowpow-union}
$(V, U) |-> \Dia_V U$ preserves unions in both variables (and binary intersections in $V$).

\item \label{it:fib-lowpow-unit}
If $X$ is $f$-fiberwise $T_0$ over $Y$, we have a continuous embedding $\down_Y := (f, \down) : X -> \@F_Y(X)$ over $Y$, with $\down^{-1}(\Dia_V U) = f^{-1}(V) \cap U$.

\item \label{it:fib-lowpow-funct}
For continuous maps $X --->{f} Y --->{g} Z$, we get a continuous map $\-f : \@F_Z(X) -> \@F_Z(Y)$ over $Z$, with $\-f^{-1}(\Dia_W V) = \Dia_W f^{-1}(V)$.

\item \label{it:fib-lowpow-prod}
For continuous maps $f : X -> Z$ and $g : Y -> Z$, the fiber product map $\times_Z : \@F_Z(X) \times_Z \@F_Z(Y) -> \@F_Z(X \times_Z Y)$ is continuous, with $\times^{-1}(\Dia_W (U \times_Z V)) = \Dia_W U \times_Z \Dia_W V$.

\end{eqenum}

\subsection{Linear quantifiers}
\label{sec:prelim-lin}

In this paper, our main interest in (fiberwise) lower powerspaces stems from the conceptual link they provide between ``quantifier-like maps'' on open and Borel sets, such as the Baire category quantifier $\exists^*_f$, and topological realization.
We now make this precise.
These ideas are more-or-less well-known in the point-free topology literature, for which see \cite{JTloc}, \cite{Vpowloc}, and \cref{sec:loc-lin}.
To keep this paper accessible, we give here a classical point-based treatment.

\begin{definition}
\label{def:lin}
Let $X, Y$ be topological spaces.
A \defn{linear map} $\phi : \@O(X) -> \@O(Y)$ is one preserving arbitrary unions.
We will only be concerned with such maps for second-countable $X, Y$, for which it is equivalent to require preservation of countable unions only.

Similarly, for Borel spaces $X, Y$, a \defn{linear map} $\phi : \@B(X) -> \@B(Y)$ is one preserving countable unions.
(For an explanation of this terminology, see \cref{rmk:lin} below.)
\end{definition}

\begin{proposition}
\label{thm:lin-lowpow}
For any topological spaces $X, Y$, we have a canonical bijection
\begin{align*}
\{\text{linear maps } \@O(X) -> \@O(Y)\} &\cong \{\text{continuous maps } Y -> \@F(X)\} \\
(U |-> h^{-1}(\Dia U)) &<-| h.
\end{align*}
\end{proposition}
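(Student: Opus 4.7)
The plan is to exhibit the inverse map explicitly and then check the two compositions give identities. In one direction the map is already defined: given a continuous $h : Y \to \@F(X)$, the assignment $\phi_h(U) := h^{-1}(\Dia U)$ lands in $\@O(Y)$ by continuity of $h$, and preserves arbitrary unions because $\Dia$ does (\cref{it:lowpow-union}) and $h^{-1}$ trivially does.

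For the inverse, given a linear $\phi : \@O(X) \to \@O(Y)$, I would define, for each $y \in Y$,
\begin{equation*}
h_\phi(y) \;:=\; X \setminus \bigcup \{U \in \@O(X) \mid y \notin \phi(U)\},
\end{equation*}
a closed subset of $X$. The key calculation is that for every $U \in \@O(X)$,
\begin{equation*}
h_\phi(y) \cap U \ne \emptyset \iff y \in \phi(U).
\end{equation*}
The direction $\Leftarrow$ is immediate from the definition of $h_\phi(y)$. For $\Rightarrow$, note that since $\phi$ preserves unions, the set $V_y := \bigcup\{U' \in \@O(X) \mid y \notin \phi(U')\}$ is itself the largest open set with $y \notin \phi(V_y)$; hence if $U \subseteq V_y$ then $y \notin \phi(U)$, so the contrapositive gives $U \not\subseteq V_y$, i.e.\ $h_\phi(y) \cap U \ne \emptyset$. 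This equivalence says precisely that $h_\phi^{-1}(\Dia U) = \phi(U)$, which is open; since the $\Dia U$ form a subbasis of $\@O(\@F(X))$, this proves $h_\phi$ is continuous and simultaneously shows $\phi_{h_\phi} = \phi$.

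It remains to check $h_{\phi_h} = h$ for a continuous $h : Y \to \@F(X)$. Unwinding, $h_{\phi_h}(y) = X \setminus \bigcup\{U \mid h(y) \cap U = \emptyset\}$, which is the complement of the largest open set disjoint from $h(y)$, i.e.\ the closure of $h(y)$; and $h(y)$ is already closed. This completes the bijection.

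The main (minor) obstacle is the verification that $y \in \phi(U) \Rightarrow h_\phi(y) \cap U \ne \emptyset$, which is where the linearity (union-preservation) of $\phi$ is essentially used, to ensure $V_y$ itself satisfies $y \notin \phi(V_y)$ so that it is characterized as the largest such open. Everything else is formal manipulation of the subbasis $\{\Dia U\}_U$.
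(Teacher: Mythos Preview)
Your argument is correct, though the $\Leftarrow$ and $\Rightarrow$ labels are swapped: the direction that follows immediately from the definition (if $y \notin \phi(U)$ then $U \subseteq V_y$, so $h_\phi(y) \cap U = \emptyset$) is the contrapositive of $\Rightarrow$, while the direction requiring linearity (so that $y \notin \phi(V_y)$, hence $U \subseteq V_y$ forces $y \notin \phi(U)$) is the contrapositive of $\Leftarrow$. Your final paragraph in effect proves $\Leftarrow$, not $\Rightarrow$.

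The paper takes a different route, arguing by currying through the Sierpinski space $\#S$: an open set in $Y$ is the same as a continuous map $Y \to \#S$, so a linear map $\@O(X) \to \@O(Y)$ becomes a map $\@O(X) \times Y \to \#S$ linear in the first variable and continuous in the second, hence a continuous map from $Y$ into the space of linear maps $\@O(X) \to \#S$ with the pointwise topology from $\#S^{\@O(X)}$; and that space is identified with $\@F(X)$. Your explicit formula $h_\phi(y) = X \setminus \bigcup\{U : y \notin \phi(U)\}$ is exactly what one gets by unwinding this identification at a point $y$ (it is the closed set corresponding to the linear functional $U \mapsto [y \in \phi(U)]$), so the two proofs are close in spirit. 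The currying viewpoint pays off later when the paper passes to the point-free setting in \cref{sec:loc-lin}, where individual points $y$ are unavailable; your concrete elementwise verification is more immediately accessible to a classical reader but would need reformulation there.
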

\begin{proof}
An open set $U \in \@O(Y)$ is equivalently a continuous map $\chi_U : Y -> \#S$;
thus, a linear map $\@O(X) -> \@O(Y)$ is equivalently a map $\@O(X) \times Y -> \#S$ linear in the first variable and continuous in the second, which is equivalently a continuous map $Y -> \{\text{linear maps } \@O(X) -> \#S\} \subseteq \#S^{\@O(X)}$; and the space of linear maps $\@O(X) -> \#S$ with the pointwise convergence topology is homeomorphic to $\@F(X)$, where $F \in \@F(X)$ corresponds to the characteristic function of $\{U \in \@O(X) \mid F \in \Dia U\}$.
\end{proof}

\begin{definition}
\label{def:fib-lin}
Let $f : X -> Z$ and $g : Y -> Z$ be continuous maps, regarded as bundles of spaces.
An \defn{$\@O(Z)$-linear map} $\phi : \@O(X) -> \@O(Y)$ is a linear map which moreover obeys
\begin{equation*}
\phi(f^{-1}(W) \cap U) = g^{-1}(W) \cap \phi(U) \quad \forall W \in \@O(Z),\, U \in \@O(X).
\end{equation*}
We are particularly interested in the case where $g = 1_Y$ is an identity, where this becomes
\begin{equation*}
\phi(f^{-1}(V) \cap U) = V \cap \phi(U) \quad \forall V \in \@O(Y),\, U \in \@O(X);
\end{equation*}
in this case, we also call $\phi$ a \defn{$\@O(Y)$-linear quantifier}.
If moreover
\begin{equation*}
\phi(f^{-1}(V)) = V \quad \text{or equivalently} \quad \phi(X) = Y
\end{equation*}
(cf.\ \cref{it:fib-baire-surj}), we call $\phi$ a \defn{$\@O(Y)$-linear retraction} of $f^{-1} : \@O(Y) -> \@O(X)$.

Similarly, if $f : X -> Y$ is a Borel map between Borel spaces, we have the notion of a \defn{$\@B(Y)$-linear quantifier} or \defn{$\@B(Y)$-linear retraction} $\phi : \@B(X) -> \@B(Y)$, defined via the same equations.
\end{definition}

\begin{example}
\label{ex:fib-lin-baire}
If $f : X -> Y$ is a continuous open map between quasi-Polish spaces, then the image map $f : \@O(X) -> \@O(Y)$ is an $\@O(Y)$-linear quantifier, and a retraction iff $f$ is surjective.

Similarly, if $f : X -> Y$ is a standard Borel-overt bundle of quasi-Polish spaces over a standard Borel space, then $\exists^*_f : \@B(X) -> \@B(Y)$ is $\@B(Y)$-linear (by \cref{thm:fib-bov-baire}, \cref{it:fib-baire-union}, and \cref{it:fib-baire-frob}).
\end{example}

\begin{remark}
\label{rmk:lin}
The terminology ``linear'' comes from viewing a ($\sigma$-)topology as analogous to a commutative ring, where $\cap$ is ``multiplication'' and $\bigcup$ is ``addition''.
For a continuous map $f : X -> Y$, we then have a ``ring homomorphism'' $f^{-1} : \@O(Y) -> \@O(X)$, via which $\@O(X)$ may be viewed as an ``algebra over $\@O(Y)$'', hence in particular as a ``$\@O(Y)$-module''; an $\@O(Y)$-linear quantifier is then a ``module homomorphism''.
For more on this perspective, see \cite{JTloc}.
\end{remark}

\begin{proposition}
\label{thm:fib-lin-lowpow}
For continuous maps $f : X -> Z$ and $g : Y -> Z$ where $X$ is completely Baire and $Z$ is $T_0$ first-countable, the bijection of \cref{thm:lin-lowpow} induces a bijection
\begin{align*}
\{\text{$\@O(Z)$-linear maps } \@O(X) -> \@O(Y)\} &\cong \{\text{continuous maps }
Y -> \@F_Z(X)
\text{ over $Z$}\} \\
(U |-> h^{-1}(\Dia_Z U)) &<-| h : Y -> \@F_Z(X).
\end{align*}
Thus in particular, for $f : X -> Y = Z$ and $g = 1_Y$, we have
\begin{align*}
\{\text{$\@O(Y)$-linear quantifiers } \@O(X) -> \@O(Y)\} &\cong \{\text{continuous sections $Y -> \@F_Y(X)$ of $\pi_1$}\} \\
(U |-> h^{-1}(\Dia_Y U)) &<-| h,
\end{align*}
with the $\@O(Y)$-linear retractions of $f^{-1}$ corresponding to the continuous sections of $\pi_1$ picking a nonempty closed set from each fiber of $f$.
\end{proposition}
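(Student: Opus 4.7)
The plan is to reduce the fiberwise statement to the absolute one (\cref{thm:lin-lowpow}) by identifying which linear maps $\phi : \@O(X) \to \@O(Y)$ correspond, under that bijection, to continuous maps $h : Y \to \@F(X)$ that lift to $\@F_Z(X)$ along the embedding $\@F_Z(X) \hookrightarrow Z \times \@F(X)$ of \cref{eq:fib-lowpow}. First, I would invoke \cref{thm:lin-lowpow} to get the bijection $\phi \leftrightarrow h$ for arbitrary linear maps, with $\phi(U) = h^{-1}(\Dia U)$. A continuous map $Y \to \@F_Z(X)$ over $Z$ is, by the embedding of \cref{eq:fib-lowpow}, the same as a continuous $h : Y \to \@F(X)$ such that the pair $(g,h) : Y \to Z \times \@F(X)$ lands in the image of $\@F_Z(X)$, and with first coordinate forced to $g$ (which is automatic once the section is over $Z$).

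Next I would translate the ``lands in $\@F_Z(X)$'' condition using the explicit $\*\Pi^0_2$ description from \cref{thm:fib-lowpow}: $(z,F) \in \@F_Z(X)$ iff for all basic $V \in \@O(Z)$ and $U \in \@O(X)$ one has the biconditional
\[
(z \in V \AND F \in \Dia U) \iff F \in \Dia(f^{-1}(V) \cap U).
\]
Pulling this back along $(g,h)$ and using $h^{-1}(\Dia U) = \phi(U)$, this becomes precisely the identity
\[
g^{-1}(V) \cap \phi(U) = \phi(f^{-1}(V) \cap U) \qquad \forall V \in \@O(Z),\ U \in \@O(X),
\]
which is the defining condition of $\@O(Z)$-linearity of $\phi$ (after noting that the identity for general $V$ reduces to the identity for basic $V$ since $\phi$ preserves arbitrary unions on both sides). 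Thus $(g,h)$ factors through $\@F_Z(X)$ iff $\phi$ is $\@O(Z)$-linear, giving the claimed bijection. For the specialization to $g = 1_Y$, the bijection restricts to continuous sections $h : Y \to \@F_Y(X)$ of $\pi_1$ corresponding to $\@O(Y)$-linear quantifiers. The retraction condition $\phi(X) = Y$ unfolds to $h^{-1}(\Dia X) = Y$, i.e., $h(y) \cap X \ne \emptyset$ for every $y$, which says exactly that the closed set picked in each fiber is nonempty.

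The only point that needs a modicum of care is verifying that the biconditional from \cref{thm:fib-lowpow} for \emph{basic} $V, U$ is equivalent to $\@O(Z)$-linearity for \emph{all} $V, U$; this is where I would use that $\phi$ and the operations $W \mapsto g^{-1}(W)$ and $W \mapsto f^{-1}(W)$ all commute with unions, so that verifying the identity on a subbasis (or basis) suffices. I do not anticipate a substantive obstacle here: the argument is essentially the dictionary of \cref{thm:lin-lowpow} combined with the explicit description of $\@F_Z(X)$ from \cref{thm:fib-lowpow}, plus the trivial observation about the retraction/section endpoint. The hypotheses that $X$ is completely Baire and $Z$ is $T_0$ first-countable enter only through their role in \cref{thm:fib-lowpow} in giving the correct image of the embedding.
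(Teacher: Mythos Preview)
Your proposal is correct and follows essentially the same approach as the paper: both identify a continuous map $Y \to \@F_Z(X)$ over $Z$ with a continuous $h : Y \to \@F(X)$ such that $(g,h)$ lands in the image described by \cref{thm:fib-lowpow}, then translate that $\*\Pi^0_2$ condition via \cref{thm:lin-lowpow} into the $\@O(Z)$-linearity identity, and finally unpack the retraction case as $\phi(X) = Y \iff h^{-1}(\Dia X) = Y$. Your extra remark about reducing to basic $V, U$ is harmless and matches the paper's parenthetical in \cref{thm:fib-lowpow} that the intersection may be taken over any bases.
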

\begin{proof}
A continuous map $h : Y -> \@F_Z(X)$ is the same thing as a continuous map $(h_1, h_2) : Y -> Z \times \@F(X)$ which lands in the image of the embedding \cref{eq:fib-lowpow}; to say that $h$ is ``over $Z$'' means $h_1 = \pi_1 \circ h = g$.
So the right-hand side of the first bijection equivalently consists of continuous maps $h_2 : Y -> \@F(X)$ such that $(g, h_2) : Y -> Z \times \@F(X)$ lands in the image of \cref{eq:fib-lowpow}.
By \cref{thm:fib-lowpow}, this happens iff for each $U \in \@O(X)$ and $W \in \@O(Z)$, we have
\begin{equation*}
g^{-1}(W) \cap h_2^{-1}(\Dia U) =
(g,h_2)^{-1}(W \times \Dia U)
= (g,h_2)^{-1}(Z \times \Dia(f^{-1}(W) \cap U))
= h_2^{-1}(\Dia(f^{-1}(W) \cap U));
\end{equation*}
by \cref{thm:lin-lowpow}, such $h_2$ are in bijection with $\@O(Z)$-linear maps $\phi : \@O(X) -> \@O(Y)$.

If $g = 1_Y$, to say that $h$ always picks a nonempty set is to say that $h_2$ does, i.e., $h_2^{-1}(\Dia X) = Y$, which by \cref{thm:lin-lowpow} corresponds to $\phi(X) = Y$.
\end{proof}

We now give yet a third description of linear quantifiers:

\begin{definition}
\label{def:fib-lin-supp}
Let $f : X -> Y$ be a continuous map, $\phi : \@O(X) -> \@O(Y)$ be an $\@O(Y)$-linear quantifier which corresponds via \cref{thm:fib-lin-lowpow} to a continuous section $h : Y -> \@F_Y(X)$ of $\pi_1 : \@F_Y(X) -> Y$.
The \defn{support} of $\phi$ is
\begin{align*}
\supp(\phi)
:={}& \{x \in X \mid x \in h(f(x))\} \\
={}& \{x \in X \mid \forall U \in \@O(X)\, (x \in U \implies f(x) \in \phi(U))\} \\
={}& \bigcap_{U \in \@O(X)} (U => f^{-1}(\phi(U))).
\end{align*}
Clearly this is an $f$-fiberwise closed subset of $X$, which is $\*\Pi^0_2$ if $X$ is second-countable (since it suffices to intersect over basic open $U$).
\end{definition}

\begin{example}
\label{ex:fib-lin-open-supp}
For a continuous open map $f : X -> Y$ where $X$ is completely Baire and $Y$ is $T_0$ first-countable, the image quantifier $f : \@O(X) -> \@O(Y)$ of \cref{ex:fib-lin-baire} corresponds via the above bijection to $f^{-1} : Y -> \@F_Y(X)$: indeed, $f^{-1}(y) \in \Dia_Y U \iff f^{-1}(y) \cap U \ne \emptyset \iff y \in f(U)$, whence $(f^{-1})^{-1}(\Dia_Y U) = f(U)$.
Thus the support of $f : \@O(X) -> \@O(Y)$ is all of $X$.
\end{example}

\begin{proposition}
\label{thm:fib-lin-supp}
For a continuous map $f : X -> Y$ where $X$ is completely Baire and $Y$ is $T_0$ first-countable, we have a bijection
\begin{align*}
\{\text{$\@O(Y)$-linear quantifiers } \@O(X) -> \@O(Y)\} &\cong \{\text{$f$-fiberwise closed } F \subseteq X \text{ s.t.\ $f|F$ is open}\} \\
\phi &|-> \supp(\phi) \\
(U |-> f(F \cap U)) &<-| F,
\end{align*}
with the $\@O(Y)$-linear retractions of $f^{-1}$ corresponding to the $F$ such that $f(F) = Y$.
\end{proposition}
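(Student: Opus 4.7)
The plan is to reduce the statement to the preceding \cref{thm:fib-lin-lowpow} via a concrete identification of the section $h : Y \to \@F_Y(X)$ corresponding to an $\@O(Y)$-linear quantifier $\phi$. Specifically, if $\phi$ corresponds to $h$, then from the first formulation in \cref{def:fib-lin-supp},
\[
\supp(\phi) \cap f^{-1}(y) = h(y) \in \@F(f^{-1}(y))
\]
for every $y \in Y$, so $\supp(\phi)$ is automatically $f$-fiberwise closed, with fiber over $y$ the closed set $h(y) \subseteq f^{-1}(y)$.

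Next, I would compute $f(\supp(\phi) \cap U)$ directly for any $U \in \@O(X)$: by the above, $y \in f(\supp(\phi) \cap U)$ iff $h(y) \cap U \ne \emptyset$ iff $h(y) \in \Dia U$, i.e., $y \in h^{-1}(\Dia_Y U) = \phi(U)$. Hence $f(\supp(\phi) \cap U) = \phi(U) \in \@O(Y)$, which simultaneously shows (a) that $f|\supp(\phi)$ is open, and (b) that the claimed inverse $F \mapsto (U \mapsto f(F \cap U))$ recovers $\phi$ from $\supp(\phi)$.

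For the converse direction, given any $f$-fiberwise closed $F \subseteq X$ with $f|F$ open, define $\phi(U) := f(F \cap U)$. Openness of $f|F$ gives $\phi(U) \in \@O(Y)$, preservation of unions is immediate, and Frobenius reciprocity \cref{eq:fib-frob} yields $\phi(f^{-1}(V) \cap U) = V \cap \phi(U)$, so $\phi$ is an $\@O(Y)$-linear quantifier. The one step requiring care — which I expect to be the main (mild) obstacle — is showing $\supp(\phi) = F$. By the above analysis, $\supp(\phi) \cap f^{-1}(y) = h(y)$, where $h(y)$ is the unique closed subset of $f^{-1}(y)$ with $h(y) \cap U \ne \emptyset \iff y \in \phi(U) \iff F \cap U \cap f^{-1}(y) \ne \emptyset$ for every $U \in \@O(X)$. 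Since every open subset of $f^{-1}(y)$ has the form $U \cap f^{-1}(y)$, and since closed subsets of a topological space are determined by which opens they meet (as $F_1 \subsetneq F_2$ closed would give $X \setminus F_1$ meeting $F_2$ but not $F_1$), the fiberwise closedness of $F$ forces $h(y) = F \cap f^{-1}(y)$ for each $y$, and thus $\supp(\phi) = F$.

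Finally, the statement about retractions is immediate: $\phi$ is a retraction of $f^{-1}$ iff $\phi(X) = Y$, and under the bijection $\phi(X) = f(F \cap X) = f(F)$, so this is equivalent to $f(F) = Y$.
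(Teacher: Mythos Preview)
Your proof is correct and follows essentially the same approach as the paper: both reduce to \cref{thm:fib-lin-lowpow} via the identification $\supp(\phi) \cap f^{-1}(y) = h(y)$ from \cref{def:fib-lin-supp}. The only organizational difference is that for the converse direction the paper defines $h(y) := f^{-1}(y) \cap F$ explicitly and checks its continuity, whereas you invoke \cref{thm:fib-lin-lowpow} to get $h$ abstractly and then argue $h(y) = F \cap f^{-1}(y)$ by uniqueness of closed sets meeting the same opens; both arguments are equally valid.
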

\begin{proof}
Let $\phi : \@O(X) -> \@O(Y)$ be an $\@O(Y)$-linear quantifier, corresponding via \cref{thm:fib-lin-lowpow} to $h : Y -> \@F_Y(X)$; we must show $\phi(U) = f(\supp(\phi) \cap U)$, which will in particular show that $f|\supp(\phi)$ is open, and that $\phi(X) = Y \iff f(\supp(\phi)) = Y$.
Indeed, we have $\supp(\phi) \cap U \subseteq (U => f^{-1}(\phi(U))) \cap U \subseteq f^{-1}(\phi(U))$ by definition of $\supp(\phi)$, whence $f(\supp(\phi) \cap U) \subseteq \phi(U)$.
Conversely, for any $y \in \phi(U) = h^{-1}(\Dia_Y U)$, we have $h(y) \in \Dia_Y U$; picking any $x \in h(y) \cap U$, we have $y = f(x)$, whence $x \in h(f(x))$, whence $x \in \supp(\phi) \cap U$, whence $y = f(x) \in f(\supp(\phi) \cap U)$.

Now let $F \subseteq X$ be $f$-fiberwise closed such that $f|F$ is open.
Note that the quantifier $\phi : U |-> f(F \cap U)$, which we are claiming is the preimage of $F$ under the bijection in question, corresponds via \cref{thm:fib-lin-lowpow} to the assignment of fibers $h : y |-> f^{-1}(y) \cap F$.
From the definition of $\supp(\phi)$ in terms of $h$, clearly $\supp(\phi) = F$; we need only check that $h : Y -> \@F_Y(X)$ is continuous.
Indeed, we have $h(y) \in \Dia_Y U \iff f^{-1}(y) \cap F \cap U \ne \emptyset \iff y \in f(F \cap U)$ which is open.
\end{proof}

Using this correspondence, we may extend \cref{it:qpol-openquot} to certain non-open quotient maps:

\begin{theorem}
\label{thm:qpol-linquot}
Let $f : X -> Y$ be a continuous map from a quasi-Polish space $X$ to a $T_0$ space $Y$, and suppose there exists an $\@O(Y)$-linear retraction $\phi : \@O(X) -> \@O(Y)$ of $f^{-1}$.
Then $f$ is surjective and $Y$ is quasi-Polish.
\end{theorem}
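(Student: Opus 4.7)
The plan is to exhibit $F := \supp(\phi) \subseteq X$ as an $f$-fiberwise closed, $\*\Pi^0_2$ subset on which $f$ restricts to a continuous open surjection onto $Y$; then $F$ will be quasi-Polish by \cref{it:qpol-pi02}, and \cref{it:qpol-openquot} will yield that $Y$ itself is quasi-Polish (with $f(F) = Y$ giving surjectivity of $f$).

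To recognize $f|F$ as a continuous open surjection I would appeal to \cref{thm:fib-lin-supp}, which sends $\phi |-> \supp(\phi)$ under a bijection between $\@O(Y)$-linear quantifiers and $f$-fiberwise closed $F$ with $f|F$ open, and sends linear retractions precisely to those $F$ with $f(F) = Y$. The catch is that \cref{thm:fib-lin-supp} assumes $Y$ is $T_0$ first-countable, whereas only $T_0$ is given. So the first step is to upgrade the hypothesis on $Y$. Since $\phi$ is a retraction, every $V \in \@O(Y)$ equals $\phi(f^{-1}(V))$; fixing a countable basis $\@B$ of $\@O(X)$ and expanding $f^{-1}(V) = \bigcup\{W \in \@B \mid W \subseteq f^{-1}(V)\}$, linearity of $\phi$ yields $V = \bigcup\{\phi(W) \mid W \in \@B,\; W \subseteq f^{-1}(V)\}$. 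So $\phi(\@B)$ is a countable basis of $\@O(Y)$, making $Y$ second-countable and a fortiori first-countable.

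With \cref{thm:fib-lin-supp} now applicable (using that $X$ is completely Baire, being quasi-Polish), it delivers $F := \supp(\phi)$ with $f|F$ a continuous open surjection onto $Y$. The final step is to check that $F$ is $\*\Pi^0_2$ in $X$. From the formula $F = \bigcap_{U \in \@O(X)} (U => f^{-1}(\phi(U)))$ of \cref{def:fib-lin-supp}, the intersection may be restricted to the countable basis $\@B$: any $x$ satisfying the implication for every basic $W$ automatically satisfies it for every open $U$, since $x \in U$ gives some $W \in \@B$ with $x \in W \subseteq U$, whence $f(x) \in \phi(W) \subseteq \phi(U)$ by monotonicity of $\phi$. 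This exhibits $F$ as a countable intersection of sets of the form $(\text{open} => \text{open})$, hence $\*\Pi^0_2$.

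The only real obstacle is the first-countability gap in the hypothesis of \cref{thm:fib-lin-supp}; once bridged by the basis-transport argument above, everything else reduces to the support-of-quantifier machinery already developed.
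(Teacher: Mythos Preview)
Your proof is correct and follows the same route as the paper: pass to $F = \supp(\phi)$, use \cref{thm:fib-lin-supp} to see that $f|F$ is a continuous open surjection onto $Y$, note $F$ is $\*\Pi^0_2$ hence quasi-Polish, and invoke \cref{it:qpol-openquot}. The paper's one-line proof simply cites \cref{thm:fib-lin-supp} and \cref{it:qpol-openquot} without comment; you were more careful in observing that \cref{thm:fib-lin-supp} nominally requires $Y$ first-countable, and in supplying the easy argument (via $\phi(\@B)$ for a countable basis $\@B$ of $X$) that $Y$ is in fact second-countable, which the paper leaves implicit.
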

\begin{proof}
By \cref{thm:fib-lin-supp}, $Y$ is a continuous open $T_0$ quotient of $\supp(\phi)$; apply \cref{it:qpol-openquot}.
\end{proof}

\begin{remark}
Without assuming either that $X$ is quasi-Polish, or some separation axiom on $Y$ stronger than $T_0$, the existence of an $\@O(Y)$-linear retraction $\phi$ of $f^{-1}$ as above need not imply that $f$ is surjective.
For a counterexample, consider the inclusion of the subspace $X := (0,\infty)$ into $Y := (0,\infty]$ with the (Scott) topology consisting of the sets $(r,\infty]$ for each $r$.
\end{remark}

\begin{remark}
\label{rmk:fib-lin-borel}
The Borel versions of linear quantifiers from \cref{def:fib-lin} correspond to a standard descriptive set-theoretic notion.
Let $f : X -> Y$ be a Borel map between standard Borel spaces.
For a $\@B(Y)$-linear quantifier $\phi : \@B(X) -> \@B(Y)$, we may chase through the proof of \cref{thm:lin-lowpow} to get a map $h : Y -> \{\text{linear maps } \@B(X) -> \#S\}$; now such a linear map is the same thing as a $\sigma$-ideal in $\@B(X)$, so that $\phi$ corresponds to a family of $\sigma$-ideals $(\@I_y)_{y \in Y}$, from which $\phi$ is recovered via
\begin{equation*}
\phi(A) = \{y \in Y \mid h(y)(A) = 1\} = \{y \in Y \mid A \not\in \@I_y\}.
\end{equation*}
In lieu of ``continuity'' of $h$ as in \cref{thm:lin-lowpow}, we have the requirement that $A \in \@B(X) \implies \phi(A) \in \@B(Y)$, which is a weak (non-parametrized) form of the requirement that $(\@I_y)_y$ is a \defn{Borel on Borel} family; see \cite[18.5]{Kcdst}.
And $\@B(X)$-linearity means in particular that for each $y \in Y$,
\begin{gather*}
\{y\} \cap \phi(A) = \phi(f^{-1}(y) \cap A), \\
\shortintertext{i.e.,}
A \not\in \@I_y \iff f^{-1}(y) \cap A \not\in \@I_y,
\end{gather*}
which means that each $\@I_y$ is determined by its restriction to $\@B(f^{-1}(y))$.
So we have a bijection
\begin{align*}
\{\text{$\@B(Y)$-linear quant.\ } \@B(X) -> \@B(Y)\} &\cong \{\text{weakly Borel on Borel fam.\ of $\sigma$-ideals $\@I_y \subseteq \@B(f^{-1}(y))$}\}.
\end{align*}
The $\@B(Y)$-linear retractions of $f^{-1}$ correspond to the families of \emph{proper} $\sigma$-ideals $\@I_y \subsetneq \@B(f^{-1}(y))$.
\end{remark}

Note that if $X$ is equipped with a (nice) topology, then a linear quantifier on Borel sets may be restricted to one on open sets.
Using this observation, we have

\begin{corollary}[of \cref{thm:qpol-linquot}]
\label{thm:qpol-linquot-borel}
Let $f : X -> Y$ be a Borel map from a quasi-Polish space to a standard Borel space, and let $\phi : \@B(X) -> \@B(Y)$ be a $\@B(Y)$-linear retraction of $f^{-1}$.
Suppose that $f^{-1}(\phi(\@O(X))) \subseteq \@O(X)$, and that $\phi(\@O(X))$ separates points of $Y$.
Then $\@O(Y) := \phi(\@O(X))$ is a compatible quasi-Polish topology on $Y$ making $f$ continuous.
\end{corollary}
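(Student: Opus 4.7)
The plan is to reduce to \cref{thm:qpol-linquot} by showing that $\@O(Y) := \phi(\@O(X))$ is in fact a topology on $Y$, and that $\phi$ restricts to an $\@O(Y)$-linear retraction of $f^{-1}$ on open sets. Compatibility is then immediate from $\phi(\@O(X)) \subseteq \phi(\@B(X)) \subseteq \@B(Y)$, and continuity of $f$ with respect to $\@O(Y)$ is the hypothesis $f^{-1}(\phi(\@O(X))) \subseteq \@O(X)$.

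First I would verify that $\@O(Y)$ is a topology. It contains $\emptyset = \phi(\emptyset)$ and $Y = \phi(f^{-1}(Y)) = \phi(X)$ (using the retraction property), and is closed under countable unions because $\phi$ preserves them. For closure under binary (hence finite) intersections, given $\phi(U_1), \phi(U_2) \in \@O(Y)$, the hypothesis $f^{-1}(\@O(Y)) \subseteq \@O(X)$ gives $f^{-1}(\phi(U_1)) \cap U_2 \in \@O(X)$, and then $\@B(Y)$-linearity of $\phi$ yields
\begin{equation*}
\phi(U_1) \cap \phi(U_2) = \phi(U_1) \cap \phi(U_2) = \phi(f^{-1}(\phi(U_1)) \cap U_2) \in \phi(\@O(X)) = \@O(Y).
\end{equation*}
Since $X$ is second-countable, $\@O(Y)$ is second-countable: fixing a countable basis $\{U_n\} \subseteq \@O(X)$, every $V = \phi(U) \in \@O(Y)$ can be written (using $U = \bigcup_i U_{n_i}$ and preservation of countable unions) as $\bigcup_i \phi(U_{n_i})$.

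Second, I would check that $\phi$ restricts to an $\@O(Y)$-linear retraction $\@O(X) \to \@O(Y)$ of $f^{-1}$ in the sense of \cref{def:fib-lin}. Linearity (preservation of arbitrary unions) reduces to countable unions by second-countability. The identity $\phi(f^{-1}(V) \cap U) = V \cap \phi(U)$ for $V \in \@O(Y)$ and $U \in \@O(X)$ follows from $\@B(Y)$-linearity applied to the Borel set $V$, noting that $f^{-1}(V) \cap U \in \@O(X)$ by the hypothesis. The retraction identity $\phi(f^{-1}(V)) = V$ is a specialization of the assumed $\@B(Y)$-linear retraction property. Finally, $Y$ is $T_0$ because $\@O(Y)$ separates points by hypothesis.

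With these ingredients in place, \cref{thm:qpol-linquot} applied to $f : X \to (Y, \@O(Y))$ gives that $Y$ is quasi-Polish (and incidentally that $f$ is surjective), completing the proof. I expect no real obstacle: the only nonformal ingredient being used is \cref{thm:qpol-linquot} itself (whose content is that linear retractions realize $Y$ as an open $T_0$ quotient of a fiberwise-closed $\*\Pi^0_2$ piece of $X$, via \cref{it:qpol-openquot}). The mildly delicate step is the verification of closure under finite intersections, which is precisely where the hypothesis $f^{-1}(\phi(\@O(X))) \subseteq \@O(X)$ is used; without it, $\phi(\@O(X))$ would only be guaranteed to form a $\bigvee$-subsemilattice of $\@B(Y)$.
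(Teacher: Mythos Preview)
Your proposal is correct and follows essentially the same approach as the paper's own proof: verify that $\@O(Y) := \phi(\@O(X))$ is a topology (using $\@B(Y)$-linearity plus the hypothesis $f^{-1}(\phi(\@O(X))) \subseteq \@O(X)$ for closure under finite intersections, exactly as you do), observe that $f$ is continuous and $Y$ is $T_0$, and then invoke \cref{thm:qpol-linquot}. The paper is slightly terser---it does not separately spell out that $\phi|_{\@O(X)}$ is an $\@O(Y)$-linear retraction, taking this as evident---but the argument is the same.
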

\begin{proof}
Note first that $\@O(Y)$ is indeed a topology: it is closed under countable unions because $\phi$ preserves countable unions, hence closed under arbitrary unions by second-countability of $\@O(X)$; it contains $Y = \phi(X)$; and it is closed under binary intersections, because by $\@B(Y)$-linearity,
\begin{equation}
\label{eq:qpol-linquot-borel}
\phi(U) \cap \phi(V) = \phi(f^{-1}(\phi(U)) \cap V)
\end{equation}
for $U, V \in \@O(X)$, and $f^{-1}(\phi(U)) \in f^{-1}(\phi(\@O(X))) \subseteq \@O(X)$ by assumption.
By this same assumption, with this topology on $Y$, $f$ is continuous; and $Y$ is $T_0$ since $\@O(Y)$ separates points.
Now apply \cref{thm:qpol-linquot}.
\end{proof}

\subsection{Baire category for coarser topologies}
\label{sec:fib-lin-baire}

For our main topological realization results below, we will be applying \cref{thm:qpol-linquot-borel} above to $\@B(Y)$-linear quantifiers $\phi$ given by the Baire category quantifier $\exists^*_f$ with respect to some $f$-fiberwise topology on $X$ which is \emph{coarser} than the restriction of the global topology on $X$.
We now specialize the machinery of the preceding subsection to this case.

\begin{definition}
\label{def:baire-subtop-supp}
Let $X$ be a set with two topologies $\@S \subseteq \@T$, such that $\@T$ is second-countable.
The \defn{$\@T$-support of $\@S$} will mean the smallest $\@T$-closed $\@S$-comeager set, i.e., the intersection of all such sets, which is still $\@S$-comeager by second-countability of $\@T$.
\end{definition}

\begin{lemma}
Let $X$ be a set with two topologies $\@S \subseteq \@T$, and let $Y \subseteq X$ be the $\@T$-support of $\@S$.
Suppose that every $\@T$-open set has the $\@S$-Baire property.
Then the inclusion $(Y,\@T|Y) -> (X,\@S)$ induces an isomorphism of Baire category algebras
\begin{equation*}
\@B(Y,\@T|Y)/\text{meager} \cong \@B(X,\@S)/\text{meager}.
\end{equation*}
In particular, an $\@S$-Borel set is $\@S$-(co)meager iff its restriction to $Y$ is $\@T$-(co)meager; and $(Y,\@T|Y)$ is a Baire space.
\end{lemma}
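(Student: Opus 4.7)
Let $N := X \setminus Y$, and abbreviate $\@T_Y := \@T|Y$. Second-countability of $\@T$ implies that any $\@T$-open $\@S$-meager set is a countable union of basic $\@T$-open $\@S$-meager sets; therefore $N$ is itself $\@T$-open and $\@S$-meager, and is maximal with respect to these properties. In particular, a $\@T$-open set is $\@S$-meager iff it is contained in $N$ iff it is disjoint from $Y$. The $\@S$-Baire property hypothesis on $\@T$-open sets extends, by $\sigma$-algebra closure, to every $\@T$-Borel set.

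The plan is to establish two transfer lemmas between $\@S$-meagerness on $X$ and $\@T_Y$-meagerness on $Y$. The main obstacle is the hard direction: if $M \subseteq Y$ is $\@T_Y$-nowhere dense, then $M$ is $\@S$-meager in $X$. Since $Y$ is $\@T$-closed and contains $M$, the $\@T$-closure $\overline{M}^{\@T}$ is contained in $Y$ and coincides with the $\@T_Y$-closure; any $\@T$-open subset of $\overline{M}^{\@T}$ is then $\@T_Y$-open inside $\overline{M}^{\@T} = \overline{M}^{\@T_Y}$ and hence empty, so $\overline{M}^{\@T}$ is $\@T$-nowhere dense in $X$. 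Applying $\@S$-Baire property to the $\@T$-closed set $\overline{M}^{\@T}$ produces an $\@S$-closed $F$ with $F \triangle \overline{M}^{\@T}$ $\@S$-meager. Then $F^{\circ_{\@S}}$ is $\@T$-open, and $F^{\circ_{\@S}} \setminus \overline{M}^{\@T}$ is $\@T$-open and $\@S$-meager, hence $\subseteq N$; so $F^{\circ_{\@S}} \subseteq \overline{M}^{\@T} \cup N$. Intersecting with $Y$ gives $F^{\circ_{\@S}} \cap Y \subseteq \overline{M}^{\@T_Y}$, which has empty $\@T_Y$-interior, so $F^{\circ_{\@S}} \cap Y = \emptyset$, putting $F^{\circ_{\@S}} \subseteq N$ and hence $F^{\circ_{\@S}}$ $\@S$-meager. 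The $\@S$-boundary $F \setminus F^{\circ_{\@S}}$ is $\@S$-nowhere dense by construction, so $F$---and therefore $\overline{M}^{\@T}$ and $M$---is $\@S$-meager.

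The reverse transfer is easy: for $\@S$-nowhere dense $N'$, any $\@T$-open $V$ with $V \cap Y \subseteq \overline{N'}^{\@S}$ decomposes as $(V \cap \overline{N'}^{\@S}) \cup (V \setminus \overline{N'}^{\@S})$, where the first piece is contained in the $\@S$-meager set $\overline{N'}^{\@S}$ and the second is a $\@T$-open set disjoint from $Y$, hence $\subseteq N$ and $\@S$-meager; so $V$ is $\@S$-meager and thus disjoint from $Y$, showing $\overline{N'}^{\@S} \cap Y \supseteq \overline{N' \cap Y}^{\@T_Y}$ has empty $\@T_Y$-interior in $Y$.

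The Baire property of $(Y, \@T_Y)$ now drops out from the hard transfer: a $\@T_Y$-open $\@T_Y$-meager $W = V \cap Y$ (with $V \in \@T$) is $\@S$-meager in $X$ by the hard lemma, and together with $V \setminus Y \subseteq N$, makes $V$ an $\@S$-meager $\@T$-open set, so $V \subseteq N$ and $W = \emptyset$. The map $[A] \mapsto [A \cap Y]$ is well-defined by the easy transfer; it is surjective because any $B \in \@B(Y, \@T_Y)$ lifts to some $A' \in \@B(X, \@T)$, which by $\@S$-Baire property equals some $\@S$-open $U$ modulo $\@S$-meager, giving $[U \cap Y] = [B]$ via the easy transfer; it is injective because, given $[A \cap Y] = 0$, replacing $A$ by such a $U$ makes $U \cap Y$ a $\@T_Y$-open $\@T_Y$-meager set, hence empty by Baire property of $Y$, so $U \subseteq N$ is $\@S$-meager and hence so is $A$. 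The reason the hard transfer is the crucial step is that direct Baire-theoretic arguments inside $Y$ tend to be circular---one does not know a priori that $(X, \@S)$ is itself a Baire space---so the decisive move is to exit $Y$ and exploit the $\@S$-Baire property globally on the $\@T$-closed set $\overline{M}^{\@T}$ in $X$, using the characterization of $N$ to force the $\@S$-interior of the resulting $\@S$-closed representative into the meager set $N$.
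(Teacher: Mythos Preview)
Your proof is correct and follows essentially the same approach as the paper's. The only cosmetic difference is that in the hard transfer you approximate the $\@T$-closed nowhere dense set by an $\@S$-\emph{closed} set and work with its $\@S$-interior, whereas the paper approximates it by an $\@S$-\emph{open} set $U$ and observes directly that $U \setminus F$ is $\@T$-open $\@S$-meager, hence disjoint from $Y$, forcing $Y \cap U = \emptyset$ and $F = F \setminus U$ $\@S$-meager; these are dual variants of the same argument, and the rest of the proof (easy transfer, Baireness of $Y$, and the isomorphism) is organized the same way.
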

\begin{proof}
We claim that a $\@T$-closed $F \subseteq Y$ is $\@T|Y$-nowhere dense iff it is $\@S$-meager.
Indeed, if $F$ is $\@S$-meager, then so is $Y => F$ since $\neg Y$ is $\@S$-meager by definition of support, whence the $\@T$-interior of $Y => F$ is disjoint from $Y$ again by definition of support, which means the $\@T|Y$-interior of $F$ is empty, i.e., $F$ is $\@T|Y$-nowhere dense.
Conversely, if $F$ is $\@T|Y$-nowhere dense, then letting $U \subseteq X$ be $\@S$-open such that $F \triangle U$ is $\@S$-meager, we have that $U \setminus F$ is $\@T$-open and $\@S$-meager, hence disjoint from $Y$ by definition of support, i.e., $Y \cap U \subseteq F$, whence $Y \cap U = \emptyset$ since $F$ is $\@T|Y$-nowhere dense, whence $F = F \setminus U$ is $\@S$-meager.

It follows that for $\@S$-meager $A \subseteq X$, $A$ is contained in the union of countably many $\@S$-closed nowhere dense sets, whose intersections with $Y$ are $\@T|Y$-nowhere dense, whence $Y \cap A$ is $\@T|Y$-meager.
Thus the restriction map
\begin{equation*}
Y \cap (-) : \@B(X,\@S) --> \@B(Y,\@T|Y)
\end{equation*}
descends to a well-defined map between the category algebras, which is surjective by the assumption that every $\@T$-open set has the $\@S$-Baire property.
To check that it is injective: if $A \subseteq X$ such that $Y \cap A$ is $\@T|Y$-meager, then $Y \cap A$ is contained in the union of countably many $\@T|Y$-closed nowhere dense sets, which are $\@S$-meager, whence $Y \cap A$ is $\@S$-meager, whence so is $A \subseteq Y => (Y \cap A)$ again since $\neg Y$ is $\@S$-meager by definition of support.
This implies that $(Y,\@T|Y)$ is a Baire space, because a meager open set must be the restriction of some $\@S$-meager $\@T$-open $U \subseteq X$, which is disjoint from $Y$ by definition of support.
\end{proof}

\begin{remark}
Let $f : X -> Y$ be a continuous map from a second-countable space $X$ to an arbitrary topological space $Y$, and let $\@O_f(X)$ be another fiberwise topology on $X$, \emph{coarser} than the fiberwise restriction of $\@O(X)$.
By $\exists^*_f$, we mean the Baire category quantifier for the coarser fiberwise topology $\@O_f(X)$.
Suppose that $\exists^*_f(\@O(X)) \subseteq \@O(Y)$, so that $\exists^*_f : \@O(X) -> \@O(Y)$ is an $\@O(Y)$-linear quantifier.
Then the fiberwise $\@O(X)$-support of $\@O_f(X)$, as defined in \cref{def:baire-subtop-supp}, is the same as the support of $\exists^*_f$ as defined in \cref{def:fib-lin-supp}.
Indeed, the latter definition says precisely that $\supp(\exists^*_f)$ is fiberwise the complement of the union of all $\@O_f(X)$-meager $\@O(X)$-open sets.
\end{remark}

Applying the preceding lemma fiberwise to this situation, we get

\begin{corollary}
In the situation of the preceding remark, suppose furthermore that each $\@O(X)$-open set has the fiberwise $\@O_f(X)$-Baire property.
Then for every $\@O(X)$-Borel $A \subseteq X$, we have
\begin{equation*}
\exists^*_f(A) = \exists^*_{f|\supp(\exists^*_f)}(\supp(\exists^*_f) \cap A)
\end{equation*}
where $\exists^*$ on the right-hand side is with respect to the finer topology $\@O(X)|\supp(\exists^*_f)$; and this finer topology is $f$-fiberwise Baire.
\qed
\end{corollary}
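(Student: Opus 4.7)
The plan is to apply the preceding lemma fiberwise. For each $y \in Y$, set $\@S_y := \@O_f(X)|f^{-1}(y)$ and $\@T_y := \@O(X)|f^{-1}(y)$. Then $\@S_y \subseteq \@T_y$ since $\@O_f(X)$ is coarser than the fiberwise restriction of $\@O(X)$, and $\@T_y$ is second-countable because $X$ is. The hypothesis that every $\@O(X)$-open set has the fiberwise $\@O_f(X)$-Baire property says exactly that every $\@T_y$-open subset of $f^{-1}(y)$ has the $\@S_y$-Baire property, so the preceding lemma applies on each fiber. By the preceding remark, the $\@T_y$-support of $\@S_y$ (in the sense of \cref{def:baire-subtop-supp}) coincides with $Y_y := \supp(\exists^*_f) \cap f^{-1}(y)$.

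From this, the lemma immediately yields that $(Y_y, \@T_y|Y_y)$ is a Baire space, which is exactly the fiberwise Baireness of $\@O(X)|\supp(\exists^*_f)$ asserted at the end of the corollary. For the equality of $\exists^*$ sets, I would observe that the proof of the preceding lemma establishes both implications ``$B$ is $\@S$-meager $\iff Y \cap B$ is $\@T|Y$-meager'' by directly covering by nowhere dense sets, and never uses any Borelness hypothesis on $B$; so the equivalence holds for \emph{arbitrary} subsets $B \subseteq X$, not just $\@S$-Borel ones. This is important here because an $\@O(X)$-Borel set restricted to a fiber need not be $\@S_y$-Borel, since the inclusion $\@S_y \subseteq \@T_y$ is in general strict at the level of $\sigma$-algebras.

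Applied with $B := A \cap f^{-1}(y)$ for our given $\@O(X)$-Borel $A \subseteq X$, this extended equivalence reads: $A \cap f^{-1}(y)$ is $\@S_y$-nonmeager iff $Y_y \cap A$ is $\@T_y|Y_y$-nonmeager. Quantifying over $y$ gives precisely
\begin{equation*}
\exists^*_f(A) = \exists^*_{f|\supp(\exists^*_f)}(\supp(\exists^*_f) \cap A),
\end{equation*}
as required. There is no real obstacle; the only mild subtlety is noticing that the lemma's ``in particular'' clause is in fact a weaker statement than what its proof delivers, and that this stronger, non-Borel form is what is needed to handle $\@O(X)$-Borel sets through the coarser topology $\@O_f(X)$.
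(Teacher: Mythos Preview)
Your proof is correct and follows exactly the paper's approach, which is simply to apply the preceding lemma fiberwise. Your observation that the lemma's proof actually establishes the meager $\Leftrightarrow$ meager equivalence for \emph{arbitrary} subsets (not just $\@S$-Borel ones), and that this stronger form is what is needed since $A \cap f^{-1}(y)$ is only $\@T_y$-Borel, is a genuine subtlety that the paper's one-line ``apply fiberwise'' glosses over.
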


\begin{theorem}
\label{thm:qpol-baire-subtop}
Let $f : X ->> Y$ be a Borel surjection from a quasi-Polish space to a standard Borel space, and let $\@O_f(X)$ be another fiberwise topology on $X$, \emph{coarser} than the fiberwise restriction of $\@O(X)$, and making $X$ into a standard Borel-overt bundle of quasi-Polish spaces over $Y$.
Let $\exists^*_f$ denote the Baire category quantifier for $\@O_f(X)$.
Suppose that $f^{-1}(\exists^*_f(\@O(X))) \subseteq \@O(X)$.
Then $\@O(Y) := \exists^*_f(\@O(X))$ is a compatible quasi-Polish topology on $Y$ making $f$ continuous.
\end{theorem}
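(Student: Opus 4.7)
The plan is to deduce the statement as a direct application of \cref{thm:qpol-linquot-borel} to the linear quantifier $\phi := \exists^*_f : \@B(X) \to \@B(Y)$. Three conditions must accordingly be verified: that $\phi$ is a $\@B(Y)$-linear retraction of $f^{-1}$; that $f^{-1}(\phi(\@O(X))) \subseteq \@O(X)$; and that $\phi(\@O(X))$ separates points of $Y$.

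The first of these breaks into several standard pieces. Borel-overtness of $(X,\@O_f(X)) \to Y$ together with \cref{thm:fib-bov-baire} ensures that $\phi$ maps $\@B(X)$ into $\@B(Y)$. Countable-union preservation is \cref{it:fib-baire-union}, and Frobenius reciprocity \cref{it:fib-baire-frob} supplies the module-theoretic part of $\@B(Y)$-linearity. Surjectivity of $f$ makes each fiber nonempty, and a nonempty quasi-Polish space in $\@O_f(X)$ is Baire hence nonmeager in itself, so $\phi(X) = Y$; together with Frobenius this gives $\phi(f^{-1}(B)) = B$ for every $B \in \@B(Y)$. The second condition is given outright as the hypothesis.

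Point separation is where I expect the main difficulty. The plan is to invoke the compatible quasi-Polish topology $\@O''(Y)$ on $Y$ furnished by \cref{thm:fib-bov-qpol}\cref{thm:fib-bov-qpol:open} from the Borel-overt bundle structure, noting that this $\@O''(Y)$ is $T_0$ and hence separates points. It will suffice to show that the $\sigma$-algebra on $Y$ generated by $\phi(\@O(X))$ equals $\@B(Y)$, since a standard monotone-class argument then transfers point-separation from $\@B(Y)$ down to the generating family $\phi(\@O(X))$. I plan to get there by starting from the retraction identity $V = \phi(f^{-1}(V))$ for each basic open $V \in \@O''(Y)$ and approximating the Borel set $f^{-1}(V)$ by a genuine $\@O(X)$-open set using the $\@O(X)$-Baire property, controlling what $\phi$ does to the error term via Kuratowski--Ulam (\cref{thm:kuratowski-ulam}) applied to the open continuous map $(X,\@O''(X)) \to (Y,\@O''(Y))$ coming from the Borel-overt bundle.

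Once all three conditions are in place, \cref{thm:qpol-linquot-borel} immediately delivers the conclusion that $\@O(Y) := \phi(\@O(X))$ is a compatible quasi-Polish topology on $Y$ making $f$ continuous.
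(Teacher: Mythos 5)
Your first two steps are fine and match the paper: $\exists^*_f$ is a $\@B(Y)$-linear retraction of $f^{-1}$ for exactly the reasons you give (this is \cref{ex:fib-lin-baire}), and the reduction of point-separation to showing that $\sigma(\exists^*_f(\@O(X))) = \@B(Y)$ is also the paper's reduction (using that $\exists^*_f : \@B(X) \to \@B(Y)$ is surjective, being a retraction of $f^{-1}$). The gap is in how you propose to carry out that last step.

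The mechanism you sketch --- write $V = \exists^*_f(f^{-1}(V))$, replace $f^{-1}(V)$ by an $\@O(X)$-open set modulo an $\@O(X)$-meager error, and kill the error via Kuratowski--Ulam for the bundle $(X,\@O''(X)) \to (Y,\@O''(Y))$ --- does not engage the real difficulty, which is the mismatch between the \emph{fine} topology $\@O(X)$ (whose open sets you feed into $\exists^*_f$) and the \emph{coarser} fiberwise topology $\@O_f(X)$ (with respect to which $\exists^*_f$ is computed). An $\@O(X)$-meager (even fiberwise $\@O(X)$-meager) error term $M$ need not satisfy $\exists^*_f(M) = \emptyset$: meagerness does not transfer from a finer topology to a coarser one, nor conversely, in general. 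Kuratowski--Ulam for the Borel-overt bundle only relates the coarse fiberwise category to the global $\@O''$-category, and $\@O(X)$-meager does not imply $\@O''(X)$-meager either; at best you would learn that $\exists^*_f(M)$ is $\@O''(Y)$-meager, which neither vanishes nor lands in $\sigma(\exists^*_f(\@O(X)))$. This is precisely the point of \cref{sec:fib-lin-baire}, which the paper develops for this theorem: one restricts to the support $\supp(\exists^*_f)$ (the smallest fiberwise $\@O(X)$-closed $\@O_f$-comeager set), where the two category algebras \emph{do} agree fiberwise, so that $\exists^*_f(A) = \exists^*_{f|\supp(\exists^*_f)}(\supp(\exists^*_f)\cap A)$ with the right-hand side computed in the fine topology, which is moreover fiberwise Baire there. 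Then $f|\supp(\exists^*_f)$ is a continuous open fiberwise Baire map from the second-countable $\@O(X)|\supp(\exists^*_f)$ to $\@O(Y) := \exists^*_f(\@O(X))$ (continuity is your hypothesis $f^{-1}(\exists^*_f(\@O(X))) \subseteq \@O(X)$; openness is \cref{thm:fib-lin-supp}), and \cref{thm:fib-baire-borel} gives $\exists^*_f(\@B(X)) \subseteq \sigma(\@O(Y))$ by induction on the Borel hierarchy. Some such device for converting the coarse-topology quantifier into a fine-topology one is unavoidable; without it your inductive control over $\exists^*_f$ on Borel sets of higher rank breaks down at the very first difference $A \setminus B$.
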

\begin{proof}
By \cref{ex:fib-lin-baire}, $\exists^*_f : \@B(X) -> \@B(Y)$ is a $\@B(Y)$-linear retraction of $f^{-1}$.
Thus by \cref{thm:qpol-linquot-borel}, we need only check that $\@O(Y)$ separates points of $Y$.
For that, it is enough to check that every $B \in \@B(Y)$ belongs to the $\sigma$-algebra generated by $\@O(Y)$; since $\exists^*_f : \@B(X) -> \@B(Y)$ is surjective (being a retraction of $f^{-1}$), it is enough to check that $\exists^*_f$ lands in said $\sigma$-algebra.
By the preceding corollary, this is to say that $\exists^*_{f|\supp(\exists^*_f)}$ does; but $f|\supp(\exists^*_f)$ is a continuous open fiberwise Baire map $\supp(\exists^*_f) -> Y$ from the second-countable topology $\@O(X)|\supp(\exists^*_f)$ to the topology $\@O(Y)$, so the claim follows from \cref{thm:fib-baire-borel}.
\end{proof}

\section{Polish group actions}
\label{sec:grp}

\subsection{Generalities on group actions}
\label{sec:grp-prelim}

Let $G$ be a group acting on a set $X$.
Throughout this section, we always denote the group multiplication by
$\mu : G \times G -> G$,
and the action map by
$\alpha = \alpha_X : G \times X -> X$.
Note that a special case is the left translation action $\alpha = \mu$ of $G$ on itself.

\begin{definition}
\label{def:grp-twist}
As a bundle over $X$, any action $\alpha$ is isomorphic to the product projection $\pi_2$, via the following \defn{twist involution} which we denote by $\dagger$:
\begin{equation*}
\begin{tikzcd}[column sep=4em]
G \times X \drar["\alpha"'] \ar[rr,<->,"{(g,x)\mapsto(g,x)^\dagger:=(g^{-1},gx)}","\cong"'] &&
G \times X \dlar["\pi_2"] \\
& X
\end{tikzcd}
\end{equation*}
When we apply $\dagger$ to a concept (element, subset, etc.)\ in $G \times X$, we say that it \defn{twists} to the result.
\end{definition}

Now suppose $G$ is a topological group.

\begin{definition}
\label{def:grp-fibtop}
By the \defn{$\alpha$-fiberwise topology} $\@O_\alpha(G \times X)$ on $G \times X$, we will always mean that given by twisting the product $\pi_2$-fiberwise topology given by a copy of $G$ on each $\pi_2$-fiber.
\end{definition}

\begin{remark}
\label{rmk:grp-fibtop-basis}
For any $U \subseteq G$ and $G$-invariant $A \subseteq X$, we have $(U \times A)^\dagger = U^{-1} \times A$.
Since the sets $U^{-1} \times A$ for $U \in \@O(G)$ and arbitrary $A \subseteq X$ are $\pi_2$-fiberwise open, it follows that the sets $U \times A$ for $G$-invariant $A$ are $\alpha$-fiberwise open.
Moreover, for any open basis $\@U \subseteq \@O(G)$, $\@U \times X := \{U \times X \mid U \in \@U\}$ is an $\alpha$-fiberwise open basis (in the sense of \cref{def:fib-top}).

Thus, if $X$ is also equipped with a topology (regardless of whether $\alpha$ acts continuously), then the $\alpha$-fiberwise topology $\@O_\alpha(G \times X)$ is \emph{coarser} than the fiberwise restriction of the product topology $\@O(G) \otimes \@O(X)$.
(Recall here our notations for topologies from \cref{sec:top,sec:fib}.)
\end{remark}

\begin{definition}
\label{def:grp-orbtop}
For each orbit $G \cdot x \in X/G$, we have a surjection $(-) \cdot x : G ->> G \cdot x$, via which we may equip $G \cdot x$ with the quotient topology, which does not depend on the choice of basepoint $x$ within the orbit.
We call the family of these topologies on each orbit the \defn{orbitwise topology}, which is a fiberwise topology on the quotient map $\pi : X ->> X/G$; we thus identify it as usual with a global topology on $X$, denoted $\@O_G(X)$, consisting of the \defn{orbitwise open} sets (cf.\ \cref{def:fib-top}).
We also write ${\subseteq_G^*}, {=_G^*}$ to mean containment and equality mod orbitwise meager (cf.\ \cref{def:fib-baire}).

Note that an equivalent definition of $\@O_G(X)$ is given by
\begin{eqenum}
\item \label{it:grp-orbtop}
$\alpha : G \times X ->> X$ is a continuous open surjection from the $\pi_2$-fiberwise topology to the orbitwise topology.
In particular, $A \subseteq X$ is orbitwise open
iff $\alpha^{-1}(A) = (G \times A)^\dagger \subseteq G \times X$ is $\pi_2$-fiberwise open,
iff $G \times A$ is $\alpha$-fiberwise open.
\end{eqenum}
From this it follows that
\begin{eqenum}
\item \label{it:grp-orbtop-baire}
If $A \subseteq X$ is orbitwise meager, then $\alpha^{-1}(A) = (G \times A)^\dagger$ is $\pi_2$-fiberwise meager, i.e., $G \times A$ is $\alpha$-fiberwise meager (since continuous open maps are category-preserving).
\item \label{it:grp-orbtop-invar}
If $A \subseteq X$ is $G$-invariant, then $A$ is orbitwise open.
\item \label{it:grp-orbtop-top}
If $X$ is a topological $G$-space, then $\@O(X) \subseteq \@O_G(X)$.
\item \label{it:grp-orbtop-eqvar}
If $f : X -> Y$ is an equivariant map between $G$-spaces, then $f^{-1}(\@O_G(Y)) \subseteq \@O_G(X)$.
\end{eqenum}
\end{definition}

Next, consider the associativity axiom $(g \cdot h) \cdot x = g \cdot (h \cdot x)$.
This is expressed by commutativity of the following \defn{associativity square}, in which we let $\alpha_2$ be the common composite:
\begin{equation}
\label{diag:grp-assoc}
\begin{tikzcd}
G \times G \times X \dar["\mu \times X"'] \rar["G \times \alpha"] \drar["\alpha_2"] &
G \times X \dar["\alpha"] \\
G \times X \rar["\alpha"'] &
X
\end{tikzcd}
\end{equation}
Just as $\alpha$ is the twisted version of $\pi_2$, so can this entire square be seen as a twisted version of an obviously-commuting square of projections, via the following ``higher-order twists'':
\begin{equation}
\label{diag:grp-assoc-twist}
\begin{tikzcd}
G \times G \times X \dar["\mu \times X"'] \rar["G \times \alpha"] \drar["\alpha_2"] \ar[rrr,bend left=20,shift left=2,"{(g,h,x)\mapsto(g^{-1},h^{-1}g^{-1},ghx)}"] &
G \times X \dar["\alpha"] \rar[<->,"{(g,x)\mapsto(g^{-1},gx)}"] &[4em]
G \times X \dar["\pi_2"'] &
G \times G \times X \lar["\pi_{13}"'] \dar["\pi_{23}"] \dlar["\pi_3"'] \ar[lll,bend right=20,"{(g^{-1},gh^{-1},hx)\mapsfrom(g,h,x)}"{yshift=-2pt}]
\\
G \times X \rar["\alpha"'] \ar[rrr,bend right=13,<->,"{(g,x)\mapsto(g^{-1},gx)}"'] &
X \rar[equal] &
X &
G \times X \lar["\pi_2"]
\end{tikzcd}
\end{equation}
Here $\pi_{ij}$ is the projection onto the $i$th and $j$th coordinates.
Note that the right-hand square clearly exhibits $G \times G \times X$ as (an isomorphic copy of) the pullback of $\pi_2$ with itself.
Thus, the left-hand associativity square \cref{diag:grp-assoc} exhibits $G \times G \times X$ as the pullback of $\alpha$ with itself.

Since $\alpha$ is equipped with a fiberwise topology, the pullback square \cref{diag:grp-assoc} yields both a $(\mu \times X)$-fiberwise topology and a $(G \times \alpha)$-fiberwise topology on $G \times G \times X$ (recall here \cref{def:fib-pb}).
On the other hand, the ``higher-order twist'' of \cref{diag:grp-assoc-twist} also yields an \defn{$\alpha_2$-fiberwise topology} on $G \times G \times X$, corresponding to the $\pi_3$-fiberwise topology given by the product topology on $G \times G$.

\begin{lemma}
\label{thm:grp-assoc-fibtop}
The $\alpha_2$-fiberwise topology on $G \times G \times X$ restricts to both
the $(\mu \times X)$-fiberwise topology (on each fiber $(\mu \times X)^{-1}(g,x)$, which is contained in the corresponding fiber $\alpha_2^{-1}(\alpha(g,x))$), and also
the $(G \times \alpha)$-fiberwise topology.
\end{lemma}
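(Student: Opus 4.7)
The plan is to transport everything through the higher-order twist of \cref{diag:grp-assoc-twist} and thereby reduce the claim to the trivial observation that a product topology restricts to each factor topology. Write $\tau(g, x) := (g^{-1}, gx)$ for the twist $\dagger$ of \cref{def:grp-twist}, and $\tau_2(g, h, x) := (g^{-1}, h^{-1}g^{-1}, ghx)$ for the higher-order twist of \cref{diag:grp-assoc-twist}, so that $\alpha = \pi_2 \circ \tau$ and $\alpha_2 = \pi_3 \circ \tau_2$ by definition. A direct calculation, encoding the commutativity of the transitions between the two sides of \cref{diag:grp-assoc-twist}, yields
\begin{align*}
\tau \circ (\mu \times X) &= \pi_{23} \circ \tau_2, \\
\tau \circ (G \times \alpha) &= \pi_{13} \circ \tau_2.
\end{align*}

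Next, unravel the fiberwise topologies. By \cref{def:grp-fibtop}, the $\alpha$-fiberwise topology on $G \times X$ is the transfer along $\tau$ of the $\pi_2$-fiberwise topology (a copy of $\@O(G)$ per fiber). Since \cref{diag:grp-assoc} is a pullback square, the pullback convention of \cref{def:fib-pb} then expresses the $(\mu \times X)$- and $(G \times \alpha)$-fiberwise topologies on $G \times G \times X$ as pullbacks along $G \times \alpha$ and $\mu \times X$, respectively, of the $\alpha$-fiberwise topology on $G \times X$. Composing with $\tau$, they become the pullbacks along $\pi_{13} \circ \tau_2$ and $\pi_{23} \circ \tau_2$ of the $\pi_2$-fiberwise topology on $G \times X$, i.e., the pullbacks along $\tau_2$ of the $\pi_{13}$- and $\pi_{23}$-fiberwise topologies on $G \times G \times X$ (where each $\pi_{ij}$-fiber is endowed with a copy of $\@O(G)$ on the remaining factor). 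Meanwhile, the $\alpha_2$-fiberwise topology is by definition the pullback along $\tau_2$ of the $\pi_3$-fiberwise topology, which equips each fiber $G \times G \times \{x\}$ with the product topology.

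Under the bijection $\tau_2$, the lemma therefore reduces to the corresponding assertion on the right-hand side of \cref{diag:grp-assoc-twist}: the product topology on each $\pi_3$-fiber $G \times G \times \{x\}$ restricts to a copy of $\@O(G)$ on each slice $\{g\} \times G \times \{x\}$ (a $\pi_{13}$-fiber) and on each slice $G \times \{h\} \times \{x\}$ (a $\pi_{23}$-fiber). This is immediate from the definition of the product topology. There is no real obstacle here; the entire content lies in carefully tracking how the twists $\tau$ and $\tau_2$ interact with the various projections, for which the two displayed formulas do essentially all of the work.
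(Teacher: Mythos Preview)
Your approach is correct and follows the same underlying idea as the paper: transport everything through the higher-order twist $\tau_2$ so that the claim becomes the triviality that a product topology on $G \times G$ restricts to each factor. The two displayed identities $\tau \circ (\mu \times X) = \pi_{23} \circ \tau_2$ and $\tau \circ (G \times \alpha) = \pi_{13} \circ \tau_2$ are exactly what is needed, and your final reduction is right.

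There is one small slip in the ``i.e.'' clause: what you call ``pullback along $\tau_2$ of the $\pi_{13}$-fiberwise topology'' is actually the $(G \times \alpha)$-fiberwise topology, not the $(\mu \times X)$-fiberwise one (and vice versa). Concretely, $\tau_2$ carries $(\mu \times X)$-fibers to $\pi_{23}$-fibers (as follows from your first identity, since $\tau$ is a bijection on $\alpha$-fibers), so the $(\mu \times X)$-fiberwise topology transports to the $\pi_{23}$-fiberwise one. This swap is harmless, since both claims are being proved simultaneously and both reduce to the same trivial restriction statement.

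The paper's proof carries out the same reduction but in a more explicit way: rather than invoking $\tau_2$ directly, it factors the top quadrilateral of \cref{diag:grp-assoc-twist} into two steps (one of the form $G \times \dagger$, the other a fiberwise right-translation on the right-hand square) and checks that each step is a fiberwise homeomorphism for the relevant fiberwise topology. Your compressed version trades this explicit factorization for the two algebraic identities; both achieve the same thing.
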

\begin{proof}
The $\alpha_2$-fiberwise topology restricted to the fibers of $G \times \alpha$ corresponds, via the above diagram \cref{diag:grp-assoc-twist}, to the $\pi_3$-fiberwise topology restricted to the fibers of $\pi_{13}$; in other words, it is the result of transporting the $\pi_{13}$-fiberwise topology along (the top edge of) the topmost quadrilateral in \cref{diag:grp-assoc-twist}.
This quadrilateral may be factored as follows:
\begin{equation*}
\begin{tikzcd}[column sep=8.5em]
G \times G \times X \drar["G \times \alpha"'] \rar["{(g,h,x)\mapsto(g,h^{-1},hx)}"] &
G \times G \times X \dar["\pi_{13}"] \rar["{(g,h,x)\mapsto(g^{-1},hg^{-1},gx)}"] &
G \times G \times X \dar["\pi_{13}"]
\\
& G \times X \rar["{(g,x)\mapsto(g^{-1},gx)}"] &
G \times X
\end{tikzcd}
\end{equation*}
The right square is a homeomorphism of the $\pi_{13}$-fiberwise topology on $G \times G \times X$ (it moves the fiber over $(g,x)$ to that over $(g^{-1},gx)$, followed by the fiberwise homeomorphism $h |-> hg^{-1}$), which transports along the left triangle to the $(G \times \alpha)$-fiberwise topology (since the left triangle is $G \times {}$the triangle in \cref{def:grp-twist}), which is thus equal to the restriction of the $\alpha_2$-fiberwise topology.

Similarly, the diagram
\begin{equation*}
\begin{tikzcd}[column sep=8.5em]
G \times G \times X \drar["\mu \times X"'] \rar["{(g,h,x)\mapsto(g^{-1},gh,x)}"] &
G \times G \times X \dar["\pi_{23}"] \rar["{(g,h,x)\mapsto(g,h^{-1},hx)}"] &
G \times G \times X \dar["\pi_{23}"]
\\
& G \times X \rar["{(h,x)\mapsto(h^{-1},hx)}"] &
G \times X
\end{tikzcd}
\end{equation*}
shows that the $\pi_{23}$-fiberwise topology transports to both the $(\mu \times X)$-fiberwise topology and the restriction of the $\alpha_2$-fiberwise topology.
\end{proof}

Now consider an equivariant map $f : X -> Y$ between two actions of $G$.
Equivariance means commutativity of the left square below, which ``untwists'' to the right square:
\begin{equation}
\label{diag:grp-eqvar-twist}
\begin{tikzcd}
G \times X \dar["\alpha_X"'] \rar["G \times f"{inner sep=1pt}] \ar[rrr,bend left=20,shift left=1,<->,"\dagger_X"] &
G \times Y \dar["\alpha_Y"] \rar[<->,"\dagger_Y"] &[2em]
G \times Y \dar["\pi_2"'] &
G \times X \dar["\pi_2"] \lar["G \times f"'{inner sep=1pt}] \\
X \rar["f"] \ar[rrr,bend right=15,equal] &
Y \rar[equal] &
Y &
X \lar["f"']
\end{tikzcd}
\end{equation}
Note that when $f = \alpha_X : G \times X -> X$, where $G$ acts on $G \times X$ via left translation on the first coordinate, the left equivariance square becomes the associativity square \cref{diag:grp-assoc}, while the entire diagram here is similar but not identical to the diagram \cref{diag:grp-assoc-twist} (the difference being that here, we only ``untwist'' the vertical edges).
As before, it is clear that these squares are pullbacks, whence

\begin{lemma}
\label{thm:grp-eqvar-fibtop}
For a $G$-equivariant map $f : X -> Y$,
the $\alpha_X$-fiberwise topology is the pullback of the $\alpha_Y$-fiberwise topology along $f$.
\qed
\end{lemma}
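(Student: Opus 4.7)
The plan is to reduce the claim to the triviality of the corresponding statement for the right-hand (untwisted) square of \cref{diag:grp-eqvar-twist}, and then transport through the twist involutions $\dagger_X, \dagger_Y$ of \cref{def:grp-twist}. The excerpt has already recorded that the diagram exhibits the left equivariance square as a pullback: indeed, the right square is manifestly the pullback of the projection $\pi_2 : G \times Y -> Y$ along $f : X -> Y$, with top arrow $G \times f$, since $G \times f$ is fiberwise the identity on $G$; and the whole of \cref{diag:grp-eqvar-twist} is an isomorphism of this right square with the left one.

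First I would verify the ``$\pi_2$-version'' of the claim on the right. Each $\pi_2$-fiber of $G \times X$ or $G \times Y$ carries, by \cref{def:grp-fibtop}, a copy of the topology $\@O(G)$; and the restriction of $G \times f$ to the fiber over any $x \in X$ is the identity on $G$ into the fiber over $f(x)$. Hence the $\pi_2$-fiberwise topology on $G \times X$ is (tautologically) the pullback of the $\pi_2$-fiberwise topology on $G \times Y$ along $f$, in the sense of \cref{def:fib-pb}.

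Second, I would invoke \cref{def:grp-fibtop} to pass from $\pi_2$-fiberwise topologies to $\alpha$-fiberwise topologies: the $\alpha_X$- and $\alpha_Y$-fiberwise topologies are, by definition, the transports of the respective $\pi_2$-fiberwise topologies under $\dagger_X$ and $\dagger_Y$, and the outer bent arrows of \cref{diag:grp-eqvar-twist} express exactly that $G \times f$ intertwines these two twists. Transporting the pullback relation through $\dagger_X$ and $\dagger_Y$ then gives the claim.

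There is no genuine obstacle here: once the pullback structure of the equivariance square is seen as the twist of the obvious one for $\pi_2$, the lemma is only a repackaging of \cref{def:grp-fibtop}. The only verification required is that $\dagger$ is compatible with the pullback, which is the content of commutativity of \cref{diag:grp-eqvar-twist}.
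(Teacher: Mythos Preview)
Your proposal is correct and follows exactly the approach the paper intends: the paper simply notes that the squares in \cref{diag:grp-eqvar-twist} are pullbacks (the right one manifestly so, the left by transport through the twists) and marks the lemma with \qed. Your write-up merely spells out what the paper leaves implicit.
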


\subsection{Vaught transforms}
\label{sec:grp-vaught}

We now suppose that $G$ is a Polish group and $X$ is a standard Borel $G$-space.
Then the $\alpha$-fiberwise topology of \cref{def:grp-fibtop} turns $G \times X$ into a standard Borel-overt bundle of quasi-Polish spaces over $X$, since $\pi_2$ clearly does.
By \cref{rmk:grp-fibtop-basis}, a countable Borel fiberwise open basis is given by $\@U \times X$ for any countable open basis $\@U$ for $G$.

In the rest of this section, we will use the letters $U, V, W$ to denote arbitrary Borel subsets of $G$ (not necessarily open), and $A, B, C$ to denote arbitrary Borel subsets of $X$.

\begin{definition}
\label{def:grp-vaught}
We will use the term \defn{Vaught transform} to refer loosely to several things.
Most generally, it will refer to the Baire category quantifier
\begin{equation*}
\exists^*_\alpha : \@B(G \times X) --> \@B(X)
\end{equation*}
for the $\alpha$-fiberwise topology (which lands in $\@B(X)$ by \cref{thm:fib-bov-baire}): for Borel $D \subseteq G \times X$,
\begin{equation*}
\begin{aligned}
\exists^*_\alpha(D)
= \exists^*_{\pi_2}(D^\dagger)
:={}& \{x \in X \mid \exists^* g \in G\, ((g^{-1},gx) \in D)\} \\
={}& \{x \in X \mid \exists^* g \in G\, ((g,g^{-1}x) \in D)\}.
\end{aligned}
\end{equation*}
For a Borel rectangle $D = U \times A$, we use the notation, also called the \defn{Vaught transform},
\begin{equation*}
U * A := \exists^*_\alpha(U \times A)
= \{x \in X \mid \exists^* g \in G\, (g \in U \AND x \in gA)\}.
\end{equation*}
In the original notation of Vaught (for nonempty open $U$), this would be denoted $A^{\triangle U^{-1}}$; see \cite[16.2]{Kcdst}, \cite[5.1.7]{BKgrp}.
However, we find this binary $*$ notation more convenient, to highlight the analogy with the product set under the action $U \cdot A = f(U \times A)$ (see \cref{it:grp-vaught-im} below).

Following \cref{def:ostar}, for $\@S \subseteq \@B(X)$ (e.g., a compatible quasi-Polish topology), we write%
\footnote{\label{ft:vaught-oast}%
The symbol $\oast$ in \cite{Cscc} denotes something entirely different.}
\begin{align*}
\@O(G) \oast \@S
:= \exists^*_\alpha(\@O(G) \otimes \@S)
= \{\bigcup_i (U_i * A_i) \mid U_i \in \@O(G) \AND A_i \in \@S\}
\subseteq \@B(X)
\end{align*}
where $i$ runs over a countable set.
The notation $\@B(G) \oast \@S$ has the analogous meaning.
\end{definition}

We now record several basic properties of Vaught transforms.
These are mostly well-known, at least for open $U$; see \cite[\S16.B]{Kcdst}, \cite[5.1.7]{BKgrp}.
Our main goal here is to make clear that these can all be derived ``algebraically'' from the corresponding properties of $\exists^*_\alpha$ from \cref{sec:fib}, hence generalize essentially verbatim to the groupoid and point-free contexts in \cref{sec:gpd-vaught,sec:loc-gpd}.

By \cref{it:fib-baire-im}, \cref{rmk:grp-fibtop-basis}, \cref{it:grp-orbtop}, and \cref{it:grp-orbtop-invar},
\begin{eqenum}
\item \label{it:grp-vaught-im}
$U * A \subseteq U \cdot A$, with equality if $U$ is open and $A$ is orbitwise open.
\item \label{it:grp-vaught-invar}
Thus, if $U$ is nonempty open and $A$ is $G$-invariant, then $U * A = A$.
\end{eqenum}
By \cref{it:fib-baire-union}, $*$ distributes over countable unions:
\begin{align}
\label{it:grp-vaught-union}
(\bigcup_i U_i) * (\bigcup_j A_j) &= \bigcup_{i,j} (U_i * A_j).
\end{align}
By \cref{it:fib-baire-diff} applied to $\alpha|(U \times X)$, for open $U \subseteq G$ and any countable open basis $\@W$ for $G$ (so $W \times X$ for $\@W \ni W \subseteq U$ form an $\alpha$-fiberwise open basis for $U \times X$),
\begin{align}
\label{it:grp-vaught-diff}
U * (A \setminus B) &= \bigcup_{\@W \ni W \subseteq U} ((W * A) \setminus (W * B)).
\end{align}
Thus for a quasi-Polish $G$-space $X$, by induction, or by \cref{thm:fib-baire-borel},
\begin{align}
\label{it:grp-vaught-borel}
\@O(G) \oast \*\Sigma^0_\xi(X) \subseteq \*\Sigma^0_\xi(X).
\end{align}

By \cref{rmk:grp-fibtop-basis} and \cref{it:grp-orbtop-baire},
\begin{equation}
\label{it:grp-vaught-meager}
\begin{aligned}
U \subseteq G \text{ meager}  &\implies  U * A \subseteq U * X = \emptyset, \\
A \subseteq X \text{ orbitwise meager}  &\implies  U * A \subseteq G * A = \emptyset.
\end{aligned}
\end{equation}
Since $*$ preserves union \cref{it:grp-vaught-union}, it follows that (recalling notation from \cref{eq:subset*}, \cref{def:fib-baire})
\begin{equation}
\label{it:grp-vaught-pettis}
U \subseteq^* V \AND A \subseteq^*_G B  \implies  U * A \subseteq V * B
\end{equation}
(indeed,
$U \subseteq^* V \implies
U * A
\subseteq ((U \setminus V) \cup V) * A
= ((U \setminus V) * A) \cup (V * A) = V * A$, and similarly for $A \subseteq^*_G B$).
We will refer to this law as \defn{Pettis's theorem (for actions)}; the original Pettis's theorem for groups follows by taking $\alpha = \mu$ and $U, A \in \@O(G)$.

Note that for any $U \in \@B(G)$, letting $U =^* U' \in \@O(G)$ by the Baire property, by \cref{it:grp-vaught-pettis} we have
\begin{align}
\label{it:grp-vaught-bp}
U * A = U' * A.
\end{align}
Thus considering $U * A$ for Borel $U$ is really no more general than taking only open $U$.

Next, consider applying the Baire category quantifier to the various edges of the associativity square \cref{diag:grp-assoc}.
Note that for $U, V \in \@B(G)$ and $A \in \@B(X)$, the quantifier $\exists^*_{G \times \alpha}$ maps $U \times V \times A |-> U \times (V * A)$; here we are implicitly using that $G \times G \times X$ is equipped with the $(G \times \alpha)$-fiberwise topology given by pulling back the $\alpha$-fiberwise topology as in the square \cref{diag:grp-assoc}.
Similar considerations apply to $\exists^*_{\mu \times G}$.
Thus, the \defn{Beck--Chevalley condition} \cref{it:fib-baire-bc} applied \emph{both ways} to the pullback square \cref{diag:grp-assoc} yields, for $D \in \@B(G \times X)$,
\begin{align}
\label{it:grp-vaught-bc}
\alpha^{-1}(\exists^*_\alpha(D))
&= \exists^*_{G \times \alpha}((\mu \times X)^{-1}(D))
= \exists^*_{\mu \times X}((G \times \alpha)^{-1}(D)),
\end{align}
which for a rectangle $D = U \times A$ means (using \cref{it:grp-vaught-union})
\begin{align}
\label{it:grp-vaught-bcr}
\alpha^{-1}(U * A)
&= \exists^*_{G \times \alpha}(\mu^{-1}(U) \times A)
= \bigcup_{VW \subseteq U} (V \times (W * A)) \quad \text{for $U, V, W \in \@O(G)$} \\
\label{it:grp-vaught-bcl}
&= \exists^*_{\mu \times X}(U \times \alpha^{-1}(A))
= U * \alpha^{-1}(A).
\end{align}
Note that an immediate consequence of \cref{it:grp-vaught-bcr} and \cref{it:grp-orbtop} is
\begin{eqenum}
\item \label{it:grp-vaught-orbtop}
For $U \in \@O(G)$ and $A \in \@B(X)$, $U * A$ is orbitwise open, i.e., 
$\@O(G) \oast \@B(X) \subseteq \@O_G(X)$.
\end{eqenum}


Also, from \cref{thm:grp-assoc-fibtop}, both the $(G \times \alpha)$- and $(\mu \times X)$-fiberwise topologies on $G \times G \times X$ are restrictions of the same $\alpha_2$-fiberwise topology; and both maps are fiberwise open over $X$ to the $\alpha$-fiberwise topology, since this is clearly true of the ``untwisted'' maps $\pi_{13}, \pi_{23}$ in the right square of \cref{diag:grp-assoc-twist}.
Thus by the \defn{Kuratowski--Ulam} \cref{thm:kuratowski-ulam}, we have
\begin{align}
\label{it:grp-vaught-ku}
\exists^*_\alpha \circ \exists^*_{\mu \times X} = \exists^*_\alpha \circ \exists^*_{G \times \alpha} : \@B(G \times G \times X) -> \@B(X),
\end{align}
which on rectangles $U \times V \times A$ says
\begin{align}
\label{it:grp-vaught-assoc}
(U * V) * A &= U * (V * A). \\
\intertext{For $U, V \subseteq G$ open, we furthermore have}
\label{it:grp-vaught-assoc-im}
= (U \cdot V) * A &= U \cdot (V * A)
\end{align}
by \cref{it:grp-vaught-im} and \cref{it:grp-vaught-orbtop}.
In particular, we get
\begin{eqenum}
\item \label{it:grp-vaught-subgrp}
If $U \subseteq G$ is an open subgroup, then $U * A$ is $U$-invariant.
\end{eqenum}

Finally, for a Borel $G$-equivariant map $f : X -> Y$ between two standard Borel $G$-spaces, from the Beck--Chevalley condition \cref{it:fib-baire-bc} and \cref{thm:grp-eqvar-fibtop}, generalizing \cref{it:grp-vaught-bcl}, we have for $B \in \@B(Y)$
\begin{align}
\label{it:grp-vaught-eqvar}
f^{-1}(U * B) = U * f^{-1}(B).
\end{align}
In fact this law characterizes equivariance of $f$; see \cref{thm:grp-eqvar-vaught} below.

\subsection{Topological realization}
\label{sec:grp-realiz}

The following may be regarded as the core result underlying the Becker--Kechris topological realization theorem, generalized to the quasi-Polish context (recall the notation $\oast$ from \cref{def:grp-vaught}):

\begin{theorem}
\label{thm:grp-cts}
Let $G$ be a Polish group, $X$ be a quasi-Polish space equipped with a Borel action $\alpha$ of $G$.
Then
\begin{enumerate}[label=(\alph*)]
\item
The action is continuous iff $\@O(G) \oast \@O(X) = \exists^*_\alpha(\@O(G \times X)) = \@O(X)$, i.e., the sets $U * A$ for $U \in \@O(G)$ and $A \in \@O(X)$ (are open and) form an open (sub)basis for $X$.
\item
If $\@O(G) \oast \@O(X) \subseteq \@O(X)$, then $\@O(G) \oast \@O(X)$ forms a coarser compatible quasi-Polish topology making the action continuous.
\end{enumerate}
\end{theorem}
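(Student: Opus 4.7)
The plan is to first derive the identity $\@O(G) \oast \@O(X) = \exists^*_\alpha(\@O(G \times X))$, which holds with no hypothesis on continuity: since $\@O(G \times X) = \@O(G) \otimes \@O(X)$ admits a basis of open rectangles $U \times A$, and $\exists^*_\alpha$ preserves countable unions by \cref{it:fib-baire-union} with $\exists^*_\alpha(U \times A) = U * A$, the image is precisely the countable unions $\bigcup_i (U_i * A_i)$.

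For part (a), forward direction: if $\alpha$ is continuous, the twist involution $\dagger$ of \cref{def:grp-twist} is a homeomorphism (self-inverse, continuous by continuity of $\alpha$ and of inversion in $G$), so $\alpha = \pi_2 \circ \dagger^{-1}$ is a continuous open surjection. For any $U \in \@O(G)$ and $A \in \@O(X)$, \cref{it:grp-vaught-im} together with $\@O(X) \subseteq \@O_G(X)$ (\cref{it:grp-orbtop-top}) yields $U * A = U \cdot A = \alpha(U \times A) \in \@O(X)$; continuity of $\alpha$ at each $(e, x)$ further shows that these $U*B$ form an open basis at $x$, proving $\@O(G) \oast \@O(X) = \@O(X)$. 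Conversely, if $\@O(G) \oast \@O(X) = \@O(X)$, then to check continuity of $\alpha$ I need only show $\alpha^{-1}(U * A) \in \@O(G \times X)$ for generators $U * A$. This is immediate from the Beck--Chevalley formula \cref{it:grp-vaught-bcr}: $\alpha^{-1}(U * A) = \bigcup_{V W \subseteq U} V \times (W * A)$, with each $W * A \in \@O(X)$ by hypothesis, so the union is of open rectangles.

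For part (b), I would invoke \cref{thm:qpol-baire-subtop} applied to the Borel surjection $\alpha : G \times X ->> X$ carrying the $\alpha$-fiberwise topology of \cref{def:grp-fibtop}. That topology is coarser than the restriction of $\@O(G \times X)$ to fibers by \cref{rmk:grp-fibtop-basis}, and it makes $G \times X$ into a standard Borel-overt bundle of quasi-Polish spaces over $X$ (it is the $\dagger$-twist of the product bundle $\pi_2 : G \times X -> X$, noted at the start of \cref{sec:grp-vaught}). The Baire category quantifier for this bundle is $\exists^*_\alpha$, with $\exists^*_\alpha(\@O(G \times X)) = \@O(G) \oast \@O(X)$ as above. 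The hypothesis $\alpha^{-1}(\exists^*_\alpha(\@O(G \times X))) \subseteq \@O(G \times X)$ of \cref{thm:qpol-baire-subtop} is verified by exactly the same \cref{it:grp-vaught-bcr} computation as in the converse direction of (a), now using the standing assumption $\@O(G) \oast \@O(X) \subseteq \@O(X)$ of (b). The theorem's conclusion yields that $\@O(G) \oast \@O(X)$ is a compatible quasi-Polish topology on $X$ making $\alpha$ continuous; coarseness is the hypothesis.

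There is no real obstacle here: the preceding machinery of \cref{sec:prelim-lin,sec:fib-lin-baire} was set up precisely so that the Beck--Chevalley identity \cref{it:grp-vaught-bcr} for $\alpha^{-1}(U * A)$ matches verbatim the hypothesis of \cref{thm:qpol-baire-subtop}. The only subtle point worth spelling out is that closure of $\@O(G) \oast \@O(X)$ under finite intersections (and hence its being a topology at all) is not proved directly, but rather obtained as a consequence of the conclusion of \cref{thm:qpol-baire-subtop} (via \cref{eq:qpol-linquot-borel} in the proof of \cref{thm:qpol-linquot-borel}).
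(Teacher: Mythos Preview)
Your proposal is correct and follows essentially the same route as the paper: the forward direction of (a) via openness of $\alpha$ and \cref{it:grp-vaught-im}, and the converse together with (b) via the Beck--Chevalley formula \cref{it:grp-vaught-bcr} feeding into \cref{thm:qpol-baire-subtop}. The only organizational difference is that the paper handles the converse of (a) and (b) in a single stroke (assuming only $\@O(G) \oast \@O(X) \subseteq \@O(X)$), and reads off from \cref{it:grp-vaught-bcr} not merely the hypothesis of \cref{thm:qpol-baire-subtop} but the stronger containment $\alpha^{-1}(\@O(G) \oast \@O(X)) \subseteq \@O(G) \otimes (\@O(G) \oast \@O(X))$, which directly gives continuity of the action with respect to the \emph{new} product topology; you should make this last point explicit, since \cref{thm:qpol-baire-subtop} alone only yields continuity of $\alpha$ from the original product topology on $G \times X$.
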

\begin{proof}
If the action is continuous, then $\alpha : G \times X -> X$ is a continuous open surjection, whence $\exists^*_\alpha(\@O(G \times X)) = \alpha(\@O(G \times X)) = \@O(X)$.
Conversely, suppose $\@O(G) \oast \@O(X) = \exists^*_\alpha(\@O(G \times X)) \subseteq \@O(X)$.
Then
$\alpha^{-1}(\exists^*_\alpha(\@O(G \times X)))
= \alpha^{-1}(\@O(G) \oast \@O(X))
\subseteq \@O(G) \otimes (\@O(G) \oast \@O(X))
\subseteq \@O(G \times X)$
by the Beck--Chevalley condition \cref{it:grp-vaught-bcr}, whence $\alpha$ is continuous with respect to $\@O(G) \oast \@O(X)$, which by \cref{thm:qpol-baire-subtop} is a compatible quasi-Polish topology on $X$.
\end{proof}

We now state a consequence of \cref{thm:grp-cts} that subsumes most commonly used topological realization results as special cases.
Recall the notation $\otimes$ for product $\sigma$-topology from \cref{def:ostar}, and the notion of a \emph{compatible $\sigma$-topology} from \cref{def:qpol-compat}.

\begin{theorem}
\label{thm:grp-realiz}
Let $G$ be a Polish group, $X$ be a standard Borel $G$-space, $\@S \subseteq \@B(X)$ be a compatible $\sigma$-topology such that $\@O(G) \oast \@S \subseteq \@S$.
For any $A \in \@B(X)$, the following are equivalent:
\begin{enumerate}[label=(\roman*)]
\item \label{thm:grp-realiz:open}
$A$ is open in some quasi-Polish topology $\@O(X) \subseteq \@S$ making the action continuous.
\item \label{thm:grp-realiz:vaught}
$A \in \@B(G) \oast \@S = \@O(G) \oast \@S$, i.e., $A = \bigcup_i (U_i * A_i)$ for countably many $U_i \in \@B(G)$ (or $\@O(G)$) and $A_i \in \@S$.
\item \label{thm:grp-realiz:rect}
$\alpha^{-1}(A) \in \@B(G) \otimes \@S$, i.e., $\alpha^{-1}(A) = \bigcup_i (U_i \times A_i)$ for countably many $U_i \in \@B(G)$ and $A_i \in \@S$.
\item \label{thm:grp-realiz:rect-open}
$\alpha^{-1}(A) \in \@O(G) \otimes \@S$, i.e., $\alpha^{-1}(A) = \bigcup_i (U_i \times A_i)$ for countably many $U_i \in \@O(G)$ and $A_i \in \@S$.
\item \label{thm:grp-realiz:rect-vaught}
$\alpha^{-1}(A) \in \@O(G) \otimes (\@O(G) \oast \@S)$, i.e., $\alpha^{-1}(A) = \bigcup_i (U_i \times (V_i * A_i))$ for $U_i, V_i \in \@O(G)$, $A_i \in \@S$.
\item \label{thm:grp-realiz:trans}
Every $G$-translate $g \cdot A$ for $g \in G$ is in $\@S$, and there are countably many sets in $\@S$ generating all such translates under union.
\end{enumerate}
In particular, every $G$-invariant $A \in \@S$ obeys these conditions.
Moreover, countably many $A \in \@B(X)$ obeying these conditions may be made simultaneously open in some topology as in \cref{thm:grp-realiz:open}.
\end{theorem}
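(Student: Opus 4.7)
The plan is to establish the cycle (i) $\Rightarrow$ (vi) $\Rightarrow$ (iii), then (iii) $\Leftrightarrow$ (iv) $\Leftrightarrow$ (v) $\Leftrightarrow$ (ii), and finally (ii) $\Rightarrow$ (i). The last implication is the core topological realization content and reduces to part~(b) of \cref{thm:grp-cts} once a suitable compatible quasi-Polish topology inside $\@S$ has been constructed.

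For the four-way equivalence of (ii)--(v), the main tool is the algebra of Vaught transforms from \cref{sec:grp-vaught}. The implication (ii) $\Rightarrow$ (v) follows from Beck--Chevalley \cref{it:grp-vaught-bcr}: writing $A = \bigcup_i (U_i * A_i)$ with $U_i \in \@O(G)$ and $A_i \in \@S$ gives $\alpha^{-1}(A) = \bigcup_i \bigcup_{V W \subseteq U_i} V \times (W * A_i)$, with $W * A_i \in \@O(G) \oast \@S \subseteq \@S$. The implications (v) $\Rightarrow$ (iv) $\Rightarrow$ (iii) are trivial. For (iii) $\Rightarrow$ (ii): since each $\alpha$-fiber is a nonmeager copy of $G$, \cref{it:fib-baire-surj} yields $A = \exists^*_\alpha(\alpha^{-1}(A))$; applying this to the decomposition from (iii) and using \cref{it:fib-baire-union} gives $A = \bigcup_i (U_i * A_i)$, and Borel $U_i$ may be replaced by open ones via \cref{it:grp-vaught-bp}.

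For (i) $\Rightarrow$ (vi), continuity of the translations $x \mapsto g \cdot x$ makes each $g \cdot A$ open in $\@O(X) \subseteq \@S$, and a countable basis of $\@O(X)$ generates all such translates. For (vi) $\Rightarrow$ (iii), the fiber of $\alpha^{-1}(A)$ over $g \in G$ is $g^{-1} A$, which by (vi) is a union of the countably many generators $B_j \in \@S$; applying \cref{thm:kunugui-novikov} to the family $\{G \times B_j\}_j$ along the projection $\pi_1 : G \times X \to G$ yields $\alpha^{-1}(A) = \bigcup_j (U_j \times B_j)$ with Borel $U_j \subseteq G$.

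The main step (ii) $\Rightarrow$ (i) builds a compatible quasi-Polish topology $\@O(X) \subseteq \@S$ closed under $\@O(G) \oast (-)$. Fix a countable open basis $\@U$ of $G$ and iteratively choose a chain of compatible quasi-Polish topologies $\@T_0 \subseteq \@T_1 \subseteq \cdots \subseteq \@S$: let $\@T_0$ contain all $A_i$ from the given decomposition $A = \bigcup_i (U_i * A_i)$ (possible by \cref{def:qpol-compat}) and, having fixed $\@T_n$ with countable basis $\@V_n$, let $\@T_{n+1}$ be any compatible quasi-Polish topology in $\@S$ containing $\@T_n$ together with the countably many sets $\{U * B : U \in \@U,\ B \in \@V_n\} \subseteq \@O(G) \oast \@S \subseteq \@S$. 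The join $\@O(X) := \bigvee_n \@T_n$ is compatible quasi-Polish by \cref{it:qpol-dis:join}, and since the chain is increasing, $\bigcup_n \@T_n$ is itself a basis for $\@O(X)$; expanding $V \in \@O(X)$ into basis elements of some $\@T_n$, which in turn unfold over $\@V_n$, and $U \in \@O(G)$ into elements of $\@U$, one verifies $U * V \in \@O(X)$, so $\@O(G) \oast \@O(X) \subseteq \@O(X)$. Part~(b) of \cref{thm:grp-cts} then supplies a coarser compatible quasi-Polish topology $\@O'(X) := \@O(G) \oast \@O(X) \subseteq \@S$ making the action continuous, and $A \in \@O'(X)$ because $A = \bigcup_i (U_i * A_i)$ with each $A_i \in \@T_0 \subseteq \@O(X)$. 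Countably many sets $A^{(k)}$ are handled simultaneously by placing all their generators into $\@T_0$ at the outset, and an invariant $A \in \@S$ trivially satisfies (vi) with $A$ as a single generator. The main technical obstacle is this iterative enrichment, necessitated because $U * (-)$ distributes over unions but not over finite intersections, so a one-shot closure under $\@U * (-)$ does not yield a topology closed under $\@O(G) \oast (-)$.
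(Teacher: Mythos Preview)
Your proposal is correct and follows essentially the same approach as the paper: the same iterative construction $\@T_0 \subseteq \@T_1 \subseteq \dotsb$ inside $\@S$ followed by an application of \cref{thm:grp-cts}(b), the same use of $A = \exists^*_\alpha(\alpha^{-1}(A))$ for (iii)$\Rightarrow$(ii), and the same appeal to Kunugui--Novikov for (vi)$\Rightarrow$(iii). The only cosmetic difference is that the paper routes (i)$\Rightarrow$(v) directly via \cref{thm:grp-cts}(a) (since $\alpha^{-1}(A) \in \@O(G) \otimes \@O(X)$ and $\@O(X) = \@O(G) \oast \@O(X)$), whereas you go (i)$\Rightarrow$(vi)$\Rightarrow$(iii) and separately (ii)$\Rightarrow$(v) via Beck--Chevalley; both orderings close the same cycle.
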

\begin{proof}
First, note that in \cref{thm:grp-realiz:vaught}, indeed $\@B(G) \oast \@S = \@O(G) \oast \@S$, by \cref{it:grp-vaught-bp}.
Also, every $G$-invariant $A \in \@S$ clearly obeys \cref{thm:grp-realiz:trans}, and also obeys \cref{thm:grp-realiz:vaught} by \cref{it:grp-vaught-invar}.

We have \cref{thm:grp-realiz:open}$\implies$\cref{thm:grp-realiz:rect-vaught}, since $\alpha^{-1}(A) \in \@O(G) \otimes \@O(X) \subseteq \@O(G) \otimes (\@O(G) \oast \@S)$ by \cref{thm:grp-cts}(a).

Clearly \cref{thm:grp-realiz:rect-vaught}$\implies$\cref{thm:grp-realiz:rect-open}$\implies$\cref{thm:grp-realiz:rect}.

We have \cref{thm:grp-realiz:rect}$\implies$\cref{thm:grp-realiz:vaught} since $A = \exists^*_\alpha(\alpha^{-1}(A))$ since $G \times X$ is $\alpha$-fiberwise nonmeager \cref{it:fib-baire-surj}.

Clearly \cref{thm:grp-realiz:rect}$\implies$\cref{thm:grp-realiz:trans}, while the converse holds by the Kunugui--Novikov \cref{thm:kunugui-novikov}.

It remains to show that countably many sets obeying \cref{thm:grp-realiz:vaught} can be made simultaneously open as in \cref{thm:grp-realiz:open}; for that, it suffices to show that countably many sets $U_i * A_i \in \@O(G) * \@S$ can be made simultaneously open.
Since $\@S$ is a compatible $\sigma$-topology, there is a quasi-Polish topology $\{A_i\}_i \subseteq \@T_0 \subseteq \@S$.
Given $\@T_n$, find a finer quasi-Polish topology $\@T_n \cup (\@O(G) * \@T_n) \subseteq \@T_{n+1} \subseteq \@S$ (using compatibility of $\@S$ and countable bases for $\@O(G), \@T_n$).
Then the join $\@T$ of the $\@T_n$ is quasi-Polish \cref{it:qpol-dis}\cref{it:qpol-dis:join}, and $\{A_i\}_i \cup (\@O(G) * \@T) \subseteq \@T \subseteq \@S$.
By \cref{thm:grp-cts}(b), the topology $\@O(G) \oast \@T$ works.
\end{proof}

\begin{corollary}[{topological realization of Borel actions; cf.\ \cite[5.2.1]{BKgrp}}]
\label{thm:grp-borel-realiz}
Let $G$ be a Polish group, $X$ be a standard Borel $G$-space.
Then there is a compatible quasi-Polish topology on $X$ making the action continuous.
\end{corollary}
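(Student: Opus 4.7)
The plan is a direct application of \cref{thm:grp-realiz} with the maximal compatible $\sigma$-topology $\@S := \@B(X)$. First I would verify the two hypotheses of that theorem for this choice of $\@S$. That $\@B(X)$ is a compatible $\sigma$-topology in the sense of \cref{def:qpol-compat} is just the classical fact that any countable family of Borel subsets of a standard Borel space can be simultaneously adjoined to a compatible (in fact zero-dimensional Polish) topology on $X$; see \cref{it:qpol-sbor}. The stability condition $\@O(G) \oast \@B(X) \subseteq \@B(X)$ says exactly that every Vaught transform $U * A$ of a Borel set $A \subseteq X$ by an open set $U \subseteq G$ is again Borel, which follows from \cref{thm:fib-bov-baire} applied to the standard Borel-overt bundle $\alpha : G \times X -> X$ equipped with the $\alpha$-fiberwise topology as in \cref{def:grp-fibtop}; equivalently, this is the boundedness $\exists^*_\alpha(\@B(G \times X)) \subseteq \@B(X)$ recorded in \cref{def:grp-vaught}.

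With both hypotheses in hand I would then invoke \cref{thm:grp-realiz}. The cleanest route is via its ``moreover'' clause applied to the empty collection of Borel sets (or, if one prefers a non-empty input, to the trivially $G$-invariant set $A := X$, which satisfies condition \cref{thm:grp-realiz:trans} vacuously): this produces a compatible quasi-Polish topology $\@O(X) \subseteq \@B(X)$ for which the action $\alpha$ is continuous, which is precisely the assertion of the corollary.

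There is essentially no new obstacle to overcome at this stage, since all of the substance has been absorbed into \cref{thm:grp-cts,thm:grp-realiz}. If one wanted to unpack the content, the mechanism is exactly the one used in the proof of the ``moreover'' clause of \cref{thm:grp-realiz}: start from any compatible quasi-Polish topology $\@T_0 \subseteq \@B(X)$ (which exists by \cref{it:qpol-sbor}) and inductively enlarge $\@T_n$ to a larger quasi-Polish topology $\@T_{n+1} \subseteq \@B(X)$ containing $\@O(G) \oast \@T_n$, using \cref{it:qpol-dis} and the fact that $\@O(G) \oast \@T_n \subseteq \@B(X)$ by the preceding paragraph; the join $\@T := \bigvee_n \@T_n$ is again quasi-Polish by \cref{it:qpol-dis}, and is closed under $\@O(G) \oast (-)$. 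Then \cref{thm:grp-cts}(b) guarantees that the still-coarser $\@O(G) \oast \@T$ is a compatible quasi-Polish topology on $X$ making $\alpha$ continuous.
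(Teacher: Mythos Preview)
Your proposal is correct and follows exactly the paper's own approach: apply \cref{thm:grp-realiz} with $\@S := \@B(X)$ and the empty collection of sets. Your verification of the hypotheses and the optional unpacking of the iterative construction are accurate elaborations of what the paper leaves implicit.
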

\begin{proof}
By \cref{thm:grp-realiz} with $\@S := \@B(X)$ and the empty collection of $A$.
\end{proof}

\begin{corollary}[{change of topology; cf.\ \cite[5.1.8]{BKgrp}, \cite{Hbeckec}}]
\label{thm:grp-realiz-dis}
Let $G$ be a Polish group, $X$ be a quasi-Polish $G$-space.
Then for any countably many sets $A_i \in \*\Sigma^0_\xi(X)$, there is a finer quasi-Polish topology containing each $\@O(G) * A_i$ and contained in $\*\Sigma^0_\xi(X)$ for which the action is still continuous.
In particular, if $A_i$ is $G$-invariant, then $A_i$ itself can be made open in such a topology.
\end{corollary}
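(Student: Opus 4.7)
The plan is to apply \cref{thm:grp-realiz} to the compatible $\sigma$-topology $\@S := \*\Sigma^0_\xi(X)$. First, I would verify the hypothesis: by \cref{ex:qpol-compat-borel}, each Borel hierarchy level $\*\Sigma^0_\xi(X)$ on a quasi-Polish space is a compatible $\sigma$-topology, and by \cref{it:grp-vaught-borel} (applied to the quasi-Polish $G$-space $X$), we have $\@O(G) \oast \*\Sigma^0_\xi(X) \subseteq \*\Sigma^0_\xi(X)$, so $\@S$ is stable under Vaught transforms as required by \cref{thm:grp-realiz}.

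Next, I would choose a countable open basis $\@U \subseteq \@O(G)$ for the Polish group $G$ and consider the countable family of Vaught transforms $\{U * A_i \mid U \in \@U, \, i \in \#N\}$. Each of these sets lies in $\@O(G) \oast \@S$, so it satisfies condition \cref{thm:grp-realiz:vaught} of \cref{thm:grp-realiz}. The ``moreover'' clause of that theorem then yields a single compatible quasi-Polish topology $\@O(X) \subseteq \@S = \*\Sigma^0_\xi(X)$ for which the action is continuous and each $U * A_i$ (for $U \in \@U$, $i \in \#N$) is open.

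To finish, I would argue that this topology $\@O(X)$ in fact contains all of $\@O(G) * A_i$, not merely the Vaught transforms by basic opens: for any $V \in \@O(G)$, write $V = \bigcup \{U \in \@U \mid U \subseteq V\}$; then by the distributivity of $*$ over unions \cref{it:grp-vaught-union}, we get $V * A_i = \bigcup_{U \in \@U, \, U \subseteq V} (U * A_i)$, which is a union of opens in $\@O(X)$ and therefore open. The ``in particular'' clause is then an immediate consequence of \cref{it:grp-vaught-invar}: for $G$-invariant $A_i$ and any nonempty $U \in \@O(G)$, one has $U * A_i = A_i$, so $A_i$ itself is among the sets that have been made open.

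There is no real obstacle here; the work is essentially bookkeeping to reduce an uncountable requirement (``$\@O(X) \supseteq \@O(G) * A_i$'' for the entire topology $\@O(G)$) to a countable one via second-countability of $G$ and distributivity of Vaught transforms over unions, followed by a direct invocation of the main realization theorem. The one point worth being careful about is the order of quantifiers in the ``moreover'' clause of \cref{thm:grp-realiz}: we produce a \emph{single} topology simultaneously witnessing openness for all countably many $U * A_i$'s, rather than a separate topology for each, which is what allows the final union argument to take place inside one fixed topology.
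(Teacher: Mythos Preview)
Your approach is essentially the same as the paper's, but there is one missing ingredient: you do not ensure that the new topology is \emph{finer} than the original $\@O(X)$, which is part of the statement. Applying the ``moreover'' clause of \cref{thm:grp-realiz} to the countable family $\{U * A_i \mid U \in \@U,\, i\}$ alone produces \emph{some} compatible quasi-Polish topology contained in $\*\Sigma^0_\xi(X)$ making the action continuous and containing each $U * A_i$, but nothing in the construction forces it to contain $\@O(X)$.

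The fix, which is exactly what the paper does, is to also throw a countable basis for $\@O(X)$ into the family of sets to be made open. Each basic open $V \in \@O(X)$ already satisfies condition \cref{thm:grp-realiz:open} (since $X$ is a quasi-Polish $G$-space with continuous action), hence satisfies the equivalent conditions of \cref{thm:grp-realiz}; so the ``moreover'' clause applies to these as well, and the resulting topology then contains $\@O(X)$. With this single addition your argument is complete; your explicit reduction from all of $\@O(G) * A_i$ to basic $U * A_i$ via \cref{it:grp-vaught-union} is a correct and helpful unpacking of what the paper leaves implicit.
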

\begin{proof}
By \cref{thm:grp-realiz}\cref{thm:grp-realiz:vaught} with $\@S := \*\Sigma^0_\xi(X)$, which is compatible (\cref{ex:qpol-compat-borel}) and obeys $\@O(G) \oast \@S \subseteq \@S$ \cref{it:grp-vaught-borel}, applied to the given sets $A_i$ as well as a countable basis for $\@O(X)$.
\end{proof}

The next result is perhaps not usually viewed as a ``topological realization theorem''.
However, we can (somewhat perversely) regard it as such: it says that if the $G$-action preserves a preexisting topology, then we can find a topological realization ``compatible with'' that preexisting topology, i.e., equal to it.
The analogous result for groupoid actions looks less perverse; see \cref{thm:gpd-top-realiz}.

\begin{corollary}[{automatic continuity of actions; cf.\ \cite[9.16(i)]{Kcdst}}]
\label{thm:grp-top-realiz}
Let $G$ be a Polish group, $X$ be a quasi-Polish space with a Borel action of $G$.
If each $g \in G$ acts via a homeomorphism of $X$, then the action is jointly continuous.
\end{corollary}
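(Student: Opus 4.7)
The plan is to apply \cref{thm:grp-realiz} with $\@S := \@O(X)$ (a compatible $\sigma$-topology since $X$ is itself quasi-Polish) to a countable basis $\{A_i\}_i$ of $\@O(X)$. Each basic open $A_i$ will satisfy condition \cref{thm:grp-realiz:trans} of the theorem: every $G$-translate $g \cdot A_i$ lies in $\@O(X)$ by the homeomorphism hypothesis, and the basis $\{A_i\}_i$ is itself a countable family generating all such translates under union. The resulting quasi-Polish topology is contained in $\@O(X)$ yet contains the basis $\{A_i\}_i$, so it must equal $\@O(X)$, and by construction the action is continuous in it.

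The main obstacle is verifying the hypothesis of \cref{thm:grp-realiz} for $\@S = \@O(X)$, namely $\@O(G) \oast \@O(X) \subseteq \@O(X)$. Here the key observation is that by \cref{it:grp-vaught-im}, $U * A = U \cdot A$ whenever $U \in \@O(G)$ is open and $A$ is orbitwise open; and in that case the homeomorphism hypothesis makes $U \cdot A = \bigcup_{g \in U} g A$ manifestly a member of $\@O(X)$. Thus everything reduces to showing that every $A \in \@O(X)$ is orbitwise open.

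To establish orbitwise openness of $\@O(X)$, the plan is to invoke \cref{thm:grp-realiz} once beforehand, with $\@S := \@B(X)$ (whose closure property $\@O(G) \oast \@B(X) \subseteq \@B(X)$ is automatic) applied to the same countable basis $\{A_i\}_i$. For each $A_i$ I verify condition \cref{thm:grp-realiz:rect}: the preimage $\alpha^{-1}(A_i) \subseteq G \times X$ is Borel and is $\pi_1$-fiberwise open, since its vertical fiber over each $g \in G$ is $g^{-1} A_i \in \@O(X)$ by the homeomorphism hypothesis; applying Kunugui--Novikov (\cref{thm:kunugui-novikov}) to this Borel $\pi_1$-fiberwise open set, together with the countable family $\{G \times A_j\}_j$, yields $\alpha^{-1}(A_i) = \bigcup_j (B_j \times A_j)$ for some Borel $B_j \subseteq G$. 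The theorem then produces a quasi-Polish refinement $\@T \supseteq \@O(X)$ on $X$ in which the action is continuous. By \cref{it:grp-orbtop-top} every $\@T$-open set is orbitwise open; and since orbitwise openness depends only on the set-theoretic action, every $A \in \@O(X) \subseteq \@T$ is orbitwise open, completing the verification. The two applications of \cref{thm:grp-realiz} use disjoint values of $\@S$, so the argument is not circular.
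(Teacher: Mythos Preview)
Your proof is correct, but takes a more circuitous route than the paper's.  Both arguments share the Kunugui--Novikov step giving $\alpha^{-1}(A) \in \@B(G) \otimes \@O(X)$ for $A \in \@O(X)$, and both finish by applying \cref{thm:grp-realiz} with $\@S = \@O(X)$ to a countable basis.  The difference lies in how the hypothesis $\@O(G) \oast \@O(X) \subseteq \@O(X)$ is established.  The paper does this by a direct computation: from $\alpha^{-1}(A) \in \@B(G) \otimes \@O(X)$ one has $(U \times A)^\dagger = (U^{-1} \times X) \cap \alpha^{-1}(A) \in \@B(G) \otimes \@O(X)$, and then $U * A = \exists^*_{\pi_2}((U \times A)^\dagger) \in \@O(X)$ by Frobenius \cref{it:fib-baire-frob}, since $\exists^*_{\pi_2}(V \times W)$ equals $W$ or $\emptyset$ according as $V \subseteq G$ is nonmeager or meager.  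You instead bootstrap through a preliminary application of \cref{thm:grp-realiz} with $\@S = \@B(X)$ to deduce that $\@O(X) \subseteq \@O_G(X)$, and then invoke \cref{it:grp-vaught-im} to get $U * A = U \cdot A \in \@O(X)$.  Your route trades a short Baire-category manipulation for a second call to the main theorem; it is arguably more conceptual (it makes the role of orbitwise openness explicit) but less economical, and in particular it throws away the sharper conclusion $\alpha^{-1}(A_i) \in \@B(G) \otimes \@O(X)$ of your own Kunugui--Novikov step, using only $\@B(G) \otimes \@B(X)$ on the first pass.
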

\begin{proof}
Since each $g$ acts via homeomorphisms, for each $A \in \@O(X)$, $\alpha^{-1}(A) = (G \times A)^\dagger \subseteq G \times X$ is $\pi_1$-fiberwise open, hence by Kunugui--Novikov is in $\@B(G) \otimes \@O(X)$.
Thus for any $U \in \@B(G)$, also $(U \times A)^\dagger = (U^{-1} \times X) \cap (G \times A)^\dagger \in \@B(G) \otimes \@O(X)$, and so $U * A = \exists^*_{\pi_2}((U \times A)^\dagger) \in \@O(X)$ by Frobenius \cref{it:fib-baire-frob}.
Now apply \cref{thm:grp-realiz}\cref{thm:grp-realiz:rect} with $\@S := \@O(X)$ to a countable basis for $\@O(X)$.
\end{proof}

\begin{remark}
\Cref{thm:grp-top-realiz} includes as a special case Pettis's automatic continuity theorem for Borel homomorphisms between Polish groups $f : G -> H$ (via the left translation action $G \curvearrowright H$).
This is unsurprising, since the proof of \cref{thm:grp-realiz} uses Pettis's theorem via \cref{it:grp-vaught-bp}.
\end{remark}

It is worth explicitly restating \cref{thm:grp-realiz} in the special case $\@S = \@B(X)$, to characterize all Borel sets which are ``potentially open'' in \emph{some} topology making the action continuous.
For a standard Borel $G$-space $X$, recalling the \emph{orbitwise topology} $\@O_G(X)$ from \cref{def:grp-orbtop}, let
\begin{equation}
\label{def:grp-orbtop-borel}
\@{BO}_G(X) := \@B(X) \cap \@O_G(X)
\end{equation}
denote the Borel orbitwise open sets.
By \cref{it:grp-orbtop} and Kunugui--Novikov,
\begin{equation}
\label{it:grp-orbtop-borel}
A \in \@{BO}_G(X)  \iff  \alpha^{-1}(A) \in \@O(G) \otimes \@B(X).
\end{equation}

\begin{corollary}[``potentially open'' = ``orbitwise open'']
\label{thm:grp-realiz-borel}
Let $G$ be a Polish group, $X$ be a standard Borel $G$-space.
For any $A \in \@B(X)$, the following are equivalent:
\begin{enumerate}[label=(\roman*)]
\item \label{thm:grp-realiz-borel:open}
$A$ is open in some compatible quasi-Polish topology making the action continuous.
\item \label{thm:grp-realiz-borel:vaught}
$A \in \@B(G) \oast \@B(X) = \@O(G) \oast \@B(X)$, i.e., $A = \bigcup_i (U_i * A_i)$ for countably many Borel $U_i, A_i$.
\item \label{thm:grp-realiz-borel:rect}
$\alpha^{-1}(A) \in \@B(G) \otimes \@B(X)$, i.e., $\alpha^{-1}(A) \subseteq G \times X$ is a countable union of Borel rectangles.
\item \label{thm:grp-realiz-borel:orbtop}
$\alpha^{-1}(A) \in \@O(G) \otimes \@B(X)$, i.e., $A \in \@{BO}_G(X)$, i.e., $A$ is orbitwise open.
\item \label{thm:grp-realiz-borel:rect-vaught}
$\alpha^{-1}(A) \in \@O(G) \otimes (\@O(G) \oast \@B(X)) = \@O(G) \otimes \@{BO}_G(X)$.
\item \label{thm:grp-realiz-borel:trans}
There are countably many Borel sets in $X$ generating all $G$-translates $g \cdot A$ under union.
\end{enumerate}
In particular, any $G$-invariant $A$ works.
Moreover, any countably many $A \in \@B(X)$ obeying these conditions may be made simultaneously open in some topology as in \cref{thm:grp-realiz:open}; in other words, $\@{BO}_G(X)$ is the increasing union of all compatible quasi-Polish topologies making the action continuous.
\qed
\end{corollary}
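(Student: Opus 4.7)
The plan is to reduce the corollary to \cref{thm:grp-realiz} by instantiating the compatible $\sigma$-topology $\@S$ as the full Borel $\sigma$-algebra $\@B(X)$. First I would verify the two hypotheses on $\@S$ needed by that theorem. The closure property $\@O(G) \oast \@B(X) \subseteq \@B(X)$ is automatic, since $\exists^*_\alpha$ sends Borel sets to Borel sets (\cref{thm:fib-bov-baire}, after fixing any compatible quasi-Polish fiberwise structure on $G \times X$ as in \cref{sec:grp-vaught}). That $\@B(X)$ is itself a compatible $\sigma$-topology in the sense of \cref{def:qpol-compat} is equivalent to the assertion that every countable subfamily of $\@B(X)$ lies inside some compatible quasi-Polish topology on $X$; this is furnished by \cref{thm:grp-borel-realiz} together with \cref{it:qpol-dis}, which lets one adjoin countably many Borel sets to that topology while keeping it compatible and quasi-Polish.

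Second, with $\@S = \@B(X)$ the equivalences \cref{thm:grp-realiz-borel:open}$\Leftrightarrow$\cref{thm:grp-realiz-borel:vaught}$\Leftrightarrow$\cref{thm:grp-realiz-borel:rect}$\Leftrightarrow$\cref{thm:grp-realiz-borel:rect-vaught}$\Leftrightarrow$\cref{thm:grp-realiz-borel:trans} and the ``Moreover'' clause are literally the corresponding assertions of \cref{thm:grp-realiz}. The only additional content is the appearance of orbitwise openness in \cref{thm:grp-realiz-borel:orbtop} and the identification $\@O(G) \oast \@B(X) = \@{BO}_G(X)$ inside \cref{thm:grp-realiz-borel:rect-vaught}. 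For the former, I would cite \cref{it:grp-orbtop-borel}, which says that a Borel set $A$ is orbitwise open iff $\alpha^{-1}(A)$ is a countable union of $\@O(G) \times \@B(X)$-rectangles; this is precisely condition \cref{thm:grp-realiz:rect-open} of \cref{thm:grp-realiz} at $\@S = \@B(X)$. The identity $\@O(G) \oast \@B(X) = \@{BO}_G(X)$ then follows from the equivalence of \cref{thm:grp-realiz-borel:vaught} with \cref{thm:grp-realiz-borel:orbtop} just established (one containment is \cref{it:grp-vaught-orbtop}).

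Third, for the final reformulation of the ``Moreover'' clause as the statement that $\@{BO}_G(X)$ is the increasing union of all compatible quasi-Polish topologies making the action continuous, one containment is that any such topology $\@O(X)$ satisfies $\@O(X) \subseteq \@O_G(X)$ by \cref{it:grp-orbtop-top}, and $\@O(X) \subseteq \@B(X)$ by compatibility, so $\@O(X) \subseteq \@{BO}_G(X)$. The reverse containment is exactly what the ``Moreover'' clause provides: countably many sets in $\@{BO}_G(X)$ can be simultaneously realized as open in a common compatible continuous-action quasi-Polish topology. Since there is no genuine obstacle here beyond pointing to \cref{thm:grp-realiz} and the bookkeeping identification of orbitwise openness with the rectangular form of $\alpha^{-1}(A)$, the ``hard work'' all lives upstream; the corollary itself is purely a repackaging.
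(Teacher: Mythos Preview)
Your approach is correct and matches the paper's: the corollary is the specialization of \cref{thm:grp-realiz} to $\@S = \@B(X)$, together with the identification \cref{it:grp-orbtop-borel} of orbitwise openness with the rectangular condition on $\alpha^{-1}(A)$. The paper indicates as much by ending the statement with \qed.

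One small circularity to fix: you invoke \cref{thm:grp-borel-realiz} to obtain a compatible quasi-Polish topology on $X$, as the first step in verifying that $\@B(X)$ is a compatible $\sigma$-topology. But \cref{thm:grp-borel-realiz} is itself proved by applying \cref{thm:grp-realiz} with $\@S = \@B(X)$, which already presupposes that $\@B(X)$ is a compatible $\sigma$-topology. The clean way out is to observe that a standard Borel space by definition carries \emph{some} compatible (quasi-)Polish topology, and then cite \cref{ex:qpol-compat-borel} (which gives that $\@B(X)$ is a compatible $\sigma$-topology for any quasi-Polish $X$). You do not need the action to be continuous for this step, so \cref{thm:grp-borel-realiz} is overkill anyway.
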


\subsection{Equivariant maps}
\label{sec:grp-eqvar}

Let $G$ be a Polish group.
By \cite[2.6.1]{BKgrp}, $\@F(G)^\#N$ is a universal standard Borel $G$-space, i.e., every other standard Borel $G$-space admits a Borel equivariant embedding into $\@F(G)^\#N$.
Here $\@F(G)$ is the Effros Borel space of $G$, i.e., the underlying standard Borel space of the lower powerspace of \cref{sec:lowpow}, equipped with the left translation action of $G$.
Since we are working in the quasi-Polish setting, where we have available the lower Vietoris topology on $\@F(G)$, we point out that in fact,

\begin{proposition}
\label{thm:grp-lowpow-univ}
For any Polish group $G$, $\@F(G)^\#N$ is a universal $T_0$ second-countable $G$-space, i.e., every $T_0$ second-countable $G$-space admits an equivariant topological embedding into $\@F(G)^\#N$.
\end{proposition}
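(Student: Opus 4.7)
The plan is to build an equivariant topological embedding $\psi: X \to \@F(G)^\#N$ coordinate by coordinate, using a countable open basis $(U_n)_n$ of $X$. For each open $U \subseteq X$, I will define
\[
\psi_U : X \to \@F(G), \quad \psi_U(x) := \overline{\{g \in G : g^{-1} x \in U\}},
\]
the closure of an open subset of $G$. Equivariance $\psi_U(hx) = h \cdot \psi_U(x)$ under the left-translation action on $\@F(G)$ is a direct computation via the substitution $g = hg'$, together with the fact that left multiplication by $h$ is a homeomorphism (hence commutes with closure). For continuity, the computation $\psi_U^{-1}(\Dia V) = V \cdot U$ for $V \in \@O(G)$ uses only that meeting an open set with a closure agrees with meeting it with the set itself; the result is open in $X$ by continuity of the action (the action map $\alpha : G \times X \to X$ being a continuous open surjection).

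Then $\psi := (\psi_{U_n})_n : X \to \@F(G)^\#N$ will be continuous and equivariant. To verify that $\psi$ is a topological embedding, the key point is that $\{V \cdot U_n : V \in \@O(G),\, n \in \#N\}$ is an open basis for $X$: given $x \in W \in \@O(X)$, continuity of $\alpha$ at $(e, x)$ produces a basic open rectangle $V \times U_n \ni (e, x)$ with $V \times U_n \subseteq \alpha^{-1}(W)$, so $x \in V \cdot U_n \subseteq W$. (This is essentially \cref{thm:grp-cts}(a), since $V * U = V \cdot U$ for open $V, U$ by \cref{it:grp-vaught-im} combined with \cref{it:grp-orbtop-top}.) The pullback along $\psi$ of the subbasis of $\@F(G)^\#N$ is precisely this family $\{V \cdot U_n\}$, so the pullback topology on $X$ equals $\@O(X)$, and injectivity of $\psi$ follows from the $T_0$ axiom. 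Hence $\psi$ will be an equivariant homeomorphism onto its image.

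I do not anticipate a serious obstacle: this is the topological avatar of the Becker--Kechris universal embedding into $\@F(G)^\#N$, and the verifications are short. The one mild subtlety is the closure in the definition of $\psi_U$, which is necessary for $\psi_U(x)$ to land in $\@F(G)$ rather than in $\@O(G)$, but is invisible in the continuity computation because the subbasic opens $\Dia V$ of $\@F(G)$ only detect intersection with open subsets of $G$.
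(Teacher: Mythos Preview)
Your proof is correct and follows essentially the same route as the paper's: your map $\psi_U$ is exactly the paper's $h_U$, which the paper obtains abstractly via the correspondence between linear maps $\@O(G) \to \@O(X)$ and continuous maps $X \to \@F(G)$ (\cref{thm:lin-lowpow}), whereas you write down the explicit pointwise formula and verify $\psi_U^{-1}(\Dia V) = V \cdot U$ directly. The only small omission is that you tacitly assume the left-translation action of $G$ on $\@F(G)$ is continuous (so that $\@F(G)^\#N$ is a topological $G$-space to begin with); the paper isolates this as a one-line lemma (\cref{thm:grp-lowpow-trans}).
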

\begin{proof}
The proof is essentially a simpler version of \cite[2.6.1]{BKgrp}.
First, we verify that

\begin{lemma}
\label{thm:grp-lowpow-trans}
For any topological group $G$ and topological $G$-space $X$, the left translation action $G \times \@F(X) -> \@F(X)$ is continuous.
In particular, $\@F(G)$ is a topological $G$-space.
\end{lemma}
\begin{proof}
The left translation action takes $(g,F) |-> \-{\alpha_X(\-{\{g\}} \times F)}$, which is a composite of continuous maps \cref{it:lowpow-unit}, \cref{it:lowpow-prod}, \cref{it:lowpow-funct}.
\end{proof}

Now let $X$ be an arbitrary $T_0$ $G$-space.
For each $A \in \@O(X)$, the map $U |-> U \cdot A : \@O(G) -> \@O(X)$ preserves unions, hence corresponds by \cref{thm:lin-lowpow} to a continuous map $h_A : X -> \@F(G)$ such that $h_A^{-1}(\Dia U) = U \cdot A$.
Then for any basis $\@A \subseteq \@O(X)$, $\@O(G) \cdot \@A$ is still a basis (because $\alpha$ is open), and so $h_\@A := (h_A)_{A \in \@A} : X -> \@F(G)^\@A$ is an embedding.
From $G$-equivariance of $U |-> U \cdot A$, it is easily seen that each $h_A$ is $G$-equivariant, whence so is $h_\@A$.
\end{proof}

\begin{corollary}[of proof]
\label{thm:grp-eqvar-prod}
For any topological group $G$ and $T_0$ $G$-spaces $X, Y$, a continuous map $f : X -> Y$ is $G$-equivariant iff for every $U \in \@O(G)$ and $B \in \@O(Y)$, we have $f^{-1}(U \cdot B) = U \cdot f^{-1}(B)$.
\end{corollary}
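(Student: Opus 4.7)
The forward direction is immediate and uses only equivariance of $f$ together with the definition of $U \cdot (-)$: $x \in f^{-1}(U \cdot B)$ iff $f(x) = u b$ for some $u \in U$, $b \in B$, iff (using $f(u^{-1}x) = u^{-1} f(x)$) $u^{-1} x \in f^{-1}(B)$, iff $x \in U \cdot f^{-1}(B)$.

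For the converse, the plan is to reuse the entire construction (not merely the statement) from the proof of \cref{thm:grp-lowpow-univ}. For each topological $G$-space $Z$ and open $C \in \@O(Z)$, write $h^Z_C : Z \to \@F(G)$ for the continuous map corresponding via \cref{thm:lin-lowpow} to the linear map $U \mapsto U \cdot C$, so that $(h^Z_C)^{-1}(\Dia U) = U \cdot C$. The key observation, already implicit in the proof of \cref{thm:grp-lowpow-univ} (and verified by a direct chase through the definitions of $\Dia$ and of left translation on $\@F(G)$), is that each $h^Z_C$ is automatically $G$-equivariant.

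With this in hand, the hypothesis $f^{-1}(U \cdot B) = U \cdot f^{-1}(B)$ translates to $(h^Y_B \circ f)^{-1}(\Dia U) = (h^X_{f^{-1}(B)})^{-1}(\Dia U)$ for every $U \in \@O(G)$; since $\{\Dia U\}_U$ generates $\@O(\@F(G))$, the uniqueness in \cref{thm:lin-lowpow} forces $h^Y_B \circ f = h^X_{f^{-1}(B)}$ as continuous maps $X \to \@F(G)$. Combining this equality with the $G$-equivariance of both $h^X_{f^{-1}(B)}$ and $h^Y_B$ yields
\begin{equation*}
h^Y_B(f(g x)) = h^X_{f^{-1}(B)}(g x) = g \cdot h^X_{f^{-1}(B)}(x) = g \cdot h^Y_B(f(x)) = h^Y_B(g \cdot f(x))
\end{equation*}
for every $g \in G$, $x \in X$, and $B \in \@O(Y)$. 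Letting $B$ range over an open basis $\@B$ for $Y$, the product map $h^Y_{\@B} := (h^Y_B)_{B \in \@B} : Y \to \@F(G)^\@B$ is an embedding by the proof of \cref{thm:grp-lowpow-univ} --- this uses openness of the action $\alpha_Y$ (a general property of any continuous group action, since $\alpha_Y \cong \pi_2$ via the twist involution) together with the $T_0$ hypothesis on $Y$ --- and in particular injective, so $f(g x) = g \cdot f(x)$.

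The only conceptual step is the passage from the hypothesis to the equality $h^Y_B \circ f = h^X_{f^{-1}(B)}$ of continuous maps into $\@F(G)$; once that is in place, equivariance of each $h^Z_C$ and injectivity of the universal embedding close out the argument. I do not anticipate any technical difficulty beyond what already appears in the proof of \cref{thm:grp-lowpow-univ}.
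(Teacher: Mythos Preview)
Your proposal is correct and follows essentially the same approach as the paper: translate the hypothesis into the commutativity $h^Y_B \circ f = h^X_{f^{-1}(B)}$, invoke the $G$-equivariance of each $h^Z_C$ established in the proof of \cref{thm:grp-lowpow-univ}, and then use that $h^Y_{\@B}$ is an (equivariant) embedding to conclude. The paper phrases the last step diagrammatically (if $h_{\@O(Y)} \circ f$ is equivariant and $h_{\@O(Y)}$ is an equivariant embedding, then $f$ is equivariant), whereas you unwind it pointwise, but the argument is the same.
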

\begin{proof}
For any $B \in \@O(Y)$, to say that
$f^{-1}(U \cdot B) = U \cdot f^{-1}(B)$
for all $U \in \@O(G)$ means
$f^{-1}(h_B^{-1}(\Dia U)) = h_{f^{-1}(B)}^{-1}(\Dia U)$
for all subbasic $\Dia U \in \@O(\@F(G))$, i.e., the triangle
\begin{equation*}
\begin{tikzcd}
X \drar["h_{f^{-1}(B)}"'] \rar["f"] & Y \dar["h_B"] \\
& \@F(G)
\end{tikzcd}
\end{equation*}
commutes.
If this holds for all $B$, then since $h_{\@O(Y)} : Y -> \@F(G)^{\@O(Y)}$ is an equivariant embedding, and $h_{f^{-1}(\@O(Y))}$ is equivariant, we get that $f$ is equivariant.
\end{proof}

\begin{corollary}
\label{thm:grp-eqvar-vaught}
For any Polish group and standard Borel $G$-spaces $X, Y$, a Borel map $f : X -> Y$ is $G$-equivariant iff for every $U \in \@O(G)$ and $B \in \@B(Y)$, we have $f^{-1}(U * B) = U * f^{-1}(B)$; and it is enough to require this only for orbitwise open $B \in \@{BO}_G(Y)$.
\end{corollary}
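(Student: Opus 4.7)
The forward direction is already contained in equation~\cref{it:grp-vaught-eqvar}, which gives $f^{-1}(U*B) = U*f^{-1}(B)$ for \emph{all} $B \in \@B(Y)$ whenever $f$ is equivariant. So the task is to prove the reverse direction, assuming the commutation law only for orbitwise open $B \in \@{BO}_G(Y)$. The plan is to bootstrap from this algebraic hypothesis to honest topological equivariance by installing quasi-Polish realizations on both $X$ and $Y$ that make $f$ continuous, and then invoking \cref{thm:grp-eqvar-prod}.

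First, I would apply \cref{thm:grp-borel-realiz} to fix a compatible quasi-Polish topology on $Y$ making $\alpha_Y$ continuous. By \cref{it:grp-orbtop-top} we then have $\@O(Y) \subseteq \@O_G(Y)$, so every $B \in \@O(Y)$ lies in $\@{BO}_G(Y)$, and the standing hypothesis $f^{-1}(U*B) = U*f^{-1}(B)$ holds for \emph{all} $B \in \@O(Y)$ and $U \in \@O(G)$. Fix a countable open basis $\{B_i\}_i$ for $\@O(Y)$.

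The key step is to show $f^{-1}(\@O(Y)) \subseteq \@{BO}_G(X)$. By \cref{thm:grp-cts}(a), continuity of $\alpha_Y$ gives $\@O(G) \oast \@O(Y) = \@O(Y)$, so each $B_i$ can be written as $B_i = \bigcup_j (U_{ij} * C_{ij})$ with $U_{ij} \in \@O(G)$ and $C_{ij} \in \@O(Y)$. Since each $C_{ij}$ is orbitwise open, the hypothesis gives
\begin{equation*}
f^{-1}(B_i) = \bigcup_j f^{-1}(U_{ij} * C_{ij}) = \bigcup_j (U_{ij} * f^{-1}(C_{ij})),
\end{equation*}
which is orbitwise open in $X$ by \cref{it:grp-vaught-orbtop}. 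Hence each $f^{-1}(B_i) \in \@{BO}_G(X)$.

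Now apply \cref{thm:grp-realiz-borel} (the ``moreover'' clause) to the countable family $\{f^{-1}(B_i)\}_i \subseteq \@{BO}_G(X)$ to get a compatible quasi-Polish topology $\@O(X)$ making $\alpha_X$ continuous and rendering every $f^{-1}(B_i)$ open, so that $f$ becomes continuous. With both actions continuous, every $B \in \@O(Y)$ and every $f^{-1}(B) \in \@O(X)$ is orbitwise open, so by \cref{it:grp-vaught-im} the Vaught transforms $U*B$ and $U*f^{-1}(B)$ coincide with the set-theoretic products $U \cdot B$ and $U \cdot f^{-1}(B)$; the hypothesis therefore yields $f^{-1}(U \cdot B) = U \cdot f^{-1}(B)$ for every $U \in \@O(G)$ and $B \in \@O(Y)$. \Cref{thm:grp-eqvar-prod} then closes the argument. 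The main technical point to verify carefully is the bootstrapping step that $f^{-1}(\@O(Y))$ consists of orbitwise open sets, since this is what allows us to upgrade a purely algebraic hypothesis into the topological realization that makes $f$ continuous.
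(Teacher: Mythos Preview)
Your proof is correct and follows essentially the same approach as the paper's: realize $Y$ as a quasi-Polish $G$-space, push the preimages of basic opens into a quasi-Polish realization of $X$ via \cref{thm:grp-realiz-borel}, and then invoke \cref{thm:grp-eqvar-prod}. You supply more detail than the paper's terse proof---in particular the verification that $f^{-1}(\@O(Y)) \subseteq \@{BO}_G(X)$ and the explicit conversion from $*$ to $\cdot$ via \cref{it:grp-vaught-im}---but the route is the same.
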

\begin{proof}
$\Longrightarrow$ is by \cref{it:grp-vaught-eqvar}.
Conversely, if $f^{-1}(U * B) = U * f^{-1}(B)$ for every $U \in \@O(G)$ and $B \in \@{BO}_G(Y)$, by \cref{thm:grp-realiz-borel}, we may find a compatible quasi-Polish topology on $Y$ making the action continuous, then find a compatible quasi-Polish topology on $X$ containing the preimage of each open set in $Y$ and making the action continuous, and then apply the preceding result.
\end{proof}

\begin{remark}
\label{rmk:grp-eqvar-vaught}
In \cref{thm:grp-eqvar-vaught}, it is in fact enough to have $f^{-1}(U * B) = U * f^{-1}(B)$ for some countable separating family of $B \in \@B(Y)$, by taking the maps $h_B : Y -> \@F(G)$ as above corresponding to $U |-> U * B$ (for some compatible quasi-Polish topology on $Y$ containing each $\@O(G) * B$), which are jointly injective into $\@F(G)^\#N$ by the proof of \cite[2.6.1]{BKgrp}.
\end{remark}

We also take this opportunity to point out the following universal property enjoyed by the topological realization constructed by \cref{thm:grp-cts}:

\begin{proposition}
\label{thm:grp-realiz-univ}
Let $G$ be a Polish group, $X$ be a quasi-Polish space equipped with a Borel action of $G$ such that $\@O(G) \oast \@O(X) \subseteq \@O(X)$.
Then any continuous equivariant map $f : X -> Y$ into another quasi-Polish $G$-space is in fact continuous from the coarser topology $\@O(G) \oast \@O(X)$.
In other words, letting $X'$ be $X$ with this coarser topology, the identity map $1_X : X -> X'$ is the universal continuous map from $X$ into a quasi-Polish $G$-space, hence exhibits $X'$ as the universal continuous ``completion'' of $X$:
\begin{equation*}
\begin{tikzcd}
X \drar["f"'] \rar["1_X"] & X' \dar[dashed,"f"] \\
& Y
\end{tikzcd}
\end{equation*}
\end{proposition}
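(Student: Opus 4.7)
The plan is to show directly that for every $B \in \@O(Y)$, the preimage $f^{-1}(B)$ lies in $\@O(G) \oast \@O(X)$, which by definition says that $f : X' -> Y$ is continuous. Two ingredients feed into this: the hypothesis that the $G$-action on $Y$ is already continuous (since $Y$ is a quasi-Polish $G$-space), and the compatibility of Vaught transforms with equivariant preimages, i.e., equation \cref{it:grp-vaught-eqvar}.

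First, applying \cref{thm:grp-cts}(a) to the continuous action on $Y$, the Vaught transforms $U * B$ for $U \in \@O(G)$ and $B \in \@O(Y)$ form an open basis for $Y$; equivalently, $\@O(Y) = \@O(G) \oast \@O(Y)$. Thus any given $C \in \@O(Y)$ may be written as $C = \bigcup_i (U_i * B_i)$ for countably many $U_i \in \@O(G)$ and $B_i \in \@O(Y)$.

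Next, since $f$ is a continuous equivariant map between standard Borel $G$-spaces, \cref{it:grp-vaught-eqvar} gives $f^{-1}(U_i * B_i) = U_i * f^{-1}(B_i)$ for each $i$, and continuity of $f$ into $Y$ ensures $f^{-1}(B_i) \in \@O(X)$. Taking countable unions yields
\begin{equation*}
f^{-1}(C) = \bigcup_i f^{-1}(U_i * B_i) = \bigcup_i \bigl(U_i * f^{-1}(B_i)\bigr) \in \@O(G) \oast \@O(X),
\end{equation*}
so $f$ remains continuous after coarsening the topology on $X$ to $\@O(G) \oast \@O(X)$. This is the asserted factorization through $1_X : X -> X'$, with uniqueness of the factoring map automatic since $1_X$ is a bijection.

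There is no real obstacle here: the argument is essentially a one-line consequence of the two earlier ``algebraic'' facts about Vaught transforms, namely that the continuous $G$-space $Y$ is generated by Vaught transforms of its own opens, and that equivariant preimages commute with $U * (-)$. The only thing to be mindful of is that \cref{it:grp-vaught-eqvar} is stated in the standard Borel category, but this is exactly the setting we are in (both $X$ and $Y$ are quasi-Polish, hence standard Borel by \cref{it:qpol-sbor}), so no further justification is needed.
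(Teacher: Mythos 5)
Your proof is correct and is essentially identical to the paper's: the paper's one-line argument is $f^{-1}(\@O(Y)) = f^{-1}(\@O(G) \oast \@O(Y)) \subseteq \@O(G) \oast f^{-1}(\@O(Y)) \subseteq \@O(G) \oast \@O(X)$, using \cref{thm:grp-cts}(a) for $Y$ and \cref{it:grp-vaught-eqvar}, which is exactly the decomposition you spell out.
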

\begin{proof}
$f^{-1}(\@O(Y)) = f^{-1}(\@O(G) \oast \@O(Y)) \subseteq \@O(G) \oast f^{-1}(\@O(Y)) \subseteq \@O(G) \oast \@O(X)$, by \cref{it:grp-vaught-eqvar}.
\end{proof}

\begin{remark}
In category-theoretic terms, this says that quasi-Polish $G$-spaces form a reflective subcategory of quasi-Polish spaces with Borel $G$-action satisfying $\@O(G) \oast \@O(X) \subseteq \@O(X)$.

(Note that there are many discontinuous such actions: for example, take $X := G$, and take any compatible quasi-Polish topology finer than the group topology; then $\@O(G) \oast \@O(X) \subseteq \@O(G) \oast \@B(G) = \@O(G) \oast \@O(G) = \@O(G) \subseteq \@O(X)$ by \cref{it:grp-vaught-bp}.)
\end{remark}

\subsection{Open relations}
\label{sec:grp-struct}

For a standard Borel $G$-space $X$, \cref{thm:grp-realiz-borel} (and more generally \cref{thm:grp-realiz}) give precise characterizations of which Borel sets $A \subseteq X$ can be made open in a topological realization.
We now consider the more general problem of which $n$-ary relations for $n \ge 2$ can be made open.
For $G$-invariant relations, this amounts to topological realization of \emph{standard Borel relational $G$-structures} in the sense of first-order logic.
For ease of notation, the following discussion will focus on $n = 2$.

\begin{remark}
\label{rmk:borel-binary-open}
Even in the absence of a group action, it is not true that every Borel binary relation $R \subseteq X \times Y$ can be made open in the product of some compatible quasi-Polish topologies on $X, Y$.
Indeed, this is clearly possible iff $R \in \@B(X) \otimes \@B(Y)$, i.e., $R$ is a countable union of Borel rectangles.

More generally, if we want $R$ to be open in the product of quasi-Polish topologies contained within compatible $\sigma$-topologies $\@S(X) \subseteq \@B(X)$ and $\@S(Y) \subseteq \@B(Y)$, then we need to require $R \in \@S(X) \otimes \@S(Y)$.
\end{remark}

For standard Borel $G$-spaces $X, Y$, we have the following analogous characterization.
We adopt the following convention: $\alpha_X \times \alpha_Y$ will denote the product action $G^2 \times X \times Y -> X \times Y$ of $G^2$ (i.e., we silently swap the middle two variables of the product map $G \times X \times G \times Y -> X \times Y$), while $\alpha_{X \times Y}$ will denote the diagonal action $G \times X \times Y -> X \times Y$.
Note that $\alpha_{X \times Y}$ factors through $\alpha_X \times \alpha_Y$, via the diagonal $G -> G^2$.

\begin{theorem}
\label{thm:grp-binary-realiz}
Let $G$ be a Polish group, $X, Y$ be standard Borel $G$-spaces, $\@S(X) \subseteq \@B(X)$ and $\@S(Y) \subseteq \@B(Y)$ be compatible $\sigma$-topologies such that $\@O(G) \oast \@S(X) \subseteq \@S(X)$ and $\@O(G) \oast \@S(Y) \subseteq \@S(Y)$.
For any $R \in \@B(X \times Y)$, the following are equivalent:
\begin{enumerate}[label=(\roman*)]
\item \label{thm:grp-binary-realiz:open}
$R \in \@O(X) \otimes \@O(Y)$ for some quasi-Polish topologies $\@O(X) \subseteq \@S(X)$ and $\@O(Y) \subseteq \@S(Y)$ making the actions on $X, Y$ continuous.
\item \label{thm:grp-binary-realiz:vaught2}
$R
\in \@B(G^2) \oast (\@S(X) \otimes \@S(Y))
= \@O(G^2) \oast (\@S(X) \otimes \@S(Y))
= (\@O(G) \oast \@S(X)) \otimes (\@O(G) \oast \@S(Y))$,
i.e.,
$R
= \bigcup_i (W_i * (A_i \times B_i))
= \bigcup_i ((U_i \times V_i) * (A_i \times B_i))
= \bigcup_i ((U_i * A_i) \times (V_i * B_i))$
for countably many $U_i, V_i \in \@O(G)$, $W_i \in \@B(G^2)$, $A_i \in \@S(X)$, and $B_i \in \@S(Y)$.
\item \label{thm:grp-binary-realiz:rect2}
$(\alpha_X \times \alpha_Y)^{-1}(R) \in \@B(G^2) \otimes \@S(X) \otimes \@S(Y)$, i.e., $(\alpha_X \times \alpha_Y)^{-1}(R) = \bigcup_i (W_i \times A_i \times B_i)$ for countably many $W_i \in \@B(G)$, $A_i \in \@S(X)$, and $B_i \in \@S(Y)$.
\item \label{thm:grp-binary-realiz:rect2-open}
$(\alpha_X \times \alpha_Y)^{-1}(R) \in \@O(G^2) \otimes \@S(X) \otimes \@S(Y)$, i.e., $(\alpha_X \times \alpha_Y)^{-1}(R) = \bigcup_i (U_i \times V_i \times A_i \times B_i)$ for countably many $U_i, V_i \in \@O(G)$, $A_i \in \@S(X)$, and $B_i \in \@S(Y)$.
\item \label{thm:grp-binary-realiz:rect2-vaught}
$(\alpha_X \times \alpha_Y)^{-1}(R) \in \@O(G^2) \otimes (\@O(G) \oast \@S(X)) \otimes (\@O(G) \oast \@S(Y))$.
\end{enumerate}
Furthermore, letting $(\@O(G) \otimes \@S(X))^*_{\pi_2} \subseteq \@B(G \times X)$ consist of all Borel $D \subseteq G \times X$ which are $=^*_{\pi_2}$ to a set in $\@O(G) \otimes \@S(X)$, the following are also equivalent to the above:
\begin{enumerate}[resume*]
\item \label{thm:grp-binary-realiz:vaught-vaught*}
$R \in \@B(G) \oast (\exists^*_{\alpha_X}((\@O(G) \otimes \@S(X))^*_{\pi_2}) \otimes \exists^*_{\alpha_Y}((\@O(G) \otimes \@S(Y))^*_{\pi_2}))$.
\item \label{thm:grp-binary-realiz:vaught-vaught}
$R \in \@O(G) \oast ((\@O(G) \oast \@S(X)) \otimes (\@O(G) \oast \@S(Y)))$.
\item \label{thm:grp-binary-realiz:rect-vaught*}
$\alpha_{X \times Y}^{-1}(R) \in \@B(G) \otimes \exists^*_{\alpha_X}((\@O(G) \otimes \@S(X))^*_{\pi_2}) \otimes \exists^*_{\alpha_Y}((\@O(G) \otimes \@S(Y))^*_{\pi_2})$.
\item \label{thm:grp-binary-realiz:rect-vaught}
$\alpha_{X \times Y}^{-1}(R) \in \@O(G) \otimes (\@O(G) \oast \@S(X)) \otimes (\@O(G) \oast \@S(Y))$.
\end{enumerate}
Moreover, countably many $R$ obeying these conditions may be made simultaneously open as in \cref{thm:grp-binary-realiz:open}, while also simultaneously making open countably many $A \subseteq X$ and $B \subseteq Y$ satisfying \cref{thm:grp-realiz}.
\end{theorem}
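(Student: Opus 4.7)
The plan is to apply the single-space realization theorem \cref{thm:grp-realiz} to two different actions on $X \times Y$: the product action $\alpha_X \times \alpha_Y$ of $G^2$, and the diagonal $G$-action $\alpha_{X \times Y}$. For the equivalences among (i)--(v), I apply \cref{thm:grp-realiz} to the $G^2$-action on $X \times Y$ with the compatible $\sigma$-topology $\@S(X) \otimes \@S(Y)$. The key identity needed is
\[
(U \times V) * (A \times B) = (U * A) \times (V * B) \qquad (U, V \in \@O(G),\ A \in \@S(X),\ B \in \@S(Y)),
\]
where on the left $*$ refers to the Vaught transform under $\alpha_X \times \alpha_Y$; this follows from Kuratowski--Ulam (\cref{thm:kuratowski-ulam}) since the $(\alpha_X \times \alpha_Y)$-fiber over $(x,y)$ is the Cartesian product of the $\alpha_X$-fiber over $x$ and the $\alpha_Y$-fiber over $y$. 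This identity gives both the second equality of (ii) and the hypothesis $\@O(G^2) \oast (\@S(X) \otimes \@S(Y)) \subseteq \@S(X) \otimes \@S(Y)$, while the first equality of (ii) is the standard Baire-property reduction \cref{it:grp-vaught-bp}. Thus \cref{thm:grp-realiz} gives the equivalences of (ii)--(v) together with a $G^2$-continuous form of (i); to identify this with the product form (i), the forward direction is immediate, while for the reverse I apply \cref{thm:grp-realiz} separately to $X$ with the $A_i$'s and to $Y$ with the $B_i$'s from the product decomposition in (ii) to get quasi-Polish topologies $\@O(X) \subseteq \@S(X)$, $\@O(Y) \subseteq \@S(Y)$ making the $G$-actions continuous and each $A_i, B_i$ open, whence by \cref{thm:grp-cts} each $U_i * A_i = U_i \cdot A_i$ lies in $\@O(X)$ and analogously for $Y$, placing $R \in \@O(X) \otimes \@O(Y)$.

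The bridge between the two groups of conditions is the equivalence (v) $\iff$ (ix), which I establish via the continuous surjection
\[
\sigma : G^2 \times X \times Y \longrightarrow G \times X \times Y, \qquad (g, h, x, y) \longmapsto (g,\, x,\, g^{-1}hy),
\]
satisfying $\alpha_{X \times Y} \circ \sigma = \alpha_X \times \alpha_Y$, so $(\alpha_X \times \alpha_Y)^{-1}(R) = \sigma^{-1}(\alpha_{X \times Y}^{-1}(R))$. The direction (v) $\Rightarrow$ (ix) is pullback along the diagonal embedding $\Delta \times X \times Y$, which sends $U \times V \times A \times B$ to $(U \cap V) \times A \times B$. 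For (ix) $\Rightarrow$ (v), I decompose $\sigma^{-1}(U \times C \times D)$ for a basic cylinder: applying condition (v) of the single-space \cref{thm:grp-realiz} to $Y$ gives $\alpha_Y^{-1}(D) = \bigcup_j V_j \times B_j$ with $V_j \in \@O(G)$ and $B_j \in \@O(G) \oast \@S(Y)$, and then
\[
\sigma^{-1}(U \times C \times D) = \bigcup_j \{(g,h) \in U \times G : g^{-1}h \in V_j\} \times C \times B_j,
\]
with each coefficient open in $G^2$ as the preimage of $V_j$ under the continuous map $(g,h) \mapsto g^{-1}h$, yielding the form required by (v).

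The equivalences among (vi)--(ix) then follow from a second application of \cref{thm:grp-realiz}, this time to the diagonal $G$-action on $X \times Y$ with the compatible $\sigma$-topology $\@S' := (\@O(G) \oast \@S(X)) \otimes (\@O(G) \oast \@S(Y))$. The required closure $\@O(G) \oast \@S' \subseteq \@S'$ follows from the already-established equivalence (v) $\iff$ (ii) applied to sets $U * (C \times D)$: a computation of $(\alpha_X \times \alpha_Y)^{-1}(U * (C \times D))$ analogous to the previous paragraph places each such set in the form (v), hence in $\@S'$. The Baire-property variants (vi) and (viii) involving the $=^*_{\pi_2}$-closures are handled by observing that $=^*_{\pi_2}$ in $G \times X$ implies global meagerness of the symmetric difference by Kuratowski--Ulam; after realizing the action continuously (via \cref{thm:grp-borel-realiz}) this is preserved by the twist $\dagger$ and determines $\exists^*_{\alpha_X}$ up to $=^*$ in $X$, which is absorbed into the outer diagonal Vaught transforms via Pettis (\cref{it:grp-vaught-pettis}). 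The ``moreover'' clause follows from the simultaneous-realization clauses of both applications of \cref{thm:grp-realiz}, applied in parallel with countably many $A \subseteq X$ and $B \subseteq Y$. The main obstacle will be the change-of-variables computation in (ix) $\Rightarrow$ (v) and the careful bookkeeping needed to verify closure of $\@S'$ under the diagonal Vaught transforms.
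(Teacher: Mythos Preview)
Your reduction of (i)–(v) to \cref{thm:grp-realiz} for the $G^2$-action, and of (vii)$\iff$(ix) to \cref{thm:grp-realiz} for the diagonal action with $\@S' = (\@O(G) \oast \@S(X)) \otimes (\@O(G) \oast \@S(Y))$, is correct. Your bridge (ix)$\Rightarrow$(v) via $\sigma(g,h,x,y) = (g, x, g^{-1}hy)$ is a genuine alternative to the paper's route: the paper only records the trivial direction (v)$\Rightarrow$(ix) (pullback along the diagonal) and instead closes the cycle by proving (vi)$\Rightarrow$(ii) directly. Your $\sigma$-computation works cleanly. (The closure $\@O(G) \oast \@S' \subseteq \@S'$ does follow, though not quite ``analogously to the previous paragraph'': you must first expand $\alpha_{X \times Y}^{-1}(U *_G (C \times D))$ via \cref{it:grp-vaught-bcl} before applying $\sigma^{-1}$.)

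There is, however, a real gap in your handling of (vi) and (viii). You deduce from $D =^*_{\pi_2} D'$ that $\exists^*_{\alpha_X}(D) =^* \exists^*_{\alpha_X}(D')$ with respect to some realized quasi-Polish topology on $X$, then invoke Pettis \cref{it:grp-vaught-pettis} to cancel this difference inside the outer $W *_G (-)$. But Pettis for the diagonal action requires the difference $(\exists^*_{\alpha_X}(D) \triangle \exists^*_{\alpha_X}(D')) \times Y$ to be \emph{orbitwise} meager in $X \times Y$, and global meagerness of a set $M \subseteq X$ does not make $M \times Y$ orbitwise meager: already in $X$ itself the map $g \mapsto gx$ need not pull back meager sets to meager sets (e.g.\ $\#Z \curvearrowright \#R$ by translation with $M = \#Q$; every orbit is discrete and the rational orbits lie entirely in $M$). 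The paper avoids descending to $X$: it stays in $G^2 \times X \times Y$ with the diagonal $G$-action by left translation on the $G^2$ factor, where the key observation \cref{it:grp-binary-realiz:rect-baire} is that $M \subseteq G$ meager makes $M \times G \subseteq G^2$ orbitwise meager (the orbit of $(g,h)$ is $k \mapsto (kg,kh)$, on which $M \times G$ pulls back to $g^{-1}M$). This yields $D \times E =^*_G D' \times E'$ orbitwise in $G^2 \times X \times Y$, so Pettis applies \emph{upstairs}; two Kuratowski–Ulam steps then identify $W *_G (\exists^*_{\alpha_X}(D) \times \exists^*_{\alpha_Y}(E))$ with $\exists^*_{\alpha_X \times \alpha_Y}(W * (D \times E))$, which lands directly in (ii). Replacing your Pettis step with this computation repairs the argument; the rest of your outline then goes through.
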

\begin{proof}
First, note that in \cref{thm:grp-binary-realiz:vaught2}, we indeed have
\begin{equation}
\label{thm:grp-binary-realiz:vaught2-rect}
(U \times V) * (A \times B) = (U * A) \times (V * B)
\end{equation}
for $U, V \in \@B(G)$, $A \in \@B(X)$, and $B \in \@B(Y)$, since
\begin{equation*}
\begin{aligned}
(U * A) \times (V * B)
&= \exists^*_{\alpha_X}(U \times A) \times \exists^*_{\alpha_Y}(V \times B) \\
&= \exists^*_{\alpha_X \times Y}(U \times A \times \exists^*_{\alpha_Y}(V \times B)) &&\text{by Beck--Chevalley \cref{it:fib-baire-bc}} \\
&= \exists^*_{\alpha_X \times Y}(\exists^*_{G \times X \times \alpha_Y}(U \times V \times A \times B)) &&\text{by Beck--Chevalley \cref{it:fib-baire-bc}} \\
&= \exists^*_{\alpha_X \times \alpha_Y}(U \times V \times A \times B) &&\text{by Kuratowski--Ulam \cref{thm:kuratowski-ulam}} \\
&= (U \times V) * (A \times B)
\end{aligned}
\end{equation*}
(where as indicated above, we silently switch the middle two factors in $G \times X \times G \times Y$), whence
$\@O(G^2) \oast (\@S(X) \otimes \@S(Y))
= (\@O(G) \oast \@S(X)) \otimes (\@O(G) \oast \@S(Y))$;
as before, this is also equal to $\@B(G^2) \oast (\@S(X) \otimes \@S(Y))$ by \cref{it:grp-vaught-bp}.

In particular, from the assumptions $\@O(G) \oast \@S(X) \subseteq \@S(X)$ and $\@O(G) \oast \@S(Y) \subseteq \@S(Y)$, we get $\@O(G^2) \oast (\@S(X) \otimes \@S(Y)) \subseteq \@S(X) \otimes \@S(Y)$.
Now \cref{thm:grp-binary-realiz:open} clearly implies \cref{thm:grp-realiz}\cref{thm:grp-realiz:open} for $G^2 \curvearrowright X \times Y$, which by \cref{thm:grp-realiz} is equivalent to each of \cref{thm:grp-binary-realiz:vaught2}--\cref{thm:grp-binary-realiz:rect2-vaught}.
And given countably many $R$ as in \cref{thm:grp-binary-realiz:vaught2}, as well as countably many $A \subseteq X$ and $B \subseteq Y$ as in \cref{thm:grp-realiz}, by that result, we may find topologies on $X, Y$ making each of these $A, B$ as well as the sets $U_i * A_i$ and $V_i * B_i$ in \cref{thm:grp-binary-realiz:vaught2} open, whence $R$ is open as in \cref{thm:grp-binary-realiz:open}.
This proves the equivalence of \cref{thm:grp-binary-realiz:open}--\cref{thm:grp-binary-realiz:rect2-vaught}.

Since $\alpha_{X \times Y}$ factors through $\alpha_X \times \alpha_Y$, we have \cref{thm:grp-binary-realiz:rect2-vaught}$\implies$\cref{thm:grp-binary-realiz:rect-vaught}.

Since $\@O(G) \otimes \@S(X) \subseteq (\@O(G) \otimes \@S(X))^*_{\pi_2}$, and similarly for $Y$,
\cref{thm:grp-binary-realiz:vaught-vaught}$\implies$\cref{thm:grp-binary-realiz:vaught-vaught*} and
\cref{thm:grp-binary-realiz:rect-vaught}$\implies$\cref{thm:grp-binary-realiz:rect-vaught*}.

As usual, we have
\cref{thm:grp-binary-realiz:rect-vaught}$\implies$\cref{thm:grp-binary-realiz:vaught-vaught} and
\cref{thm:grp-binary-realiz:rect-vaught*}$\implies$\cref{thm:grp-binary-realiz:vaught-vaught*}
because $R = \exists^*_{\alpha_{X \times Y}}(\alpha_{X \times Y}^{-1}(R))$.

Finally, we prove \cref{thm:grp-binary-realiz:vaught-vaught*}$\implies$\cref{thm:grp-binary-realiz:vaught2}.
Let $D \in (\@O(G) \otimes \@S(X))^*_{\pi_2}$
and $E \in (\@O(G) \otimes \@S(Y))^*_{\pi_2}$,
say $D =^*_{\pi_2} \bigcup_i (U_i \times A_i) \in \@O(G) \otimes \@S(X)$
and $E =^*_{\pi_2} \bigcup_j (V_j \times B_j) \in \@O(G) \otimes \@S(Y)$.
Note that
\begin{eqenum}
\item \label{it:grp-binary-realiz:rect-baire}
If $M \subseteq G$ is meager, then $M \times G, G \times M \subseteq G^2$ are orbitwise meager for the diagonal action $G \curvearrowright G^2$, since $\alpha_{G^2}^{-1}(M \times G) = \mu^{-1}(M) \times G \subseteq G^3$ is $\pi_{23}$-fiberwise homeomorphic via $(g,h,k) |-> (gh,h,k)$ to $M \times G^2$.
\end{eqenum}
It follows that
\begin{align*}
D \times E = (D \times G \times Y) \cap (G \times X \times E)
=^*_G \bigcup_{i,j} (U_i \times V_j \times A_i \times B_j) \in \@O(G^2) \otimes \@S(X) \otimes \@S(Y)
\end{align*}
(again silently swapping the middle two factors).
Thus by Pettis's theorem \cref{it:grp-vaught-pettis} and \cref{it:grp-vaught-bc}, for any $\@B(G) \ni W =^* W' \in \@O(G)$, we have
\begin{align*}
W * (D \times E)
&= \bigcup_{i,j} ((W' * (U_i \times V_j)) \times A_i \times B_j)
\in \@O(G^2) \otimes \@S(X) \otimes \@S(Y).
\end{align*}
But now
\begin{equation*}
\begin{aligned}
\MoveEqLeft
W * (\exists^*_{\alpha_X}(D) \times \exists^*_{\alpha_Y}(E)) \\
&= W * \exists^*_{\alpha_X \times \alpha_Y}(D \times E) &&\text{by Kuratowski--Ulam as in \cref{thm:grp-binary-realiz:vaught2-rect}} \\
&= \mathrlap{\exists^*_{\alpha_{X \times Y}}(W \times \exists^*_{\alpha_X \times \alpha_Y}(D \times E))} \\
&= \exists^*_{\alpha_X \times \alpha_Y}(\exists^*_{\alpha_{G^2 \times X \times Y}}(W \times D \times E)) &&\text{by Kuratowski--Ulam as in \cref{it:grp-vaught-ku}} \\
&= \exists^*_{\alpha_X \times \alpha_Y}(W * (D \times E)) \\
&\in \mathrlap{\exists^*_{\alpha_X \times \alpha_Y}(\@O(G^2) \otimes \@S(X) \otimes \@S(Y))
= \@O(G^2) \oast (\@S(X) \otimes \@S(Y))}
\end{aligned}
\end{equation*}
satisfies \cref{thm:grp-binary-realiz:vaught2}, as desired.
\end{proof}

\begin{remark}
In contrast to \cref{thm:grp-binary-realiz:vaught2}, \cref{thm:grp-binary-realiz:rect2-vaught}, and the situation with \cref{thm:grp-realiz}, we do not know if we can add the conditions ``$R \in \@O(G) \oast (\@S(X) \otimes \@S(Y))$'' and ``$\alpha_{X \times Y}^{-1}(R) \in \@O(G) \otimes \@S(X) \otimes \@S(Y)$''.
\end{remark}

For certain $\@S$, however, we can make such a simplification:

\begin{corollary}[characterization of ``potentially open'' relations]
\label{thm:grp-binary-realiz-borel}
Let $G$ be a Polish group, $X, Y$ be standard Borel $G$-spaces.
For any $R \in \@B(X \times Y)$, the following are equivalent:
\begin{enumerate}[label=(\roman*)]
\item \label{thm:grp-binary-realiz-borel:open}
$R \in \@O(X) \otimes \@O(Y)$ for some compatible quasi-Polish topologies $\@O(X), \@O(Y)$ making the actions on $X, Y$ continuous.
\item \label{thm:grp-binary-realiz-borel:vaught2}
$R
\in \@B(G^2) \oast (\@B(X) \otimes \@B(Y))
= \@O(G^2) \oast (\@B(X) \otimes \@B(Y))
= \@{BO}_G(X) \otimes \@{BO}_G(Y)$,
i.e., $R$ is a countable union of rectangles of Borel orbitwise open sets.
\item \label{thm:grp-binary-realiz-borel:rect2}
$(\alpha_X \times \alpha_Y)^{-1}(R) \in \@B(G^2) \otimes \@B(X) \otimes \@B(Y)$.
\item \label{thm:grp-binary-realiz-borel:rect2-open}
$(\alpha_X \times \alpha_Y)^{-1}(R) \in \@O(G^2) \otimes \@B(X) \otimes \@B(Y)$.
\item \label{thm:grp-binary-realiz-borel:rect2-vaught}
$(\alpha_X \times \alpha_Y)^{-1}(R) \in \@O(G^2) \otimes \@{BO}_G(X) \otimes \@{BO}_G(Y)$.
\item \label{thm:grp-binary-realiz-borel:vaught-vaught*}
$R \in \@B(G) \oast (\@B(X) \otimes \@B(Y))$.
\item \label{thm:grp-binary-realiz-borel:vaught-vaught}
$R \in \@O(G) \oast (\@{BO}_G(X) \otimes \@{BO}_G(Y))$.
\item \label{thm:grp-binary-realiz-borel:rect-vaught*}
$\alpha_{X \times Y}^{-1}(R) \in \@B(G) \otimes \@B(X) \otimes \@B(Y)$.
\item \label{thm:grp-binary-realiz-borel:rect-vaught}
$\alpha_{X \times Y}^{-1}(R) \in \@O(G) \otimes \@{BO}_G(X) \otimes \@{BO}_G(Y)$.
\end{enumerate}
Moreover, countably many $R$ obeying these conditions may be made simultaneously open as in \cref{thm:grp-binary-realiz-borel:open}, while also simultaneously making open countably many other $A \in \@{BO}_G(X)$ and $B \in \@{BO}_G(Y)$.
\end{corollary}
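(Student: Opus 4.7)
The plan is to apply \cref{thm:grp-binary-realiz} with the specialization $\@S(X) := \@B(X)$ and $\@S(Y) := \@B(Y)$, and then translate each of its nine conditions into the simpler form listed here. Both $\@B(X)$ and $\@B(Y)$ are trivially compatible $\sigma$-topologies, and each is closed under Vaught transforms, $\@O(G) \oast \@B(X) \subseteq \@B(X)$, since Vaught transforms of Borel sets are Borel by definition; hence the hypotheses of \cref{thm:grp-binary-realiz} are met.

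Most of the translation is routine: the equivalence of conditions \cref{thm:grp-realiz-borel:vaught} and \cref{thm:grp-realiz-borel:orbtop} in \cref{thm:grp-realiz-borel} identifies $\@O(G) \oast \@B(X) = \@{BO}_G(X)$ (and likewise for $Y$). Substituting this into conditions \cref{thm:grp-binary-realiz:vaught2}, \cref{thm:grp-binary-realiz:rect2-vaught}, \cref{thm:grp-binary-realiz:vaught-vaught}, and \cref{thm:grp-binary-realiz:rect-vaught} of \cref{thm:grp-binary-realiz} yields \cref{thm:grp-binary-realiz-borel:vaught2}, \cref{thm:grp-binary-realiz-borel:rect2-vaught}, \cref{thm:grp-binary-realiz-borel:vaught-vaught}, and \cref{thm:grp-binary-realiz-borel:rect-vaught} of this corollary respectively; while \cref{thm:grp-binary-realiz-borel:open}, \cref{thm:grp-binary-realiz-borel:rect2}, and \cref{thm:grp-binary-realiz-borel:rect2-open} translate verbatim, once one notes that a quasi-Polish topology on $X$ is contained in $\@B(X)$ precisely when it is compatible.

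The only point of real content is handling the ``starred'' conditions \cref{thm:grp-binary-realiz:vaught-vaught*} and \cref{thm:grp-binary-realiz:rect-vaught*}, which reduces to establishing the identity $\exists^*_{\alpha_X}((\@O(G) \otimes \@B(X))^*_{\pi_2}) = \@B(X)$. To prove this, I will first verify that $(\@O(G) \otimes \@B(X))^*_{\pi_2} = \@B(G \times X)$. The product bundle $\pi_2 : G \times X \to X$ is standard Borel-overt of quasi-Polish spaces, with countable Borel $\pi_2$-fiberwise open basis $\{U \times X \mid U \in \@U\}$ for any countable open basis $\@U \subseteq \@O(G)$. By the Borel fiberwise Baire property (\cref{thm:fib-bov-baire}), every Borel $D \subseteq G \times X$ is $=^*_{\pi_2}$ to some Borel $\pi_2$-fiberwise open set; by \cref{thm:fib-bor-qpol}\cref{thm:fib-bor-qpol:basis} (Kunugui--Novikov), every such set already lies in $\@O(G) \otimes \@B(X)$. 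Applying $\exists^*_{\alpha_X}$, which by \cref{ex:fib-lin-baire} is a $\@B(X)$-linear retraction of $\alpha_X^{-1}$, then gives $\@B(X)$. With this identity and its analogue over $Y$, conditions \cref{thm:grp-binary-realiz:vaught-vaught*} and \cref{thm:grp-binary-realiz:rect-vaught*} become exactly \cref{thm:grp-binary-realiz-borel:vaught-vaught*} and \cref{thm:grp-binary-realiz-borel:rect-vaught*}. The moreover clause transfers directly from the moreover part of \cref{thm:grp-binary-realiz}, since \cref{thm:grp-realiz-borel} identifies $\@{BO}_G(X)$ as precisely the class of $A \in \@B(X)$ satisfying the hypotheses of \cref{thm:grp-realiz} with $\@S = \@B(X)$. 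I anticipate no genuine obstacle beyond the identification $(\@O(G) \otimes \@B(X))^*_{\pi_2} = \@B(G \times X)$ above.
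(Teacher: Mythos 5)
Your proposal is correct and follows essentially the same route as the paper: specialize \cref{thm:grp-binary-realiz} to $\@S(X) := \@B(X)$, $\@S(Y) := \@B(Y)$, identify $\@O(G) \oast \@B(X) = \@{BO}_G(X)$ via \cref{thm:grp-realiz-borel}, and reduce the starred conditions to $(\@O(G) \otimes \@B(X))^*_{\pi_2} = \@B(G \times X)$ via the fiberwise Baire property (\cref{thm:fib-bov-baire}). The extra detail you supply (Kunugui--Novikov to land fiberwise open sets in $\@O(G) \otimes \@B(X)$, and surjectivity of $\exists^*_{\alpha_X}$ as a retraction of $\alpha_X^{-1}$) is exactly what the paper's terse proof leaves implicit.
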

\begin{proof}
By \cref{thm:grp-binary-realiz} with $\@S(X) := \@B(X)$ and $\@S(Y) := \@B(Y)$, using in \cref{thm:grp-binary-realiz-borel:vaught2,thm:grp-binary-realiz-borel:rect2-vaught,thm:grp-binary-realiz-borel:vaught-vaught,thm:grp-binary-realiz-borel:rect-vaught} that $\@O(G) \oast \@B(X) = \@{BO}_G(X)$ consists of the orbitwise open sets by \cref{thm:grp-realiz-borel} and similarly for $Y$, and in \cref{thm:grp-binary-realiz-borel:vaught-vaught*,thm:grp-binary-realiz-borel:rect-vaught*} that $(\@O(G) \otimes \@B(X))^*_{\pi_2} = \@B(G \times X)$ by the fiberwise Baire property (\cref{thm:fib-bov-baire}) and similarly for $Y$.
\end{proof}

\begin{remark}
The most substantial implication in the preceding two results is \cref*{thm:grp-binary-realiz:vaught-vaught*}$\implies$\cref*{thm:grp-binary-realiz:open}; all other implications are relatively easy consequences.
\end{remark}

As noted before, these results straightforwardly generalize to $n$-ary relations for all $n \in \#N$.
Rather than state the most general result, which would be notationally rather messy, we will only state the generalized form of the main conditions of \cref{thm:grp-binary-realiz-borel}:

\begin{corollary}
\label{thm:grp-nary-realiz-borel}
Let $G$ be a Polish group, $X_i$ be countably many standard Borel $G$-spaces, and $R_k \subseteq X_{i_{k,1}} \times \dotsb \times X_{i_{k,n_k}}$ be countably many Borel relations of arities $n_k \in \#N$.
Then there are compatible quasi-Polish topologies on each $X_i$ making the actions continuous and making each $R_k$ open, iff each $R_k \in \@B(G) \oast (\@B(X_{i_{k,1}}) \otimes \dotsb \otimes \@B(X_{i_{k,n_k}}))$, i.e., $R_k$ can be written as a countable union of sets of the form $U * (A_1 \times \dotsb \times A_{n_k})$, where $U \in \@B(G)$ and $A_j \in \@B(X_{i_{k,j}})$.

In particular, this can be done if each $R_k$ is $G$-invariant and $R_k \in \@B(X_{i_{k,1}}) \otimes \dotsb \otimes \@B(X_{i_{k,n_k}})$, i.e., $R_k$ is a countable union of Borel rectangles.
In other words, a standard Borel structure over a (multi-sorted) countable relational first-order language equipped with a Borel action of $G$ via automorphisms can be made into a quasi-Polish $G$-structure with open relations, iff each relation is a countable union of Borel rectangles.
\qed
\end{corollary}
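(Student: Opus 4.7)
The plan is to lift the binary proof of \cref{thm:grp-binary-realiz-borel} to arbitrary finite arity $n$, with essentially the same structure but $n$ factors in place of two. The main technical input is the $n$-fold version of \cref{thm:grp-binary-realiz:vaught2-rect}:
\begin{equation*}
(U_1 \times \dotsb \times U_n) * (A_1 \times \dotsb \times A_n) = (U_1 * A_1) \times \dotsb \times (U_n * A_n)
\end{equation*}
for $U_j \in \@B(G)$ and $A_j \in \@B(X_{i_{k,j}})$, with the Vaught transform on the left taken under the coordinatewise action of $G^n$; this follows by $n$-fold iteration of Beck--Chevalley \cref{it:fib-baire-bc} and Kuratowski--Ulam \cref{thm:kuratowski-ulam}, exactly as in the binary derivation. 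Combined with \cref{thm:grp-realiz-borel} applied factor-by-factor, this yields $\@O(G^n) \oast (\@B(X_{i_{k,1}}) \otimes \dotsb \otimes \@B(X_{i_{k,n_k}})) = \@{BO}_G(X_{i_{k,1}}) \otimes \dotsb \otimes \@{BO}_G(X_{i_{k,n_k}})$.

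For the forward direction, suppose the compatible quasi-Polish topologies exist. Then by \cref{thm:grp-cts} each $\@O(X_i) = \@O(G) \oast \@O(X_i)$, so an open $R_k$ in the product decomposes as a countable union of rectangles of Vaught transforms, which the identity above rewrites as a countable union of $G^n$-Vaught transforms of Borel rectangles; passing to the diagonal $G \hookrightarrow G^n$ places $R_k$ in $\@B(G) \oast (\@B(X_{i_{k,1}}) \otimes \dotsb \otimes \@B(X_{i_{k,n_k}}))$. For the converse, given $R_k = \bigcup_j U_{k,j} * (A_{k,j,1} \times \dotsb \times A_{k,j,n_k})$, the $n$-ary analog of the chain \cref*{thm:grp-binary-realiz:vaught-vaught*}$\implies$\cref*{thm:grp-binary-realiz:vaught2} in \cref{thm:grp-binary-realiz}---using the Borel fiberwise Baire property, the generalization of \cref{it:grp-binary-realiz:rect-baire} to $G \curvearrowright G^n$ diagonally, Pettis \cref{it:grp-vaught-pettis}, and iterated Kuratowski--Ulam---shows $R_k \in \@{BO}_G(X_{i_{k,1}}) \otimes \dotsb \otimes \@{BO}_G(X_{i_{k,n_k}})$. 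The ``moreover'' clause of \cref{thm:grp-realiz-borel}, applied simultaneously to each $X_i$ with the countable family of Borel orbitwise open factors arising across all the $R_k$, then produces compatible quasi-Polish topologies in which each $R_k$ becomes a countable union of open rectangles, hence open in the product.

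Finally, the ``in particular'' clause follows because $G$-invariance gives $R_k = U * R_k = \bigcup_l U * (A_{k,l,1} \times \dotsb \times A_{k,l,n_k})$ for any nonempty open $U \subseteq G$ and any Borel-rectangle decomposition $R_k = \bigcup_l A_{k,l,1} \times \dotsb \times A_{k,l,n_k}$, by \cref{it:grp-vaught-invar,it:grp-vaught-union}. The main obstacle is the iterated Beck--Chevalley/Kuratowski--Ulam bookkeeping underlying the key identity and the converse implication; conceptually this is just $n$ applications of the binary case, but the multi-index notation can easily obscure the parallel, so care is needed to state the intermediate identities cleanly.
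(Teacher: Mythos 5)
Your proposal is correct and follows essentially the same route as the paper, which states this corollary with a \qed precisely because it is the straightforward $n$-ary generalization of \cref{thm:grp-binary-realiz-borel} (itself proved via \cref{thm:grp-binary-realiz}): the $n$-fold rectangle identity via iterated Beck--Chevalley and Kuratowski--Ulam, the chain \cref*{thm:grp-binary-realiz:vaught-vaught*}$\implies$\cref*{thm:grp-binary-realiz:vaught2}$\implies$\cref*{thm:grp-binary-realiz:open} for the converse, and \cref{it:grp-vaught-invar} with \cref{it:grp-vaught-union} for the invariant case are exactly the intended ingredients. Your filling-in of the simultaneous application of \cref{thm:grp-realiz-borel} to each $X_i$ with all factors collected across the $R_k$ is the right way to handle the countably-many-relations clause.
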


We also have the following generalization of \cref{thm:grp-realiz-dis}:

\begin{corollary}[change of topology for relations]
\label{thm:grp-nary-realiz-dis}
Let $G$ be a Polish group, $X_i$ be countably many quasi-Polish $G$-spaces, and $R_k \subseteq X_{i_{k,1}} \times \dotsb \times X_{i_{k,n_k}}$ be countably many relations of arities $n_k \in \#N$, such that each $R_k \in \@B(G) \oast (\*\Sigma^0_\xi(X_{i_{k,1}}) \otimes \dotsb \otimes \*\Sigma^0_\xi(X_{i_{k,n_k}}))$, i.e., $R_k$ can be written as a countable union of sets of the form $U * (A_1 \times \dotsb \times A_{n_k})$, where $U \in \@B(G)$ and $A_j \in \*\Sigma^0_\xi(X_{i_{k,j}})$.
Then there are finer quasi-Polish topologies on each $X_i$ contained in $\*\Sigma^0_\xi(X_i)$ for which the action is still continuous, such that each $R_k$ becomes open.
In particular, this can be achieved if each $R_k$ is $G$-invariant and a countable union of $\*\Sigma^0_\xi$ rectangles.
\end{corollary}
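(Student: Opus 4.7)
The plan is to reduce the claim to the $n$-ary analog of \cref{thm:grp-binary-realiz} by taking $\@S(X_i) := \*\Sigma^0_\xi(X_i)$ as the ambient compatible $\sigma$-topologies. First I would check that this choice meets the hypotheses of that theorem: each $\*\Sigma^0_\xi(X_i)$ is compatible by \cref{ex:qpol-compat-borel}, and is closed under $\@O(G) \oast (-)$ by \cref{it:grp-vaught-borel}. The assumption $R_k \in \@B(G) \oast (\*\Sigma^0_\xi(X_{i_{k,1}}) \otimes \dotsb \otimes \*\Sigma^0_\xi(X_{i_{k,n_k}}))$ is then precisely condition \cref{thm:grp-binary-realiz:vaught2} in the $n$-ary generalization of \cref{thm:grp-binary-realiz} (stated arity-by-arity as in \cref{thm:grp-nary-realiz-borel}), so the theorem directly yields quasi-Polish topologies $\@O(X_i) \subseteq \*\Sigma^0_\xi(X_i)$ making each action continuous and each $R_k$ open.

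To get topologies that are \emph{finer} than the existing ones on the $X_i$ (not merely contained in $\*\Sigma^0_\xi(X_i)$), I would invoke the moreover clause of \cref{thm:grp-binary-realiz}, which permits simultaneously making open countably many extra sets that already satisfy \cref{thm:grp-realiz}. Applied to a countable basis $\@B_i$ for the given topology on each $X_i$, this works because the given action of $G$ on $X_i$ is continuous, so for any $A \in \@B_i$ we have $\alpha_{X_i}^{-1}(A) \in \@O(G) \otimes \@O(X_i) \subseteq \@O(G) \otimes \*\Sigma^0_\xi(X_i)$, which is condition \cref{thm:grp-realiz:rect-open} of \cref{thm:grp-realiz} for $\@S := \*\Sigma^0_\xi(X_i)$. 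Thus every basic open of the old topology can be forced into the new one, making the latter finer.

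For the ``in particular'' clause, suppose $R_k$ is $G$-invariant (under the diagonal action on the product) and $R_k = \bigcup_j (A_{j,1} \times \dotsb \times A_{j,n_k})$ with each $A_{j,l} \in \*\Sigma^0_\xi(X_{i_{k,l}})$. Then by \cref{it:grp-vaught-invar} applied to the diagonal action, $R_k = G * R_k = \bigcup_j (G * (A_{j,1} \times \dotsb \times A_{j,n_k}))$, which is manifestly of the form required in the hypothesis; so the main case applies.

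There is essentially no genuine obstacle here: all the heavy lifting has already been done in \cref{thm:grp-binary-realiz} (and its $n$-ary analog), together with the fact that $\*\Sigma^0_\xi$ is stable under the operation $\@O(G) \oast (-)$. The only mild point to verify carefully is the $n$-ary generalization of \cref{thm:grp-binary-realiz}, but as noted in the text preceding \cref{thm:grp-nary-realiz-borel}, the same proof goes through with only notational overhead from handling $n$ factors instead of $2$.
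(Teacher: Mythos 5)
There is a genuine gap in the main reduction. Your hypothesis is that $R_k \in \@B(G) \oast (\*\Sigma^0_\xi(X_{i_{k,1}}) \otimes \dotsb \otimes \*\Sigma^0_\xi(X_{i_{k,n_k}}))$, where this $\oast$ is the Vaught transform for the \emph{diagonal} action of $G$ on the product. Condition \cref{thm:grp-binary-realiz:vaught2} of \cref{thm:grp-binary-realiz} is instead $R \in \@B(G^2) \oast (\@S(X) \otimes \@S(Y))$, the Vaught transform for the \emph{product} action of $G^2$ (equivalently, a countable union of rectangles $(U_i * A_i) \times (V_i * B_i)$). These are not the same condition, and the identification is exactly where the difficulty lies: the remark immediately following \cref{thm:grp-binary-realiz} states that it is \emph{not known} whether the diagonal-action condition ``$R \in \@O(G) \oast (\@S(X) \otimes \@S(Y))$'' can be added to the list of equivalents for general $\@S$. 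So you cannot read off the hypothesis as \cref{thm:grp-binary-realiz:vaught2}.

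The condition you should be targeting is \cref{thm:grp-binary-realiz:vaught-vaught*}, namely $R \in \@B(G) \oast (\exists^*_{\alpha_X}((\@O(G) \otimes \@S(X))^*_{\pi_2}) \otimes \exists^*_{\alpha_Y}((\@O(G) \otimes \@S(Y))^*_{\pi_2}))$, and the missing step is to verify that $\*\Sigma^0_\xi(X) \subseteq \exists^*_{\alpha_X}((\@O(G) \otimes \*\Sigma^0_\xi(X))^*_{\pi_2})$ (and likewise for the other factors). This is where the hypothesis that the $X_i$ are quasi-Polish $G$-\emph{spaces} (with continuous action) does real work: for $A \in \*\Sigma^0_\xi(X)$, continuity of $\alpha_X$ gives $\alpha_X^{-1}(A) \in \*\Sigma^0_\xi(G \times X)$, and the fiberwise Baire property in the form of \cref{thm:fib-baire-borel} gives $\alpha_X^{-1}(A) =^*_{\pi_2} U$ for some $U \in \@O(G) \otimes \*\Sigma^0_\xi(X)$, whence $A = \exists^*_{\alpha_X}(\alpha_X^{-1}(A))$ lies in the required class by \cref{it:fib-baire-surj}. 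With that supplied, \cref{thm:grp-binary-realiz:vaught-vaught*}$\implies$\cref{thm:grp-binary-realiz:open} (which the paper flags as the one substantial implication) finishes the argument. Your treatment of the ``finer topology'' point via the moreover clause applied to a countable basis of each $\@O(X_i)$, and of the ``in particular'' clause via \cref{it:grp-vaught-invar}, are both fine.
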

\begin{proof}
As before, for simplicity of notation we only consider the case of a binary relation $R = U * (A \times B) \in \@B(G) \oast (\*\Sigma^0_\xi(X) \otimes \*\Sigma^0_\xi(Y))$, which follows from \cref{thm:grp-binary-realiz}\cref{thm:grp-binary-realiz:vaught-vaught*}$\implies$\cref{thm:grp-binary-realiz:open}, using that $\alpha_X^{-1}(A) \in \*\Sigma^0_\xi(G \times X) \subseteq (\@O(G) \otimes \*\Sigma^0_\xi(X))^*_{\pi_2}$ by the fiberwise Baire property in the form of \cref{thm:fib-baire-borel} whence $A = \exists^*_{\alpha_X}(\alpha_X^{-1}(A)) \in \exists^*_{\alpha_X}((\@O(G) \otimes \*\Sigma^0_\xi(X))^*_{\pi_2})$, and similarly for $B$.
\end{proof}

\begin{remark}
\label{rmk:grp-top-nary-realiz}
\Cref{thm:grp-top-realiz} trivially ``generalizes'' to a ``topological realization'' result for quasi-Polish $G$-structures: if $X_i$ are countably many quasi-Polish spaces, equipped with countably many (invariant) relations $R_k$ of various arities which are open in the product topology, as well as a Borel action of $G$ via homeomorphisms which are also automorphisms of the $R_k$, then $((X_i)_i, (R_k)_k)$ is already a quasi-Polish $G$-structure with open relations (and jointly continuous action).

The analogous result for actions of groupoids on bundles of structures is less trivial, and is an application of the groupoid analog of \cref{thm:grp-binary-realiz}; see \cref{thm:gpd-top-nary-realiz}.
\end{remark}

\subsection{(Zero-dimensional) Polish realizations}
\label{sec:grp-0d-reg}

\begin{remark}
\label{rmk:grp-bk-0d-reg}
Thus far, we have focused on quasi-Polish topological realizations.
To get a Polish realization, one can combine \cref{thm:grp-cts} with the first part of \cite[Proof of 5.2.1]{BKgrp}, which ensures regularity of the resulting topology by iteratively constructing a countable Boolean algebra of basic open sets closed under $U * (-)$ for each basic open $U \subseteq G$.

Note that that part of their argument can be easily formalized in a point-free manner, in the spirit of our approach in this paper.
The last part of \cite[Proof of 5.2.1]{BKgrp}, showing that the topology is strong Choquet, can still be replaced by our argument in \cref{thm:grp-cts} which instead shows that the topology is quasi-Polish, via \cref{thm:qpol-baire-subtop} which ultimately reduces to \cref{it:qpol-openquot}.
\end{remark}

In the rest of this subsection, we show that a simple variation of the above argument recovers the finer change-of-topology results of Hjorth~\cite{Hbeckec} and Sami~\cite{Sbeckec}, generalized to quasi-Polish $G$-spaces, thereby strengthening \cref{thm:grp-realiz-dis} and several other preceding results in this paper to yield Polish topologies, while also clarifying the connection between \cite{Hbeckec}, \cite{Sbeckec} and \cite{BKgrp}.

Recall that a topological space is \defn{zero-dimensional} if it has an open basis consisting of clopen sets, and that a Polish group $G$ is \defn{non-Archimedean} if $1 \in G$ has a neighborhood basis of open (hence clopen) subgroups, the cosets of which then form an open basis for $G$.

\begin{lemma}
\label{thm:grp-realiz-dis-lat}
Let $G$ be a Polish group, $\@U \subseteq \@O(G)$ be a countable basis such that $\@U = \@U^{-1}$, $X$ be a standard Borel $G$-space, $\@A \subseteq \@B(X)$ be a countable sublattice forming a basis for a compatible quasi-Polish topology $\@T$ such that $\@U * \@A \subseteq \@A$ (hence $\@O(G) \oast \@T \subseteq \@T$).
Then the sets in $\@A$ together with their complements generate a compatible zero-dimensional Polish topology $\@T'$ such that $\@O(G) \oast \@T' \subseteq \@T'$, whence $\@O(G) \oast \@T'$ is a compatible topology making the action continuous; and this topology is Polish.
If moreover $\@U$ consists of cosets (so $G$ is non-Archimedean), then $\@O(G) \oast \@T'$ is zero-dimensional.
\end{lemma}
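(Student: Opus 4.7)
The plan is to establish the four assertions of the lemma in sequence. For the first, that $\@T'$ is zero-dimensional Polish: the sets $\neg A$ for $A \in \@A$ are $\@T$-closed, hence $\*\Delta^0_2$ in $\@T$, and adjoining these countably many closed sets as open yields a compatible zero-dimensional Polish refinement of $\@T$ by \cref{it:qpol-sbor}. The Boolean algebra $\@B \subseteq \@B(X)$ generated by $\@A$ forms a clopen basis for $\@T'$; since $\@A$ is a sublattice, basic clopens take the form $A \setminus A'$ with $A, A' \in \@A$.

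For the second, that $\@O(G) \oast \@T' \subseteq \@T'$: since $*$ distributes over unions in both arguments \cref{it:grp-vaught-union} and $\@U$ is a basis for $\@O(G)$, one reduces to checking $U * (A \setminus A') \in \@T'$ for $U \in \@U$, $A, A' \in \@A$. Applying the Vaught difference formula \cref{it:grp-vaught-diff} with the basis $\@U$ gives
\begin{equation*}
U * (A \setminus A') = \bigcup_{\@U \ni W \subseteq U} ((W * A) \setminus (W * A')),
\end{equation*}
and each $W * A, W * A' \in \@U * \@A \subseteq \@A$, so each summand is a basic clopen of $\@T'$. Consequently $\@U * \@B \subseteq \@B$. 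Then \cref{thm:grp-cts}(b) yields that $\@O(G) \oast \@T'$ is a compatible quasi-Polish topology making the action continuous.

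For the third assertion, that $\@O(G) \oast \@T'$ is Polish: by \cref{it:qpol-pol}, it suffices to verify regularity. I expect this to be the main obstacle. As indicated in \cref{rmk:grp-bk-0d-reg}, the argument mirrors the first part of \cite[Proof of 5.2.1]{BKgrp}: given $x \in U * B$ with $U \in \@U$, $B \in \@B$, one uses Pettis's theorem \cref{it:grp-vaught-pettis}, together with the closure of $\@B$ under $\@U * (-)$ established above, to locate a smaller $V * C$ with $x \in V * C$ and $\@O(G) \oast \@T'$-closure contained in $U * B$. The Vaught-closure of $\@B$ is precisely the structure needed to match Pettis-style separation in $G$ with a separation by basic clopens of $\@T'$ in $X$.

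For the fourth assertion, the non-Archimedean case: take $U = gH \in \@U$ a coset of an open subgroup $H$, and $B \in \@B$. Then $H * B \in \@B$ is $H$-invariant by \cref{it:grp-vaught-subgrp}, hence orbitwise open (since $H$ is open in $G$, the $H$-orbits are open in each $G$-orbit). Applying \cref{it:grp-vaught-im} and associativity \cref{it:grp-vaught-assoc-im},
\begin{equation*}
U * B = (gH \cdot H) * B = gH * (H * B) = gH \cdot (H * B) = g \cdot (H * B).
\end{equation*}
The same reasoning applied to the $H$-invariant set $\neg(H * B) \in \@B$ yields $gH * \neg(H * B) = g \cdot \neg(H * B) = \neg(U * B)$. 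Since $\neg(H * B) \in \@B \subseteq \@T'$, the set $\neg(U * B)$ lies in $\@U \oast \@T' \subseteq \@O(G) \oast \@T'$, so $U * B$ is $\@O(G) \oast \@T'$-clopen. Since such $U * B$ form a basis, $\@O(G) \oast \@T'$ is zero-dimensional, hence (being quasi-Polish) zero-dimensional Polish, sidestepping the need for the subtler regularity argument of Part 3.
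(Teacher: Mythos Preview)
Your Parts~1 and~2 match the paper's argument closely, but the sentence ``Consequently $\@U * \@B \subseteq \@B$'' at the end of Part~2 is unjustified and in fact false: the Vaught difference formula expresses $U * (A \setminus A')$ as a \emph{countable} union of sets $(W*A) \setminus (W*A')$, and $\@B$ is only a Boolean algebra, not a $\sigma$-algebra. You have only shown $\@U * \@B \subseteq \@T'$. This error propagates to Part~4, where you need $H * B \in \@B$ (so that $\neg(H*B) \in \@B \subseteq \@T'$), which does not follow. The paper avoids this by choosing a different basis for $\@O(G) \oast \@T'$: rather than all $U * B$ with $B \in \@B$, it uses the sets $C := (W*A) \setminus (W*B)$ with $W \in \@U$ and $A, B \in \@A$, which already form a basis by \cref{eq:grp-realiz-dis-0d}. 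Each such $C$ is $WW^{-1}$-invariant (since $W*A, W*B$ are), and both $C$ and $\neg C$ lie in $\@T'$ (since $W*A, W*B \in \@A$), so $C = WW^{-1} * C$ and $\neg C = WW^{-1} * \neg C$ both lie in $\@O(G) \oast \@T'$.

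Your Part~3 is only a sketch, and the paper's proof here is more delicate than the reference to \cite{BKgrp} suggests. The paper proves an explicit sublemma (\cref{lm:grp-realiz-dis-reg}): for $V, W \in \@U$ and $A, B \in \@A$,
\begin{equation*}
\overline{V * ((W*A) \setminus (V^{-1}VV^{-1}VW*B))} \subseteq VV^{-1}VW * (A \setminus B),
\end{equation*}
witnessed by the intermediate closed set $\neg(V * (\neg(V^{-1}VW*A) \cup (V^{-1}VW*B)))$. The verification is a direct computation using \cref{it:grp-vaught-assoc-im} and \cref{it:grp-vaught-im}. One then rewrites an arbitrary basic open $U * (A \setminus B)$ as a union of such $V * ((W*A) \setminus (V^{-1}VV^{-1}VW*B))$ with $VV^{-1}VW \subseteq U$, which takes some care with the nesting of neighborhoods. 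The closure of $\@A$ under $\@U * (-)$ is indeed what makes the intermediate closed set belong to $\@O(G) \oast \@T'$, so your intuition about the role of that hypothesis is correct.
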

\begin{proof}
$\@T'$ is zero-dimensional since it is generated by some sets and their complements, and is Polish since it is the result of adjoining countably many closed sets to the quasi-Polish $\@T$.
For a basic $\@T'$-open $A \setminus B$, where $A, B \in \@A$, and $U \in \@U$, by \cref{it:grp-vaught-diff} we have
\begin{align}
\label{eq:grp-realiz-dis-0d}
U * (A \setminus B)
= \bigcup_{\@U \ni W \subseteq U} ((W * A) \setminus (W * B)) \in \@T';
\end{align}
thus $\@O(G) \oast \@T' \subseteq \@T'$.
So by \cref{thm:grp-cts}, $\@O(G) \oast \@T'$ is a compatible quasi-Polish topology making the action continuous.

First, suppose $\@U$ consists of cosets.
Then for $A, B \in \@A$ and $W \in \@U$, $C := (W * A) \setminus (W * B)$ is $WW^{-1}$-invariant since $W * A, W * B$ are \cref{it:grp-vaught-subgrp}, whence $C = WW^{-1} * C \in \@O(G) \oast \@T'$ and similarly $\neg C \in \@O(G) \oast \@T'$; by \cref{eq:grp-realiz-dis-0d}, such $C$ form a basis for $\@O(G) \oast \@T'$, which is hence zero-dimensional.

In the general case, we use:

\begin{lemma}
\label{lm:grp-realiz-dis-reg}
For any $V, W \in \@U$ and $A, B \in \@A$, we have
\begin{align*}
\-{V * ((W * A) \setminus (V^{-1}VV^{-1}VW * B))}
&\subseteq VV^{-1}VW * (A \setminus B)
\end{align*}
in the topology $\@O(G) \oast \@T'$, witnessed when $V \ne \emptyset$ by the following closed set sandwiched in between:
\begin{equation*}
\neg (V * (\neg (V^{-1}VW * A) \cup (V^{-1}VW * B))).
\end{equation*}
\end{lemma}
\begin{proof}
To prove that the left set above is contained in this set, it suffices (by \cref{it:grp-vaught-im}) to show
\begin{align*}
\emptyset &= (V \cdot ((W * A) \setminus (V^{-1}VV^{-1}VW * B))) \cap (V * (\neg (V^{-1}VW * A) \cup (V^{-1}VW * B))) \\
\iff \emptyset &= ((W * A) \setminus (V^{-1}VV^{-1}VW * B)) \cap V^{-1}(V * (\neg (V^{-1}VW * A) \cup (V^{-1}VW * B))) \\
&= (W * A) \cap \neg (V^{-1}VV^{-1}VW * B) \cap ((V^{-1}V * \neg (V^{-1}VW * A)) \cup (V^{-1}VV^{-1}VW * B))
\intertext{(using \cref{it:grp-vaught-assoc-im} in the last step).
The intersection with the second term of the union is clearly empty, as is the intersection with the first term of the union, since similarly to before we have}
\emptyset &= (W * A) \cap (V^{-1}V \cdot \neg (V^{-1}VW * A)) \\
\iff \emptyset &= V^{-1}V(W * A) \cap \neg (V^{-1}VW * A).
\end{align*}
To prove the second containment: putting $W' := V^{-1}VW$, we have
\begin{align*}
\MoveEqLeft
(VV^{-1}VW * (A \setminus B)) \cup (V * (\neg (W' * A) \cup (W' * B))) \\
&= (VW' * (A \setminus B)) \cup (V * (\neg (W' * A) \cup (W' * B))) \\
&= V * ((W' * (A \setminus B)) \cup \neg (W' * A) \cup (W' * B)) \\
&\supseteq V * ((W' * A) \cup \neg (W' * A)) \\
&= V * X = X \quad \text{(by \cref{it:grp-vaught-invar})}.
\qedhere
\end{align*}
\end{proof}

Now for a basic open $U * (A \setminus B) \in \@O(G) \oast \@T'$, where $U \in \@U$ and $A, B \in \@A$, we have
\begin{align*}
U * (A \setminus B)
&= \bigcup \set{V' * U' * (A \setminus B)}{U' \in \@U \AND 1 \in V' \in \@U \AND V'U' \subseteq U} \\
&= \bigcup \set*{V' * ((W' * A) \setminus (W' * B))}{
    W' \in \@U \AND
    1 \in V' \in \@U \AND
    V'W' \subseteq U
} \quad \text{by \cref{eq:grp-realiz-dis-0d}} \\
&= \origdisplaystyle\bigcup \set*{V * ((W * A) \setminus (W' * B))}{
    \begin{aligned}
    &W, W' \in \@U \AND
    1 \in V' \in \@U \AND
    V'W' \subseteq U \\
    &\AND \@U \ni V \subseteq V' \AND
    V^{-1}VV^{-1}VW \subseteq W'
    \end{aligned}
} \\
&\subseteq \bigcup \set*{V * ((W * A) \setminus (V^{-1}VV^{-1}VW * B))}{V, W \in \@U \AND VV^{-1}VW \subseteq U}
\end{align*}
which is a union of basic open sets whose closures are contained in $U * (A \setminus B)$ by \cref{lm:grp-realiz-dis-reg}.
Thus $\@O(G) \oast \@T'$ is a regular topology, hence being quasi-Polish, is Polish by \cref{it:qpol-pol}.
\end{proof}

\begin{theorem}[{cf.\ \cite[4.3]{Sbeckec}, \cite[2.2]{Hbeckec}}]
\label{thm:grp-realiz-dis-0d-reg}
Let $G$ be a (non-Archimedean) Polish group, $X$ be a quasi-Polish $G$-space.
Then for any countably many sets $A_i \in \*\Sigma^0_\xi(X)$, $\xi \ge 2$, there is a finer (zero-dimensional) Polish topology containing each $\@O(G) * A_i$ and contained in $\*\Sigma^0_\xi(X)$ for which the action is still continuous.
In particular, if $A_i$ is $G$-invariant, then $A_i$ itself can be made open in such a topology.
\end{theorem}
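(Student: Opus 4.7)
The plan is to apply \cref{thm:grp-realiz-dis-lat}; the key refinement over the argument for \cref{thm:grp-realiz-dis} is that the sublattice $\@A$ supplied to the lemma will be chosen strictly inside $\*\Sigma^0_{<\xi}(X) := \bigcup_{\eta < \xi} \*\Sigma^0_\eta(X)$, rather than merely inside $\*\Sigma^0_\xi(X)$. Fix a countable symmetric open basis $\@U \subseteq \@O(G)$, consisting of cosets of open subgroups in the non-Archimedean case. Using the Selivanov definition of $\*\Sigma^0_\xi$, write each $A_i = \bigcup_j (U_{ij} \setminus V_{ij})$ with $U_{ij}, V_{ij} \in \*\Sigma^0_{<\xi}(X)$.

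I would then construct, via an iterative procedure analogous to the proof of \cref{thm:grp-realiz}, a compatible quasi-Polish topology $\@T \subseteq \*\Sigma^0_\xi(X)$ together with a countable basis $\@A \subseteq \*\Sigma^0_{<\xi}(X)$ of $\@T$ that is a sublattice closed under $\@U * (-)$ and contains both a countable basis of $\@O(X)$ and each $W * U_{ij}$, $W * V_{ij}$ for $W \in \@U$. The level bound on $\@A$ is preserved because $W * (-)$ and finite lattice operations both preserve each $\*\Sigma^0_\eta$ levelwise by \cref{it:grp-vaught-borel}; the quasi-Polishness of $\@T$ is maintained via \cref{ex:qpol-compat-borel} and \cref{it:qpol-dis}\cref{it:qpol-dis:join}.

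Applying \cref{thm:grp-realiz-dis-lat}, the topology $\@T'$ generated by $\@A$ and its complements is a compatible zero-dimensional Polish topology, and $\@O(G) \oast \@T'$ is a compatible Polish topology (zero-dimensional in the non-Archimedean case) making the action continuous. Each $\@O(G) * A_i$ lies in this topology since
\begin{equation*}
\@O(G) * A_i = \bigcup_{W \in \@U,\, j} ((W * U_{ij}) \setminus (W * V_{ij}))
\end{equation*}
is a countable union of basic opens of $\@T'$ and, being $G$-invariant, equals $G * (\@O(G) * A_i) \in \@O(G) \oast \@T'$. Moreover $\@O(G) \oast \@T' \subseteq \@T' \subseteq \*\Sigma^0_\xi(X)$: every basic open $A \setminus B$ of $\@T'$ has $A, B \in \@A \subseteq \*\Sigma^0_{<\xi}$, so $A \setminus B \in \*\Sigma^0_\xi$ directly by the Selivanov definition of that level.

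The main obstacle lies in the joint construction of $\@A$ and $\@T$: at each stage we must keep $\@A$ an actual \emph{basis} of the quasi-Polish $\@T$ (not just a subfamily of its opens), while simultaneously closing $\@A$ under the Vaught and lattice operations demanded by the lemma and confining it to $\*\Sigma^0_{<\xi}$. This entails alternating closure steps on $\@A$, which stay in $\*\Sigma^0_{<\xi}$, with quasi-Polish refinements of $\@T$ supplied by compatibility of $\*\Sigma^0_\xi$, adjoining only sets expressible as unions from the enlarged $\@A$.
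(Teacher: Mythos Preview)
Your approach is essentially identical to the paper's: decompose each $A_i$ into differences of $\*\Sigma^0_{<\xi}$ pieces, build by iteration a quasi-Polish $\@T$ with countable lattice basis $\@A \subseteq \*\Sigma^0_{<\xi}$ closed under $\@U * (-)$, and apply \cref{thm:grp-realiz-dis-lat}.

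One slip in your verification: $\@O(G) * A_i$ denotes the \emph{family} $\{U * A_i : U \in \@O(G)\}$, not a single set, so the ``being $G$-invariant'' argument (which only gives $G * A_i$) does not cover the general $U * A_i$. The clean fix---which is what the paper does---is to put the pieces $U_{ij}, V_{ij}$ themselves (not just their Vaught transforms) into $\@A$; then $A_i = \bigcup_j (U_{ij} \setminus V_{ij}) \in \@T'$ directly, whence every $U * A_i \in \@O(G) * \@T' \subseteq \@O(G) \oast \@T'$.
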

\begin{proof}
Let $\@U \subseteq \@O(G)$ be a countable basis with $\@U = \@U^{-1}$, consisting of cosets if $G$ is non-Archimedean.
Since every $\*\Sigma^0_\xi(X)$ set is a countable union of differences $A_i \setminus B_i$ where $A_i \in \@O(X)$ and $B_i \in \*\Sigma^0_{\zeta_i}(X)$ for some $\zeta_i < \xi$, it suffices to show that for countably many such $A_i, B_i$, we can find a topology of the specified kind containing each $\@O(G) * (A_i \setminus B_i)$.
For each $i$, find a finer quasi-Polish topology containing $A_i, B_i, \@O(X)$ and contained in $\*\Sigma^0_{\zeta_i}(X)$ by \cref{it:qpol-dis}; then the topology $\@T_0$ generated by all of these is still quasi-Polish by \cref{it:qpol-dis}\cref{it:qpol-dis:join}, and has a countable basis $\@A_0 \subseteq \bigcup_{\zeta < \xi} \*\Sigma^0_\zeta(X)$, which we may assume to be a lattice.
Given $\@T_n, \@A_n$, similarly find a finer quasi-Polish topology $\@T_{n+1}$ generated by a countable lattice $\@A_n \cup (\@U * \@A_n) \subseteq \@A_{n+1} \subseteq \bigcup_{\zeta < \xi} \*\Sigma^0_\zeta(X)$.
Let $\@T$ be the topology generated by $\@A := \bigcup_n \@A_n \subseteq \bigcup_{\zeta < \xi} \*\Sigma^0_\zeta(X)$, which obeys $\@U * \@A \subseteq \@A$ and each $A_i, B_i \in \@A$.
Then the topology $\@O(G) \oast \@T'$ given by the preceding lemma works.
\end{proof}

\begin{remark}
\label{rmk:grp-realiz-0d-reg}
By applying \cref{thm:grp-realiz-dis-0d-reg} after
\cref{thm:grp-borel-realiz},
\cref{thm:grp-realiz-borel},
\cref{thm:grp-binary-realiz-borel},
\cref{thm:grp-nary-realiz-borel}, or
\cref{thm:grp-nary-realiz-dis} (for $\xi \ge 2$),
we get that ``quasi-Polish'' may be replaced with ``(zero-dimensional) Polish'' in those results as well.
\end{remark}

\section{Open quasi-Polish groupoids}
\label{sec:gpd}

\subsection{Generalities on groupoids and their actions}
\label{sec:gpd-prelim}

\begin{definition}
\label{def:gpd}
A \defn{groupoid} $G = (G_0, G_1, \sigma, \tau, \mu, \iota, \nu)$ consists of two sets $G_0$ (\defn{objects}) and $G_1$ (\defn{morphisms}), together with five structure maps:
\begin{itemize}
\item
$\sigma, \tau : G_1 -> G_0$ (\defn{source} and \defn{target}); if $g \in G_1$ with $\sigma(g) = x$ and $\tau(g) = y$, then we also write $g : x -> y$ or $g \in G(x,y)$ where $G(x,y)$ is the \defn{hom-set} of all morphisms from $x$ to $y$;
\item
$\iota : G_0 -> G_1$ (\defn{identity}), also denoted $\iota(x) =: 1_x$;
\item
$\nu : G_1 -> G_1$ (\defn{inverse}), also denoted $\nu(g) =: g^{-1}$;
\item
$\mu : G_1 \times_{G_0} G_1 = G_1 \tensor[_\sigma]\times{_\tau} G_1 := \{(g,h) \in G_1 \times G_1 \mid \sigma(g) = \tau(h)\} -> G_1$ (\defn{multiplication} or \defn{composition}), also denoted $\mu(g,h) =: g \cdot h =: gh$;
\end{itemize}
satisfying the usual axioms such as associativity, $\sigma(gh) = \sigma(h)$, $gg^{-1} = 1_{\tau(g)}$, etc.

We will henceforth regard $G_1$ as the ``underlying set'' of a groupoid, which we thus also denote by $G := G_1$ (so we write $G \times_{G_0} G$, etc.); objects may be identified with identity morphisms.

For two sets of morphisms $U, V \subseteq G$ ($= G_1$), $UV = U \cdot V := \mu(U \times_{G_0} V)$ denotes the set of all composites of $g \in U$ and $h \in V$ which are defined.
If $U$ or $V$ is a singleton $\{g\}$, we omit the braces.
\end{definition}

As indicated above, when working with groupoids, one frequently encounters fiber products of the form $G \times_{G_0} (-)$ or $(-) \times_{G_0} G$, for which this usual notation is potentially ambiguous, due to the presence of two canonical maps $\sigma, \tau : G_1 -> G_0$.
We therefore adopt the following

\begin{convention}
\label{def:gpd-pb}
For a groupoid $G$, $G \times_{G_0} (-)$ always means with respect to $\sigma : G -> G_0$ in the first factor, while $(-) \times_{G_0} G$ always means with respect to $\tau : G -> G_0$ in the second factor.

When we need to specify the maps used in a fiber product, because they differ from the default, or just for emphasis, we use a notation such as $\tensor[_\sigma]\times{_\tau}$ used above.

For instance,
$G \times_{G_0} G \times_{G_0} G
= G \tensor[_\sigma]\times{_\tau} G \tensor[_\sigma]\times{_\tau} G
:= \{(g,h,k) \in G^3 \mid \sigma(g) = \tau(h) \AND \sigma(h) = \tau(k)\}$;
and for another bundle $p : X -> G_0$,
$G \times_{G_0} X
= G \tensor[_\sigma]\times{_p} X
:= \{(g,x) \in G \times X \mid \sigma(g) = p(x)\}$.
\end{convention}

\begin{definition}
A \defn{topological groupoid} $G$ is one whose $G_0, G_1$ are topological spaces and $\sigma, \tau, \mu, \iota, \nu$ are continuous maps.
Note that this implies that $G_0$ is a continuous retract of $G_1$, via $\iota$ and $\sigma$, so that we may continue to regard $G = G_1$ as the underlying space.
A \defn{quasi-Polish groupoid} $G$ is a topological groupoid such that $G = G_1$ is quasi-Polish
(whence so is its retract $G_0$ by \cref{it:qpol-pi02}).
Note that a one-object quasi-Polish groupoid is the same thing as a Polish group (since topological groups are uniformizable, hence regular).


A topological groupoid $G$ is \defn{open} if $\sigma : G -> G_0$ is an open map, or equivalently (as explained in the following definition) $\tau$ or $\mu$ are.
Note that topological groups are open as one-object groupoids.
\end{definition}

\begin{definition}
\label{def:gpd-action}
An \defn{action} of a groupoid $G$ on a bundle $p : X -> G_0$ is a map
$\alpha : G \times_{G_0} X -> X$ (recalling \cref{def:gpd-pb}),
taking each $(g,a)$ where $g : x -> y$ and $a \in p^{-1}(x)$ to $g \cdot a := \alpha(g,a) \in p^{-1}(y)$, i.e., $p(\alpha(g,a)) = \tau(g)$, and satisfying the usual associativity and identity axioms.

Examples are the \defn{trivial action} $\alpha = \tau$ of $G$ on $1_{G_0} : G_0 -> G_0$, the \defn{left translation action} $\alpha = \mu$ of $G$ on $\tau : G -> G_0$, and the \defn{right translation action} $\alpha(g,h) := hg^{-1}$ on $\sigma : G -> G_0$.

The \defn{twist involution} $\dagger$ of an action is defined as in \cref{def:grp-twist}:
\begin{equation}
\label{diag:gpd-twist}
\begin{tikzcd}[column sep=4em]
G \times_{G_0} X \drar["\alpha"'] \ar[rr,<->,"{(g,a)\mapsto(g,a)^\dagger:=(g^{-1},ga)}","\cong"'] &&
G \times_{G_0} X \dlar["\pi_2"] \\
& X
\end{tikzcd}
\end{equation}

If $G$ is a topological groupoid, a \defn{topological $G$-space} is an action $\alpha$ on a bundle $p : X -> G_0$ such that $X$ is a topological space and both $p, \alpha$ are continuous.
The notion of \defn{standard Borel $G$-space} (for a quasi-Polish
groupoid $G$) is defined analogously.

An \defn{open topological $G$-space} $X$ will mean one where $p : X -> G_0$ is an open map.
This is equivalent to saying that $\pi_1 : G \times_{G_0} X -> G$ is open, since $\pi_1, p$ are pullbacks of each other (along $\sigma, \iota$).
By considering the left and right translation actions of $G$ on $G$, we thus recover the aforementioned fact that $G$ is open iff $\sigma$ is open, iff $\pi_2$ is, iff its twist $\mu$ is, iff $\tau$ is.

For a quasi-Polish groupoid $G$, a \defn{standard Borel(-overt) $G$-bundle of quasi-Polish spaces} (over $G_0$) will mean a standard Borel $G$-space $p : X -> G_0$ which is also a standard Borel(-overt) bundle of quasi-Polish spaces (cf.\ \cref{def:fib-bor-qpol,thm:fib-bov-qpol}) and such that each morphism $g : x -> y \in G$ acts via a homeomorphism $p^{-1}(x) -> p^{-1}(y)$.
\end{definition}

\begin{remark}
\label{rmk:gpd-action-open}
Being an open topological $G$-space is \emph{not} equivalent to $\alpha$ being an open map.
Indeed, if $G$ is an open topological groupoid, then for any topological $G$-space $X$, $\alpha$ is an open map, being the twist of $\pi_2$ which is a pullback of $\sigma$.

Similarly, for an open quasi-Polish groupoid $G$ and standard Borel $G$-space $X$, the action map $\alpha$ is always Borel-overt with respect to the $\alpha$-fiberwise topology as defined below.
\end{remark}

\begin{definition}
\label{def:gpd-eqvar}
For two $G$-spaces $p : X -> G_0$ and $q : Y -> G_0$, a \defn{$G$-equivariant map} $f : X -> Y$ is one such that $p = q \circ f$ and $f(g \cdot a) = g \cdot f(a)$.
\end{definition}

For the rest of this subsection, fix an open topological groupoid $G$ acting on $p : X -> G_0$.

\begin{definition}[cf.\ \cref{def:grp-fibtop}]
\label{def:gpd-fibtop}
The \defn{$\alpha$-fiberwise topology} $\@O_\alpha(G \times_{G_0} X)$ is the twist of the $\pi_2$-fiberwise topology on $G \times_{G_0} X$ given by pulling back the $\sigma$-fiberwise topology on $G$.
\end{definition}

\begin{remark}[cf.\ \cref{rmk:grp-fibtop-basis}]
\label{rmk:gpd-fibtop-basis}
For any $U \subseteq G$ and $G$-invariant $A \subseteq X$, we have $(U \times_{G_0} A)^\dagger = U^{-1} \times_{G_0} A$.
Thus if $U \subseteq G$ is $\tau$-fiberwise open, then $U^{-1} \times_{G_0} A$ is $\pi_2$-fiberwise open, whence $U \times_{G_0} A$ is $\alpha$-fiberwise open.
And for any ($\tau$-fiberwise) open basis $\@U$ for $G$,
$\@U \times_{G_0} X := \{U \times_{G_0} X \mid U \in \@U\}$ is an $\alpha$-fiberwise open basis for $G \times_{G_0} X$.
\end{remark}

\begin{definition}[cf.\ \cref{def:grp-orbtop}]
\label{def:gpd-orbtop}
The \defn{orbit} of $a \in X$ is $G \cdot a = \sigma^{-1}(p(a)) \cdot a \subseteq X$.
We denote the set of orbits by $X/G$.

The \defn{orbitwise topology} $\@O_G(X)$ is the fiberwise topology on the quotient map $\pi : X ->> X/G$ given by the quotient topology on each orbit $G \cdot x$ induced by $(-) \cdot x : \sigma^{-1}(p(x)) ->> G \cdot x$.

As in \cref{def:grp-orbtop}, the following are easily seen:
\begin{eqenum}
\item \label{it:gpd-orbtop}
$\alpha : G \times_{G_0} X ->> X$ is a continuous open surjection from the $\pi_2$-fiberwise topology to the orbitwise topology.
In particular, $A \subseteq X$ is orbitwise open iff $\alpha^{-1}(A) = (G \times_{G_0} A)^\dagger \subseteq G \times_{G_0} X$ is $\pi_2$-fiberwise open, iff $G \times_{G_0} A$ is $\alpha$-fiberwise open.
\item \label{it:gpd-orbtop-baire}
If $A \subseteq X$ is orbitwise meager, then $G \times_{G_0} A$ is $\alpha$-fiberwise meager.
\item \label{it:gpd-orbtop-invar}
If $A \subseteq X$ is $G$-invariant, then $A$ is orbitwise open.
\item \label{it:gpd-orbtop-top}
If $X$ is a topological $G$-space, then $\@O(X) \subseteq \@O_G(X)$.
\item \label{it:gpd-orbtop-eqvar}
If $f : X -> Y$ is an equivariant map between $G$-spaces, then $f^{-1}(\@O_G(Y)) \subseteq \@O_G(X)$.
\end{eqenum}
\end{definition}

The associativity of $\alpha$ is expressed by the \defn{associativity square} (cf.\ \cref{diag:grp-assoc})
\begin{equation}
\label{diag:gpd-assoc}
\begin{tikzcd}
G \times_{G_0} G \times_{G_0} X \dar["\mu \times X"'] \rar["G \times \alpha"] \drar["\alpha_2"] &
G \times_{G_0} X \dar["\alpha"] \\
G \times_{G_0} X \rar["\alpha"'] &
X
\end{tikzcd}
\end{equation}
As in \cref{diag:grp-assoc-twist}, this can be seen as a twisted version of a square of projections (note the subscripts):
\begin{equation}
\label{diag:gpd-assoc-twist}
\begin{tikzcd}
G \tensor[_\sigma]\times{_\tau} G \tensor[_\sigma]\times{_p} X \dar["\mu \times X"'] \rar["G \times \alpha"] \drar["\alpha_2"] \ar[rrr,bend left=15,"{(g,h,a)\mapsto(g^{-1},h^{-1}g^{-1},gha)}"] &
G \times_{G_0} X \dar["\alpha"] \rar[<->,"\dagger"] &[4em]
G \times_{G_0} X \dar["\pi_2"'] &
G \tensor[_\sigma]\times{_\sigma} G \tensor[_\sigma]\times{_p} X \lar["\pi_{13}"'] \dar["\pi_{23}"] \dlar["\pi_3"'] \\
G \times_{G_0} X \rar["\alpha"'] \ar[rrr,bend right=10,<->,"\dagger"] &
X \rar[equal] &
X &
G \times_{G_0} X \lar["\pi_2"]
\end{tikzcd}
\end{equation}

\begin{lemma}[cf.\ \cref{thm:grp-assoc-fibtop}]
\label{thm:gpd-assoc-fibtop}
The pullback $(\mu \times X)$-fiberwise and $(G \times \alpha)$-fiberwise topologies on $G \times_{G_0} G \times_{G_0} X$ are both restrictions of a common $\alpha_2$-fiberwise topology, namely that twisting as above to the $\pi_3$-fiberwise topology on $G \tensor[_\sigma]\times{_\sigma} G \times_{G_0} X$.
\end{lemma}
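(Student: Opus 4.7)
The plan is to mimic the proof of \cref{thm:grp-assoc-fibtop} essentially verbatim, transcribing the diagrams from the group setting into the groupoid setting and keeping track of the source/target constraints that determine which fiber products appear. The key observation, already encoded in \cref{diag:gpd-assoc-twist}, is that the right-hand square of that diagram exhibits $G \times_{G_0} G \times_{G_0} X$ as (an isomorphic copy of) the pullback of $\pi_2$ with itself, so the pullback of the $\pi_2$-fiberwise topology along either leg agrees with the $\pi_3$-fiberwise topology. Twisting via the top row then transports this $\pi_3$-fiberwise topology to the $\alpha_2$-fiberwise topology on $G \times_{G_0} G \times_{G_0} X$, and the two pullback topologies on the left square of \cref{diag:gpd-assoc} correspond, under the twists, to the two pullback topologies on the right square.

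Concretely, I would first establish the claim for $G \times \alpha$ by factoring the corresponding portion of \cref{diag:gpd-assoc-twist} as
\begin{equation*}
\begin{tikzcd}[column sep=large]
G \tensor[_\sigma]\times{_\tau} G \tensor[_\sigma]\times{_p} X \drar["G \times \alpha"'] \rar["{(g,h,a)\mapsto(g,h^{-1},ha)}"] &
G \tensor[_\sigma]\times{_\sigma} G \tensor[_\sigma]\times{_p} X \dar["\pi_{13}"] \rar["{(g,h,a)\mapsto(g^{-1},hg^{-1},ga)}"] &
G \tensor[_\sigma]\times{_\sigma} G \tensor[_\sigma]\times{_p} X \dar["\pi_{13}"] \\
& G \times_{G_0} X \rar["\dagger"] &
G \times_{G_0} X
\end{tikzcd}
\end{equation*}
The right square is a homeomorphism of the $\pi_{13}$-fiberwise topology (it moves the fiber over $(g,a)$ to that over $(g^{-1},ga)$, composed with the fiberwise homeomorphism $h \mapsto hg^{-1}$), while the left triangle is exactly the application of $G \times (\text{twist})$ to the analogous group-case triangle in \cref{def:grp-twist}. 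Transporting the $\pi_{13}$-fiberwise topology back along this factoring yields the pullback $(G \times \alpha)$-fiberwise topology, and this is visibly equal to the restriction of the $\alpha_2$-fiberwise topology.

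For $\mu \times X$, I would similarly write down
\begin{equation*}
\begin{tikzcd}[column sep=large]
G \tensor[_\sigma]\times{_\tau} G \tensor[_\sigma]\times{_p} X \drar["\mu \times X"'] \rar["{(g,h,a)\mapsto(g^{-1},gh,a)}"] &
G \tensor[_\tau]\times{_\tau} G \tensor[_\sigma]\times{_p} X \dar["\pi_{23}"] \rar["{(g,h,a)\mapsto(g,h^{-1},ha)}"] &
G \tensor[_\tau]\times{_\tau} G \tensor[_\sigma]\times{_p} X \dar["\pi_{23}"] \\
& G \times_{G_0} X \rar["\dagger"] &
G \times_{G_0} X
\end{tikzcd}
\end{equation*}
and verify analogously that the right square is a fiberwise homeomorphism of the $\pi_{23}$-fiberwise topology and that the left triangle identifies the $(\mu \times X)$-fibers with the $\pi_{23}$-fibers. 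This yields that the pullback $(\mu \times X)$-fiberwise topology is also a restriction of the $\alpha_2$-fiberwise topology.

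The main obstacle is bookkeeping: one must verify at each step that the stated map between iterated fiber products is well-defined, i.e., that the source/target (and $p$) conditions on the image tuple hold, and that the intermediate fiber-product structures are the correct ones for the projection $\pi_{13}$ or $\pi_{23}$ to target $G \times_{G_0} X$ in the intended sense. Once this is checked, the proof proceeds by the same formal diagram-chasing as in the group case, with no new topological input required.
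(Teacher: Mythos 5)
Your overall strategy is exactly the paper's: the proof given there is literally ``same as \cref{thm:grp-assoc-fibtop}, being careful to replace the various products with fiber products with the correct subscripts,'' and your first diagram carries this transcription out correctly. The problem is that your second diagram gets the subscripts wrong, and this is precisely the bookkeeping you yourself flag as the main obstacle. The map $(g,h,a) \mapsto (g^{-1}, gh, a)$ does not land in $G \tensor[_\tau]\times{_\tau} G \tensor[_\sigma]\times{_p} X$: one has $\tau(g^{-1}) = \sigma(g)$ while $\tau(gh) = \tau(g)$, and these differ in general. What does hold is $\sigma(g^{-1}) = \tau(g) = \tau(gh)$, so the correct middle object is $G \tensor[_\sigma]\times{_\tau} G \tensor[_\sigma]\times{_p} X$ (the same fiber-product type as the source). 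Similarly, the right-hand map $(g,h,a) \mapsto (g, h^{-1}, ha)$ applied to a triple with $\sigma(g) = \tau(h)$ yields $\sigma(g) = \sigma(h^{-1})$, so the correct right-hand object is $G \tensor[_\sigma]\times{_\sigma} G \tensor[_\sigma]\times{_p} X$ --- and it has to be, since that is the object carrying the $\pi_3$-fiberwise (hence $\pi_{23}$-fiberwise) topology named in the statement and in \cref{diag:gpd-assoc-twist}; with $G \tensor[_\tau]\times{_\tau} G \tensor[_\sigma]\times{_p} X$ as the target, the conclusion would not connect the $(\mu \times X)$-fiberwise topology to the $\alpha_2$-fiberwise topology at all. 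As written, neither horizontal map of your second diagram is well-defined.

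Once the subscripts are corrected, everything goes through as you intend: the left triangle commutes since $\pi_{23}(g^{-1},gh,a) = (gh,a) = (\mu\times X)(g,h,a)$, and the right square, which is $\dagger$ applied to the last two coordinates over $\dagger$ on the base, is the identity on each $\pi_{23}$-fiber $\sigma^{-1}(\tau(h))$, hence transports the $\pi_{23}$-fiberwise topology to the $\pi_{23}$-fiberwise topology of $G \tensor[_\sigma]\times{_\sigma} G \tensor[_\sigma]\times{_p} X$, i.e., to the restriction of the $\pi_3$-fiberwise one. So this is a repairable slip rather than a wrong approach; but in a lemma whose entire content beyond the group case is the fiber-product bookkeeping, the slip is exactly the part that must be done right.
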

\begin{proof}
Same as \cref{thm:grp-assoc-fibtop} (being careful to replace the various products appearing in that proof with fiber products with the correct subscripts).
\end{proof}

\begin{lemma}[cf.\ \cref{thm:grp-eqvar-fibtop}]
\label{thm:gpd-eqvar-fibtop}
For a $G$-equivariant map $f : X -> Y$, the $\alpha_X$-fiberwise topology is the pullback of the $\alpha_Y$-fiberwise topology along $f$.
\qed
\end{lemma}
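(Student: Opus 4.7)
The plan is to mirror the argument of \cref{thm:grp-eqvar-fibtop}: I would show that the equivariance square
\begin{equation*}
\begin{tikzcd}
G \times_{G_0} X \dar["\alpha_X"'] \rar["G \times f"] & G \times_{G_0} Y \dar["\alpha_Y"] \\
X \rar["f"] & Y
\end{tikzcd}
\end{equation*}
is a pullback of bundles of topological spaces, where the vertical edges carry their respective $\alpha$-fiberwise topologies; by \cref{def:fib-pb}, this is exactly the conclusion of the lemma.

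First, I would splice in on the right the twist involutions $\dagger_X, \dagger_Y$ of \cref{diag:gpd-twist} to form the groupoid analog of \cref{diag:grp-eqvar-twist}:
\begin{equation*}
\begin{tikzcd}
G \times_{G_0} X \dar["\alpha_X"'] \rar["G \times f"{inner sep=1pt}] \ar[rrr,bend left=20,shift left=1,<->,"\dagger_X"] &
G \times_{G_0} Y \dar["\alpha_Y"] \rar[<->,"\dagger_Y"] &[2em]
G \times_{G_0} Y \dar["\pi_2"'] &
G \times_{G_0} X \dar["\pi_2"] \lar["G \times f"'{inner sep=1pt}] \\
X \rar["f"] \ar[rrr,bend right=15,equal] &
Y \rar[equal] &
Y &
X \lar["f"']
\end{tikzcd}
\end{equation*}
By \cref{def:gpd-fibtop}, each $\dagger$ is by construction an isomorphism of bundles of topological spaces from the $\alpha$-fiberwise topology to the $\pi_2$-fiberwise topology, so it suffices to verify that the rightmost square (pure projections) is a pullback of bundles of topological spaces.

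For this verification, at the set level, using the equivariance condition $p = q \circ f$ the assignment $(g,x) \mapsto (g,f(x),x)$ gives a canonical bijection $G \tensor[_\sigma]\times{_p} X \cong \{(g,y,x) \mid \sigma(g) = q(y) \text{ and } y = f(x)\}$, exhibiting $G \times_{G_0} X$ as the set-theoretic pullback of $\pi_2 : G \times_{G_0} Y \to Y$ along $f$. At the level of fiberwise topologies, both the $\pi_2$-fiberwise topology on $G \times_{G_0} X$ and its pullback along $G \times f$ are, by \cref{def:gpd-fibtop}, obtained by pulling back the $\sigma$-fiberwise topology on $G$ along $\pi_1 : G \times_{G_0} X \to G$; these coincide because $\pi_1 \circ (G \times f) = \pi_1$. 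The only real obstacle is careful bookkeeping with the subscripts in the various fiber products (as codified in \cref{def:gpd-pb}); the structural argument is identical to the group case of \cref{thm:grp-eqvar-fibtop}, which arises as the special case $G_0 = 1$ where all fiber products collapse to ordinary products.
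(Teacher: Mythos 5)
Your proposal is correct and follows the paper's own route: the paper likewise reduces the equivariance square to its untwisted square of projections via $\dagger_X, \dagger_Y$ (the groupoid analog of \cref{diag:grp-eqvar-twist}) and observes that the latter is a pullback of bundles, with the fiberwise topologies on both sides induced from the $\sigma$-fiberwise topology on $G$ via $\pi_1$. Your extra verification that $p = q \circ f$ makes the fiber products match and that $\pi_1 \circ (G \times f) = \pi_1$ is exactly the bookkeeping the paper leaves implicit.
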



\begin{definition}
\label{def:gpd-actgpd}
The \defn{action groupoid} $G \ltimes X$ of a groupoid action $G \curvearrowright X$ is given by
\begin{align*}
(G \ltimes X)_0 &:= X, \\
(G \ltimes X)_1 &:= G \times_{G_0} X, \\
\sigma_{G \ltimes X} &:= \pi_2 : G \times_{G_0} X -> X, \\
\tau_{G \ltimes X} &:= \alpha : G \times_{G_0} X -> X, \\
\mu_{G \ltimes X} &:= \mu_G \times X : (G \times_{G_0} X) \tensor[_{\pi_2}]\times{_\alpha} (G \times_{G_0} X) \cong G \times_{G_0} G \times_{G_0} X -> G \times_{G_0} X, \\
\iota_{G \ltimes X} &:= (\iota_G \circ p, 1_X) : X -> G \times_{G_0} X, \\
\nu_{G \ltimes X} &:= \dagger : G \times_{G_0} X -> G \times_{G_0} X.
\end{align*}
Intuitively, we think of a morphism $(g,x) \in (G \ltimes X)_1$ as ``$g : x -> gx$''; thus a hom-set $(G \ltimes X)(x,y)$ consists of ``all $g \in G$ such that $gx = y$''.


If $G$ is an (open) topological groupoid, and $X$ is a topological $G$-space, then $G \ltimes X$ is an (open) topological groupoid.
\end{definition}

\subsection{Vaught transforms}
\label{sec:gpd-vaught}

Let $G$ be an open quasi-Polish groupoid and $p : X -> G_0$ be a standard Borel $G$-space.
By \cref{rmk:gpd-action-open}, $\alpha : G \times_{G_0} X -> X$ with the $\alpha$-fiberwise topology of \cref{def:gpd-fibtop} is then a standard Borel-overt bundle of quasi-Polish spaces.
As in \cref{sec:grp-vaught}, we henceforth use $U, V, W$ to denote Borel subsets of $G$, and $A, B, C$ to denote Borel subsets of $X$.

\begin{definition}[cf.\ \cref{def:grp-vaught}]
\label{def:gpd-vaught}
The \defn{Vaught transform} will mean the category quantifier
\begin{equation*}
\begin{aligned}
\exists^*_\alpha : \@B(G \times_{G_0} X) -->{}& \@B(X) \\
D |-->{}& \{a \in X \mid \exists^* g \in \sigma^{-1}(p(a))\, ((g^{-1},ga) \in D)\} \\
={}& \{a \in X \mid \exists^* g \in \tau^{-1}(p(a))\, ((g,g^{-1}a) \in D)\}
\end{aligned}
\end{equation*}
for the $\alpha$-fiberwise topology, as well as its restriction to Borel rectangles
\begin{equation*}
U * A := \exists^*_\alpha(U \times_{G_0} A)
= \{a \in X \mid \exists^* g \in \tau^{-1}(p(a))\, (g \in U \AND a \in gA)\}.
\end{equation*}
(For open $U$, this was denoted $A^{\triangle U^{-1}}$ in \cite{Lgpd} and \cite{Cscc}.)

Per \cref{def:ostar}, for $\@S \subseteq \@B(X)$, we write
\begin{align*}
\@O(G) \oast \@S
:= \exists^*_\alpha(\@O(G) \otimes_{G_0} \@S)
= \{\bigcup_i (U_i * A_i) \mid U_i \in \@O(G) \AND A_i \in \@S\}
\subseteq \@B(X)
\end{align*}
(where $\otimes_{G_0}$ means all countable unions of $\times_{G_0}$ of two sets from $\@O(G), \@S$); similarly for $\@B(G) \oast \@S$.
\end{definition}

We now list some basic properties of groupoid Vaught transforms, corresponding to those in \cref{sec:grp-vaught} for group actions.
These are numbered so that, excepting the last item, (\ref*{sec:gpd-vaught}.$n$) here corresponds to (\ref*{sec:grp-vaught}.$n$) from \cref{sec:grp-vaught}, and is proved in exactly the same way (using the facts from \cref{sec:gpd-prelim} analogous to those previously used from \cref{sec:grp-prelim}).

\begin{eqenum}

\matcheq{it:grp-vaught-im}
\item \label{it:gpd-vaught-im}
$U * A \subseteq U \cdot A$, with equality if $U$ is $\tau$-fiberwise open and $A$ is orbitwise open.

\matcheq{it:grp-vaught-invar}
\item \label{it:gpd-vaught-invar}
Thus, if $A$ is $G$-invariant and $U$ is $\tau$-fiberwise open with $p(A) \subseteq \tau(U)$, then $U * A = A$.

\end{eqenum}
For countably many $U_i \in \@B(G)$, $A_j \in \@B(X)$,
\begin{align}
\matcheq{it:grp-vaught-union}
\label{it:gpd-vaught-union}
(\bigcup_i U_i) * (\bigcup_j A_j) = \bigcup_{i,j} (U_i * A_j).
\end{align}
For open $U \subseteq G$ and any countable open basis $\@W$ for $G$,
\begin{gather}
\matcheq{it:grp-vaught-diff}
\label{it:gpd-vaught-diff}
U * (A \setminus B) = \bigcup_{\@W \ni W \subseteq U} ((W * A) \setminus (W * B)).
\end{gather}
For a quasi-Polish $G$-space $X$,
\begin{gather}
\matcheq{it:grp-vaught-borel}
\label{it:gpd-vaught-borel}
\@O(G) \oast \*\Sigma^0_\xi(X) \subseteq \*\Sigma^0_\xi(X).
\end{gather}

The following laws form \defn{Pettis's theorem (for groupoid actions)}:
\begin{gather}
\matcheq{it:grp-vaught-meager}
\label{it:gpd-vaught-meager}
\left\{
\begin{aligned}
U \subseteq G \text{ $\tau$-fiberwise meager}  &\implies  U * A = \emptyset, \\
A \subseteq X \text{ orbitwise meager}  &\implies  U * A = \emptyset.
\end{aligned}
\right.
\\
\matcheq{it:grp-vaught-pettis}
\label{it:gpd-vaught-pettis}
U \subseteq^*_\tau V \AND A \subseteq^*_G B  \implies  U * A \subseteq V * B.
\end{gather}
Thus for $U \in \@B(G)$, letting $U =^*_\tau U' \in \@{BO}_\tau(G)$ by the fiberwise Baire property (\cref{thm:fib-bov-baire}),
\begin{align}
\matcheq{it:grp-vaught-bp}
\label{it:gpd-vaught-bp}
U * A = U' * A.
\end{align}

By the \defn{Beck--Chevalley condition}, for $D \in \@B(G \times X)$,
\begin{align}
\matcheq{it:grp-vaught-bc}
\label{it:gpd-vaught-bc}
\alpha^{-1}(\exists^*_\alpha(D))
&= \exists^*_{G \times \alpha}((\mu \times X)^{-1}(D))
= \exists^*_{\mu \times X}((G \times \alpha)^{-1}(D)),
\end{align}
which for a rectangle $D = U \times_{G_0} A$ means
\begin{align}
\matcheq{it:grp-vaught-bcr}
\label{it:gpd-vaught-bcr}
\alpha^{-1}(U * A)
&= \exists^*_{G \times \alpha}(\mu^{-1}(U) \times_{G_0} A)
= \bigcup_{VW \subseteq U} (V \times_{G_0} (W * A)) \quad \text{for $U, V, W \in \@O(G)$} \\
\matcheq{it:grp-vaught-bcl}
\label{it:gpd-vaught-bcl}
&= \exists^*_{\mu \times X}(U \times_{G_0} \alpha^{-1}(A))
= U * \alpha^{-1}(A).
\end{align}
Consequently,
\begin{align}
\matcheq{it:grp-vaught-orbtop}
\label{it:gpd-vaught-orbtop}
\@O(G) \oast \@B(X) \subseteq \@O_G(X).
\end{align}

By the \defn{Kuratowski--Ulam theorem},
\begin{align}
\matcheq{it:grp-vaught-ku}
\label{it:gpd-vaught-ku}
\exists^*_\alpha \circ \exists^*_{\mu \times X} = \exists^*_\alpha \circ \exists^*_{G \times \alpha} : \@B(G \times_{G_0} G \times_{G_0} X) -> \@B(X),
\end{align}
which for rectangles says
\begin{align}
\matcheq{it:grp-vaught-assoc}
\label{it:gpd-vaught-assoc}
(U * V) * A &= U * (V * A) \\
\matcheq{it:grp-vaught-assoc-im}
\label{it:gpd-vaught-assoc-im}
= (U \cdot V) * A &= U \cdot (V * A) \quad \mathrlap{\text{if $U$ $\tau$-fiberwise open and $V$ open.}} \quad
\end{align}

For a Borel $G$-equivariant $f : X -> Y$ between standard Borel $G$-spaces, for $B \in \@B(Y)$,
\begin{align}
\matcheq{it:grp-vaught-eqvar}
\label{it:gpd-vaught-eqvar}
f^{-1}(U * B) = U * f^{-1}(B).
\end{align}

Finally, we record an additional fact specific to the groupoid context: by \defn{Frobenius reciprocity}, for $B \in \@B(G_0)$, $U \in \@B(G)$, and $A \in \@B(X)$,
\begin{align}
\label{it:gpd-vaught-frob}
(\tau^{-1}(B) \cap U) * A = p^{-1}(B) \cap (U * A).
\end{align}
Indeed, we have
$(\tau^{-1}(B) \cap U) * A
= \exists^*_\alpha((\tau^{-1}(B) \cap U) \times_{G_0} A)
= \exists^*_\alpha(\alpha^{-1}(p^{-1}(B)) \cap (U \times_{G_0} A))
= p^{-1}(B) \cap \exists^*_\alpha(U \times_{G_0} A)
= p^{-1}(B) \cap (U * A)$,
using \cref{it:fib-baire-frob}.

\subsection{Topological realization}
\label{sec:gpd-realiz}

\begin{theorem}[cf.\ \cref{thm:grp-cts}]
\label{thm:gpd-cts}
Let $G$ be an open quasi-Polish groupoid, $X$ be a quasi-Polish space equipped with a continuous map $p : X -> G_0$ and a Borel action $\alpha$ of $G$.
Then
\begin{enumerate}[label=(\alph*)]
\item
The action is continuous iff $\@O(G) \oast \@O(X) = \exists^*_\alpha(\@O(G \times_{G_0} X)) = \@O(X)$, i.e., the sets $U * A$ for $U \in \@O(G)$ and $A \in \@O(X)$ (are open and) form an open (sub)basis for $X$.
\item
If $\@O(G) \oast \@O(X) \subseteq \@O(X)$, then $\@O(G) \oast \@O(X)$ forms a coarser compatible quasi-Polish topology for which $p$ is still continuous and also making the action continuous.
\end{enumerate}
\end{theorem}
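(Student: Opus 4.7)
The plan is to reproduce the proof of the group version \cref{thm:grp-cts} essentially verbatim, using the groupoid Vaught-transform machinery of \cref{sec:gpd-vaught} in place of its group analog, with one additional check that the anchor map $p : X \to G_0$ remains continuous in the new topology.

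For part (a), $(\Rightarrow)$: assume the action is continuous. Since $G$ is an open groupoid, \cref{rmk:gpd-action-open} makes $\alpha : G \times_{G_0} X \to X$ an open map, hence a continuous open surjection with respect to the global topologies. The rectangles $U \times_{G_0} A$ with $U \in \@O(G)$ and $A \in \@O(X)$ form a basis for the global topology on $G \times_{G_0} X$; by \cref{rmk:gpd-fibtop-basis} they are also $\alpha$-fiberwise open, and each $\alpha$-fiber is a copy of a $\sigma$-fiber of $G$, hence quasi-Polish and in particular Baire. Applying \cref{it:fib-baire-im} fiberwise to these basic rectangles and taking countable unions then yields $\exists^*_\alpha(\@O(G \times_{G_0} X)) = \alpha(\@O(G \times_{G_0} X)) = \@O(X)$. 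The converse direction of (a) then follows immediately from (b) applied with $\@T := \@O(X)$.

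For part (b), set $\@T := \@O(G) \oast \@O(X)$. The Beck--Chevalley condition \cref{it:gpd-vaught-bcr} gives $\alpha^{-1}(\@T) \subseteq \@O(G) \otimes_{G_0} \@T \subseteq \@O(G \times_{G_0} X)$, so $\alpha$ is continuous from the global topology on $G \times_{G_0} X$ into $(X, \@T)$. For continuity of $p$: since each $\alpha$-fiber is nonempty and Baire, \cref{it:fib-baire-surj} yields $G * X = X$, whence Frobenius reciprocity \cref{it:gpd-vaught-frob} gives $p^{-1}(B) = \tau^{-1}(B) * X \in \@T$ for every $B \in \@O(G_0)$. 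Finally, invoke \cref{thm:qpol-baire-subtop} with $f := \alpha$ equipped with the $\alpha$-fiberwise topology -- which is coarser than the fiberwise restriction of the global product topology by \cref{rmk:gpd-fibtop-basis} and makes $\alpha$ into a standard Borel-overt bundle of quasi-Polish spaces by \cref{rmk:gpd-action-open} -- to conclude that $\@T = \exists^*_\alpha(\@O(G \times_{G_0} X))$ is indeed a compatible quasi-Polish topology on $X$.

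The only place where the groupoid setting demands more than the group case is the continuity of $p$, and the argument above dispatches this in one line via Frobenius reciprocity; no other step poses a substantive obstacle, since every ingredient of \cref{sec:gpd-vaught} was set up precisely so that this reduction would proceed in the same rhythm as the original proof of \cref{thm:grp-cts}.
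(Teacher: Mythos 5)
Your proof is correct and follows the paper's argument essentially verbatim: the forward direction of (a) via openness of $\alpha$, part (b) via Beck--Chevalley plus \cref{thm:qpol-baire-subtop}, and the converse of (a) as a special case of (b). The only (cosmetic) difference is the continuity of $p$: the paper derives $p^{-1}(\@O(G_0)) \subseteq \@O(G) \oast \@O(X)$ from equivariance of $p$ into the trivial action via \cref{it:gpd-vaught-eqvar}, whereas you obtain $p^{-1}(B) = \tau^{-1}(B) * X$ directly from Frobenius reciprocity \cref{it:gpd-vaught-frob} together with $G * X = X$ — both are valid one-line arguments.
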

\begin{proof}
The proof is identical to that of \cref{thm:grp-cts}, except that we must additionally point out why $p$ is still continuous with respect to $\@O(G) \oast \@O(X)$.
This is because $p : X -> G_0$ is an equivariant map to the trivial action (\cref{def:gpd-action}) which is continuous, whence by (a) and \cref{it:gpd-vaught-eqvar},
$p^{-1}(\@O(G_0))
= p^{-1}(\@O(G) \oast \@O(G_0))
\subseteq \@O(G) \oast p^{-1}(\@O(G_0))
\subseteq \@O(G) \oast \@O(X)$.
\end{proof}

In order to apply this core result in concrete situations, compared to \cref{sec:grp-realiz}, here there is an additional subtlety.
We could ask for certain subsets of $X$ to become globally open, as in \cref{thm:grp-realiz}.
Or, we might wish only to control the topology on the individual fibers of the bundle $p : X -> G_0$.
The latter is the best we can hope for in results depending on the interchangeability of $\@O(G), \@B(G)$ as in \cref{thm:grp-realiz}, due to the \emph{$\tau$-fiberwise} meager condition in Pettis's theorem \cref{it:gpd-vaught-meager}.
We will therefore state two generalizations of \cref{thm:grp-realiz}, beginning with the global one:

\begin{theorem}[cf.\ \cref{thm:grp-realiz}]
\label{thm:gpd-realiz}
Let $G$ be an open quasi-Polish groupoid, $p : X -> G_0$ be a standard Borel $G$-space, $\@S \subseteq \@B(X)$ be a compatible $\sigma$-topology such that $p^{-1}(\@O(G_0)), \@O(G) \oast \@S \subseteq \@S$.
For any $A \in \@B(X)$, the following are equivalent:
\begin{enumerate}[label=(\roman*)]
\item \label{thm:gpd-realiz:open}
$A$ is open in some quasi-Polish topology $\@O(X) \subseteq \@S$ making $p$ and the action continuous.
\item \label{thm:gpd-realiz:vaught}
$A \in \@O(G) \oast \@S$, i.e., $A = \bigcup_i (U_i * A_i)$ for countably many $U_i \in \@O(G)$ and $A_i \in \@S$.
\item \label{thm:gpd-realiz:rect-open}
$\alpha^{-1}(A) \in \@O(G) \otimes_{G_0} \@S$, i.e., $\alpha^{-1}(A) = \bigcup_i (U_i \times_{G_0} A_i)$ for countably many $U_i \in \@O(G)$, $A_i \in \@S$.
\item \label{thm:gpd-realiz:rect-vaught}
$\alpha^{-1}(A) \in \@O(G) \otimes_{G_0} (\@O(G) \oast \@S)$.
\end{enumerate}
In particular, every $G$-invariant $A \in \@S$ obeys these conditions.
Moreover, countably many $A \in \@B(X)$ obeying these conditions may be made simultaneously open in some topology as in \cref{thm:gpd-realiz:open}.
\end{theorem}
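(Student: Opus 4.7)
The plan is to mirror the proof of \cref{thm:grp-realiz} structurally, using the Vaught transform calculus of \cref{sec:gpd-vaught} and the continuity criterion \cref{thm:gpd-cts}. The only new wrinkle compared to the group case is that we must keep $p$ continuous through the topological refinement, which is exactly why the hypothesis imposes $p^{-1}(\@O(G_0)) \subseteq \@S$; this is absorbed into \cref{thm:gpd-cts}(b). For the invariance claim, if $A \in \@S$ is $G$-invariant, I would apply \cref{it:gpd-vaught-invar} with $U := G$, which is $\tau$-fiberwise open and satisfies $\tau(G) = G_0 \supseteq p(A)$, so $A = G * A \in \@O(G) \oast \@S$, giving \cref{thm:gpd-realiz:vaught}.

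For the equivalences, I would run the cycle \cref{thm:gpd-realiz:open} $\Rightarrow$ \cref{thm:gpd-realiz:rect-vaught} $\Rightarrow$ \cref{thm:gpd-realiz:rect-open} $\Rightarrow$ \cref{thm:gpd-realiz:vaught} $\Rightarrow$ \cref{thm:gpd-realiz:open}. For \cref{thm:gpd-realiz:open} $\Rightarrow$ \cref{thm:gpd-realiz:rect-vaught}, \cref{thm:gpd-cts}(a) gives $\@O(X) = \@O(G) \oast \@O(X) \subseteq \@O(G) \oast \@S$, hence $\alpha^{-1}(A) \in \@O(G) \otimes_{G_0} \@O(X) \subseteq \@O(G) \otimes_{G_0} (\@O(G) \oast \@S)$. \cref{thm:gpd-realiz:rect-vaught} $\Rightarrow$ \cref{thm:gpd-realiz:rect-open} is immediate from $\@O(G) \oast \@S \subseteq \@S$. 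For \cref{thm:gpd-realiz:rect-open} $\Rightarrow$ \cref{thm:gpd-realiz:vaught}, each $\alpha$-fiber over $a \in X$ is, via the twist involution, a copy of $\sigma^{-1}(p(a))$, which is nonempty (containing $1_{p(a)}$) and quasi-Polish (being a fiber of the open map $\sigma$ from a quasi-Polish space, see \cref{it:qpol-pi02}), hence Baire and nonmeager in itself; so by \cref{it:fib-baire-surj}, $A = \exists^*_\alpha(\alpha^{-1}(A))$, and applying $\exists^*_\alpha$ to a decomposition $\alpha^{-1}(A) = \bigcup_i (U_i \times_{G_0} A_i)$ yields $A = \bigcup_i (U_i * A_i) \in \@O(G) \oast \@S$.

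The implication \cref{thm:gpd-realiz:vaught} $\Rightarrow$ \cref{thm:gpd-realiz:open} is folded into the moreover clause, which I would prove by an iterative saturation as in \cref{thm:grp-realiz}. Given countably many $A_j \in \@O(G) \oast \@S$, decompose each as $\bigcup_i (U_{ij} * A_{ij})$ with $U_{ij} \in \@O(G)$ and $A_{ij} \in \@S$, and pick a countable basis $\@V$ for $G_0$. Using compatibility of $\@S$ (here invoking $p^{-1}(\@O(G_0)) \subseteq \@S$), choose a compatible quasi-Polish $\@T_0 \subseteq \@S$ containing $\{A_{ij}\}_{i,j} \cup p^{-1}(\@V)$. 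Iteratively, produce compatible quasi-Polish $\@T_n \cup (\@O(G) \oast \@T_n) \subseteq \@T_{n+1} \subseteq \@S$, using a countable basis for $\@T_n$, a countable basis for $\@O(G)$, and the closure hypothesis $\@O(G) \oast \@S \subseteq \@S$. The join $\@T := \bigvee_n \@T_n$ is quasi-Polish by \cref{it:qpol-dis}\cref{it:qpol-dis:join}, lies in $\@S$, contains each $A_{ij}$ together with a basis for $p^{-1}(\@O(G_0))$, and satisfies $\@O(G) \oast \@T \subseteq \@T$. Then \cref{thm:gpd-cts}(b) produces the sought compatible quasi-Polish topology $\@O(G) \oast \@T \subseteq \@S$, for which $p$ and the action are continuous and which by construction contains each $U_{ij} * A_{ij}$, hence each $A_j$.

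No substantial obstacle arises: the entire argument transcribes \cref{thm:grp-realiz}, with the only extra bookkeeping being that $p^{-1}(\@O(G_0)) \subseteq \@S$ is used at the outset of the saturation to seed $\@T_0$, and that continuity of $p$ under the final passage to $\@O(G) \oast \@T$ is already built into \cref{thm:gpd-cts}(b) via the equivariant-map argument $p^{-1}(\@O(G_0)) \subseteq \@O(G) \oast p^{-1}(\@O(G_0)) \subseteq \@O(G) \oast \@T$ appealing to \cref{it:gpd-vaught-eqvar}.
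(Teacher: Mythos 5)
Your proposal is correct and follows essentially the same route as the paper, whose proof of \cref{thm:gpd-realiz} simply defers to the argument for \cref{thm:grp-realiz} with the single modification of seeding the iterative saturation with $p^{-1}(\@O(G_0))$ — exactly the adjustment you identify and carry out. The details you spell out (the twist identification of $\alpha$-fibers with $\sigma$-fibers for nonmeagerness, the use of \cref{it:gpd-vaught-invar} with $U := G$ for invariant sets, and the appeal to \cref{thm:gpd-cts}(b) for continuity of $p$) all match the paper's intended argument.
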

\begin{proof}
Same as \cref{thm:grp-realiz}, except when building the topology, we start with $p^{-1}(\@O(G_0))$.
\end{proof}

The following is the fiberwise version of the above:

\begin{corollary}[cf.\ \cref{thm:grp-realiz}]
\label{thm:gpd-fib-realiz}
Let $G$ be an open quasi-Polish groupoid, $p : X -> G_0$ be a standard Borel $G$-space, $\@S \subseteq \@B(X)$ be a compatible $\sigma$-topology such that $p^{-1}(\@O(G_0)), \@O(G) \oast \@S \subseteq \@S$.
For any $A \in \@B(X)$, the following are equivalent:
\begin{enumerate}[label=(\roman*)]
\item \label{thm:gpd-fib-realiz:fibopen}
$A$ is $p$-fiberwise open in some quasi-Polish topology $\@O(X) \subseteq \@S$ making $p, \alpha$ continuous.
\item \label{thm:gpd-fib-realiz:vaught}
$A \in \@B(G) \oast \@S = \@{BO}_\tau(G) \oast \@S$, i.e., $A = \bigcup_i (U_i * A_i)$ for countably many $U_i \in \@B(G)$ (or $U_i \in \@{BO}_\tau(G)$) and $A_i \in \@S$.
\item \label{thm:gpd-fib-realiz:rect}
$\alpha^{-1}(A) \in \@B(G) \otimes_{G_0} \@S$, i.e., $\alpha^{-1}(A) = \bigcup_i (U_i \times_{G_0} A_i)$ for countably many $U_i \in \@B(G)$, $A_i \in \@S$.
\item \label{thm:gpd-fib-realiz:rect-fibopen}
$\alpha^{-1}(A) \in \@{BO}_\tau(G) \otimes_{G_0} \@S$.
\item \label{thm:gpd-fib-realiz:rect-vaught}
$\alpha^{-1}(A) \in \@{BO}_\tau(G) \otimes_{G_0} (\@O(G) \oast \@S)$.
\item \label{thm:gpd-fib-realiz:trans}
Every $G$-translate $g \cdot A$ for $g \in G$ is a $p$-fiber of some set in $\@S$, and there are countably many sets in $\@S$ generating all such translates under union and restriction to $p$-fibers.
\end{enumerate}
In particular, every $G$-invariant $A \in \@S$ obeys these conditions.
Moreover, countably many $A \in \@B(X)$ obeying these may be made simultaneously $p$-fiberwise open as in \cref{thm:gpd-fib-realiz:fibopen}, while also making open countably many sets obeying \cref{thm:gpd-realiz}.
\end{corollary}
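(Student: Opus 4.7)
The plan is to cycle the implications as (i) $\implies$ (v) $\implies$ (iv) $\implies$ (iii) $\implies$ (ii) $\implies$ (i), to dispatch (iii) $\iff$ (vi) separately via Kunugui--Novikov, and to obtain the simultaneous clause by the same iterative topology-refinement argument as in the proof of \cref{thm:grp-realiz}. Most of these implications are routine: the equality $\@B(G) \oast \@S = \@{BO}_\tau(G) \oast \@S$ in \cref{thm:gpd-fib-realiz:vaught} is immediate from the fiberwise Baire property via \cref{it:gpd-vaught-bp}; the implications (v) $\implies$ (iv) $\implies$ (iii) are trivial inclusions; and \cref{thm:gpd-fib-realiz:rect} $\implies$ \cref{thm:gpd-fib-realiz:vaught} comes from applying $\exists^*_\alpha$ and the identity $A = \exists^*_\alpha(\alpha^{-1}(A))$ of \cref{it:fib-baire-surj}. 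The equivalence \cref{thm:gpd-fib-realiz:rect} $\iff$ \cref{thm:gpd-fib-realiz:trans} is by disintegrating $\alpha^{-1}(A)$ over $\pi_1 : G \times_{G_0} X \to G$: the fiber at $g$ is canonically $g^{-1} \cdot A \subseteq p^{-1}(\sigma(g))$, so after the substitution $g \mapsto g^{-1}$, \cref{thm:gpd-fib-realiz:rect} says exactly that the family $\{g \cdot A\}_{g \in G}$ is $\pi_1$-fiberwise generated by a countable subfamily of $\@S$ modulo restriction to $p$-fibers, which by Kunugui--Novikov (\cref{thm:kunugui-novikov}) is precisely \cref{thm:gpd-fib-realiz:trans}. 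The invariant case follows from $A = G * A$, using \cref{it:gpd-vaught-invar} with $U = G \in \@{BO}_\tau(G)$.

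For \cref{thm:gpd-fib-realiz:fibopen} $\implies$ \cref{thm:gpd-fib-realiz:rect-vaught}, given $\@O(X) \subseteq \@S$ making $p, \alpha$ continuous with $A$ Borel $p$-fiberwise open, I would apply \cref{thm:fib-bor-qpol}\cref{thm:fib-bor-qpol:basis} to $p$ to write $A = \bigcup_i (p^{-1}(B_i) \cap U_i)$ with $U_i \in \@O(X)$ and $B_i \in \@B(G_0)$. Pulling back by $\alpha$, using $p \circ \alpha = \tau \circ \pi_1$ and expanding each $\alpha^{-1}(U_i) \in \@O(G) \otimes_{G_0} \@O(X)$ into basic rectangles, exhibits $\alpha^{-1}(A)$ as a countable union of rectangles $(\tau^{-1}(B_i) \cap V) \times_{G_0} U'$ with $V \in \@O(G)$ and $U' \in \@O(X)$, where each $\tau^{-1}(B_i) \cap V$ lies in $\@{BO}_\tau(G)$. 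By \cref{thm:gpd-cts}(a), $\@O(X) = \@O(G) \oast \@O(X) \subseteq \@O(G) \oast \@S$, giving \cref{thm:gpd-fib-realiz:rect-vaught}.

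The main step, and the main obstacle, is \cref{thm:gpd-fib-realiz:vaught} $\implies$ \cref{thm:gpd-fib-realiz:fibopen}. Given $A = \bigcup_i (U_i * A_i)$ with $U_i \in \@{BO}_\tau(G)$ and $A_i \in \@S$, the key move is to straighten each ``fiberwise open'' $U_i$ into honest opens by applying Kunugui--Novikov to $\tau : G \to G_0$ (a standard Borel-overt bundle since $G$ is an open quasi-Polish groupoid): write $U_i = \bigcup_j (\tau^{-1}(C_{ij}) \cap W_{ij})$ with $W_{ij} \in \@O(G)$ and $C_{ij} \in \@B(G_0)$. Then Frobenius reciprocity \cref{it:gpd-vaught-frob} yields $U_i * A_i = \bigcup_j (p^{-1}(C_{ij}) \cap (W_{ij} * A_i))$, with each $W_{ij} * A_i \in \@O(G) \oast \@S \subseteq \@S$. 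I would then invoke the global realization \cref{thm:gpd-realiz} applied to the countable collection $\{W_{ij} * A_i\}_{i,j}$ (together with any other realizable sets we wish to make open simultaneously) to obtain a quasi-Polish $\@O(X) \subseteq \@S$ making $p, \alpha$ continuous and containing every $W_{ij} * A_i$. On each $p$-fiber $p^{-1}(y)$, the set $p^{-1}(C_{ij}) \cap (W_{ij} * A_i)$ restricts to either $(W_{ij} * A_i) \cap p^{-1}(y)$ or $\emptyset$ according as $y \in C_{ij}$, hence is open in the fiber; so $A$ is $p$-fiberwise open, establishing \cref{thm:gpd-fib-realiz:fibopen} together with the simultaneous clause. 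Conceptually, the obstacle is bridging the ``$\tau$-fiberwise / $p$-fiberwise'' flavor of \cref{thm:gpd-fib-realiz} with the ``globally open'' form of \cref{thm:gpd-realiz}: Frobenius reciprocity translates $\tau^{-1}(C) \subseteq G$ into $p^{-1}(C) \subseteq X$, and Kunugui--Novikov reduces an arbitrary $\tau$-fiberwise open Borel $U \subseteq G$ to a countable sum of base-restricted opens, together converting the fiberwise problem into the global one already handled.
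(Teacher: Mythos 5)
Your proposal is correct and follows essentially the same route as the paper: Kunugui--Novikov applied to the open (hence Borel-overt) bundle $\tau : G \to G_0$ to write each $U_i \in \@{BO}_\tau(G)$ as $\bigcup_j (\tau^{-1}(B_{ij}) \cap V_{ij})$, Frobenius reciprocity \cref{it:gpd-vaught-frob} to convert $U_i * A_i$ into $\bigcup_j (p^{-1}(B_{ij}) \cap (V_{ij} * A_i))$, and then the global realization \cref{thm:gpd-realiz} applied to the sets $V_{ij} * A_i \in \@O(G) \oast \@S$, with the easy implications and \cref{thm:gpd-fib-realiz:rect}$\iff$\cref{thm:gpd-fib-realiz:trans} handled exactly as in \cref{thm:grp-realiz}. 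The argument for \cref{thm:gpd-fib-realiz:fibopen}$\implies$\cref{thm:gpd-fib-realiz:rect-vaught} via Kunugui--Novikov over $p$ also matches the paper's.
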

\begin{proof}
The proofs of
$\@B(G) \oast \@S = \@{BO}_\tau(G) \oast \@S$ in \cref{thm:gpd-fib-realiz:vaught}, of
\cref{thm:gpd-fib-realiz:rect-vaught}$\implies$%
\cref{thm:gpd-fib-realiz:rect-fibopen}$\implies$%
\cref{thm:gpd-fib-realiz:rect}$\implies$%
\cref{thm:gpd-fib-realiz:vaught}, and of
\cref{thm:gpd-fib-realiz:rect}$\iff$\cref{thm:gpd-fib-realiz:trans}
are the same as in \cref{thm:grp-realiz}.

\cref{thm:gpd-fib-realiz:fibopen}$\implies$\cref{thm:gpd-fib-realiz:rect-vaught}:
By Kunugui--Novikov, $A = \bigcup_i (p^{-1}(B_i) \cap A_i)$ where $B \in \@B(G_0)$ and $A_i \in \@O(X)$; and for each $i$,
$\alpha^{-1}(p^{-1}(B_i) \cap A_i)
= (\tau^{-1}(B_i) \times_{G_0} X) \cap \alpha^{-1}(A_i)
\in \@{BO}_\tau(G) \otimes_{G_0} (\@O(G) \oast \@S)$ by \cref{thm:gpd-realiz}.

Finally, to make countably many sets $U_i * A_i$ fiberwise open, where $U_i \in \@{BO}_\tau(G)$ and $A_i \in \@S$ as in \cref{thm:gpd-fib-realiz:vaught}: by Kunugui--Novikov, $U_i = \bigcup_j (\tau^{-1}(B_{ij}) \cap V_{ij})$ where $B_{ij} \in \@B(G_0)$ and $V_{ij} \in \@O(G)$, whence by \cref{it:gpd-vaught-frob},
\begin{equation*}
\textstyle
U_i * A_i = \bigcup_j (p^{-1}(B_{ij}) \cap (V_{ij} * A_i)).
\end{equation*}
By \cref{thm:gpd-realiz}, we may make the $V_{ij} * A_i$ (as well as countably many other sets satisfying the conditions in that theorem) open, thereby making the $U_i * A_i$ $p$-fiberwise open.
\end{proof}

\begin{corollary}[{topological realization of Borel actions; cf.\ \cref{thm:grp-borel-realiz}, \cite[4.1.1]{Lgpd}}]
\label{thm:gpd-borel-realiz}
Let $G$ be an open quasi-Polish groupoid, $p : X -> G_0$ be a standard Borel $G$-space.
Then there is a compatible quasi-Polish topology on $X$ making $p, \alpha$ continuous.
\end{corollary}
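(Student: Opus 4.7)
The plan is to invoke \cref{thm:gpd-realiz} directly, with the choice $\@S := \@B(X)$ and an empty collection of designated sets to be made open. The conclusion \cref{thm:gpd-realiz:open} of that theorem will then produce a quasi-Polish topology $\@O(X) \subseteq \@B(X)$ making both $p$ and $\alpha$ continuous, which is exactly what the corollary asserts. This strategy directly parallels the proof of the group-theoretic analog \cref{thm:grp-borel-realiz}, which specialized \cref{thm:grp-realiz} in the same way.

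The only work is therefore to verify that the three hypotheses on $\@S$ in \cref{thm:gpd-realiz} hold for $\@S := \@B(X)$. First, $\@B(X)$ is a compatible $\sigma$-topology on $X$ in the sense of \cref{def:qpol-compat}: given any countable family of Borel sets in a standard Borel space, there is a compatible Polish (hence quasi-Polish) topology on $X$ making all of them clopen, so the countable family is contained in a compatible quasi-Polish topology contained within $\@B(X)$. Second, $p^{-1}(\@O(G_0)) \subseteq \@B(X)$ because $p$ is Borel by the definition of a standard Borel $G$-space. Third, $\@O(G) \oast \@B(X) \subseteq \@B(X)$: by \cref{rmk:gpd-action-open}, the action map $\alpha$ equipped with the $\alpha$-fiberwise topology of \cref{def:gpd-fibtop} is a standard Borel-overt bundle of quasi-Polish spaces, and so by the Borel fiberwise Baire property \cref{thm:fib-bov-baire}, $\exists^*_\alpha$ sends Borel sets to Borel sets; in particular $U * A = \exists^*_\alpha(U \times_{G_0} A) \in \@B(X)$ for every $U \in \@O(G)$ and $A \in \@B(X)$.

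With these hypotheses in place, applying \cref{thm:gpd-realiz} to the empty collection of sets immediately yields the desired compatible quasi-Polish topology on $X$. I do not expect any genuine obstacle here, since all of the substantive content has already been absorbed into \cref{thm:gpd-cts,thm:gpd-realiz}, whose proofs in turn rely on \cref{thm:qpol-baire-subtop} to extract a quasi-Polish topology from an $\@O(G) \oast (-)$-closed subfamily of $\@B(X)$.
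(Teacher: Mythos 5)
Your proposal is correct and is exactly the paper's proof: the paper also just cites \cref{thm:gpd-realiz} with $\@S := \@B(X)$ and the empty collection of $A$. Your verification of the hypotheses on $\@S$ (compatibility, $p^{-1}(\@O(G_0)) \subseteq \@B(X)$, and $\@O(G) \oast \@B(X) \subseteq \@B(X)$ via \cref{rmk:gpd-action-open} and \cref{thm:fib-bov-baire}) is accurate and merely makes explicit what the paper leaves implicit.
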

\begin{proof}
By \cref{thm:gpd-realiz} with $\@S := \@B(X)$ and the empty collection of $A$.
\end{proof}

The following strengthens \cite[4.2.1]{Lgpd}, in the quasi-Polish context, to be adapted precisely to each level of the Borel hierarchy:

\begin{corollary}[change of topology; cf.\ \cref{thm:grp-realiz-dis}]
\label{thm:gpd-realiz-dis}
Let $G$ be an open quasi-Polish groupoid, $p : X -> G_0$ be a quasi-Polish $G$-space.
Then for any countably many $A_i \in \*\Sigma^0_\xi(X)$, there is a finer quasi-Polish topology containing each $\@O(G) * A_i$ and contained in $\*\Sigma^0_\xi(X)$ for which the action is still continuous.
In particular, if $A_i$ is $G$-invariant, then $A_i$ itself can be made open in such a topology.
\end{corollary}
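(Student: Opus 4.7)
My plan is to mirror the proof of \cref{thm:grp-realiz-dis}, substituting the groupoid topological realization \cref{thm:gpd-realiz} for its group counterpart. Specifically, I would take $\@S := \*\Sigma^0_\xi(X)$ and verify the three standing hypotheses of \cref{thm:gpd-realiz}: $\@S$ is a compatible $\sigma$-topology by \cref{ex:qpol-compat-borel}; $p^{-1}(\@O(G_0)) \subseteq \@O(X) \subseteq \@S$ by continuity of $p$; and $\@O(G) \oast \@S \subseteq \@S$ is precisely \cref{it:gpd-vaught-borel}, the groupoid analog of \cref{it:grp-vaught-borel} (proved by induction on $\xi$ using \cref{it:gpd-vaught-diff}).

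Next, fix a countable open basis $\@U$ of $G$ and a countable open basis $\@W$ of $X$, and apply the final ``moreover'' clause of \cref{thm:gpd-realiz} to the countable family $\{V * A_i : V \in \@U,\ i\} \cup \@W$. Every member lies in $\@O(G) \oast \@S$: the Vaught transforms by definition, and each $W \in \@W$ because $\@O(X) = \@O(G) \oast \@O(X) \subseteq \@O(G) \oast \@S$ by \cref{thm:gpd-cts}(a) applied to the preexisting continuous action. This produces a quasi-Polish topology $\@O(X)' \subseteq \@S = \*\Sigma^0_\xi(X)$ refining the original $\@O(X)$, making $p$ and $\alpha$ continuous, and containing each $V * A_i$.

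To conclude, for an arbitrary $U \in \@O(G)$ one writes $U = \bigcup_j V_j$ with $V_j \in \@U$ and invokes distributivity \cref{it:gpd-vaught-union} to obtain $U * A_i = \bigcup_j (V_j * A_i) \in \@O(X)'$, so $\@O(G) * A_i \subseteq \@O(X)'$ as required. The ``in particular'' case follows immediately, since for $G$-invariant $A_i$, covering $p(A_i)$ by the $\tau$-images of sets $V_j \in \@U$ yields $A_i = \bigcup_j (V_j * A_i)$ via \cref{it:gpd-vaught-invar}. I do not anticipate any real obstacle: both \cref{thm:gpd-realiz} and the Vaught calculus of \cref{sec:gpd-vaught} have been arranged precisely so that the hierarchy-level change-of-topology argument from the group case transcribes verbatim, with the only genuinely new ingredients being the $\tau$-fiberwise subtleties (already handled in \cref{it:gpd-vaught-bp} and \cref{it:gpd-vaught-meager}) and the need to carry along $p^{-1}(\@O(G_0))$ in the topology, which is built into the hypotheses of \cref{thm:gpd-realiz}.
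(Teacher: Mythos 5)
Your proposal is correct and takes essentially the same route as the paper, whose proof of this corollary is literally ``Same as \cref{thm:grp-realiz-dis}'', i.e., apply \cref{thm:gpd-realiz} with $\@S := \*\Sigma^0_\xi(X)$ (compatible by \cref{ex:qpol-compat-borel}, closed under $\@O(G) \oast (-)$ by \cref{it:gpd-vaught-borel}) to the given sets $A_i$ together with a countable basis for $\@O(X)$.
Your only, harmless, variation is to feed the ``moreover'' clause the sets $V * A_i$ for basic $V \in \@U$ rather than the $A_i$ themselves and then recover general $U * A_i$ via \cref{it:gpd-vaught-union}; this is exactly what the proof of that clause does internally, so the two arguments coincide.
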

\begin{proof}
Same as \cref{thm:grp-realiz-dis}.
\end{proof}

\begin{remark}
Presumably, one could also generalize the finer arguments of \cref{sec:grp-0d-reg} to the groupoid setting, thereby strengthening the preceding result to yield a Polish topology as in \cite[4.2.1]{Lgpd} (perhaps under an additional assumption that $G$ is locally Polish or non-Archimedean).
However, we will not pursue this in this paper.
\end{remark}

Next, recall the notion of a \emph{standard Borel $G$-bundle of quasi-Polish spaces} from \cref{def:gpd-action}.

\begin{corollary}[topological realization of Borel $G$-bundles of spaces; cf.\ \cref{thm:grp-top-realiz}]
\label{thm:gpd-top-realiz}
Let $G$ be an open quasi-Polish groupoid, $p : X -> G_0$ be a standard Borel $G$-bundle of quasi-Polish spaces.
Then there is a compatible (global) quasi-Polish topology on $X$ making $p, \alpha$ continuous and restricting to the original $p$-fiberwise topology.
Moreover, such a topology may be taken to include any countably many sets in $\@O(G) \oast \@{BO}_p(X)$, in particular $G$-invariant Borel $p$-fiberwise open sets.
\end{corollary}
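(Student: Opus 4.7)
The plan is to apply the fiberwise topological realization theorem \cref{thm:gpd-fib-realiz} with $\@S := \@{BO}_p(X)$, both to a countable Borel $p$-fiberwise open basis $\@U$ for $X$ and, via its ``moreover'' clause invoking \cref{thm:gpd-realiz}, to the prescribed countably many sets in $\@O(G) \oast \@{BO}_p(X)$. By \cref{thm:fib-bor-qpol}\cref{thm:fib-bor-qpol:top}, $\@{BO}_p(X)$ is already a compatible $\sigma$-topology, and trivially $p^{-1}(\@O(G_0)) \subseteq \@{BO}_p(X)$; the one nontrivial hypothesis to check is the closure condition $\@O(G) \oast \@{BO}_p(X) \subseteq \@{BO}_p(X)$, which is the main obstacle.

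To establish this closure, fix $V \in \@O(G)$ and $A \in \@{BO}_p(X)$. Borelness of $V * A$ is automatic from \cref{thm:fib-bov-baire}, since $\alpha$ is a Borel-overt bundle. For $p$-fiberwise openness, fix $y \in G_0$ and consider the Borel set
\begin{equation*}
\Psi := \{(h,a) \in \tau^{-1}(y) \times p^{-1}(y) : h^{-1}a \in A\}.
\end{equation*}
For fixed $h$, its $h$-slice equals $h(A \cap p^{-1}(\sigma(h)))$, which is open in $p^{-1}(y)$ because $A$ is $p$-fiberwise open and $h$ acts as a homeomorphism $p^{-1}(\sigma(h)) \to p^{-1}(y)$; thus $\Psi$ is $\pi_1$-fiberwise open. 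Kunugui--Novikov (\cref{thm:kunugui-novikov}) applied with a countable open basis $\@V_0$ of $p^{-1}(y)$ then gives $\Psi = \bigcup_{V' \in \@V_0}(B_{V'} \times V')$ for some Borel $B_{V'} \subseteq \tau^{-1}(y)$. Identifying the $\alpha$-fiber over $a$ with $\tau^{-1}(y)$ via $h \mapsto (h, h^{-1}a)$, one has $a \in V*A$ iff $\{h \in V \cap \tau^{-1}(y) : (h,a) \in \Psi\}$ is nonmeager in $\tau^{-1}(y)$, and the decomposition of $\Psi$ yields
\begin{equation*}
(V * A) \cap p^{-1}(y) = \bigcup \{V' \in \@V_0 : V \cap \tau^{-1}(y) \cap B_{V'} \text{ is nonmeager in } \tau^{-1}(y)\},
\end{equation*}
which is manifestly open in $p^{-1}(y)$.

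With closure in hand, each $U \in \@U$ satisfies \cref{thm:gpd-fib-realiz:rect} of \cref{thm:gpd-fib-realiz}: because $G$ acts by fiberwise homeomorphisms, $\alpha^{-1}(U)$ is $\pi_1$-fiberwise open in $G \times_{G_0} X$, and Kunugui--Novikov with basis $\@U$ gives $\alpha^{-1}(U) \in \@B(G) \otimes_{G_0} \@{BO}_p(X)$. \Cref{thm:gpd-fib-realiz} then produces a compatible quasi-Polish topology $\@O(X) \subseteq \@{BO}_p(X)$ making $p$ and $\alpha$ continuous, in which each $U \in \@U$ is $p$-fiberwise open in $\@O(X)$ and each prescribed set in $\@O(G) \oast \@{BO}_p(X)$ is globally open. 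The subspace topology on each fiber $p^{-1}(y)$ induced from $\@O(X)$ then coincides with the original: the inclusion $\@O(X) \subseteq \@{BO}_p(X)$ makes it no finer, while the $p$-fiberwise openness of each $U \in \@U$ places the basis $\{U \cap p^{-1}(y) : U \in \@U\}$ for the original topology inside the subspace topology, making it no coarser. Finally, $G$-invariant Borel $p$-fiberwise open sets lie in $\@O(G) \oast \@{BO}_p(X)$ by \cref{it:gpd-vaught-invar}.
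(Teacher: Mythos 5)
Your proof is correct and follows the same overall strategy as the paper's: establish the closure $\@O(G) \oast \@{BO}_p(X) \subseteq \@{BO}_p(X)$, note via \cref{thm:fib-bor-qpol} that $\@{BO}_p(X)$ is a compatible $\sigma$-topology, and feed a countable Borel $p$-fiberwise open basis together with the prescribed sets into \cref{thm:gpd-fib-realiz} with $\@S := \@{BO}_p(X)$. The one place you diverge is the closure step. The paper proves it uniformly: since $G$ acts by fiberwise homeomorphisms, $\alpha^{-1}(A) = (G \times_{G_0} A)^\dagger$ is $\pi_1$-fiberwise open, so a \emph{single} application of Kunugui--Novikov over the base $G$ gives $\alpha^{-1}(A) = \bigcup_i (U_i \times_{G_0} A_i) \in \@B(G) \otimes_{G_0} \@{BO}_p(X)$, and then intersecting with $U^{-1} \times_{G_0} X$ and applying $\exists^*_{\pi_2}$ with Beck--Chevalley yields the explicit formula $U * A = \bigcup_i (p^{-1}(\exists^*_\sigma(U^{-1} \cap U_i)) \cap A_i) \in \@{BO}_p(X)$, so Borelness and fiberwise openness come out together. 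You instead fix a fiber $y \in G_0$, apply Kunugui--Novikov inside $\tau^{-1}(y) \times p^{-1}(y)$, and obtain fiberwise openness pointwise, quoting \cref{thm:fib-bov-baire} separately for Borelness; this is valid, since membership in $\@{BO}_p(X) = \@B(X) \cap \@O_p(X)$ is exactly the conjunction of those two conditions. What the paper's uniform version buys is the labelled decomposition \cref{eq:gpd-top-realiz}, which is reused verbatim in the proof of \cref{thm:gpd-top-nary-realiz} (and is closer to the point-free spirit that makes the localic generalization in \cref{sec:loc} go through); for the present corollary your fiberwise computation suffices. The remaining steps --- verifying \cref{thm:gpd-fib-realiz}\cref{thm:gpd-fib-realiz:rect} for the basis, recovering the fiber topologies by the two-sided inclusion argument, and placing invariant Borel fiberwise open sets in $\@O(G) \oast \@{BO}_p(X)$ via \cref{it:gpd-vaught-invar} --- all match the paper.
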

\begin{proof}
This is essentially the proof of \cref{thm:grp-top-realiz}, but taking care that everything belongs to the correct fibers.
Since each $g \in G$ acts via a homeomorphism $p^{-1}(\sigma(g)) -> p^{-1}(\tau(g))$, for each $p$-fiberwise open $A \subseteq X$, $\alpha^{-1}(A) \subseteq G \times_{G_0} X$ is $\pi_1$-fiberwise open, hence by Kunugui--Novikov (applied to $G \times_{G_0}{}$any countable Borel $p$-fiberwise open basis for $X$),
\begin{align*}
\yesnumber
\label{eq:gpd-top-realiz}
\alpha^{-1}(A)
= (G \times_{G_0} A)^\dagger
&= \bigcup_i (U_i \times_{G_0} A_i)
\in \@B(G) \otimes_{G_0} \@{BO}_p(X) \\
\intertext{where $U_i \in \@B(G)$ and $A_i \in \@{BO}_p(X)$.
Thus for any $U \in \@B(G)$,}
(U \times_{G_0} A)^\dagger
&= (U^{-1} \times_{G_0} X) \cap (G \times_{G_0} A)^\dagger
= \bigcup_i ((U^{-1} \cap U_i) \times_{G_0} A) \\
\shortintertext{and so}
U * A
= \exists^*_{\pi_2}((U \times A)^\dagger)
&= \bigcup_i (p^{-1}(\exists^*_\sigma(U^{-1} \cap U_i)) \cap A) \in \@{BO}_p(X)
\end{align*}
by the Beck--Chevalley condition \cref{it:fib-baire-bc} for the pullback $G \times_{G_0} X$.
So $\@O(G) \oast \@{BO}_p(X) \subseteq \@{BO}_p(X)$; and by \cref{thm:fib-bor-qpol}, $\@{BO}_p(X)$ is a compatible $\sigma$-topology.
Now apply \cref{thm:gpd-fib-realiz} with $\@S := \@{BO}_p(X)$; by \cref{eq:gpd-top-realiz}, we may make a countable Borel $p$-fiberwise open basis for $X$ fiberwise open, while also making countably many sets in $\@O(G) \oast \@{BO}_p(X)$ open.
\end{proof}

For a \emph{Borel-overt} $G$-bundle (recall again \cref{def:gpd-action}), we would naturally hope for a topological realization making $p : X -> G_0$ an open map, generalizing \cref{thm:fib-bov-qpol} for a bundle without an action.
To achieve this for a $G$-bundle, in general, we must \emph{refine the topology of the groupoid $G$}; this can be conveniently done using the action groupoid construction (\cref{def:gpd-actgpd}).

\begin{corollary}[topological realization of Borel-overt $G$-bundles]
\label{thm:gpd-bov-realiz}
Let $G$ be an open quasi-Polish groupoid, $p : X -> G_0$ be a standard Borel-overt $G$-bundle of quasi-Polish spaces.
Then there is a finer quasi-Polish topology on $G_0$, call the resulting space $\~G_0$, for which the trivial action $G \curvearrowright \~G_0$ is still continuous, together with a compatible (global) quasi-Polish topology on $X$ making $\alpha$ continuous and $p : X -> \~G_0$ continuous and open and restricting to the original $p$-fiberwise topology.
Thus, $\~G := G \ltimes \~G_0$ is $G$ with a finer open quasi-Polish groupoid topology for which $X$ becomes an open quasi-Polish $\~G$-space.
Moreover, $\@O(X)$ may be taken to include any countably many sets in $\@O(G) \oast \@{BO}_p(X)$, in particular $G$-invariant Borel $p$-fiberwise open sets.
\end{corollary}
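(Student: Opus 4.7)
The plan is to combine \cref{thm:gpd-top-realiz} with a Borel-overt refinement of the base topology in the style of \cref{thm:fib-bov-qpol}, carried out simultaneously on $X$, $G_0$, and $G$ via an action-groupoid pullback. First, apply \cref{thm:gpd-top-realiz} to obtain a compatible quasi-Polish topology $\@O(X)$ making $p,\alpha$ continuous, restricting to the prescribed $p$-fiberwise topology, and containing any countably many desired sets from $\@O(G) \oast \@{BO}_p(X)$. Fix a countable basis $\@A \subseteq \@O(X)$ including $X$ itself, and iteratively close $\@A$ under the operation $(U,A) \mapsto U \cdot A := \alpha(U \times_{G_0} A)$ for $U$ in a countable basis of $\@O(G)$; this stays within $\@O(X)$ since $\alpha$ is an open map whenever $G$ is an open groupoid (\cref{rmk:gpd-action-open}). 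By Borel-overtness each $p(A) \in \@B(G_0)$, so by \cref{it:qpol-dis} there is a finer quasi-Polish topology $\@O(\tilde G_0) \subseteq \@B(G_0)$ containing $\@O(G_0) \cup \{p(A) : A \in \@A\}$. Since $p(X) \in \@O(\tilde G_0)$, restricting to $p(X)$ allows us to assume $p$ surjective.

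Next, define $\tilde X := X \times_{G_0} \tilde G_0$ and $\tilde G := G \times_{G_0} \tilde G_0$ (via $\sigma$) with their pullback topologies; setwise these are $X$ and $G$, and the refinement preserves the $p$-fiberwise topology on $X$. Pullback stability of quasi-Polishness gives that $\tilde X, \tilde G$ are quasi-Polish; stability of open maps gives $\tilde\sigma : \tilde G \to \tilde G_0$ open (as a pullback of the open $\sigma$), and the argument in the proof of \cref{thm:fib-bov-qpol} (using each $p(A) \in \@O(\tilde G_0)$) gives $\tilde p : \tilde X \to \tilde G_0$ continuous and open, whence $\tilde\pi_1 : \tilde G \times_{\tilde G_0} \tilde X \to \tilde G$ is also open (as a pullback of $\tilde p$). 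Sets in $\@O(G) \oast \@{BO}_p(X)$ that were already open in $\@O(X)$ remain open in $\@O(\tilde X) \supseteq \@O(X)$.

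The substantive step is verifying that $\tilde G$ is a topological groupoid with continuous action on $\tilde X$, and the key subclaim is continuity of the new target $\tilde\tau : \tilde G \to \tilde G_0$ (the action map for the trivial action $\tilde G \curvearrowright \tilde G_0$). The crucial identity is $\tau^{-1}(p(A)) = \pi_1(\alpha^{-1}(A))$ (a direct consequence of the groupoid action, using surjectivity of $p$): since $\alpha^{-1}(A) \in \@O(G \times_{G_0} X) \subseteq \@O(\tilde G \times_{\tilde G_0} \tilde X)$ by the original continuity of $\alpha$, and $\tilde\pi_1$ is open, we conclude $\tau^{-1}(p(A)) \in \@O(\tilde G)$ for every $A \in \@A$. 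Together with $\tau^{-1}(\@O(G_0)) \subseteq \@O(G)$ and using that the closure of $\@A$ under $U \cdot (-)$ keeps the iterative $\tau$-preimage computations inside our framework, this establishes continuity of $\tilde\tau$ on the generators of $\@O(\tilde G_0)$. Continuity of $\alpha : \tilde G \times_{\tilde G_0} \tilde X \to \tilde X$ then follows from the identity $\alpha^{-1}(p^{-1}(\tilde V)) = \pi_1^{-1}(\tau^{-1}(\tilde V))$ (from $p\alpha = \tau\pi_1$), and the remaining topological-groupoid axioms (continuity of $\mu, \iota, \nu$ on $\tilde G$) are routine pullback verifications.

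The main obstacle is this continuity of $\tilde\tau$, which requires carefully threading Borel-overtness of $p$, the openness of $\alpha$ for open groupoids (\cref{rmk:gpd-action-open}), and the action-groupoid pullback structure together; once established, the rest of the statement follows routinely, with the moreover-clause on the prescribed $\@O(G) \oast \@{BO}_p(X)$-sets and on the $p$-fiberwise topology both being automatic from $\@O(\tilde X) \supseteq \@O(X)$.
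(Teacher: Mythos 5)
Your overall strategy matches the paper's: apply \cref{thm:gpd-top-realiz} first, refine the topology of $G_0$ by the images $p(A)$ of basic open sets (Borel by overtness), and pull everything back; and you correctly isolate the key identity $\tau^{-1}(p(A)) = \pi_1(\alpha^{-1}(A)) = \bigcup_{UB \subseteq A} (U \cap \sigma^{-1}(p(B)))$. But there is a genuine gap in how you produce $\@O(\~G_0)$. You invoke only \cref{it:qpol-dis} to get a finer quasi-Polish topology \emph{containing} $\@O(G_0) \cup p(\@A)$, and then verify continuity of $\~\tau$ only on these generators. That lemma gives no control over the other open sets of the topology it hands you: it is not the topology \emph{generated} by those sets (which in general fails to be quasi-Polish --- adjoining $\#Q$ to the topology of $\#R$ already generates a non-quasi-Polish topology, since $\#Q$ becomes an open, hence quasi-Polish, subspace), and for an arbitrary quasi-Polish topology merely containing them there is no reason that $\tau^{-1}(W)$ should lie in $\@O(\~G)$ for the extra open sets $W$. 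So continuity of the trivial action $G \curvearrowright \~G_0$ --- which you yourself flag as the substantive step --- is not established, and without it $\~G = G \ltimes \~G_0$ is not a topological groupoid.

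The fix is what the paper does: your computation shows precisely that each $p(A)$ is orbitwise open for the trivial action $G \curvearrowright G_0$ (i.e., $\tau^{-1}(p(A))$ is $\sigma$-fiberwise open, by \cref{it:gpd-orbtop}), which is condition \cref{thm:gpd-realiz-borel:rect-open} of \cref{thm:gpd-realiz-borel}; that corollary then yields a finer compatible quasi-Polish topology on $G_0$ containing all the $p(A)$ \emph{and} making the trivial action continuous, because the topology it constructs has the form $\@O(G) \oast \@T$ to which \cref{thm:gpd-cts} applies. With that substitution your auxiliary devices (closing $\@A$ under $U \cdot (-)$, the surjectivity reduction, the openness of $\~\pi_1$) become unnecessary, and the rest of your argument --- openness of $\~p$ from $p(\@A) \subseteq \@O(\~G_0)$, continuity of $\alpha$ via $p \circ \alpha = \tau \circ \pi_1$, and the routine groupoid axioms via the action groupoid construction --- goes through as you describe.
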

\begin{proof}
Start by taking any topology $\@O(X)$ given by \cref{thm:gpd-top-realiz}.
Since $p$ was Borel-overt, $p(\@O(X)) \subseteq \@B(G_0)$.
Moreover, for each $A \in \@O(X)$, $p(A) \subseteq G_0$ is orbitwise open, which for the trivial action means by \cref{it:gpd-orbtop} that $\tau^{-1}(p(A))$ is $\sigma$-fiberwise open: indeed, we have
\begin{align*}
\tau^{-1}(p(A))
&= \pi_1(\alpha^{-1}(A)) \quad \text{(by the equivariance pullback square in \cref{def:gpd-eqvar} for $p$)} \\
&= \bigcup_{UB \subseteq A} \pi_1(U \times_{G_0} B) \quad \text{(where $U \in \@O(G)$, $B \in \@O(X)$)} \\
&= \bigcup_{UB \subseteq A} (U \cap \sigma^{-1}(p(B))).
\end{align*}
Thus by \cref{thm:gpd-realiz-borel} below, there is a finer quasi-Polish topology on $G_0$ containing $p(\@O(X))$, call the resulting space $\~G_0$, for which the trivial action $G \curvearrowright \~G_0$ is still continuous.
Now adjoin $p^{-1}(\@O(\~G_0))$ to the topology of $X$, i.e., replace $X$ with $\~G_0 \times_{G_0} X$.
\end{proof}

Finally, we restate \cref{thm:gpd-realiz,thm:gpd-fib-realiz} for $\@S := \@B(X)$.
As in \cref{def:grp-orbtop-borel}, let
\begin{align}
\@{BO}_G(X) := \@B(X) \cap \@O_G(X)
\end{align}
denote the Borel orbitwise open sets.
As in \cref{it:grp-orbtop-borel}, by \cref{it:gpd-orbtop} and Kunugui--Novikov,
\begin{align}
\label{it:gpd-orbtop-borel}
A \in \@{BO}_G(X)  \iff  \alpha^{-1}(A) \in \@O(G) \otimes_{G_0} \@B(X).
\end{align}

\begin{corollary}[of \cref{thm:gpd-realiz}; cf.\ \cref{thm:grp-realiz-borel}]
\label{thm:gpd-realiz-borel}
Let $G$ be an open quasi-Polish groupoid, $p : X -> G_0$ be a standard Borel $G$-space.
For any $A \in \@B(X)$, the following are equivalent:
\begin{enumerate}[label=(\roman*)]
\item \label{thm:gpd-realiz-borel:open}
$A$ is open in some compatible quasi-Polish topology $\@O(X)$ making $p, \alpha$ continuous.
\item \label{thm:gpd-realiz-borel:vaught}
$A \in \@O(G) \oast \@B(X)$, i.e., $A = \bigcup_i (U_i * A_i)$ for countably many $U_i \in \@O(G)$, $A_i \in \@B(X)$.
\item \label{thm:gpd-realiz-borel:rect-open}
$\alpha^{-1}(A) \in \@O(G) \otimes_{G_0} \@B(X)$, i.e., $A \in \@{BO}_G(X)$, i.e., $A$ is orbitwise open.
\item \label{thm:gpd-realiz-borel:rect-vaught}
$\alpha^{-1}(A) \in \@O(G) \otimes_{G_0} (\@O(G) \oast \@B(X)) = \@O(G) \otimes_{G_0} \@{BO}_G(X)$.
\end{enumerate}
In particular, any $G$-invariant $A$ works.
Moreover, countably many $A$ obeying these conditions may be made simultaneously open in some topology as in \cref{thm:gpd-realiz-borel:open}; in other words, $\@{BO}_G(X)$ is the increasing union of all compatible quasi-Polish topologies making $p, \alpha$ continuous.
\qed
\end{corollary}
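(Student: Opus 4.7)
The plan is to deduce this corollary directly from \cref{thm:gpd-realiz} by specializing to $\@S := \@B(X)$. First I would verify the hypotheses: $\@B(X)$ is trivially a compatible $\sigma$-topology on $X$; $p^{-1}(\@O(G_0)) \subseteq \@B(X)$ since $p$ is Borel; and $\@O(G) \oast \@B(X) \subseteq \@B(X)$ since $\exists^*_\alpha$ is $\@B(X)$-valued by \cref{thm:fib-bov-baire}, applied to $\alpha$ equipped with the $\alpha$-fiberwise topology of \cref{def:gpd-fibtop} (which makes $G \times_{G_0} X$ into a standard Borel-overt bundle of quasi-Polish spaces over $X$ by \cref{rmk:gpd-action-open}).

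With this choice of $\@S$, conditions \cref{thm:gpd-realiz-borel:open}, \cref{thm:gpd-realiz-borel:vaught}, and \cref{thm:gpd-realiz-borel:rect-vaught} of the present statement are literal instances of \cref{thm:gpd-realiz}\cref{thm:gpd-realiz:open}, \cref{thm:gpd-realiz:vaught}, and \cref{thm:gpd-realiz:rect-vaught} respectively. For \cref{thm:gpd-realiz-borel:rect-open}, I would invoke \cref{it:gpd-orbtop-borel}: by \cref{it:gpd-orbtop} combined with Kunugui--Novikov (\cref{thm:kunugui-novikov}), a Borel $A \subseteq X$ is orbitwise open iff $\alpha^{-1}(A) \in \@O(G) \otimes_{G_0} \@B(X)$, which is exactly \cref{thm:gpd-realiz}\cref{thm:gpd-realiz:rect-open}. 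The identity $\@O(G) \oast \@B(X) = \@{BO}_G(X)$ appearing in \cref{thm:gpd-realiz-borel:rect-vaught} is then just the equivalence \cref{thm:gpd-realiz-borel:vaught}$\iff$\cref{thm:gpd-realiz-borel:rect-open} established above; in particular every $G$-invariant Borel $A$ qualifies, since it is orbitwise open by \cref{it:gpd-orbtop-invar}.

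The ``moreover'' clause permitting simultaneous realization of countably many $A$ is inherited verbatim from the corresponding clause of \cref{thm:gpd-realiz}. The final reformulation that $\@{BO}_G(X)$ is the increasing union of all compatible quasi-Polish topologies making $p, \alpha$ continuous then follows because any such topology $\@O(X)$ satisfies $\@O(X) \subseteq \@{BO}_G(X)$ by \cref{it:gpd-orbtop-top}, while the reverse inclusion at the level of countable subfamilies is exactly what the ``moreover'' clause provides, and the resulting family of topologies is directed under pairwise join by applying the clause to the union of two such countable subfamilies. Since the entire deduction is a bookkeeping exercise on top of \cref{thm:gpd-realiz}, there is no substantive obstacle here; the only point requiring any care is the Borel-level translation of orbitwise openness into a statement about $\alpha^{-1}$, which is supplied by Kunugui--Novikov via \cref{it:gpd-orbtop-borel}.
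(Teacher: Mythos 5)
Your proposal is correct and is precisely the paper's intended argument: the corollary is stated with a \qed because it is the literal specialization of \cref{thm:gpd-realiz} to $\@S := \@B(X)$, with \cref{thm:gpd-realiz-borel:rect-open} translated into orbitwise openness via \cref{it:gpd-orbtop-borel} (i.e., \cref{it:gpd-orbtop} plus Kunugui--Novikov), exactly as you do. The hypothesis checks, the identification $\@O(G) \oast \@B(X) = \@{BO}_G(X)$ via the chain of equivalences, and the derivation of the final ``increasing union'' reformulation from \cref{it:gpd-orbtop-top} together with the simultaneity clause are all as the paper intends.
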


For the characterization of ``potentially $p$-fiberwise open'' Borel sets, we need to consider the $p$-fiberwise restriction of the orbitwise topology:

\begin{definition}
For a topological groupoid $G$ and $G$-space $p : X -> G_0$, we say that $A \subseteq X$ is \defn{$p$-fiberwise ($G$-)orbitwise open} if for every $a \in X$, $A \cap p^{-1}(p(a)) \cap (G \cdot a)$ is open in the subspace topology on $p^{-1}(p(a)) \cap (G \cdot a)$ induced by the orbitwise topology on $G \cdot a$.

Note that $p^{-1}(p(a)) \cap (G \cdot a) = G(p(a),p(a)) \cdot a$, where $G(x,x)$ is the automorphism group of $x \in G_0$ (recall \cref{def:gpd}).
Thus, letting $\Aut(G) \subseteq G$ be the subgroupoid of automorphisms,
\begin{eqenum}
\item
$A$ $p$-fiberwise $G$-orbitwise open
$\iff$
$A$ $\Aut(G)$-orbitwise open
$\iff A \in \@O_{\Aut(G)}(X)$.
\end{eqenum}
(Warning: if $G$ is an open topological groupoid, $\Aut(G)$ may no longer be open.)

Note also that if $A$ is $p$-fiberwise orbitwise open, then more generally, $A \cap p^{-1}(y) \cap (G \cdot a)$ is open in the subspace topology on $p^{-1}(y) \cap (G \cdot a)$ for any $y \in G_0$, since if $b \in p^{-1}(y) \cap (G \cdot a)$ then $p^{-1}(y) \cap (G \cdot a) = p^{-1}(p(b)) \cap (G \cdot b)$.
Thus similarly to \cref{it:gpd-orbtop},
\begin{eqenum}
\item
$A \subseteq X$ $p$-fiberwise orbitwise open
$\iff \alpha^{-1}(A) \subseteq G \times_{G_0} X$ is $(\tau \circ \pi_1, \pi_2)$-fiberwise open.
\end{eqenum}
If $G$ is a quasi-Polish groupoid and $X$ is a standard Borel $G$-space, then by Kunugui--Novikov,
\begin{equation}
\begin{aligned}
A \in \@{BO}_{\Aut(G)}(X)
&\iff \alpha^{-1}(A) \in \@{BO}_\tau(G) \otimes_{G_0} \@B(X) \\
&\iff \alpha^{-1}(A) = \bigcup_i ((\tau^{-1}(B_i) \cap U_i) \times_{G_0} A_i)
\end{aligned}
\end{equation}
for countably many $B_i \in \@B(G_0)$, $U_i \in \@O(G)$, and $A_i \in \@B(X)$ (cf.\ the proof of \cref{thm:gpd-fib-realiz}).
\end{definition}

\begin{corollary}[of \cref{thm:gpd-fib-realiz}; cf.\ \cref{thm:grp-realiz-borel}]
\label{thm:gpd-fib-realiz-borel}
Let $G$ be an open quasi-Polish groupoid, $p : X -> G_0$ be a standard Borel $G$-space.
For any $A \in \@B(X)$, the following are equivalent:
\begin{enumerate}[label=(\roman*)]
\item \label{thm:gpd-fib-realiz-borel:fibopen}
$A$ is $p$-fiberwise open in some compatible quasi-Polish topology $\@O(X)$ making $p, \alpha$ continuous.
\item \label{thm:gpd-fib-realiz-borel:vaught}
$A \in \@B(G) \oast \@B(G) = \@{BO}_\tau(G) \oast \@B(X)$, i.e., $A = \bigcup_i (U_i * A_i)$ for countably many Borel $U_i, A_i$.
\item \label{thm:gpd-fib-realiz-borel:rect}
$\alpha^{-1}(A) \in \@B(G) \otimes_{G_0} \@B(X)$, i.e., $\alpha^{-1}(A) \subseteq G \times_{G_0} X$ is a countable union of Borel rectangles.
\item \label{thm:gpd-fib-realiz-borel:rect-fibopen}
$\alpha^{-1}(A) \in \@{BO}_\tau(G) \otimes_{G_0} \@B(X)$, i.e., $A \in \@{BO}_{\Aut(G)}(X)$, i.e., $A$ is $p$-fiberwise orbitwise open.
\item \label{thm:gpd-fib-realiz-borel:rect-vaught}
$\alpha^{-1}(A) \in \@{BO}_\tau(G) \otimes_{G_0} (\@O(G) \oast \@B(X)) = \@{BO}_\tau(G) \otimes_{G_0} \@{BO}_G(X)$.
\item \label{thm:gpd-fib-realiz-borel:trans}
There are countably many Borel sets in $X$ generating all $G$-translates $g \cdot A$ under union and restriction to $p$-fibers.
\end{enumerate}
Moreover, countably many $A \in \@S$ obeying these may be made simultaneously $p$-fiberwise open as in \cref{thm:gpd-fib-realiz-borel:fibopen}, while also making open countably many orbitwise open sets as in \cref{thm:gpd-realiz-borel}.
\qed
\end{corollary}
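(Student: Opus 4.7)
The plan is to derive this corollary by invoking \cref{thm:gpd-fib-realiz} with the maximal choice $\@S := \@B(X)$, just as \cref{thm:grp-realiz-borel} was obtained from \cref{thm:grp-realiz}. So the first step is to verify the hypotheses of \cref{thm:gpd-fib-realiz} hold for $\@S := \@B(X)$. Both are automatic: $p^{-1}(\@O(G_0)) \subseteq \@B(X)$ since $p$ is Borel, and $\@O(G) \oast \@B(X) \subseteq \@B(X)$ since $\alpha : G \times_{G_0} X -> X$ is a standard Borel-overt bundle of quasi-Polish spaces (by \cref{rmk:gpd-action-open}) so that $\exists^*_\alpha$ preserves Borel sets by \cref{thm:fib-bov-baire}.

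Next I would match up the listed conditions with those of \cref{thm:gpd-fib-realiz}. Conditions \cref{thm:gpd-fib-realiz-borel:fibopen}, \cref{thm:gpd-fib-realiz-borel:vaught}, \cref{thm:gpd-fib-realiz-borel:rect}, and \cref{thm:gpd-fib-realiz-borel:trans} specialize immediately. Condition \cref{thm:gpd-fib-realiz-borel:rect-fibopen} reduces to \cref{thm:gpd-fib-realiz:rect-fibopen} once we identify $\@{BO}_\tau(G) \otimes_{G_0} \@B(X)$ with $\@{BO}_{\Aut(G)}(X)$; this identification is precisely the second-to-last displayed equivalence in the excerpt (obtained from Kunugui--Novikov applied to the fact that $\alpha^{-1}(A)$ is $(\tau\circ\pi_1,\pi_2)$-fiberwise open iff $A$ is $p$-fiberwise orbitwise open). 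Finally, condition \cref{thm:gpd-fib-realiz-borel:rect-vaught} requires the identification $\@O(G) \oast \@B(X) = \@{BO}_G(X)$, which is \cref{thm:gpd-realiz-borel}\cref{thm:gpd-realiz-borel:rect-open}$\iff$\cref{thm:gpd-realiz-borel:vaught}.

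For the ``moreover'' clause, I would apply the corresponding moreover clause of \cref{thm:gpd-fib-realiz}: countably many $A$ satisfying the equivalent conditions can be made simultaneously $p$-fiberwise open while also making open countably many sets satisfying the hypotheses of \cref{thm:gpd-realiz} (with $\@S := \@B(X)$). By \cref{thm:gpd-realiz-borel}, the latter sets are exactly the Borel orbitwise open sets, i.e.\ members of $\@{BO}_G(X)$, yielding the desired statement.

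The main ``step'' here is really just the bookkeeping of the two identifications $\@O(G) \oast \@B(X) = \@{BO}_G(X)$ and $\@{BO}_\tau(G) \otimes_{G_0} \@B(X) = \@{BO}_{\Aut(G)}(X)$; since both are already set up immediately before this corollary, no new argument is required and the whole proof is essentially a citation of \cref{thm:gpd-fib-realiz,thm:gpd-realiz-borel}, which is why the author marks it with \verb|\qed|.
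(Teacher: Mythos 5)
Your proposal is correct and matches the paper's intended argument exactly: the corollary is the specialization of \cref{thm:gpd-fib-realiz} to $\@S := \@B(X)$ (whose hypotheses hold for the reasons you give), combined with the identifications $\@O(G) \oast \@B(X) = \@{BO}_G(X)$ from \cref{thm:gpd-realiz-borel} and $\@{BO}_\tau(G) \otimes_{G_0} \@B(X) \leftrightarrow \@{BO}_{\Aut(G)}(X)$ from the definition immediately preceding, which is why the paper states it with no written proof.
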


\subsection{Equivariant maps}
\label{sec:gpd-eqvar}

The following extends \cite[\S6]{Lgpd} to open quasi-Polish groupoids.
Recall the \emph{fiberwise lower powerspace} construction from \cref{def:fib-lowpow}.

\begin{proposition}[cf.\ \cref{thm:grp-lowpow-univ}]
\label{thm:gpd-lowpow-univ}
For any open quasi-Polish groupoid $G$, the countable fiber product of the fiberwise lower powerspace $\@F_\tau(G)^\#N_{G_0}$ is a universal $T_0$ second-countable $G$-space, as well as a universal standard Borel $G$-space, i.e., every other such $G$-space admits an equivariant topological (resp., Borel) fiberwise embedding over $G_0$ into $\@F_\tau(G)^\#N_{G_0}$.
\end{proposition}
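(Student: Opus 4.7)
The plan is to adapt the proof of \cref{thm:grp-lowpow-univ}, replacing $\@F(G)$ and linear maps with their fiberwise analogs from \cref{sec:lowpow,sec:prelim-lin}, with everything living over $G_0$.

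First, I would verify the groupoid analog of \cref{thm:grp-lowpow-trans}: for an open quasi-Polish groupoid $G$, the left translation $\mu$ extends to a continuous action $G \times_{G_0} \@F_\tau(G) \to \@F_\tau(G)$ sending $(g,F)$ with $g : x \to y$ and $F \in \@F(\tau^{-1}(x))$ to $\overline{gF} \in \@F(\tau^{-1}(y))$. This map decomposes as a composite of the fiberwise operations \cref{it:fib-lowpow-unit}, \cref{it:fib-lowpow-prod}, \cref{it:fib-lowpow-funct}, and is therefore continuous. Combined with \cref{thm:fib-lowpow} and basic closure properties of quasi-Polish spaces, this shows that each countable fiber power $\@F_\tau(G)^I_{G_0}$ is a quasi-Polish $G$-space.

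Next, given any $T_0$ second-countable $G$-space $p : X \to G_0$ and any $A \in \@O(X)$, the assignment $\phi_A : U \mapsto U \cdot A$ defines an $\@O(G_0)$-linear map $\@O(G) \to \@O(X)$: linearity holds because $\alpha$ is open (\cref{rmk:gpd-action-open}), and $\@O(G_0)$-linearity is the identity $(\tau^{-1}(W) \cap U) \cdot A = p^{-1}(W) \cap (U \cdot A)$, immediate from $p(g \cdot a) = \tau(g)$ whenever $\sigma(g) = p(a)$. By \cref{thm:fib-lin-lowpow} applied with $f := \tau$ and $g := p$, $\phi_A$ corresponds to a continuous map $h_A : X \to \@F_\tau(G)$ over $G_0$ satisfying $h_A^{-1}(\Dia_{G_0} U) = U \cdot A$. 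Equivariance $h_A(g \cdot a) = g \cdot h_A(a)$ reduces, upon testing against subbasic opens $\Dia_{G_0} V$, to the tautology $g \cdot a \in V \cdot A \iff a \in g^{-1} V \cdot A$.

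Fixing a countable open basis $\@A \subseteq \@O(X)$, I would then assemble the $h_A$'s into a map $h_\@A := (h_A)_{A \in \@A} : X \to \@F_\tau(G)^\@A_{G_0}$, continuous and equivariant over $G_0$. Since $\alpha$ is open and surjective (via the identity section $\iota$), $\@O(G) \cdot \@A$ remains an open basis of $X$; these sets are precisely the preimages $h_A^{-1}(\Dia_{G_0} U)$, whence $h_\@A$ is a topological embedding, injective because this basis separates points of the $T_0$ space $X$. The standard Borel universality then follows by first applying \cref{thm:gpd-borel-realiz} to realize a given standard Borel $G$-space as a quasi-Polish $G$-space, then invoking the topological case to obtain an equivariant Borel fiberwise embedding into $\@F_\tau(G)^\#N_{G_0}$. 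The main obstacle is Step 1, the verification of continuity (and equivariance) of left translation on $\@F_\tau(G)$; this requires careful bookkeeping through the fiberwise lower powerspace functorialities with the correct $\sigma$/$\tau$ labels on each fiber product, though it follows the template set by the group case. Once that is in place, every remaining step is a direct fiberwise translation of the group-case argument.
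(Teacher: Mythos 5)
Your proposal is correct and follows essentially the same route as the paper: first establish continuity of left translation on $\@F_\tau(G)$ via the fiberwise powerspace functorialities, then use the $\@O(G_0)$-linearity of $U \mapsto U \cdot A$ together with \cref{thm:fib-lin-lowpow} to produce the maps $h_A$ over $G_0$, assemble them over a countable basis into an equivariant embedding, and deduce the Borel case from \cref{thm:gpd-borel-realiz}. The only cosmetic difference is that the paper cites Frobenius reciprocity \cref{it:gpd-vaught-frob} for the $\@O(G_0)$-linearity where you verify it directly from $p(g \cdot a) = \tau(g)$; both are fine.
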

\begin{proof}
As in \cref{thm:grp-lowpow-univ}, we first check

\begin{lemma}
\label{thm:gpd-lowpow-trans}
For any open topological groupoid $G$ and topological $G$-space $X$, the left translation action $G \times_{G_0} \@F_p(X) -> \@F_p(X)$ is continuous.
\end{lemma}

The proof is the same as \cref{thm:grp-lowpow-trans}, using \cref{it:fib-lowpow-unit}, \cref{it:fib-lowpow-prod}, \cref{it:fib-lowpow-funct}.
Now let $p : X -> G_0$ be any $T_0$ $G$-space.
For each $A \in \@O(X)$, the map $U |-> U \cdot A : \@O(G) -> \@O(X)$ preserves unions, and is $\@O(G_0)$-linear (where $G$ is regarded as a bundle via $\tau$) by Frobenius reciprocity \cref{it:gpd-vaught-frob}, hence corresponds by \cref{thm:fib-lin-lowpow} to a continuous map $h_A : X -> \@F_\tau(G)$ over $G_0$ such that $h_A^{-1}(\Dia_{G_0} U) = U \cdot A$.
Then for any basis $\@A \subseteq \@O(X)$, $h_\@A := (h_A)_{A \in \@A} : X -> \@F_\tau(G)^\@A_{G_0}$ is easily seen to be an equivariant embedding over $G_0$, proving the topological case.
The Borel case then follows by \cref{thm:gpd-borel-realiz}.
\end{proof}

We now state the groupoid analogs of the other results in \cref{sec:grp-eqvar}, similarly numbered and proved in exactly the same way:

\matcheq{thm:grp-eqvar-prod}
\begin{corollary}
For any open quasi-Polish groupoid $G$ and $T_0$ $G$-spaces $p : X -> G_0$ and $q : Y -> G_0$, a continuous map $f : X -> Y$ over $G_0$ is $G$-equivariant iff for every $U \in \@O(G)$ and $B \in \@O(Y)$, we have $f^{-1}(U \cdot B) = U \cdot f^{-1}(B)$.
\qed
\end{corollary}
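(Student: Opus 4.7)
The plan is to mimic the proof of \cref{thm:grp-eqvar-prod} essentially verbatim, substituting the fiberwise lower powerspace $\@F_\tau(G)$ for $\@F(G)$ and invoking \cref{thm:fib-lin-lowpow} in place of \cref{thm:lin-lowpow}. The forward direction will be a routine unpacking of equivariance: for $U \in \@O(G)$ and $B \in \@O(Y)$, membership $a \in f^{-1}(U \cdot B)$ means there is $g \in U$ with $\sigma(g) = q(f(a))$ and $g \cdot f(a) \in B$; since $f$ is over $G_0$, $\sigma(g) = q(f(a)) = p(a)$, so $g \cdot a$ is defined, and equivariance then yields $f(g \cdot a) = g \cdot f(a) \in B$, i.e., $a \in U \cdot f^{-1}(B)$. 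The reverse inclusion is symmetric.

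For the substantive converse direction, for each $B \in \@O(Y)$ I will use the continuous map $h_B : Y \to \@F_\tau(G)$ over $G_0$ constructed in the proof of \cref{thm:gpd-lowpow-univ}, which corresponds via \cref{thm:fib-lin-lowpow} to the $\@O(G_0)$-linear quantifier $U \mapsto U \cdot B$ and satisfies $h_B^{-1}(\Dia_{G_0} U) = U \cdot B$. The hypothesis $f^{-1}(U \cdot B) = U \cdot f^{-1}(B)$ for all $U \in \@O(G)$ will then translate into the equality $(h_B \circ f)^{-1}(\Dia_{G_0} U) = h_{f^{-1}(B)}^{-1}(\Dia_{G_0} U)$ of preimages of subbasic opens in $\@F_\tau(G)$. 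Since both composites are continuous maps $X \to \@F_\tau(G)$ lying over the common map $p = q \circ f : X \to G_0$, and the $\Dia_{G_0} U$ form a subbasis for $\@F_\tau(G)$, this will force $h_B \circ f = h_{f^{-1}(B)}$.

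Bundling this equality over all $B \in \@O(Y)$ will yield the commutative triangle
\begin{equation*}
\begin{tikzcd}
X \drar["h_{f^{-1}(\@O(Y))}"'] \rar["f"] & Y \dar["h_{\@O(Y)}"] \\
& \@F_\tau(G)^{\@O(Y)}_{G_0}
\end{tikzcd}
\end{equation*}
in which the right-hand vertical is the equivariant embedding from \cref{thm:gpd-lowpow-univ} and the diagonal is equivariant because each component $h_{f^{-1}(B)}$ is (again by the construction in that proof). It will follow that $f$ itself is $G$-equivariant, as desired.

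The only obstacle is bookkeeping: one must verify that the Frobenius reciprocity \cref{it:gpd-vaught-frob} which establishes $\@O(G_0)$-linearity of $U \mapsto U \cdot B$ applies uniformly in $B$, so that \cref{thm:fib-lin-lowpow} genuinely produces maps \emph{over $G_0$}; this is the feature which distinguishes the groupoid case from the group case (where $G_0$ is a point and $\@O(G_0)$-linearity is automatic), but it is already handled inside the proof of \cref{thm:gpd-lowpow-univ}, so no new work is required.
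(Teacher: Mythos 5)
Your proposal is correct and matches the paper's intended argument: the paper proves this corollary ``in exactly the same way'' as \cref{thm:grp-eqvar-prod}, i.e., by encoding $U \mapsto U \cdot B$ as a continuous map $h_B : Y \to \@F_\tau(G)$ over $G_0$ via \cref{thm:fib-lin-lowpow} (with $\@O(G_0)$-linearity supplied by Frobenius reciprocity, exactly as in the proof of \cref{thm:gpd-lowpow-univ}) and reading the hypothesis as commutativity of the triangle through the equivariant embedding $h_{\@O(Y)}$. Nothing further is needed.
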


\matcheq{thm:grp-eqvar-vaught}
\begin{corollary}
\label{thm:gpd-eqvar-vaught}
For any open quasi-Polish groupoid $G$ and standard Borel $G$-spaces $p : X -> G_0$ and $q : Y -> G_0$, a Borel map $f : X -> Y$ over $G_0$ is $G$-equivariant iff for every $U \in \@O(G)$ and $B \in \@{BO}_G(Y)$, we have $f^{-1}(U * B) = U * f^{-1}(B)$.
\qed
\end{corollary}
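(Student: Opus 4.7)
The forward direction is immediate from the equivariance identity \cref{it:gpd-vaught-eqvar}, so the real content is the converse, which I would prove by mimicking the strategy of \cref{thm:grp-eqvar-vaught}: bootstrap from the Borel setting to a continuous setting in which the preceding (continuous-case) corollary applies.

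First I would apply \cref{thm:gpd-realiz-borel} to $Y$ to obtain a compatible quasi-Polish topology $\@O(Y)$ making $q$ and the $G$-action continuous. By \cref{thm:gpd-cts}(a), this topology satisfies $\@O(Y) = \@O(G) \oast \@O(Y)$; in particular, a countable basis $\@B \subseteq \@O(Y)$ may be chosen with each $B \in \@B$ of the form $B = \bigcup_i (U_i * B'_i)$ for $U_i \in \@O(G)$ and $B'_i \in \@O(Y) \subseteq \@{BO}_G(Y)$ (by \cref{it:gpd-orbtop-top}). Applying the hypothesis to each $U_i$ and $B'_i$ gives $f^{-1}(U_i * B'_i) = U_i * f^{-1}(B'_i) \in \@{BO}_G(X)$ by \cref{it:gpd-vaught-orbtop}, so $f^{-1}(B) \in \@{BO}_G(X)$ for every $B \in \@B$.

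Next I would apply \cref{thm:gpd-realiz-borel} to $X$ to find a compatible quasi-Polish topology $\@O(X)$ making $p$ and the action continuous, with every $f^{-1}(B)$ for $B \in \@B$ open; note that $f$ is automatically over $G_0$, and now also continuous. The preceding corollary (the continuous-case version of \cref{thm:gpd-eqvar-vaught}) then says $f$ is $G$-equivariant iff $f^{-1}(U \cdot B) = U \cdot f^{-1}(B)$ for all $U \in \@O(G)$ and $B \in \@O(Y)$. Since $B$ is now open (hence orbitwise open) and $f^{-1}(B) \in \@{BO}_G(X)$, by \cref{it:gpd-vaught-im} the $\tau$-fiberwise open $U$ gives $U * B = U \cdot B$ and $U * f^{-1}(B) = U \cdot f^{-1}(B)$, so the hypothesis \emph{is} exactly the equivariance criterion in the chosen topologies.

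The main obstacle is the mildly circular-looking step of showing $f^{-1}$ of basic opens in $Y$ lie in $\@{BO}_G(X)$ before we can invoke the realization theorem for $X$; the key is that the Vaught-transform expansion coming from \cref{thm:gpd-cts}(a) lets us apply the hypothesis only to $B'_i \in \@{BO}_G(Y)$, never needing orbitwise-openness of the outer $B$ itself. One should also verify the replacement remark from the group case (\cref{rmk:grp-eqvar-vaught}), namely that it suffices to check the identity on a countable separating family of $B$; this follows by the same lower-powerspace argument using \cref{thm:gpd-lowpow-univ} in place of \cref{thm:grp-lowpow-univ}.
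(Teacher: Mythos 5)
Your proposal is correct and takes essentially the same route as the paper: the paper's proof (given for the group case in \cref{thm:grp-eqvar-vaught} and declared identical for groupoids) realizes $Y$, then realizes $X$ so that $f^{-1}(\@O(Y))$ becomes open, and invokes the continuous-case equivariance criterion. You have merely made explicit the step the paper leaves implicit, namely that expanding basic opens of $Y$ as unions of $U_i * B'_i$ via \cref{thm:gpd-cts}(a) and applying the hypothesis shows $f^{-1}(B) \in \@{BO}_G(X)$, so the realization theorem for $X$ applies.
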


\matcheq{rmk:grp-eqvar-vaught}
\begin{remark}
In \cref{thm:gpd-eqvar-vaught}, it is enough to have $f^{-1}(U * B) = U * f^{-1}(B)$ for a countable $q$-fiberwise separating family of $B \in \@B(Y)$, arguing as in \cref{rmk:grp-eqvar-vaught} using \cite[\S6]{Lgpd}.
\end{remark}

\matcheq{thm:grp-realiz-univ}
\begin{proposition}
\label{thm:gpd-realiz-univ}
Let $G$ be an open quasi-Polish groupoid, $X$ be a quasi-Polish space equipped with a continuous map $p : X -> G_0$ and a Borel action of $G$ such that $\@O(G) \oast \@O(X) \subseteq \@O(X)$.
Then any continuous equivariant map $f : X -> Y$ into another quasi-Polish $G$-space $q : Y -> G_0$ is in fact continuous from the coarser topology $\@O(G) \oast \@O(X)$, which is hence the universal continuous ``completion'' of $X$.
\qed
\end{proposition}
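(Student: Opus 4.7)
The plan is to imitate the proof of the group version (\cref{thm:grp-realiz-univ}) verbatim, which is a three-step chain of equalities/inclusions applied to the topology of $Y$. The only ingredients needed are \cref{thm:gpd-cts}(a) applied to the already continuous action on $Y$, equivariance of $f$ in its Vaught-transform form \cref{it:gpd-vaught-eqvar}, and the hypothesis that $f$ is continuous with respect to the original (finer) topology on $X$.

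Concretely, I would argue as follows. Because $Y$ is a quasi-Polish $G$-space, the action on $Y$ is continuous, so \cref{thm:gpd-cts}(a) gives $\@O(Y) = \@O(G) \oast \@O(Y)$. Next, since $f : X \to Y$ is continuous as originally given, $f^{-1}(\@O(Y)) \subseteq \@O(X)$. Finally, the equivariance identity \cref{it:gpd-vaught-eqvar} says $f^{-1}(U * B) = U * f^{-1}(B)$ for any $U \in \@O(G)$ and $B \in \@B(Y)$, and this extends pointwise to the operator $\oast$ on $\sigma$-families. Stringing these together yields
\begin{equation*}
f^{-1}(\@O(Y)) \;=\; f^{-1}(\@O(G) \oast \@O(Y)) \;=\; \@O(G) \oast f^{-1}(\@O(Y)) \;\subseteq\; \@O(G) \oast \@O(X),
\end{equation*}
which is exactly the statement that $f$ is continuous when the domain is retopologized by $\@O(G) \oast \@O(X)$.

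To conclude the universal-property phrasing, I would observe that \cref{thm:gpd-cts}(b) already guarantees that $\@O(G) \oast \@O(X)$ is itself a compatible quasi-Polish topology on $X$ making $p$ and $\alpha$ continuous (so the retopologized $X'$ really is a quasi-Polish $G$-space), and the identity $1_X : X \to X'$ is tautologically continuous and equivariant. The factorization $f = (f : X' \to Y) \circ 1_X$ is unique because $1_X$ has dense image (in fact is the identity on underlying sets), so $X \to X'$ is the reflection of $X$ into quasi-Polish $G$-spaces.

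There is essentially no hard step here: the work has already been absorbed into \cref{thm:gpd-cts} and the Beck--Chevalley/equivariance identity \cref{it:gpd-vaught-eqvar}. The only thing to be mildly careful about is that $f$ is a map of bundles over $G_0$, so when invoking $\@O(G) \oast (-)$ one must read it as the fiberwise version from \cref{def:gpd-vaught} with the $\times_{G_0}$ convention — but equivariance of $f$ (preserving $p, q$) makes this automatic.
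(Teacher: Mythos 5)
Your proposal is correct and follows essentially the same route as the paper: the paper proves the group case (\cref{thm:grp-realiz-univ}) by exactly the chain $f^{-1}(\@O(Y)) = f^{-1}(\@O(G) \oast \@O(Y)) \subseteq \@O(G) \oast f^{-1}(\@O(Y)) \subseteq \@O(G) \oast \@O(X)$ via the equivariance identity, and states the groupoid version as being proved in the same way. Your additional remarks about \cref{thm:gpd-cts}(b) and the fiberwise $\times_{G_0}$ bookkeeping are consistent with the paper's intent.
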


\subsection{Open relations and bundles of structures}
\label{sec:gpd-struct}

\begin{remark}[cf.\ \cref{rmk:borel-binary-open}]
For two Borel maps $f : X -> Z$ and $g : Y -> Z$ between standard Borel spaces, it is easily seen that a Borel fiberwise binary relation $R \subseteq X \times_Z Y$ can be made open in the fiber product of some compatible quasi-Polish topologies on $X, Y, Z$ making $f, g$ continuous iff $R \in \@B(X) \otimes_Z \@B(Y)$, i.e., $R$ is a countable union of Borel rectangles $\bigcup_i (A_i \times_Z B_i)$.

If we want the topologies of $X, Y$ to be contained in compatible $\sigma$-topologies $\@S(X) \subseteq \@B(X)$ and $\@S(Y) \subseteq \@B(Y)$,
then we need to require $R \in \@S(X) \otimes_Z \@S(Y)$.
\end{remark}

We now have the analogous characterization for making binary relations open in fiber products of $G$-spaces.
As in \cref{thm:grp-binary-realiz}, there are two types of conditions, referring to either the diagonal action of $G$ or the ``product action of $G^2$''; here, however, ``$G^2$'' is no longer a groupoid.

Let $G^2_\tau := G \tensor[_\tau]\times{_\tau} G$ be all pairs of morphisms with the same target, but possibly two different sources.
Given two $G$-spaces $p : X -> G_0$ and $q : Y -> G_0$, we may let $G^2_\tau$ ``act'' coordinatewise on the (ordinary, \emph{not} fiber) product $X \times Y$, landing in the fiber product $X \times_{G_0} Y$; this yields a map
\begin{equation*}
\alpha_X \times \alpha_Y : G^2_\tau \times_{G_0^2} (X \times Y) := G^2_\tau \tensor[_{\sigma^2}]\times{_{p \times q}} (X \times Y) --> X \times_{G_0} Y.
\end{equation*}
Note that
\begin{equation}
\label{eq:gpd-binary-swap}
G^2_\tau \times_{G_0^2} (X \times Y)
\cong (G \times_{G_0} X) \tensor[_{\tau\pi_1}]\times{_{\tau\pi_1}} (G \times_{G_0} Y)
\end{equation}
via switching the middle two factors; under this isomorphism, $\alpha_X \times \alpha_Y$ becomes simply the fiber product of $\alpha_X, \alpha_Y$ over $G_0$.
In particular, $\alpha_X \times \alpha_Y$ is equipped with a fiberwise topology, the product of the $\alpha_X$-fiberwise and $\alpha_Y$-fiberwise topologies, forming a standard Borel-overt bundle of quasi-Polish spaces.
Thus we have a Baire category quantifier $\exists^*_{\alpha_X \times \alpha_Y}$, whose restriction to rectangles we continue to denote by $*$, with the usual meaning for $\oast$ (\cref{def:ostar,def:gpd-vaught}).

\begin{theorem}[cf.\ \cref{thm:grp-binary-realiz}]
\label{thm:gpd-binary-realiz}
Let $G$ be an open quasi-Polish groupoid, $p : X -> G_0$ and $q : Y -> G_0$ be two standard Borel $G$-spaces, and $\@S(X) \subseteq \@B(X)$ and $\@S(Y) \subseteq \@B(Y)$ be compatible $\sigma$-topologies such that $p^{-1}(\@O(G_0)), \@O(G) \oast \@S(X) \subseteq \@S(X)$ and $q^{-1}(\@O(G_0)), \@O(G) \oast \@S(Y) \subseteq \@S(Y)$.
For any $R \in \@B(X \times_{G_0} Y)$, the following are equivalent:
\begin{enumerate}[label=(\roman*)]
\item \label{thm:gpd-binary-realiz:open}
$R \in \@O(X) \otimes_{G_0} \@O(Y)$ for some quasi-Polish topologies $\@O(X) \subseteq \@S(X)$ and $\@O(Y) \subseteq \@S(Y)$ making $p, q, \alpha_X, \alpha_Y$ continuous.
\item \label{thm:gpd-binary-realiz:vaught2}
$R \in \@O(G^2_\tau) \oast (\@S(X) \otimes \@S(Y)) = (\@O(G) \oast \@S(X)) \otimes_{G_0} (\@O(G) \oast \@S(Y))$,
i.e., we have $R = \bigcup_i ((U_i \tensor[_\tau]\times{_\tau} V_i) * (A_i \times_{G_0} B_i)) = \bigcup_i (U_i * A_i) \times_{G_0} (V_i * B_i)$ for countably many $U_i, V_i \in \@O(G)$, $A_i \in \@S(X)$, and $B_i \in \@S(Y)$, where the first $*$ refers to $\exists^*_{\alpha_X \times \alpha_Y}$.
\item \label{thm:gpd-binary-realiz:rect2-open}
$(\alpha_X \times \alpha_Y)^{-1}(R) \in \@O(G^2_\tau) \otimes_{G_0^2} (\@S(X) \otimes \@S(Y))$.
\item \label{thm:gpd-binary-realiz:rect2-vaught}
$(\alpha_X \times \alpha_Y)^{-1}(R) \in \@O(G^2_\tau) \otimes_{G_0^2} ((\@O(G) \oast \@S(X)) \otimes (\@O(G) \oast \@S(Y)))$.
\end{enumerate}
Furthermore, letting $(\@O(G) \otimes_{G_0} \@S(X))^*_{\pi_2}$ consist of all Borel $D \subseteq G \tensor[_\sigma]\times{_p} X$ which are $=^*_{\pi_2}$ to a set in $\@O(G) \otimes_{G_0} \@S(X)$, the following are also equivalent to the above:
\begin{enumerate}[resume*]
\item \label{thm:gpd-binary-realiz:vaught-vaught*}
$R \in \@O(G) \oast (\exists^*_{\alpha_X}((\@O(G) \otimes_{G_0} \@S(X))^*_{\pi_2}) \otimes_{G_0} \exists^*_{\alpha_Y}((\@O(G) \otimes_{G_0} \@S(Y))^*_{\pi_2}))$.
\item \label{thm:gpd-binary-realiz:vaught-vaught}
$R \in \@O(G) \oast ((\@O(G) \oast \@S(X)) \otimes_{G_0} (\@O(G) \oast \@S(Y)))$.
\item \label{thm:gpd-binary-realiz:rect-vaught*}
$(\alpha_{X \times_{G_0} Y})^{-1}(R) \in \@O(G) \otimes_{G_0} \exists^*_{\alpha_X}((\@O(G) \otimes_{G_0} \@S(X))^*_{\pi_2}) \otimes_{G_0} \exists^*_{\alpha_Y}((\@O(G) \otimes_{G_0} \@S(Y))^*_{\pi_2})$.
\item \label{thm:gpd-binary-realiz:rect-vaught}
$(\alpha_{X \times_{G_0} Y})^{-1}(R) \in \@O(G) \otimes_{G_0} (\@O(G) \oast \@S(X)) \otimes_{G_0} (\@O(G) \oast \@S(Y))$.
\end{enumerate}
Moreover, countably many $R$ obeying these conditions may be made simultaneously open as in \cref{thm:gpd-binary-realiz:open}, while also simultaneously making open countably many $A \subseteq X$ and $B \subseteq Y$ satisfying \cref{thm:gpd-realiz}.
\end{theorem}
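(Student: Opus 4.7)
The plan is to follow the template of \cref{thm:grp-binary-realiz} essentially verbatim, substituting fiber products over $G_0$ for ordinary products and replacing $G^2$ with $G^2_\tau = G \tensor[_\tau]\times{_\tau} G$. The crucial initial step is the rectangle identity
\begin{equation*}
(U \tensor[_\tau]\times{_\tau} V) * (A \times_{G_0} B) = (U * A) \times_{G_0} (V * B)
\end{equation*}
for $U, V \in \@B(G)$, $A \in \@B(X)$, $B \in \@B(Y)$. Using the isomorphism \cref{eq:gpd-binary-swap}, the left side is $\exists^*_{\alpha_X \times_{G_0} \alpha_Y}(U \tensor[_\sigma]\times{_p} A) \times_{G_0} (V \tensor[_\sigma]\times{_q} B))$, and we compute this via two applications of Beck--Chevalley \cref{it:fib-baire-bc} together with Kuratowski--Ulam \cref{thm:kuratowski-ulam}, exactly as in the group case. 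This immediately yields the second equality in \cref{thm:gpd-binary-realiz:vaught2}.

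Next, I would observe that \cref{thm:gpd-binary-realiz:open}--\cref{thm:gpd-binary-realiz:rect2-vaught} are really just an instance of \cref{thm:gpd-realiz} applied to the bundle $\alpha_X \times \alpha_Y : G^2_\tau \times_{G_0^2}(X \times Y) \to X \times_{G_0} Y$, viewed (via \cref{eq:gpd-binary-swap}) as the fiber product of two standard Borel-overt bundles of quasi-Polish spaces. The compatible $\sigma$-topology to use on $X \times_{G_0} Y$ is the restriction of $\@S(X) \otimes \@S(Y)$; closure under Vaught transforms by open sets of $G^2_\tau$ follows from the rectangle identity together with the hypotheses $\@O(G) \oast \@S(X) \subseteq \@S(X)$ and $\@O(G) \oast \@S(Y) \subseteq \@S(Y)$. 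The ``moreover'' clause for simultaneous realization comes out of the same application of \cref{thm:gpd-realiz}, by throwing the generating families of the $A$'s and $B$'s into the initial seed topology and iteratively closing under $\@O(G) \oast (-)$ in each factor, as in the group proof.

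For the second group \cref{thm:gpd-binary-realiz:vaught-vaught*}--\cref{thm:gpd-binary-realiz:rect-vaught}, the implications (vii)$\Rightarrow$(v), (viii)$\Rightarrow$(vi) are trivial from $R = \exists^*_{\alpha_{X \times_{G_0} Y}}(\alpha_{X \times_{G_0} Y}^{-1}(R))$, and (vi)$\Rightarrow$(v), (viii)$\Rightarrow$(vii) use the inclusion $\@O(G) \otimes_{G_0} \@S \subseteq (\@O(G) \otimes_{G_0} \@S)^*_{\pi_2}$. The implications from the first block to the second are immediate since $\alpha_{X \times_{G_0} Y}$ factors through $\alpha_X \times \alpha_Y$ via the diagonal $G \to G^2_\tau$ over $G_0$. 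The main obstacle, as in the group case, is the implication \cref{thm:gpd-binary-realiz:vaught-vaught*}$\Rightarrow$\cref{thm:gpd-binary-realiz:vaught2}. I would follow the same strategy: write a summand of \cref{thm:gpd-binary-realiz:vaught-vaught*} as $W * (\exists^*_{\alpha_X}(D) \times_{G_0} \exists^*_{\alpha_Y}(E))$ for $W \in \@O(G)$, $D =^*_{\pi_2} \bigcup_i (U_i \tensor[_\sigma]\times{_p} A_i)$, $E =^*_{\pi_2} \bigcup_j (V_j \tensor[_\sigma]\times{_q} B_j)$, then show $D \tensor[_{\tau \pi_1}]\times{_{\tau \pi_1}} E =^*_{G^2_\tau} \bigcup_{i,j}((U_i \tensor[_\tau]\times{_\tau} V_j) \times_{G_0^2} (A_i \times B_j))$, and finish with Pettis \cref{it:gpd-vaught-pettis} together with a double application of Kuratowski--Ulam (once to turn $\exists^*_{\alpha_X} \times \exists^*_{\alpha_Y}$ into $\exists^*_{\alpha_X \times \alpha_Y}$, once to absorb the outer $W * (-)$ into an $\exists^*$ over $G \times_{G_0} G^2_\tau \times_{G_0^2}(X \times Y)$).

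The key technical subtlety distinguishing this from the group case, and the place to be most careful, is replacing the group-level fact ``$M \subseteq G$ meager $\Rightarrow$ $M \times G, G \times M$ are orbitwise meager for the diagonal action on $G^2$'' \cref{it:grp-binary-realiz:rect-baire} with its groupoid analog: $M \subseteq G$ $\tau$-fiberwise meager implies $M \tensor[_\tau]\times{_\tau} G$ and $G \tensor[_\tau]\times{_\tau} M$ are $\alpha_{G^2_\tau}$-fiberwise meager for the coordinatewise left translation action of $G$ on $G^2_\tau$. This reduces, via the twist and Beck--Chevalley for the appropriate pullback square, to $\tau$-fiberwise meagerness of $M$ together with openness of the groupoid; thereafter the argument proceeds verbatim. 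I expect no further serious difficulties beyond this bookkeeping.
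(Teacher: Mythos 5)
Your overall architecture tracks the paper's proof closely — the rectangle identity via Beck--Chevalley and Kuratowski--Ulam, the easy implications among \cref{thm:gpd-binary-realiz:vaught-vaught*}--\cref{thm:gpd-binary-realiz:rect-vaught}, and the reduction of the hard implication to a fiberwise-meagerness lemma are all exactly as in the paper — but there are two genuine problems. First, you assert that the equivalence of \cref{thm:gpd-binary-realiz:open}--\cref{thm:gpd-binary-realiz:rect2-vaught} is ``really just an instance of \cref{thm:gpd-realiz} applied to the bundle $\alpha_X \times \alpha_Y$''. It is not: \cref{thm:gpd-realiz} is a statement about actions of an open quasi-Polish \emph{groupoid}, and $G^2_\tau = G \tensor[_\tau]\times{_\tau} G$ is not a groupoid (composability of $(g,h)$ with $(g',h')$ would force $\sigma(g) = \sigma(h)$, so $G^2_\tau$ is not closed under composition). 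The paper explicitly flags this and instead proves the cycle \cref{thm:gpd-binary-realiz:rect2-vaught}$\implies$\cref{thm:gpd-binary-realiz:rect2-open}$\implies$\cref{thm:gpd-binary-realiz:vaught2}$\implies$\cref{thm:gpd-binary-realiz:open}$\implies$\cref{thm:gpd-binary-realiz:rect2-vaught} directly, invoking \cref{thm:gpd-realiz} only for the genuine actions $G \curvearrowright X$ and $G \curvearrowright Y$ separately: for \cref{thm:gpd-binary-realiz:vaught2}$\implies$\cref{thm:gpd-binary-realiz:open} one makes each $U_i * A_i$ and $V_i * B_i$ open, and for \cref{thm:gpd-binary-realiz:open}$\implies$\cref{thm:gpd-binary-realiz:rect2-vaught} one rewrites the target via \cref{eq:gpd-binary-swap} as $(\@O(G) \otimes_{G_0} (\@O(G) \oast \@S(X))) \tensor[_{\tau\pi_1}]\otimes{_{\tau\pi_1}} (\@O(G) \otimes_{G_0} (\@O(G) \oast \@S(Y)))$. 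The detour is easy, but your shortcut as stated does not typecheck.

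Second, and more seriously since you single it out as the place to be most careful, your groupoid analog of \cref{it:grp-binary-realiz:rect-baire} has the wrong fiberwise direction: you hypothesize $M \subseteq G$ \emph{$\tau$-fiberwise} meager, whereas the correct hypothesis is \emph{$\sigma$-fiberwise} meager. The meagerness actually available in the proof comes from $D =^*_{\pi_2} \bigcup_i (U_i \times_{G_0} A_i)$ inside $G \tensor[_\sigma]\times{_p} X$, whose $\pi_2$-fibers are $\sigma$-fibers of $G$; and the desired conclusion (that $M \tensor[_\tau]\times{_\tau} G$ is orbitwise meager for the diagonal action $G \curvearrowright G^2_\tau$) needs $M \cap \sigma^{-1}(x)$ meager, since the orbit of $(g,h)$ is parametrized by $k \in \sigma^{-1}(\tau(g))$ and $kg \in M$ iff $k \in (M \cap \sigma^{-1}(\sigma(g)))\,g^{-1}$. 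With the $\tau$-fiberwise hypothesis the lemma is actually false: in the pair groupoid $G = Z \times Z$ on a Polish space $Z$ (with $\sigma = \pi_2$, $\tau = \pi_1$), the set $M = Z \times A$ for meager $A$ is $\tau$-fiberwise meager, yet $M \tensor[_\tau]\times{_\tau} G$ contains entire orbits. So the key step of \cref{thm:gpd-binary-realiz:vaught-vaught*}$\implies$\cref{thm:gpd-binary-realiz:vaught2} would not go through as you have set it up; replacing $\tau$ by $\sigma$ there, the rest of your outline agrees with the paper.
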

\begin{proof}
Essentially the same proof as for \cref{thm:grp-binary-realiz} works.
Here, the equivalence of \cref{thm:gpd-binary-realiz:open}--\cref{thm:gpd-binary-realiz:rect2-vaught} strictly speaking does not directly follow from \cref{thm:gpd-realiz}, since $G^2_\tau$ is not a groupoid; but as there, we easily have
\cref{thm:gpd-binary-realiz:rect2-vaught}$\implies$%
\cref{thm:gpd-binary-realiz:rect2-open}$\implies$%
\cref{thm:gpd-binary-realiz:vaught2}, which implies
\cref{thm:gpd-binary-realiz:open}
since we may make each of the given sets in $\@O(G) \oast \@S(X), \@O(G) \oast \@S(Y)$ open by \cref{thm:gpd-realiz}, which in turn implies
\cref{thm:gpd-binary-realiz:rect2-vaught} by \cref{thm:gpd-realiz} since the right side of \cref{thm:gpd-binary-realiz:rect2-vaught} is the same as $(\@O(G) \otimes_{G_0} (\@O(G) \oast \@S(X))) \tensor[_{\tau\pi_1}]\otimes{_{\tau\pi_1}} (\@O(G) \otimes_{G_0} (\@O(G) \oast \@S(Y)))$ via \cref{eq:gpd-binary-swap}.
As in \cref{thm:grp-binary-realiz},
\cref{thm:gpd-binary-realiz:rect2-vaught} implies
\cref{thm:gpd-binary-realiz:rect-vaught} which implies each of
\cref{thm:gpd-binary-realiz:rect-vaught*},
\cref{thm:gpd-binary-realiz:vaught-vaught}
each of which implies
\cref{thm:gpd-binary-realiz:vaught-vaught*},
which implies
\cref{thm:gpd-binary-realiz:vaught2}
by the same proof as in \cref{thm:grp-binary-realiz}, with \cref{it:grp-binary-realiz:rect-baire} changed to
\begin{eqenum}
\item
If $M \subseteq G$ is $\sigma$-fiberwise meager, then $M \tensor[_\tau]\times{_\tau} G, G \tensor[_\tau]\times{_\tau} M \subseteq G^2_\tau$ are orbitwise meager for the diagonal action $G \curvearrowright G^2_\tau$, since $\alpha_{G^2_\tau}^{-1}(M \times G) = \mu^{-1}(M) \tensor[_\tau]\times{_\tau} G \subseteq G \tensor[_\sigma]\times{_\tau} G \tensor[_\tau]\times{_\tau} G$ is $\pi_{23}$-fiberwise homeomorphic via $(g,h,k) |-> (gh,h,k)$ to $M \tensor[_\sigma]\times{_\sigma} G \tensor[_\tau]\times{_\tau} G$.
\qedhere
\end{eqenum}
\end{proof}

\begin{remark}
There is also a fiberwise version of the above result, that we will not spell out, replacing the roles of $\@O(G^2_\tau), \@O(G)$ by $\@B$ or equivalently by $\@{BO}_\tau$ as in \cref{thm:gpd-fib-realiz}.
The versions with $\@{BO}_\tau$ follow from the above and Kunugui--Novikov, as in the proof of \cref{thm:gpd-fib-realiz}; the versions with $\@B$ then follow from the $\tau$-fiberwise Baire property (\cref{thm:fib-bov-baire}) and Pettis's theorem \cref{it:gpd-vaught-pettis} (which can also be proved for the ``action'' of $G^2_\tau$ in the same way).
\end{remark}

\begin{corollary}[characterization of ``potentially open'' relations; cf.\ \cref{thm:grp-binary-realiz-borel}]
\label{thm:gpd-binary-realiz-borel}
Let $G$ be an open quasi-Polish groupoid, $p : X -> G_0$ and $q : Y -> G_0$ be standard Borel $G$-spaces.
For any $R \in \@B(X \times_{G_0} Y)$, the following are equivalent:
\begin{enumerate}[label=(\roman*)]
\item \label{thm:gpd-binary-realiz-borel:open}
$R \in \@O(X) \otimes_{G_0} \@O(Y)$ for some compatible quasi-Polish topologies $\@O(X), \@O(Y)$ making $p, q, \alpha_X, \alpha_Y$ continuous.
\item \label{thm:gpd-binary-realiz-borel:vaught2}
$R \in \@O(G^2_\tau) \oast (\@B(X) \otimes \@B(Y)) = (\@O(G) \oast \@B(X)) \otimes_{G_0} (\@O(G) \oast \@B(Y)) = \@{BO}_G(X) \otimes_{G_0} \@{BO}_G(Y)$.
\item \label{thm:gpd-binary-realiz-borel:rect2-open}
$(\alpha_X \times \alpha_Y)^{-1}(R) \in \@O(G^2_\tau) \otimes_{G_0^2} (\@B(X) \otimes \@B(Y))$.
\item \label{thm:gpd-binary-realiz-borel:rect2-vaught}
$(\alpha_X \times \alpha_Y)^{-1}(R) \in \@O(G^2_\tau) \otimes_{G_0^2} (\@{BO}_G(X) \otimes \@{BO}_G(Y))$.
\item \label{thm:gpd-binary-realiz-borel:vaught-vaught*}
$R \in \@O(G) \oast (\@B(X) \otimes_{G_0} \@B(Y))$.
\item \label{thm:gpd-binary-realiz-borel:vaught-vaught}
$R \in \@O(G) \oast (\@{BO}_G(X) \otimes_{G_0} \@{BO}_G(Y))$.
\item \label{thm:gpd-binary-realiz-borel:rect-vaught*}
$(\alpha_{X \times_{G_0} Y})^{-1}(R) \in \@O(G) \otimes_{G_0} \@B(X) \otimes_{G_0} \@B(Y)$.
\item \label{thm:gpd-binary-realiz-borel:rect-vaught}
$(\alpha_{X \times_{G_0} Y})^{-1}(R) \in \@O(G) \otimes_{G_0} \@{BO}_G(X) \otimes_{G_0} \@{BO}_G(Y)$.
\end{enumerate}
Moreover, countably many $R$ obeying these conditions may be made simultaneously open as in \cref{thm:gpd-binary-realiz-borel:open}, while also simultaneously making open countably many other $A \in \@{BO}_G(X)$ and $B \in \@{BO}_G(Y)$.
\end{corollary}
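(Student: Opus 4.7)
The plan is to derive this corollary by specializing Theorem~\ref{thm:gpd-binary-realiz} to the case $\@S(X) := \@B(X)$ and $\@S(Y) := \@B(Y)$, in direct analogy to how Corollary~\ref{thm:grp-binary-realiz-borel} is derived from Theorem~\ref{thm:grp-binary-realiz}. First I would verify that the hypotheses of that theorem are satisfied: trivially $p^{-1}(\@O(G_0)) \subseteq \@B(X)$ and $\@O(G) \oast \@B(X) \subseteq \@B(X)$ (and similarly for $Y$), since everything in sight is already Borel. So all nine conditions of Theorem~\ref{thm:gpd-binary-realiz} hold for $R$ if and only if any one does, and countably many such $R$ can be made simultaneously open, along with any sets satisfying the hypothesis of \cref{thm:gpd-realiz} — which, with $\@S := \@B(X)$, is exactly orbitwise-openness by \cref{thm:gpd-realiz-borel}.

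It then remains to translate the nine abstract conditions of Theorem~\ref{thm:gpd-binary-realiz} into the cleaner forms stated here. For the conditions \cref{thm:gpd-binary-realiz-borel:vaught2,thm:gpd-binary-realiz-borel:rect2-vaught,thm:gpd-binary-realiz-borel:vaught-vaught,thm:gpd-binary-realiz-borel:rect-vaught}, I would invoke the identification $\@O(G) \oast \@B(X) = \@{BO}_G(X)$ from Corollary~\ref{thm:gpd-realiz-borel} (and similarly for $Y$), which rewrites each expression $\@O(G) \oast \@B(X)$ as the $\sigma$-topology of Borel orbitwise open sets. For \cref{thm:gpd-binary-realiz-borel:vaught-vaught*,thm:gpd-binary-realiz-borel:rect-vaught*}, I would use the Borel fiberwise Baire property from \cref{thm:fib-bov-baire} for the standard Borel-overt bundle $\pi_2 : G \times_{G_0} X \to X$ (equipped with the $\pi_2$-pullback of the $\sigma$-fiberwise topology on $G$), which gives $(\@O(G) \otimes_{G_0} \@B(X))^*_{\pi_2} = \@B(G \times_{G_0} X)$, and then $\exists^*_{\alpha_X}(\@B(G \times_{G_0} X)) = \@B(X)$ since $\exists^*_{\alpha_X}$ is a retraction of $\alpha_X^{-1}$ (hence surjective), and analogously for $Y$. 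This collapses the conditions to simply having $R$ or $\alpha^{-1}(R)$ be a countable union of Borel rectangles of the indicated shapes.

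For the remaining conditions \cref{thm:gpd-binary-realiz-borel:rect2-open}, the simplification uses $\@O(G) \otimes_{G_0^2}$ with Borel sets in the $X$- and $Y$-factors, which is inherited directly from Theorem~\ref{thm:gpd-binary-realiz}\cref{thm:gpd-binary-realiz:rect2-open}. The moreover clause — that countably many such $R$ may be made open simultaneously along with countably many $A \in \@{BO}_G(X)$ and $B \in \@{BO}_G(Y)$ — transfers verbatim from the moreover clause of Theorem~\ref{thm:gpd-binary-realiz} combined with the orbitwise open characterization of \cref{thm:gpd-realiz-borel}. I do not anticipate a substantive obstacle here: the corollary is a pure bookkeeping exercise on top of Theorem~\ref{thm:gpd-binary-realiz}, and the only mildly nontrivial step is the correct application of fiberwise Baire category to collapse the $(\@O(G) \otimes_{G_0} \@B(X))^*_{\pi_2}$ notation, which is exactly what Corollary~\ref{thm:fib-bov-baire} was set up to handle.
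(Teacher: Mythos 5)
Your proposal is correct and is essentially identical to the paper's own proof: the paper derives this corollary from \cref{thm:gpd-binary-realiz} by setting $\@S(X) := \@B(X)$, $\@S(Y) := \@B(Y)$, using $\@O(G) \oast \@B(X) = \@{BO}_G(X)$ from \cref{thm:gpd-realiz-borel} and the fiberwise Baire property (\cref{thm:fib-bov-baire}) to get $(\@O(G) \otimes_{G_0} \@B(X))^*_{\pi_2} = \@B(G \times_{G_0} X)$, exactly as you describe. The identification of the sets admitted in the ``moreover'' clause with the Borel orbitwise open sets via \cref{thm:gpd-realiz-borel} is also as in the paper.
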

\begin{proof}
Same as \cref{thm:grp-binary-realiz-borel}.
\end{proof}


The generalization to $n$-ary relations is straightforward; as before, we only state the main parts:

\begin{corollary}[cf.\ \cref{thm:grp-nary-realiz-borel}]
\label{thm:gpd-nary-realiz-borel}
Let $G$ be an open quasi-Polish groupoid, $p_i : X_i -> G_0$ be countably many standard Borel $G$-spaces, and $R_k \subseteq X_{i_{k,1}} \times_{G_0} \dotsb \times_{G_0} X_{i_{k,n_k}}$ be countably many Borel fiberwise relations of arities $n_k \in \#N$.
Then there are compatible quasi-Polish topologies on each $X_i$ making $p_i, \alpha_i$ continuous and each $R_k$ open, iff each $R_k \in \@O(G) \oast (\@B(X_{i_{k,1}}) \otimes_{G_0} \dotsb \otimes_{G_0} \@B(X_{i_{k,n_k}}))$.
In particular, this can be done if $R_k$ is $G$-invariant and a countable union of Borel rectangles.
\qed
\end{corollary}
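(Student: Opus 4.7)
The statement is the $n$-ary generalization of Corollary~\ref{thm:gpd-binary-realiz-borel}, and my plan is to reduce everything to that binary case together with Corollary~\ref{thm:gpd-realiz-borel}, by first lifting the key distributivity identity to arbitrary arity.

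First I would establish the $n$-ary analogue of the main identity from \cref{thm:gpd-binary-realiz}, namely
\begin{equation*}
(U_1 \tensor[_\tau]\times{_\tau} \dotsb \tensor[_\tau]\times{_\tau} U_n) * (A_1 \times_{G_0} \dotsb \times_{G_0} A_n) = (U_1 * A_1) \times_{G_0} \dotsb \times_{G_0} (U_n * A_n)
\end{equation*}
for $U_j \in \@B(G)$ and $A_j \in \@B(X_{i_{k,j}})$, where the left-hand $*$ refers to the coordinatewise ``action'' of $G^n_\tau := G \tensor[_\tau]\times{_\tau} \dotsb \tensor[_\tau]\times{_\tau} G$. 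This follows by induction on $n$ from the binary case computed in the proof of \cref{thm:gpd-binary-realiz}, via iterated applications of Beck--Chevalley \cref{it:gpd-vaught-bc} and Kuratowski--Ulam \cref{thm:kuratowski-ulam} (using \cref{thm:gpd-assoc-fibtop} to ensure fiberwise topologies on the various fiber products agree). As a consequence, and combined with \cref{thm:gpd-realiz-borel}, we obtain the chain of equalities
\begin{equation*}
\@O(G) \oast (\@B(X_1) \otimes_{G_0} \dotsb \otimes_{G_0} \@B(X_n))
= (\@O(G) \oast \@B(X_1)) \otimes_{G_0} \dotsb \otimes_{G_0} (\@O(G) \oast \@B(X_n))
= \@{BO}_G(X_1) \otimes_{G_0} \dotsb \otimes_{G_0} \@{BO}_G(X_n),
\end{equation*}
generalizing the $n=2$ version in \cref{thm:gpd-binary-realiz-borel}\cref{thm:gpd-binary-realiz-borel:vaught2}.

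For the forward direction, suppose compatible quasi-Polish topologies on each $X_i$ exist making $p_i, \alpha_i$ continuous and each $R_k$ open in the fiber product topology. Then $R_k \in \@O(X_{i_{k,1}}) \otimes_{G_0} \dotsb \otimes_{G_0} \@O(X_{i_{k,n_k}})$; and by \cref{thm:gpd-cts}(a) each $\@O(X_i) \subseteq \@O(G) \oast \@B(X_i) = \@{BO}_G(X_i)$, so the displayed chain of equalities places $R_k$ in $\@O(G) \oast (\@B(X_{i_{k,1}}) \otimes_{G_0} \dotsb \otimes_{G_0} \@B(X_{i_{k,n_k}}))$. For the backward direction, if $R_k$ has this form then by the same chain it can be written as a countable union $R_k = \bigcup_l (B_{k,l,1} \times_{G_0} \dotsb \times_{G_0} B_{k,l,n_k})$ of fiber products of Borel orbitwise open sets $B_{k,l,j} \in \@{BO}_G(X_{i_{k,l,j}})$. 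Then by \cref{thm:gpd-realiz-borel}, applied simultaneously to the countable collection of all the $B_{k,l,j}$ across all $k, l, j$ (grouped by the space $X_i$ they live in), we may find compatible quasi-Polish topologies on each $X_i$ making $p_i, \alpha_i$ continuous and every $B_{k,l,j}$ open, so that each $R_k$ becomes open in the fiber product topology.

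Finally, for the $G$-invariant, countable-union-of-Borel-rectangles case, suppose $R_k = \bigcup_l (A_{k,l,1} \times_{G_0} \dotsb \times_{G_0} A_{k,l,n_k})$ is $G$-invariant. Applying \cref{it:gpd-vaught-invar} with $U = G$ (noting $p(R_k) \subseteq G_0 = \tau(G)$) and distributing $*$ over unions \cref{it:gpd-vaught-union} yields $R_k = G * R_k = \bigcup_l G * (A_{k,l,1} \times_{G_0} \dotsb \times_{G_0} A_{k,l,n_k})$, exhibiting $R_k \in \@O(G) \oast (\@B(X_{i_{k,1}}) \otimes_{G_0} \dotsb \otimes_{G_0} \@B(X_{i_{k,n_k}}))$ as required. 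The only genuinely new work is the $n$-ary distributivity identity; once that is in hand, everything reduces to the unary \cref{thm:gpd-realiz-borel}, and no further interaction with the groupoid fiber-product combinatorics is needed beyond bookkeeping.
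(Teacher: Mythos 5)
There is a genuine gap in your second step, and it is located exactly where the real work of the theorem lives. The $n$-ary distributivity identity you propose to prove concerns the coordinatewise ``action'' of $G^n_\tau := G \tensor[_\tau]\times{_\tau} \dotsb \tensor[_\tau]\times{_\tau} G$, and hence yields
\begin{equation*}
\@O(G^n_\tau) \oast (\@B(X_1) \otimes_{G_0} \dotsb \otimes_{G_0} \@B(X_n))
= (\@O(G) \oast \@B(X_1)) \otimes_{G_0} \dotsb \otimes_{G_0} (\@O(G) \oast \@B(X_n)).
\end{equation*}
But the left-hand side of your displayed chain is $\@O(G) \oast (\@B(X_1) \otimes_{G_0} \dotsb \otimes_{G_0} \@B(X_n))$ taken with respect to the \emph{diagonal} action of $G$ on the fiber product, which is what the corollary's condition refers to (compare \cref{thm:grp-nary-realiz-borel}, where $R_k$ is a union of sets $U * (A_1 \times \dotsb \times A_{n_k})$ with a single $U \subseteq G$); note that the $n=2$ instance you cite, \cref{thm:gpd-binary-realiz-borel}\cref{thm:gpd-binary-realiz-borel:vaught2}, begins with $\@O(G^2_\tau) \oast$, not $\@O(G) \oast$. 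Identifying the diagonal-action transform with the coordinatewise one is precisely the equivalence \cref{thm:gpd-binary-realiz-borel:vaught-vaught*}$\iff$\cref{thm:gpd-binary-realiz-borel:vaught2}, which the remark following \cref{thm:grp-binary-realiz-borel} singles out as ``the most substantial implication'': its proof requires the fiberwise Baire property, Pettis's theorem, the orbitwise meagerness of $M \tensor[_\tau]\times{_\tau} G$ for the diagonal action on $G^2_\tau$, and two applications of Kuratowski--Ulam, none of which appear in your argument. The distributivity identity is, by contrast, the easy preliminary computation at the start of the proof of \cref{thm:gpd-binary-realiz}, so your closing claim that ``the only genuinely new work is the $n$-ary distributivity identity'' inverts the actual difficulty.

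The gap also cannot be closed by quoting the binary \cref{thm:gpd-binary-realiz-borel} as a black box with $X := X_1$ and $Y := X_2 \times_{G_0} \dotsb \times_{G_0} X_n$ and inducting: that only yields $R_k = \bigcup_i (A_i \times_{G_0} S_i)$ with $S_i \in \@{BO}_G(Y)$, and there is no reason for such $S_i$ to again be a countable union of rectangles in the remaining factors. The intended argument---which is why the paper records this as a corollary with no written proof---is to rerun the entire proof of \cref{thm:gpd-binary-realiz} with $n$ factors in place of two, including the final computation showing $W * (\exists^*_{\alpha_{X_1}}(D_1) \times_{G_0} \dotsb \times_{G_0} \exists^*_{\alpha_{X_n}}(D_n)) \in \@O(G^n_\tau) \oast (\@B(X_1) \otimes_{G_0} \dotsb \otimes_{G_0} \@B(X_n))$. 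Your forward direction and your treatment of the invariant case are fine once the chain of equalities is actually established, but as written the chain itself is unsupported in the one place that matters.
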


The following generalizes \cref{thm:gpd-realiz-dis}:

\begin{corollary}[change of topology for relations; cf.\ \cref{thm:grp-nary-realiz-dis}]
\label{thm:gpd-nary-realiz-dis}
Let $G$ be an open quasi-Polish groupoid, $p_i : X_i -> G_0$ be countably many quasi-Polish $G$-spaces, and $R_k \subseteq X_{i_{k,1}} \times_{G_0} \dotsb \times_{G_0} X_{i_{k,n_k}}$ be countably many fiberwise relations of arities $n_k \in \#N$, such that each $R_k \in \@O(G) \oast (\*\Sigma^0_\xi(X_{i_{k,1}}) \otimes_{G_0} \dotsb \otimes_{G_0} \*\Sigma^0_\xi(X_{i_{k,n_k}}))$, i.e., $R_k$ can be written as a countable union of sets $U * (A_1 \times_{G_0} \dotsb \times_{G_0} A_{n_k})$ where $U \in \@B(G)$ and $A_j \in \*\Sigma^0_\xi(X_{i_{k,j}})$.
Then there are finer quasi-Polish topologies on each $X_i$ contained in $\*\Sigma^0_\xi(X_i)$ for which $p_i, \alpha_i$ are still continuous, such that each $R_k$ becomes open.
In particular, this can be done if $R_k$ is $G$-invariant and a countable union of $\*\Sigma^0_\xi$ rectangles.
\end{corollary}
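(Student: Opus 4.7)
The plan is to reduce to the binary case (the $n$-ary case being notationally heavier but requiring no new ideas) and apply Theorem~\ref{thm:gpd-binary-realiz}, specifically the implication \ref{thm:gpd-binary-realiz:vaught-vaught*}$\implies$\ref{thm:gpd-binary-realiz:open}, exactly as in the group case (Corollary~\ref{thm:grp-nary-realiz-dis}). I would take the compatible $\sigma$-topologies $\@S(X_i) := \*\Sigma^0_\xi(X_i)$; these are compatible by \cref{ex:qpol-compat-borel}, they contain $p_i^{-1}(\@O(G_0)) \subseteq \@O(X_i)$ by continuity of $p_i$, and they satisfy $\@O(G) \oast \@S(X_i) \subseteq \@S(X_i)$ by \cref{it:gpd-vaught-borel}.

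The main step is to verify that the hypothesis on $R_k$ implies the corresponding hypothesis of the $n$-ary analog of \cref{thm:gpd-binary-realiz}\ref{thm:gpd-binary-realiz:vaught-vaught*}. Given a factor $A \in \*\Sigma^0_\xi(X_i)$, I would note that the pullback $\alpha_{X_i}^{-1}(A) \subseteq G \times_{G_0} X_i$ lies in $\*\Sigma^0_\xi(G \times_{G_0} X_i)$ (since $\alpha_{X_i}$ is Borel and continuous in the relevant senses), and then invoke the fiberwise Baire property of \cref{thm:fib-baire-borel} applied to the standard Borel-overt bundle $\pi_2 : G \times_{G_0} X_i \to X_i$ with the $\alpha_{X_i}$-fiberwise topology (recall \cref{rmk:gpd-action-open}), giving $\alpha_{X_i}^{-1}(A) \in (\@O(G) \otimes_{G_0} \*\Sigma^0_\xi(X_i))^*_{\pi_2}$. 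Since $\alpha_{X_i}$ is $\pi_2$-fiberwise surjective, $A = \exists^*_{\alpha_{X_i}}(\alpha_{X_i}^{-1}(A)) \in \exists^*_{\alpha_{X_i}}((\@O(G) \otimes_{G_0} \*\Sigma^0_\xi(X_i))^*_{\pi_2})$. Applying this on each factor and then wrapping with the outer $\@O(G) \oast (-)$ (using that the hypothesis gives $R_k \in \@O(G) \oast (\*\Sigma^0_\xi(X_{i_{k,1}}) \otimes_{G_0} \dotsb \otimes_{G_0} \*\Sigma^0_\xi(X_{i_{k,n_k}}))$) puts $R_k$ in the form demanded by \ref{thm:gpd-binary-realiz:vaught-vaught*}. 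Theorem~\ref{thm:gpd-binary-realiz} then produces the desired topologies, with the moreover clause ensuring that countably many $R_k$ can be handled simultaneously.

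For the final ``in particular'' claim, suppose $R_k$ is $G$-invariant and a countable union $\bigcup_j (A_{j,1} \times_{G_0} \dotsb \times_{G_0} A_{j,n_k})$ of $\*\Sigma^0_\xi$ rectangles. Then $R_k$ already lies in $\*\Sigma^0_\xi(X_{i_{k,1}}) \otimes_{G_0} \dotsb \otimes_{G_0} \*\Sigma^0_\xi(X_{i_{k,n_k}})$, and by \cref{it:gpd-vaught-invar} applied to the diagonal action of $G$ on the fiber product (taking $U := G$, which is $\tau$-fiberwise open and covers the image of $R_k$ under the projection to $G_0$), we have $R_k = G * R_k$, placing it in $\@O(G) \oast (\*\Sigma^0_\xi(X_{i_{k,1}}) \otimes_{G_0} \dotsb \otimes_{G_0} \*\Sigma^0_\xi(X_{i_{k,n_k}}))$ as required.

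I do not expect any genuine obstacle: every ingredient is already in place in the preceding subsections, and the proof is essentially a transcription of \cref{thm:grp-nary-realiz-dis} using the groupoid versions of Vaught transforms, the fiberwise Baire property, and the topological realization of relations. The only mildly delicate point is keeping the fiber-product subscripts on $\otimes_{G_0}$ and $\times_{G_0}$ correct when tracing through the chain $\*\Sigma^0_\xi \subseteq (\@O(G) \otimes_{G_0} \*\Sigma^0_\xi)^*_{\pi_2}$ via the fiberwise Baire property in the groupoid setting, but this is handled uniformly by the bundle formalism of \cref{sec:fib-bor}.
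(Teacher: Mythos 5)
Your proposal is correct and follows essentially the same route as the paper, which simply says ``Same as \cref{thm:grp-nary-realiz-dis}'': take $\@S(X_i) := \*\Sigma^0_\xi(X_i)$, use the fiberwise Baire property (\cref{thm:fib-baire-borel}) to get $\alpha_{X_i}^{-1}(A) \in (\@O(G) \otimes_{G_0} \*\Sigma^0_\xi(X_i))^*_{\pi_2}$ and hence $A \in \exists^*_{\alpha_{X_i}}((\@O(G) \otimes_{G_0} \*\Sigma^0_\xi(X_i))^*_{\pi_2})$, and apply \cref{thm:gpd-binary-realiz}\cref{thm:gpd-binary-realiz:vaught-vaught*}$\implies$\cref{thm:gpd-binary-realiz:open}. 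The only cosmetic quibble is that ``$A = \exists^*_{\alpha_{X_i}}(\alpha_{X_i}^{-1}(A))$'' is justified by the $\alpha$-fibers being nonmeager, i.e., \cref{it:fib-baire-surj}, rather than ``$\pi_2$-fiberwise surjectivity.''
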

\begin{proof}
Same as \cref{thm:grp-nary-realiz-dis}.
\end{proof}

Next, we consider applications to ``topological realizations of bundles of topological structures'', i.e., where we start with $G$-spaces $p_i : X_i -> G_0$ with each fiber equipped with a topology as well as some relations.
Recall that with just a fiberwise topology, this is addressed by \cref{thm:gpd-top-realiz}.

\begin{corollary}[cf.\ \cref{rmk:grp-top-nary-realiz}]
\label{thm:gpd-top-nary-realiz}
Let $G$ be an open quasi-Polish groupoid, $p_i : X_i -> G_0$ be countably many standard Borel $G$-bundles of quasi-Polish spaces, $R_k \subseteq X_{i_{k,1}} \times_{G_0} \dotsb \times_{G_0} X_{i_{k,n_k}}$ be countably many Borel fiberwise open relations of positive arities $n_k > 0$.
Then there are compatible (global) quasi-Polish topologies on each $X_i$ making $p_i, \alpha_i$ continuous and restricting to the original $p_i$-fiberwise topology, such that each set in $\@O(G) * R_k$ becomes open.
Hence if $R_k$ is $G$-invariant, then $R_k$ itself can be made open; in other words, a ``standard Borel $G$-bundle of quasi-Polish open relational structures'' can be realized as a quasi-Polish $G$-space with globally open relations.
\end{corollary}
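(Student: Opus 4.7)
The plan is to apply the $n$-ary analog of \cref{thm:gpd-binary-realiz} with $\@S(X_i) := \@{BO}_{p_i}(X_i)$, interleaved with the fiberwise-basis inclusion from the proof of \cref{thm:gpd-top-realiz}. By that proof, each $\@{BO}_{p_i}(X_i)$ is a compatible $\sigma$-topology containing $p_i^{-1}(\@O(G_0))$ and closed under $\@O(G) \oast$, hence a valid ambient $\sigma$-topology for the $n$-ary analog of \cref{thm:gpd-binary-realiz}.

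First I would verify that each $R_k$ satisfies the $n$-ary analog of condition~(iii) of \cref{thm:gpd-binary-realiz}. Since $R_k$ is Borel and fiberwise open in $X_{i_{k,1}} \times_{G_0} \dotsb \times_{G_0} X_{i_{k,n_k}}$, Kunugui--Novikov uniformization (\cref{thm:kunugui-novikov}), applied to the fiberwise basis of the fiber product obtained as $\times_{G_0}$-products of Borel fiberwise open bases for the factors, produces a decomposition $R_k = \bigcup_j (A_j^1 \times_{G_0} \dotsb \times_{G_0} A_j^{n_k})$ with $A_j^l \in \@{BO}_{p_{i_{k,l}}}(X_{i_{k,l}})$. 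Applying \eqref{eq:gpd-top-realiz} factor-wise to each $\alpha_{i_{k,l}}^{-1}(A_j^l)$ then places $(\alpha_{i_{k,1}} \times \dotsb \times \alpha_{i_{k,n_k}})^{-1}(R_k)$ in $\@B(G^{n_k}_\tau) \otimes_{G_0^{n_k}} (\@{BO}_{p_{i_{k,1}}}(X_{i_{k,1}}) \otimes \dotsb \otimes \@{BO}_{p_{i_{k,n_k}}}(X_{i_{k,n_k}}))$, giving the Borel-$G^n_\tau$ form of the required condition; this is equivalent to the open-$\@O(G^n_\tau)$ form via the analog of Pettis's theorem for ``actions'' of $G^n_\tau$, as noted in the remark after \cref{thm:gpd-binary-realiz}.

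Next I would run the iterative tower construction underlying both \cref{thm:gpd-binary-realiz} and \cref{thm:gpd-top-realiz}, building $\@T_0 \subseteq \@T_1 \subseteq \dotsb \subseteq \@{BO}_{p_i}(X_i)$ by adjoining at each stage (a) a countable Borel fiberwise open basis for $X_i$ (realized $p_i$-fiberwise open via the argument in \cref{thm:gpd-top-realiz} using \eqref{eq:gpd-top-realiz}), (b) the rectangle factors $A_j^l$ from the Kunugui--Novikov decompositions, and (c) all Vaught transforms $\@O(G) \oast (\cdot)$ of previously added sets (staying inside $\@{BO}_{p_i}(X_i)$ by closure under $\@O(G) \oast$). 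The join $\@O(X_i) \subseteq \@{BO}_{p_i}(X_i)$ is a compatible quasi-Polish topology making $p_i, \alpha_i$ continuous, restricting to the original fiberwise topology on each $p_i$-fiber, and making each $R_k$ globally open in the fiber product topology. Then \cref{thm:gpd-cts}(a), applied to the continuous diagonal $G$-action on $X_{i_{k,1}} \times_{G_0} \dotsb$, automatically gives $\@O(G) \oast \@O(X_{i_{k,1}} \times_{G_0} \dotsb) \subseteq \@O(X_{i_{k,1}} \times_{G_0} \dotsb)$, so each $U * R_k$ is open; the ``Hence'' conclusion follows from \eqref{it:gpd-vaught-invar}, which gives $G * R_k = R_k$ for $G$-invariant $R_k$. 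The main obstacle will be the simultaneous interleaving of the global opening of each $R_k$ with the fiberwise opening of a basis for each $X_i$, but this is a routine bookkeeping extension of both theorems' proofs, since both operate via tower constructions within the same compatible $\sigma$-topology $\@{BO}_{p_i}(X_i)$.
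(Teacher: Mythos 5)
There is a genuine gap, and it sits exactly at the point where the groupoid setting differs from the group setting. You verify the Borel form of \cref{thm:gpd-binary-realiz}\cref{thm:gpd-binary-realiz:rect2-open}, i.e.\ membership in $\@B(G^{n}_\tau) \otimes_{G_0^{n}} (\@{BO}_{p_{i_{k,1}}}(X_{i_{k,1}}) \otimes \dotsb)$, and then assert that this is ``equivalent to the open-$\@O(G^{n}_\tau)$ form via the analog of Pettis's theorem''. But Pettis's theorem for groupoid actions \cref{it:gpd-vaught-pettis} only lets you replace a Borel $U \subseteq G$ by a \emph{$\tau$-fiberwise} open set $=^*_\tau$ to it, i.e.\ by something in $\@{BO}_\tau(G)$ of the form $\bigcup_j (\tau^{-1}(B_j) \cap V_j)$ with $B_j \in \@B(G_0)$ merely Borel; it does not produce a genuinely open subset of $G$. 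This is precisely the distinction between \cref{thm:gpd-realiz} and \cref{thm:gpd-fib-realiz}: the $\@B$/$\@{BO}_\tau$ conditions characterize ``potentially \emph{fiberwise} open'', not ``potentially open''. So your argument only shows each $R_k$ can be made $p$-fiberwise open. Worse, your intermediate claim that $R_k$ itself becomes \emph{globally} open is false in general: already for $n_k = 1$, take $G$ the pair groupoid on a quasi-Polish $Z$ and $X = G_0 = Z$ with the trivial action (a bundle of singletons); any Borel non-open $A \subseteq Z$ is fiberwise open but not orbitwise open, hence by \cref{thm:gpd-realiz-borel} cannot be made open in any compatible realization. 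This is exactly why the corollary only asserts that the sets in $\@O(G) * R_k$ become open.

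The correct route (the paper's) targets condition \cref{thm:gpd-binary-realiz:vaught-vaught*} for the sets $U * R_k$ with $U \in \@O(G)$, rather than condition \cref{thm:gpd-binary-realiz:rect2-open} for $R_k$ itself. Your Kunugui--Novikov decomposition $R_k \in \@{BO}_{p_{i_{k,1}}}(X_{i_{k,1}}) \otimes_{G_0} \dotsb$ is the right first step; the missing ingredient is the inclusion $\@{BO}_p(X) \subseteq \exists^*_{\alpha_X}((\@O(G) \otimes_{G_0} \@{BO}_p(X))^*_{\pi_2})$. Its proof starts from $\alpha_X^{-1}(A) = \bigcup_i (U_i \times_{G_0} A_i)$ with $U_i \in \@B(G)$ as in \cref{eq:gpd-top-realiz}, applies the $\sigma$-fiberwise Baire property to write $U_i =^*_\sigma \bigcup_j (\sigma^{-1}(B_{ij}) \cap V_{ij})$, and then moves the Borel coefficient across the fiber product, $(\sigma^{-1}(B_{ij}) \cap V_{ij}) \times_{G_0} A_i = V_{ij} \times_{G_0} (p^{-1}(B_{ij}) \cap A_i)$, so that it is absorbed into the $\@{BO}_p(X)$-factor and the $G$-factor becomes genuinely open. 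This absorption step is what replaces your (unavailable) Borel-to-open upgrade over $G^{n}_\tau$; the rest of your outline (the choice $\@S(X_i) := \@{BO}_{p_i}(X_i)$, the interleaving with the fiberwise bases from \cref{thm:gpd-top-realiz}, and \cref{it:gpd-vaught-invar} for the invariant case) does match the paper.
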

\begin{proof}
For simplicity of notation we only consider two bundles $p : X -> G_0$ and $q : Y -> G_0$ with a $(p \times_{G_0} q)$-fiberwise open binary relation $R \subseteq X \times_{G_0} Y$.
By \cref{thm:gpd-top-realiz}, we can find topological realizations of $X, Y$ compatible with the fiberwise topologies.
Now we apply \cref{thm:gpd-binary-realiz} with $\@S(X) := \@{BO}_p(X)$ and $\@S(Y) := \@{BO}_q(Y)$ (which are closed under $\@O(G) * (-)$ by the proof of \cref{thm:gpd-top-realiz}) to make each set in $\@O(G) * R$ open, while also making open countably many basic open sets in these prior topological realizations, to ensure that the new realizations restrict to the original fiberwise topologies.
To see that we can apply \cref{thm:gpd-binary-realiz} to $\@O(G) * R$, note that $R \in \@{BO}_p(X) \otimes_{G_0} \@{BO}_q(Y)$ by Kunugui--Novikov.
We now claim that
\begin{equation}
\@{BO}_p(X) \subseteq \exists^*_{\alpha_X}((\@O(G) \otimes_{G_0} \@{BO}_p(X))^*_{\pi_2}),
\end{equation}
and similarly for $Y$, whence every set in $\@O(G) * R$ satisfies \cref{thm:gpd-binary-realiz}\cref{thm:gpd-binary-realiz:vaught-vaught*}.
Indeed, for $A \in \@{BO}_p(X)$,
\begin{align*}
\alpha_X^{-1}(A)
= \bigcup_i (U_i \times_{G_0} A_i)
\in \@B(G) \otimes_{G_0} \@{BO}_p(X)
\end{align*}
for $U_i \in \@B(G)$ and $A_i \in \@{BO}_p(X)$ by \cref{eq:gpd-top-realiz}; letting
\begin{align*}
U_i =^*_\sigma \bigcup_j (\sigma^{-1}(B_{ij}) \cap V_{ij})
\end{align*}
for $B_{ij} \in \@B(G_0)$ and $V_{ij} \in \@O(G)$ by the fiberwise Baire  property (\cref{thm:fib-baire-borel}),
\begin{align*}
\alpha_X^{-1}(A)
&=^*_{\pi_2} \bigcup_{i,j} ((\sigma^{-1}(B_{ij}) \cap V_{ij}) \times_{G_0} A_i) \\
&= \bigcup_{i,j} (V_{ij} \times_{G_0} (p^{-1}(B_{ij}) \cap A_i))
\in \@O(G) \otimes_{G_0} \@{BO}_p(X),
\end{align*}
whence $A = \exists^*_{\alpha_X}(\alpha_X^{-1}(A)) \in \exists^*_{\alpha_X}((\@O(G) \otimes_{G_0} \@{BO}_p(X))^*_{\pi_2})$.
\end{proof}

\begin{remark}
\label{rmk:gpd-bov-0ary-realiz}
If some of the bundles in the preceding result are Borel-overt, then applying \cref{thm:gpd-bov-realiz} afterwards, we may pass to a finer groupoid topology $\~G$ so that those bundles become open (while others remain continuous).

If we are willing to refine the topology of $G$ to $\~G$, then we can also handle a nullary relation $R$, which just means $R \subseteq G_0$ (a ``bundle of truth values''), by adding $\@O(G) * R$ to $\@O(\~G_0)$.

(The proof above fails for nullary $R$ in the step ``note that $R \in \@{BO}_p(X) \otimes \@{BO}_q(Y)$ by Kunugui--Novikov'', whose nullary analog would say that $R$ is a countable union of $G_0$ (the nullary $\times_{G_0}$), i.e., $R \in \{\emptyset, G_0\}$; the binary case implicitly absorbed a $(p \times_{G_0} q)^{-1}(B)$ into one of the factors.)
\end{remark}

One special instance of an open relation in a topological space is the equality relation ${=_X} \subseteq X^2$, which is open iff every point is isolated, i.e., $X$ is discrete.
The analogous fiberwise condition says that $X$ is ``uniformly fiberwise discrete'', or a ``bundle of \emph{sets}'', rather than \emph{spaces}:

\begin{definition}
\label{def:etale}
A continuous map $p : X -> Z$ between topological spaces is \defn{isolated} if the diagonal in $X \times_Z X$ is open, and \defn{étale} if $p$ is both open and isolated.

Equivalently, $p$ is isolated iff $X$ has an open cover of sets on which $p$ is injective, and étale iff it is a local homeomorphism, i.e., $X$ has an open cover of sets on which $p$ is an open embedding.

For a general reference on étale maps, see \cite[\S2.3]{Tsh} or \cite[\S4.1]{Cgpd}.

By \cref{it:qpol-loc}, an étale space $X -> Z$ over a quasi-Polish $Z$ is quasi-Polish iff it is second-countable.

A \defn{second-countable étale bundle of structures} over a quasi-Polish space $Z$ consists of countably many second-countable étale bundles $p_i : X_i -> Z$, countably many continuous maps over $Z$ between them of various arities $f_j : X_{i_{j,1}} \times_Z \dotsb \times_Z X_{i_{j,m_j}} -> X_{i_j}$, and countably many open relations between them of various arities $R_k \subseteq X_{i_{k,1}} \times_Z \dotsb \times_Z X_{i_{k,n_k}}$; see \cite[\S4.3]{Cgpd}.

A \defn{standard Borel bundle of countable structures} is defined the same way, except the $X_i$ are merely standard Borel spaces, the $p_i$ are countable-to-1 Borel maps, the $f_j$ are Borel maps over $Z$, and the $R_k$ are Borel relations.

For a quasi-Polish groupoid $G$, a \defn{$G$-bundle of structures} is a bundle of structures over $G_0$ together with a (continuous, resp., Borel) action of $G$ via isomorphisms between fibers.
\end{definition}

The following result was proved for open Polish $G$, and for étale spaces only (without functions or relations), in \cite[1.5]{Cscc} using \emph{ad hoc} methods.
We give here a simple proof as a direct application of \cref{thm:gpd-top-nary-realiz}.

\begin{corollary}
\label{thm:gpd-etale-realiz}
Every standard Borel $G$-bundle of countable structures over an open quasi-Polish groupoid $G$ has a topological realization as a second-countable étale $\~G$-bundle of structures over $G$ with a finer open quasi-Polish groupoid topology $\~G$.
If there are only relations of positive arities, then we can also find a realization over the original $G$ as an isolated (not necessarily étale) bundle.
\end{corollary}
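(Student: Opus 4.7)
The plan is to encode the given Borel bundle of countable structures as a standard Borel $G$-bundle of fiberwise \emph{discrete} quasi-Polish spaces carrying only Borel fiberwise open relations, and then invoke the machinery of the preceding subsection. First, by Lusin--Novikov, each countable-to-1 Borel map $p_i : X_i \to G_0$ admits a Borel partition $X_i = \bigsqcup_n A_{i,n}$ into Borel sections. Declaring $\@U_i := \{A_{i,n}\}_n$ to be a Borel fiberwise open basis equips $X_i$ with the fiberwise discrete topology (each $A_{i,n}$ meets each fiber in at most one point), making it a standard Borel $G$-bundle of discrete, hence quasi-Polish, spaces in the sense of \cref{def:gpd-action}; the $G$-action is fiberwise discrete-homeomorphic since any bijection of countable discrete spaces is a homeomorphism. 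By Lusin--Suslin, each $p_i(A_{i,n}) \subseteq G_0$ is Borel, so $X_i$ is moreover Borel-overt (\cref{thm:fib-bov-qpol}). I would then replace every Borel function $f_j$ by its graph $\Gamma_{f_j} \subseteq X_{i_{j,1}} \times_{G_0} \cdots \times_{G_0} X_{i_{j,m_j}} \times_{G_0} X_{i_j}$, which is a Borel relation of positive arity $m_j + 1$; since a product of discrete fiberwise topologies is again discrete fiberwise, $\Gamma_{f_j}$ and every positive-arity $R_k$ are automatically Borel fiberwise open.

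Applying \cref{thm:gpd-top-nary-realiz} to the data $((X_i)_i, (\Gamma_{f_j})_j, (R_k)_k)$ yields compatible global quasi-Polish topologies on each $X_i$ making $p_i, \alpha_i$ continuous, restricting to the original fiberwise discrete topology (so the diagonal of $X_i \times_{G_0} X_i$ is open, i.e., $p_i$ is isolated), and making each $\Gamma_{f_j}$ and $R_k$ globally open. This already settles the second claim: when all structure is given by positive-arity relations, we obtain continuous isolated $p_i$ over the original $G$ with every $R_k$ globally open.

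For the first (étale) claim, I would next invoke \cref{thm:gpd-bov-realiz} simultaneously for the countably many Borel-overt bundles $X_i$: by adjoining countably many sets $p_i(U)$, where $U$ ranges over countable bases of each $\@O(X_i)$, to a compatible finer quasi-Polish topology on $G_0$ exactly as in the proof of that corollary, we obtain an open quasi-Polish groupoid $\~G := G \ltimes \~G_0$ refining $G$ such that each $p_i : X_i \to \~G_0$ becomes continuous and open, hence étale (fibers stay discrete). Any nullary relations $R_k \subseteq G_0$ may be absorbed into $\@O(\~G_0)$ as in \cref{rmk:gpd-bov-0ary-realiz}. Since each codomain $X_{i_j}$ is now open over $\~G_0$, the fiber-product projection away from the last factor is open, so openness of $\Gamma_{f_j}$ translates into continuity of $f_j$ by a routine $f_j^{-1}(V) = \pi(\Gamma_{f_j} \cap \pi_{\mathrm{last}}^{-1}(V))$ argument. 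The main step is the initial reduction to the fiberwise discrete setting via Lusin--Novikov; once that is in place, everything is a direct application of the topological realization theorems from the preceding subsection applied to countably many bundles at once, with the only subtlety being the verification that open graphs yield continuous functions, which uses openness of the codomain bundles over $\~G_0$.
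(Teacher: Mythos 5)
Your overall route is the same as the paper's (fiberwise discrete topology via Lusin--Novikov, Borel-overtness, functions replaced by their graphs, then \cref{thm:gpd-top-nary-realiz} plus \cref{thm:gpd-bov-realiz} and \cref{rmk:gpd-bov-0ary-realiz}), but there is one genuine gap: you claim that because the realized global topology on $X_i$ \emph{restricts} to the fiberwise discrete topology, the diagonal of $X_i \times_{G_0} X_i$ is open, i.e., $p_i$ is isolated. That inference is false. Having each singleton open in the subspace topology of each fiber $p_i^{-1}(y)$ is a pointwise condition on fibers, whereas openness of the diagonal in the fiber product is a uniform condition across nearby fibers. Concretely, take $Z = [0,1]$ and $X = \{(z,0) : z \in [0,1]\} \cup \{(z,z) : z \in [0,1]\} \subseteq [0,1] \times \#R$ with $p = \pi_1$: this is a Polish bundle whose fibers are discrete (one or two points), yet the diagonal of $X \times_Z X$ is not open at $((0,0),(0,0))$, since every neighborhood contains off-diagonal pairs $((z,0),(z,z))$ for small $z > 0$. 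So \cref{thm:gpd-top-nary-realiz} applied only to the $\Gamma_{f_j}$ and the given $R_k$ does not by itself produce an isolated bundle.

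The fix is exactly what the paper does: include the equality relations ${=_{X_i}} \subseteq X_i \times_{G_0} X_i$ among the binary relations fed to \cref{thm:gpd-top-nary-realiz}. Each ${=_{X_i}}$ is Borel, $G$-invariant, and fiberwise open in the product of the fiberwise discrete topologies (every subset is), so the theorem makes it globally open, which is precisely the definition of $p_i$ being isolated in \cref{def:etale}. With this one addition your argument goes through; the remaining steps — Borel-overtness of the fiberwise discrete topology, passing to $\~G$ via \cref{thm:gpd-bov-realiz} to upgrade isolated $+$ open to étale, absorbing nullary relations into $\@O(\~G_0)$, and recovering continuity of the $f_j$ from openness of their graphs using openness of the codomain bundles over $\~G_0$ — all match the paper's proof and are correct as you state them.
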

\begin{proof}
To realize relations including equality on each $X_i$, apply \cref{thm:gpd-top-nary-realiz,rmk:gpd-bov-0ary-realiz} to the fiberwise discrete topology on each $X_i$, which is Borel-overt by the Lusin--Novikov uniformization theorem (see e.g., \cite[18.10]{Kcdst}).
To realize functions, realize their graphs, using the standard fact that a fiberwise map $f : X -> Y$ over $Z$ between two étale bundles $p : X -> Z$ and $q : Y -> Z$ is continuous iff its graph is open in $X \times_Z Y$ ($\Longrightarrow$ because $=_Y$ is open; $\Longleftarrow$ because $p$ is open).
\end{proof}

\section{Open localic groupoids}
\label{sec:loc}

We now generalize all of the topological realization results from the preceding two sections to localic group(oid) actions.
Because those results were proved in a point-free manner, the generalization will be nearly immediate, given the right point-free topological foundations; the bulk of this section is devoted to developing such foundations.
In \cref{sec:loc-prelim}, we quickly review some basic concepts of locale theory and localic descriptive set theory from \cite{Cborloc}.
In \cref{sec:loc-fib,sec:loc-lin}, we develop a point-free analog of the theory of fiberwise topology and Baire category quantifiers from \crefrange{sec:fib}{sec:fib-lin-baire}.
In \cref{sec:loc-gpd}, we describe the generalization of the machinery of \cref{sec:grp,sec:gpd} to the setting of localic groupoids, as well as the resulting point-free topological realization theorems.

\subsection{Generalities on locales}
\label{sec:loc-prelim}

The following is a very terse review of the main definitions of locale theory, for which see \cite[C1.1--1.2]{Jeleph}, \cite{Jstone}, \cite{PPloc}, and especially descriptive set theory for locales as developed in \cite{Cborloc} (see \cite[\S2]{Cpettis} for a more concise summary, that is however less general than what we need here).

For the reader familiar with locales, two key points should be noted about our conventions.
First, we strictly distinguish between the ``algebraic'' and ``spatial'' views of locales (like \cite{Jeleph} but unlike \cite{Jstone}, \cite{PPloc}); this allows us to unambiguously use notation and terminology on the ``spatial'' side quite close to that in the classical point-set setting.
Thus, for instance, we speak of \emph{locales} $X$ versus \emph{frames} $\@O(X)$ of \emph{open sets}; we interchangeably denote meets in $\@O(X)$ by $\wedge$ or $\cap$; we denote images by $f(A)$; etc.
Second, our ``descriptive set theory'' is fundamentally Boolean in nature (unlike that of \cite{Idst}); in fact, we make no use of Heyting algebra operations or intuitionistic logic.
Thus, for instance, for open $U, V \in \@O(X)$, $(U => V) = (\neg U \cup V)$ refers to the Boolean implication, possibly after passing to the frame of nuclei $\@N(\@O(X))$ if $U$ is not clopen; see \cref{cvt:loc-forget}.

\begin{definition}
\label{def:loc}
A \defn{suplattice} is a poset equipped with arbitrary joins.
We denote the category of suplattices (and suplattice homomorphisms, i.e., join-preserving maps) by $\!{Sup}$.

A \defn{frame} is a poset with finite meets and arbitrary joins, the former distributing over the latter.
We denote the category of frames by $\!{Frm}$.

A \defn{locale} $X$ is the same thing as a frame $\@O(X)$, whose elements $U \in \@O(X)$ we call \defn{open sets} $U \subseteq X$.
The partial order of $\@O(X)$ is also denoted $\subseteq$, and lattice operations are also denoted ${\cap} := {\wedge}$, $X := \top =$ the top element, $\emptyset := \bot$, etc.
A \defn{continuous map} $f : X -> Y$ between locales is a frame homomorphism $f^* : \@O(Y) -> \@O(X)$.
We denote the category of locales by $\!{Loc}$.

A \defn{product locale} $X \times Y$ is given by a coproduct frame%
\footnote{Strictly speaking, this notation conflicts with our earlier \cref{def:ostar} of $\otimes$ to mean product $\sigma$-topology.
But when $\@S(X), \@S(Y)$ are \emph{compatible} $\sigma$-topologies on standard Borel spaces, a straightforward chase through universal properties and \cref{thm:loc-stdsigma} shows that the product $\sigma$-topology $\@S(X) \otimes \@S(Y)$ is also the coproduct $\sigma$-frame.
We thus adopt this abuse of notation, analogous to e.g., how $\oplus$ can denote internal or external direct sum of vector spaces.}
$\@O(X \times Y) := \@O(X) \otimes \@O(Y)$; an open rectangle is denoted $U \times V := \pi_1^*(U) \cap \pi_2^*(V)$, where $\pi_1^*, \pi_2^*$ are the coproduct injections.
Similar notation is used for fiber products.
A \defn{sublocale} $X \subseteq Y$ is given by a quotient frame $\@O(Y) ->> \@O(X)$.
\end{definition}

Now let $\kappa$ be an uncountable regular cardinal.
By \defn{$\kappa$-ary}, we mean of size $<\kappa$.%
\footnote{Unlike in \cite{Cborloc}, we do not use $\sigma$ as an abbreviation for $\omega_1$, due to the potential for confusion with the source map of a groupoid.
Also, we will usually assume $\kappa < \infty$, unless otherwise noted.}

\begin{definition}
\label{def:loc-kloc}
A \defn{$\kappa$-suplattice} is a poset equipped with $\kappa$-ary joins; the category of these is denoted $\!{\kappa Sup}$.
A \defn{$\kappa$-frame} is a poset with finite meets and $\kappa$-ary joins, the former distributing over the latter; the category of these is denoted $\!{\kappa Frm}$.
A \defn{$\kappa$-locale} $X$ is a $\kappa$-frame $\@O_\kappa(X)$, whose elements are called \defn{$\kappa$-open sets} of $X$;
a \defn{$\kappa$-continuous map} $f$ is a $\kappa$-frame homomorphism $f^*$ in the opposite direction; and the category of these is denoted $\!{\kappa Loc}$.

A ($\kappa$-)locale $X$ is \defn{$\kappa$-based} if $\@O_{(\kappa)}(X)$ is $\kappa$-generated as a ($\kappa$-)frame, equivalently as a ($\kappa$\nobreakdash-)\allowbreak suplattice; these two notions, with and without the ($\kappa$-), are equivalent.
A \defn{standard $\kappa$-locale} is a ($\kappa$-)locale $X$ such that $\@O(X) = \@O_\kappa(X)$ is $\kappa$-presented.
We denote the full subcategory of $\kappa$-presented ($\kappa$-)frames by $\!{Frm_\kappa} \subseteq \!{Frm}$, and that of standard $\kappa$-locales by $\!{Loc_\kappa} \subseteq \!{Loc}$.

\defn{Product $\kappa$-locale} and \defn{$\kappa$-sublocale} are defined as in \cref{def:loc}.
Note that since a $\kappa$-ary colimit of $\kappa$-presented algebras is $\kappa$-presented, a $\kappa$-ary limit of standard $\kappa$-locales is still standard.
\end{definition}

\begin{definition}
A \defn{$\kappa$-Boolean algebra} is a $\kappa$-complete Boolean algebra; the category of these is denoted $\!{\kappa Bool}$, and the full subcategory of $\kappa$-presented algebras $\!{\kappa Bool_\kappa}$.
A \defn{$\kappa$-Borel locale} $X$ is a $\kappa$-Boolean algebra $\@B_\kappa(X)$, whose elements are called \defn{$\kappa$-Borel sets} of $X$;
a \defn{$\kappa$-Borel map} $f$ is a $\kappa$-Boolean homomorphism $f^*$; and the category of these is denoted $\!{\kappa BorLoc}$.
A \defn{standard $\kappa$-Borel locale} $X$ is one whose $\@B_\kappa(X)$ is $\kappa$-presented; the full subcategory of these is denoted $\!{\kappa BorLoc_\kappa}$.
\end{definition}

\begin{theorem}[{Heckmann \cite{Hqpol}, Loomis--Sikorski; see \cite[3.5.8]{Cborloc}}]
\label{thm:loc-stdsigma}
The canonical functors (forgetting the underlying set) are equivalences of categories between quasi-Polish spaces and standard $\omega_1$-locales, and between standard Borel spaces and standard $\omega_1$-Borel locales.
\end{theorem}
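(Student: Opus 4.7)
The plan is to verify separately, for each of the two forgetful functors $X \mapsto \@O(X)$ and $X \mapsto \@B(X)$, three things: well-definedness on the claimed source subcategory, essential surjectivity onto the target, and full faithfulness.

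For the topological equivalence, I would first check that for quasi-Polish $X$ the frame $\@O(X)$ is $\omega_1$-presented by invoking \cref{it:qpol-pi02}: $X$ embeds as a $\Pi^0_2$-subspace of $\#S^\#N$, and since $\@O(\#S^\#N)$ is the free frame on countably many generators (the coordinate subbasic opens), the defining relations $X = \bigcap_i (U_i \Rightarrow V_i)$ with $U_i, V_i$ basic open exhibit $\@O(X)$ as an $\omega_1$-presented quotient. Essential surjectivity reverses this: a standard $\omega_1$-locale, presented as $\@O(X) = \langle G \mid R \rangle$ with $G, R$ countable, is realized as a $\Pi^0_2$-sublocale of $\#S^G \cong \#S^\#N$, which by \cref{it:qpol-pi02} is spatial with quasi-Polish point-space. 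For full faithfulness, quasi-Polish spaces are $T_0$ and sober (the latter because sobriety passes to $\Pi^0_2$-subspaces of the sober space $\#S^\#N$), so continuous maps between quasi-Polish spaces are in natural bijection with frame homomorphisms in the opposite direction.

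For the Borel equivalence, well-definedness of $\@B(X)$ as a standard $\omega_1$-Borel locale is classical: any standard Borel $X$ admits a Borel isomorphism with a Borel subspace of $2^\#N$, whence $\@B(X)$ is obtained from the free $\omega_1$-Boolean algebra on countably many generators (i.e., $\@B(2^\#N)$) by countably many relations. Essential surjectivity is the Loomis--Sikorski theorem in localic form: any countably presented $\omega_1$-Boolean algebra is a quotient of $\@B(2^\#N)$ by a $\sigma$-ideal generated by countably many Borel sets $B_i \subseteq 2^\#N$, and this quotient is naturally isomorphic to $\@B(Y)$ where $Y := 2^\#N \setminus \bigcup_i B_i$ is a Borel, hence standard Borel, subspace. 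Full faithfulness follows by embedding into $2^\#N$ and using that a Borel map is determined by (and can be reconstructed from) the preimages of generating Borel sets of its codomain.

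The principal difficulty sits in essential surjectivity on the topological side: one must verify that the $\Pi^0_2$-sublocale of $\#S^\#N$ cut out by the given relations is spatial with quasi-Polish point-space. This is precisely the localic content of de~Brecht's theorem (\cref{it:qpol-pi02}), which already does the hard work; the remainder of the argument is a formal unwinding of the universal properties of $\omega_1$-presented frames and $\omega_1$-Boolean algebras.
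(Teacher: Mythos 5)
The paper offers no proof of this statement---it is imported by citation from Heckmann \cite{Hqpol}, Loomis--Sikorski, and \cite[3.5.8]{Cborloc}---so there is nothing internal to compare your argument against; what you have written is a sketch of the standard proof from those sources. The Borel half is correct as outlined: the free $\omega_1$-Boolean algebra on countably many generators is $\@B(2^\#N)$, the $\sigma$-ideal generated by countably many elements $B_i$ of a $\sigma$-complete Boolean algebra is the principal ideal below $\bigcup_i B_i$, quotienting by it gives $\@B(2^\#N \setminus \bigcup_i B_i)$, and full faithfulness follows by reconstructing a Borel map from its action on a countable generating family. The reduction of the topological half to presentations over $\@O(\#S^\#N)$ and to sobriety of quasi-Polish spaces is also the right skeleton.

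The gap is in the step you yourself flag as the principal difficulty, which you then discharge against the wrong result. The cited fact \cref{it:qpol-pi02} is purely point-set: it says that the $\*\Pi^0_2$ \emph{subspaces} of $\#S^\#N$ are exactly the quasi-Polish spaces. It does not say that the countably presented frame $\ang{G \mid R}$ is isomorphic to the open-set lattice of its space of points. What you get for free is only the canonical \emph{surjection} $\ang{G \mid R} \twoheadrightarrow \@O(\mathrm{pt}\ang{G \mid R})$, together with the fact (from \cref{it:qpol-pi02}) that its target is the topology of a quasi-Polish space; injectivity of this surjection is the spatiality of countably presentable locales, which is the actual content of Heckmann's theorem and requires a separate Baire-category argument. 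The same identification is silently used in your well-definedness step (``the defining relations exhibit $\@O(X)$ as an $\omega_1$-presented quotient''): without spatiality you only know $\@O(X)$ is a quotient of a countably presented frame, not that it is one. Since the theorem you are proving is, in its topological half, precisely Heckmann's theorem, resolving this step by appeal to \cref{it:qpol-pi02} is circular; you need either to run the Baire-category spatiality argument or to cite \cite{Hqpol} for it explicitly.
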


\begin{convention}
\label{cvt:loc-forget}
As noted above, between the various categories $\!{\kappa Frm}, \!{\kappa Bool}$ (for varying $\kappa$), we regard each object in one category as silently embedded inside its free completion to a ``higher'' category consisting of algebras with more structure;
and we regard these free functors as nameless \emph{forgetful} functors between the dual localic categories.

Thus, for instance, a $\kappa$-locale $X$ has an \emph{underlying $\kappa$-Borel locale}, whose $\@B_\kappa(X)$ is the free $\kappa$-Boolean algebra generated by the $\kappa$-frame $\@O_\kappa(X)$, in which we regard $\@O_\kappa(X) \subseteq \@B_\kappa(X)$ as a $\kappa$-subframe; as well as an \emph{underlying locale}, whose $\@O(X)$ is the free frame generated by $\@O_\kappa(X)$.
The following diagram depicts all such forgetful functors between the ``standard'' localic categories:
\begin{equation}
\begin{tikzcd}[column sep=1.5em]
|[label={left:\!{SBor}\simeq}]|
\!{\omega_1 BorLoc_{\omega_1}} \rar &
\!{\omega_2 BorLoc_{\omega_2}} \rar &
\dotsb \rar &
\!{\kappa BorLoc_\kappa} \rar &
\dotsb \rar &
\!{\infty BorLoc} \\
|[label={left:\!{QPol}\simeq}]|
\!{\omega_1 Loc_{\omega_1}} \uar \rar &
\!{\omega_2 Loc_{\omega_2}} \uar \rar &
\dotsb \rar &
\!{\kappa Loc_\kappa} \uar \rar &
\dotsb \rar &
\!{Loc} \uar
\end{tikzcd}
\end{equation}
Here $\!{SBor}$ is the category of standard Borel spaces, while $\!{QPol}$ is that of quasi-Polish spaces.

We also let $\@B_\infty(X) := \injlim_\kappa \@B_\kappa(X)$, the \defn{$\infty$-Borel sets} (which may form a proper class).
An \defn{$\infty$-Borel locale} is a $\kappa$-Borel locale for some $\kappa < \infty$, where we remember only $\@B_\infty(X)$, hence a class-sized complete Boolean algebra which is presented by a set; the category of these is $\!{\infty BorLoc}$.

All lattice and Boolean operations are interpreted as taking place in $\@B_\infty(X)$, and may or may not land in the original subalgebra.
For instance, for a $\kappa$-locale $X$ and $\kappa$-open $U, V \in \@O_\kappa(X)$, $(U => V) := (\neg U \cup V)$ may not land in $\@O_\kappa(X)$, but still does at least land in $\@B_\kappa(X) \subseteq \@B_\infty(X)$.
\end{convention}

\begin{definition}
\label{def:loc-borhier}
The \defn{$\kappa$-Borel hierarchy} of a $\kappa$-locale $X$ is defined by declaring
$\kappa\Sigma^0_1(X) := \@O_\kappa(X)$,
$\kappa\Sigma^0_{\xi+1}(X) := \@N_\kappa(\kappa\Sigma^0_\xi(X))$
where the functor $\@N_\kappa : \!{\kappa Frm} -> \!{\kappa Frm}$ freely adjoins a complement for every element of a $\kappa$-frame, and
$\kappa\Sigma^0_\xi(X) := \injlim_{\zeta < \xi} \kappa\Sigma^0_\zeta(X)$
for a limit ordinal $\xi$ where the colimit is taken in the category $\!{\kappa Frm}$.
Thus $\@B_\kappa(X) = \injlim_{\xi < \kappa} \kappa\Sigma^0_\xi(X)$.
Put $\kappa\Pi^0_\xi(X) := \neg(\kappa\Sigma^0_\xi(X)) \subseteq \@B_\kappa(X)$.
\end{definition}

\begin{remark}
\label{rmk:loc-sub}
For a standard $\kappa$-locale $X$, there is a canonical bijection between $\kappa\Pi^0_2(X)$ and standard $\kappa$-sublocales of $X$ (given by taking image as defined below).
Similarly, for a standard $\kappa$-Borel locale $X$, there is a canonical bijection between $\@B_\kappa(X)$ and standard $\kappa$-Borel sublocales of $X$ (the \defn{Lusin--Suslin theorem} for $\kappa$-Borel locales).
See \cite[3.4.9]{Cborloc}.
We henceforth treat these bijections as identities, i.e., we identify standard $\kappa$-(Borel) sublocales with certain $\kappa$-Borel sets.
\end{remark}

The following notion appears only implicitly in \cite{Cborloc}:

\begin{definition}
\label{def:loc-compat}
For a $\kappa$-Borel locale $X$, a \defn{$\kappa$-Borel $\kappa$-topology} on $X$ will mean a $\kappa$-subframe $\@S \subseteq \@B_\kappa(X)$, and a \defn{compatible $\kappa$-topology} on $X$ will mean one which freely generates $\@B_\kappa(X)$ as a $\kappa$-Boolean algebra.
In other words, the corresponding $\kappa$-locale $X'$ with $\@O_\kappa(X') := \@S$ is canonically $\kappa$-Borel-isomorphic to $X$, via the map $X -> X'$ induced by the inclusion $\@S `-> \@B_\kappa(X)$.

Using \cref{thm:loc-stdsigma}, this definition is easily seen to agree when $\kappa = \omega_1$ with \cref{def:qpol-compat}.

For example, for a $\kappa$-locale $X$, each $\kappa\Sigma^0_\xi(X)$ is a compatible $\kappa$-topology (cf.\ \cref{ex:qpol-compat-borel}).
If $X$ is a standard $\kappa$-locale $X$, then (cf.\ \cref{it:qpol-dis}) each $\kappa\Sigma^0_\xi(X)$ is a $\kappa$-directed union of compatible $\kappa$-presented topologies, i.e., we can ``change topology'' to make $\kappa\Sigma^0_\xi$ sets open \cite[3.3.7]{Cborloc}.
\end{definition}

\begin{definition}
\label{def:loc-im}
For a $\kappa$-Borel map $f : X -> Y$ and $A \in \@B_\kappa(X)$, the \defn{$\kappa$-Borel image} $f(A) \in \@B_\kappa(Y)$ is the smallest $B \in \@B_\kappa(Y)$ such that $A \subseteq f^*(B)$, if it exists.
The \defn{$\kappa$-Borel image of $f$} is the $\kappa$-Borel image $f(X)$; if $f(X) = Y$, $f$ is \defn{$\kappa$-Borel surjective}.
The $\kappa$-Borel image may not exist \cite[4.4.3]{Cborloc}; but if it does exist, it is automatically pullback-stable, i.e., the Beck--Chevalley condition \cref{eq:fib-bc} holds for all pullbacks in $\!{\kappa BorLoc}$.
These notions also make sense for $\kappa = \infty$.

Thus, a \defn{$\kappa$-continuous $\kappa$-open map} between $\kappa$-locales may be defined in the obvious manner, i.e., the $\kappa$-Borel image of each $\kappa$-open set exists and is $\kappa$-open.
In fact, for such a map, it is enough to require a left adjoint $f(-)$ to $f^*$ on $\@O_\kappa(Y)$, rather than all of $\@B_\kappa(Y)$, and for Frobenius reciprocity \cref{eq:fib-frob} to hold on $\@O_\kappa(Y)$; this is the more standard definition of continuous open map, found for instance in \cite[Ch.~V]{JTloc}, \cite[C3.1]{Jeleph}, and is equivalent because it is pullback-stable, hence we can pull back to a finer topology making an arbitrary $\kappa$-Borel set $\kappa$-open.
Note also that a $\kappa$-continuous $\kappa$-open map is also $\lambda$-open for $\lambda \ge \kappa$, since the free functor $\!{\kappa Frm} -> \!{\lambda Frm}$ agrees with that $\!{\kappa Sup} -> \!{\lambda Sup}$ (namely, $\lambda$-generated $\kappa$-ideal completion), hence preserves the adjunction $f \dashv f^*$.
\end{definition}

\begin{remark}
\label{rmk:loc-logic}
In generalizing notions from the classical to the localic setting, it is helpful to recall that formulas and assertions of low enough logical complexity can be interpreted in the \defn{internal logic} of a sufficiently rich category such as $\!{\kappa BorLoc_\kappa}$.
Namely, $\!{\kappa BorLoc_\kappa}$ is a $\kappa$-complete, Boolean $\kappa$-extensive category, with subobjects corresponding to $\kappa$-Borel sets by \cref{rmk:loc-sub}; thus we can interpret terms as well as quantifier-free formulas in $\kappa$-infinitary first-order logic, and also certain quantifiers once we know the corresponding $\kappa$-Borel images exist.
Thus, simple definitions such as the associativity law for group actions ``$(g \cdot h) \cdot x = g \cdot (h \cdot x)$'' automatically make sense also in the localic setting.
See \cite[\S3.6]{Cborloc} for a detailed overview of this technique.
\end{remark}

\begin{theorem}[{Baire category theorem \cite[1.5]{Iloc}}]
\label{thm:loc-baire}
For any locale $X$, the intersection of all dense $\infty\Pi^0_2$ sets in $X$ is still dense.
Thus, for any $\kappa$-locale $X$, the intersection of $<\kappa$-many dense $\kappa\Pi^0_2$ sets in $X$ is still a dense $\kappa\Pi^0_2$ set.
\end{theorem}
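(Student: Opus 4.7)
The plan is to reduce the theorem to the classical localic fact that every locale admits a smallest dense sublocale, the \emph{Booleanization} $X_{\neg\neg}$. Granting this, the intersection of any family of dense sublocales of $X$ automatically contains $X_{\neg\neg}$ and is therefore dense, which immediately handles the first assertion and also supplies the density half of the second.

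To establish the smallest dense sublocale, I would proceed in three short steps. First, define the pseudocomplement $\neg U := \bigcup \{V \in \@O(X) \mid V \cap U = \emptyset\}$ and verify that $j(U) := \neg\neg U$ is a nucleus on $\@O(X)$ (inflationary, idempotent, finite-meet-preserving); this produces a sublocale $X_{\neg\neg} \subseteq X$ whose frame of opens is the Boolean algebra of regular opens of $X$. Second, note that $X_{\neg\neg}$ is dense, because $\neg\neg \emptyset = \neg X = \emptyset$ (which is the localic form of density: the nucleus fixes $\emptyset$). Third, for any dense sublocale $Y \subseteq X$ with nucleus $j_Y$, density forces $j_Y(\emptyset) = \emptyset$, and then for each $U \in \@O(X)$ one has $j_Y(U) \cap \neg U \subseteq j_Y(U) \cap j_Y(\neg U) = j_Y(U \cap \neg U) = j_Y(\emptyset) = \emptyset$, so $j_Y(U) \subseteq \neg\neg U$. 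Thus $j_Y \le \neg\neg$ pointwise, which—since the order on nuclei reverses the inclusion of sublocales—says $X_{\neg\neg} \subseteq Y$.

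Applying this to the theorem: the intersection of any collection of dense sublocales, computed in the lattice of sublocales of $X$, contains $X_{\neg\neg}$ and is hence dense; in particular this applies to dense $\infty\Pi^0_2$ sublocales, giving the first statement. For the $\kappa$-version, a $<\kappa$-ary intersection of $\kappa\Pi^0_2$ sets is again $\kappa\Pi^0_2$: by \cref{def:loc-borhier}, $\kappa\Sigma^0_2(X)$ is a $\kappa$-frame, so its elements are closed under $<\kappa$-ary joins in $\@B_\kappa(X)$, and dualizing shows $\kappa\Pi^0_2(X)$ is closed under $<\kappa$-ary meets there; density is inherited from the general argument above.

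No serious obstacle is expected---this is essentially Isbell's original argument packaged for our hierarchy. The one point to be mindful of is that ``intersection'' and ``density'' must be interpreted in the sublocale sense (via nuclei and their fixpoints), since $X$ need not have enough points for a direct pointwise argument.
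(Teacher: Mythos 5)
Your proposal is correct: it is precisely the standard proof of the cited result of Isbell, namely that the double-negation nucleus presents the smallest dense sublocale (the Booleanization), from which the density of any intersection of dense sublocales follows, and your observation that $\kappa\Pi^0_2(X)$ is closed under $\kappa$-ary meets correctly handles the second assertion. The paper itself gives no argument beyond the citation to Isbell, so your reconstruction is essentially the same approach as the one the paper relies on.
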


We recall that in a topological space or locale, if a set of the form $(U => V) = (\neg U \cup V)$ is dense, for open $U, V$, then so is the open subset $(\neg U)^\circ \cup V$; see e.g., \cite[7.1]{Cqpol}.

\begin{definition}
For a locale $X$, we call an $\infty$-Borel set $A \in \@B_\infty(X)$ \defn{comeager} if it contains a dense $\infty\Pi^0_2$ set, or equivalently an intersection of dense open sets.
Thus for a $\kappa$-locale $X$, the comeager $\kappa$-Borel sets form a $\kappa$-filter in $\@B_\kappa(X)$.
\end{definition}

\subsection{Fiberwise topology in locales}
\label{sec:loc-fib}

Whereas in \cref{sec:fib} it was natural to import standard topological notions to the fiberwise context, and then show in \cref{sec:fib-bor} that things can be done in a uniformly Borel manner, in the point-free setting only the uniform Borel notions make sense to begin with.

\begin{definition}
\label{def:loc-fib}
Let $Y$ be a $\kappa$-Borel locale.
A \defn{$\kappa$-Borel bundle of $\kappa$-locales} over $Y$ will mean an arbitrary $\kappa$-locale $X$ equipped with a $\kappa$-continuous map $f : X -> Y$, where $Y$ is regarded as a ``discrete $\kappa$-locale'' with $\@O_\kappa(Y) := \@B_\kappa(Y)$.
In this situation, we denote the $\kappa$-frame of $X$ by $\@{BO}_{\kappa,f}(X)$ instead of $\@O_\kappa(X)$, and call its elements the \defn{$\kappa$-Borel $f$-fiberwise $\kappa$-open} sets of $X$.

If $Y$ is a standard $\kappa$-Borel locale, and $\@{BO}_{\kappa,f}(X)$ is $\kappa$-presented \emph{as a $\kappa$-frame equipped with a homomorphism $f^* : \@B_\kappa(Y) -> \@{BO}_{\kappa,f}(X)$}, i.e., as an ``algebra over $\@B_\kappa(Y)$'' (cf.\ \cref{rmk:lin}), then we call $X$ a \defn{standard $\kappa$-Borel bundle of standard $\kappa$-locales} over $Y$.
Note that this implies that the underlying $\kappa$-Borel locale of $X$ is standard.
In this case, we also write
$\@B_\kappa\@O_f(X) := \@{BO}_{\kappa,f}(X)$,
and call its elements \defn{$\kappa$-Borel $f$-fiberwise open} (instead of ``$\kappa$-open'').

A \defn{$\kappa$-Borel $f$-fiberwise $\kappa$-open subbasis} $\@U \subseteq \@{BO}_{\kappa,f}(X)$ is a generator of $\@{BO}_{\kappa,f}(X)$ as a $\@B_\kappa(X)$-algebra (i.e., $f^*(\@B_\kappa(Y)) \cup \@U$ generates it as a $\kappa$-frame).
If the closure of $\@U$ under $\kappa$-ary joins is already closed under finite meets, then $\@U$ is a \defn{$\kappa$-Borel $f$-fiberwise $\kappa$-open basis}.
Clearly, the closure under finite meets of a subbasis is a basis; thus every standard $\kappa$-Borel bundle of standard $\kappa$-locales has a $\kappa$-ary fiberwise basis, i.e., is \defn{fiberwise $\kappa$-based}.

If $f : X -> Y$ is a $\kappa$-continuous map between $\kappa$-locales, then we may \defn{$f$-fiberwise restrict} the global $\kappa$-topology $\@O_\kappa(X)$ to get a $\kappa$-Borel bundle of $\kappa$-locales (over the underlying $\kappa$-Borel locale of $Y$), with $\@{BO}_{\kappa,f}(X) :=$ the $\kappa$-subframe of $\@B_\kappa(X)$ generated by $f^{-1}(\@B_\kappa(X)) \cup \@O_\kappa(X)$, or equivalently the pushout of $f^* : \@O_\kappa(Y) -> \@O_\kappa(X)$ and the inclusion $\@O_\kappa(Y) -> \@B_\kappa(Y)$.
Note that $\@{BO}_{\kappa,f}(X)$ is indeed a compatible $\kappa$-topology on $X$, since the free functor $\!{\kappa Frm} -> \!{\kappa Bool}$ preserves pushouts.
If $X, Y$ are standard $\kappa$-locales, then the $f$-fiberwise restriction is a standard $\kappa$-Borel bundle.

If $f : X -> Y$ is a $\kappa$-Borel bundle of $\kappa$-locales, and $\lambda \ge \kappa$, we may regard $f$ also as a $\lambda$-Borel bundle of $\lambda$-locales, by taking $\@{BO}_{\lambda,f}(X)$ to be the free $\@B_\lambda(X)$-algebra generated by $\@{BO}_{\kappa,f}(X)$ as a $\@B_\kappa(X)$-algebra.
In other words, we complete $\@{BO}_{\kappa,f}(X)$ under $\lambda$-ary joins, while also adjoining all $\lambda$-ary Boolean combinations of elements in $f^*(\@B_\kappa(Y))$.
Per \cref{cvt:loc-forget}, we regard this as a nameless forgetful functor.
\end{definition}

The next few results justify that this is the ``correct'' point-free generalization of \cref{def:fib-bor-qpol}:

\begin{theorem}[Kunugui--Novikov uniformization for locales]
\label{thm:loc-kunugui-novikov}
Let $f : X -> Y$ be a $\kappa$-Borel map between standard $\kappa$-Borel locales, $\@S \subseteq \@B_\kappa(X)$ be a $\kappa$-ary family.
Let $g : A' -> X$ and $h : A -> X$ be two $\kappa$-Borel maps from standard $\kappa$-Borel locales, such that ``the $f$-fiberwise $\@S$-closure of $g(A')$ is disjoint from $h(A)$'': the $\kappa$-Borel set defined in the internal logic of $\!{\kappa BorLoc}$ by
\begin{align*}
\{((a_S)_{S \in \@S}, a) \in A^{\prime\@S} \times A \mid \bigwedge_{S \in \@S} (h(a) \in S \implies g(a_S) \in S \AND f(g(a_S)) = f(h(a)))\},
\end{align*}
of ``$\@S$-nets in $A'$ whose $g$-image converges in the same $f$-fiber to the $h$-image of $a \in A$'', is empty.
Then there are $\kappa$-Borel $B_S \in \@B_\kappa(Y)$ such that the ``$f$-fiberwise $\@S$-closed'' set
$C := \bigcap_{S \in \@S} (S => f^*(B_S))$
``contains $g(A')$ and is disjoint from $h(A)$'', i.e., $g^*(C) = A'$ and $h^*(C) = \emptyset$.

In particular, if $A \in \@B_\kappa(X)$ is ``$f$-fiberwise $\@S$-open'', i.e., ``disjoint from the $f$-fiberwise closure of $\neg A$'' expressed internally as above, then there are $B_S \in \@B_\kappa(Y)$ such that $A = \bigcup_{S \in \@S} (f^*(B_S) \cap S)$.
\end{theorem}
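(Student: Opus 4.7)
The plan is to construct each $B_S$ essentially as the $\kappa$-Borel image of $A' \cap S$ under $f$, after first reducing to the case of sublocale inclusions. To begin, I would use the Lusin--Suslin theorem for $\kappa$-Borel locales (\cref{rmk:loc-sub}) to replace $g$ and $h$ by the inclusions of their $\kappa$-Borel images as standard $\kappa$-Borel sublocales $A', A \subseteq X$; the hypothesis then becomes that no ``point'' of $A$ is a fiberwise $\@S$-limit of $A'$, i.e., no $a \in A$ has the property that its $f$-fiber meets $A' \cap S$ for every $S \in \@S$ containing $a$.

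I would then tentatively set $B_S := f(A' \cap S) \in \@B_\kappa(Y)$, so that $A' \cap S \subseteq f^*(B_S)$ holds by definition of image, whence $A' \subseteq (S => f^*(B_S))$ for each $S$ and therefore $A' \subseteq C$. For the disjointness $A \cap C = \emptyset$, I would argue in the internal logic of $\!{\kappa BorLoc}$: a ``point'' of $A \cap C$ would give an $a \in A$ with $f(a) \in B_S = f(A' \cap S)$ for every $S \ni a$; then by the universal property of images together with the $\kappa$-ary products available in $\!{\kappa BorLoc}$, one assembles an $\@S$-indexed family of witnesses $(a_S)_{S \in \@S}$ in $A'$ with $a_S \in S$ and $f(a_S) = f(a)$, yielding a morphism into the ``net set'' stipulated empty by the hypothesis, a contradiction.

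The main obstacle is that the $\kappa$-Borel images $f(A' \cap S)$ generally fail to exist in $\!{\kappa BorLoc}$, since the image of a $\kappa$-Borel map between standard $\kappa$-Borel locales is only $\kappa$-analytic in general (see \cref{def:loc-im}). I would handle this by replacing each image by a $\kappa$-Borel \emph{separator}: by a localic first separation theorem for disjoint $\kappa$-analytic sublocales of $Y$ (the analogue of Luzin's theorem developed in \cite{Cborloc}), and using that $\@S$ is $\kappa$-ary while $\@B_\kappa(Y)$ is $\kappa$-complete, one can choose $B_S \in \@B_\kappa(Y)$ with $A' \cap S \subseteq f^*(B_S)$ while still avoiding the analytic set of ``bad'' fibers in $Y$ that the hypothesis rules out. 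A cleaner shortcut when $\kappa = \omega_1$ is to reduce to the classical \cref{thm:kunugui-novikov} via \cref{thm:loc-stdsigma}; the general $\kappa$ case then follows by combining this with the change-of-topology framework on $Y$ (cf.\ \cref{def:loc-compat}).
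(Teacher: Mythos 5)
The paper itself does not give a detailed proof of this theorem: it only remarks that the proof is a point-free transcription of the classical argument of \cite[8.14]{Cgpd}, with the Novikov separation theorem of \cite[4.2.1]{Cborloc} as the key input, and explicitly declines to spell out the details since the result is not used later. Your skeleton --- reduce $g,h$ to sublocale inclusions via Lusin--Suslin, take the fiberwise images $f(A' \cap S)$, observe that these are in general only $\kappa$-analytic, and repair this by separation --- is the right one and matches that indicated route. But the separation step as you describe it has a genuine gap. You invoke the two-set (Luzin) ``first separation theorem'' and speak of choosing each $B_S \supseteq f(A' \cap S)$ ``while still avoiding the analytic set of bad fibers in $Y$''. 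No such single set exists: the condition to be preserved, $h^*\bigl(\bigcap_{S \in \@S}(S => f^*(B_S))\bigr) = \emptyset$, is a \emph{joint} constraint on the entire $\kappa$-ary family $(B_S)_S$, entangled with the sets $S \subseteq X$ themselves. A given ``point'' of the image of $h$ is only guaranteed to escape $f^*(f(A' \cap S))$ for \emph{some} $S$ containing it, and which $S$ works varies from point to point; enlarging each analytic image to a Borel set independently can destroy every such witness. What is needed is the $\kappa$-ary \emph{simultaneous} (Novikov) separation theorem --- exactly the tool the paper cites --- applied to the $\kappa$-analytic sets $h^*(S => f^*(f(A' \cap S)))$, whose intersection is empty by hypothesis, together with the additional (and not entirely routine) work of putting the resulting separators into the special form $S => f^*(B_S)$. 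That is where the substance of the classical proof lies, and your proposal skips it.

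Two smaller issues. First, your internal-logic derivation of $h^*(C) = \emptyset$ ``assembles an $\@S$-indexed family of witnesses'': in $\!{\kappa BorLoc}$ this amounts to the claim that a $\kappa$-ary fiber product of $\kappa$-Borel surjections is again $\kappa$-Borel surjective, which is not a formality for locales (infinite products of nonempty locales can be empty) and needs justification; the internal formulation of the hypothesis in the statement is designed precisely so that one argues with the ``net locale'' directly rather than picking witnesses. Second, the proposed bootstrap from $\kappa = \omega_1$ to general $\kappa$ via change of topology on $Y$ cannot work: the general statement involves genuinely $\kappa$-ary families $\@S$ and $\kappa$-Borel sets with no counterpart in the classical setting, so it is not a formal consequence of the $\omega_1$ case together with \cref{thm:loc-stdsigma}.
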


The proof is a straightforward point-free transcription of the usual proof (see \cite[8.14]{Cgpd}), using the Novikov separation theorem from \cite[4.2.1]{Cborloc}.
We will not give the details, since we do not need this result, except as informal motivation for \cref{def:loc-fib}:

\begin{remark}
It follows that the dropping of the prefix ``$\kappa$-'' in \cref{def:loc-fib} for $\kappa$-Borel $f$-fiberwise open sets in \emph{standard} $\kappa$-Borel bundles $f : X -> Y$ is justified: if $A \in \@B_\kappa(X)$, and also $A \in \@{BO}_{\lambda,f}(X)$ for some $\lambda \ge \kappa$, then the internal definition of ``$f$-fiberwise open'' in \cref{thm:loc-kunugui-novikov} (for $\@S :=$ any $\kappa$-ary $\kappa$-Borel fiberwise basis) holds, whence in fact $A \in \@B_\kappa\@O_f(X)$.
\end{remark}

\begin{proposition}[cf.\ \cref{thm:fib-bor-qpol}]
\label{thm:loc-fib}
Let $f : X -> Y$ be a standard $\kappa$-Borel bundle of standard $\kappa$-locales over a standard $\kappa$-locale $Y$.
Then there is a compatible standard $\kappa$-topology $\@O(X) \subseteq \@B_\kappa\@O_f(X)$ making $f$ continuous and $f$-fiberwise restricting to $\@B_\kappa\@O_f(X)$.
\end{proposition}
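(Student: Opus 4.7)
The plan is to transcribe the classical proof of \cref{thm:fib-bor-qpol}\cref{thm:fib-bor-qpol:top} into the point-free setting, by constructing a $\kappa$-Borel embedding of $X$ into a suitably chosen ambient standard $\kappa$-locale and then refining the topology there. First, I would fix a $\kappa$-ary $\kappa$-Borel $f$-fiberwise $\kappa$-open basis $\@U \subseteq \@B_\kappa\@O_f(X)$, which exists by the standardness of the bundle, and form the standard $\kappa$-locale $Z := Y \times \#S^\@U$, where $\#S$ denotes the Sierpinski $\kappa$-locale whose $\kappa$-topology is the free $\kappa$-frame on one generator; thus $\@O_\kappa(Z)$ is the coproduct of $\@O_\kappa(Y)$ with the free $\kappa$-frame on $\@U$, which is $\kappa$-presented. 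I would then consider the $\kappa$-frame homomorphism
\[
\phi_0^* : \@O_\kappa(Z) \to \@B_\kappa\@O_f(X), \qquad V \otimes \bar U \mapsto f^*(V) \cap U \quad (V \in \@O_\kappa(Y), U \in \@U),
\]
and extend through the free $\kappa$-Boolean completions to a $\kappa$-Borel map $\phi : X \to Z$ over $Y$.

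Next I would argue that $\phi$ is a $\kappa$-Borel embedding, i.e., $\phi^* : \@B_\kappa(Z) \to \@B_\kappa(X)$ is surjective. Indeed, the image of $\phi_0^*$ contains $f^*(\@O_\kappa(Y)) \cup \@U$, and after the Boolean extension also $f^*(\@B_\kappa(Y))$; together these generate $\@B_\kappa\@O_f(X)$ as a $\@B_\kappa(Y)$-algebra in $\!{\kappa Frm}$, whose free $\kappa$-Boolean completion is $\@B_\kappa(X)$. Hence by the Lusin--Suslin theorem for $\kappa$-Borel locales (\cref{rmk:loc-sub}), $X$ appears as a standard $\kappa$-Borel sublocale of $Z$ corresponding to some $A \in \@B_\kappa(Z)$. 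Using the localic analog of \cref{it:qpol-dis}, available from the change-of-topology machinery of \cite{Cborloc}, I would refine $\@O_\kappa(Z)$ to a compatible standard $\kappa$-topology $\@O'(Z) \subseteq \@B_\kappa(Z)$ making $A$ open. Since open sets are in particular $\kappa\Pi^0_2$, by \cref{rmk:loc-sub} again $X$ becomes a standard $\kappa$-sublocale of $(Z, \@O'(Z))$, inheriting a compatible standard $\kappa$-topology $\@O(X) \subseteq \@B_\kappa(X)$.

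Finally I would verify: $\@O(X) \subseteq \@B_\kappa\@O_f(X)$, since it is the image of $\@O'(Z)$ under $\phi^*$, with $\phi_0^*(\@O_\kappa(Z)) \subseteq \@B_\kappa\@O_f(X)$ by construction and $\phi^*(A) = \top$; $f = \pi_1 \circ \phi$ is continuous; and the $f$-fiberwise restriction $\@O(X) \sqcup_{\@O_\kappa(Y)} \@B_\kappa(Y)$ equals $\@B_\kappa\@O_f(X)$. For the last equality, the parenthetical argument in \cref{def:loc-fib} shows this pushout to be a compatible $\kappa$-topology, hence (by \cref{def:loc-compat}) a $\kappa$-subframe of $\@B_\kappa(X)$; it contains $\@U \cup f^*(\@B_\kappa(Y))$, which generates $\@B_\kappa\@O_f(X)$ as a $\kappa$-frame, and is conversely contained in $\@B_\kappa\@O_f(X)$. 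The main obstacle I anticipate is extracting from \cite{Cborloc} the precise ``adjoin-a-$\kappa$-Borel-set-as-open'' result in the form needed for the ambient product $\kappa$-locale $Z$, and navigating cleanly the bookkeeping around the sublocale quotient $\@O'(Z) \twoheadrightarrow \@O(X)$ to justify that $\@O(X)$ is $\kappa$-presented as a bare $\kappa$-frame.
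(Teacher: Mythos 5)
Your construction of the $\kappa$-Borel embedding $\phi : X \to Y \times \#S^\@U$ is fine, and the verification that $\phi^*$ is surjective (hence that $X$ is a standard $\kappa$-Borel sublocale of $Z$ corresponding to some $A \in \@B_\kappa(Z)$) goes through. But there is a genuine gap at the change-of-topology step. When you refine $\@O_\kappa(Z)$ to a compatible standard $\kappa$-topology $\@O'(Z)$ making $A$ open, the machinery of \cite{Cborloc} must adjoin not just $A$ but further $\kappa$-Borel sets of $Z$ (complements and differences of opens, iterated according to the Borel rank of $A$), and these are in general \emph{not} of the form $\bigcup_i (B_i \times V_i)$ with $B_i \in \@B_\kappa(Y)$ and $V_i$ open in $\#S^\@U$. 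Consequently $\phi^*(\@O'(Z))$ is not contained in $\phi_0^*(\@O_\kappa(Z))$ together with $\phi^*(A) = \top$, as your final verification assumes: there is no reason for the resulting $\@O(X)$ to land inside $\@B_\kappa\@O_f(X)$, nor for its $f$-fiberwise restriction to equal (rather than strictly refine) $\@B_\kappa\@O_f(X)$. The same problem blocks the weaker fix of refining only the $Y$-factor and asking for $A$ to become $\kappa\Pi^0_2$ there: knowing that $A$ can be written as $\bigcap_i ((Y \times U_i) => \bigcup_j (B_{ij} \times V_{ij}))$ with only the $B_{ij} \in \@B_\kappa(Y)$ non-open is exactly the content of Saint~Raymond's uniformization theorem for fiberwise $\*\Pi^0_2$ sets (the Lemma following \cref{thm:fib-bor-qpol}), and a localic analog of that theorem is a nontrivial ingredient your proposal does not supply.

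For comparison, the paper's proof avoids the embedding entirely and is purely presentation-theoretic: take a $\kappa$-ary presentation of $\@B_\kappa\@O_f(X)$ as an algebra over $\@B_\kappa(Y)$, refine the topology of $Y$ to a compatible standard $\kappa$-topology $\@O(Y')$ making open the $<\kappa$-many elements of $\@B_\kappa(Y)$ occurring in the relations, and observe that the same presentation then presents a $\kappa$-presented $\kappa$-frame over $\@O(Y')$, which serves as $\@O(X)$; its pushout along $\@O(Y') \hookrightarrow \@B_\kappa(Y)$ recovers $\@B_\kappa\@O_f(X)$, giving the fiberwise restriction statement for free. This sidesteps Saint~Raymond altogether, and is one of the places where the point-free development is genuinely simpler than the classical one; I would recommend looking for an argument of this algebraic kind rather than pushing the embedding route through.
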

\begin{proof}
Take a $\kappa$-ary presentation of $\@B_\kappa\@O_f(X) = \ang{G \mid R}$ as an algebra over $\@B_\kappa(Y)$.
Let $Y'$ be $Y$ with a finer compatible standard $\kappa$-topology $\@O(Y) \subseteq \@O(Y') \subseteq \@B(Y)$, making open all of the $<\kappa$-many elements of $\@B_\kappa(Y)$ appearing in some relation in $R$.
Then that same presentation presents an algebra over the $\kappa$-presented $\kappa$-frame $\@O(Y')$, hence this algebra is also $\kappa$-presented as a $\kappa$-frame; it is easily seen that letting $\@O(X)$ be this $\kappa$-frame works.
\end{proof}

\begin{definition}[cf.\ \cref{thm:fib-bov-qpol}]
\label{def:loc-fib-bov}
A $\kappa$-Borel bundle of $\kappa$-locales $f : X -> Y$ will be called \defn{$\kappa$-Borel-overt} if every $A \in \@{BO}_{\kappa,f}(X)$ has a $\kappa$-Borel image $f(A) \in \@B_\kappa(Y)$.
In other words, regarded as a $\kappa$-continuous map between the $\kappa$-topologies $\@{BO}_{\kappa,f}(X)$ and $\@B_\kappa(Y)$ as in \cref{def:loc-fib}, $f$ is a $\kappa$-open map.
Note that this notion is stable under increasing $\kappa$ (cf.\ \cref{def:loc-im}).
\end{definition}

\begin{proposition}
\label{thm:loc-fib-bov}
Let $f : X -> Y$ be a standard $\kappa$-Borel-overt bundle of standard $\kappa$-locales.
Then there are compatible standard $\kappa$-topologies $\@O(X) \subseteq \@B_\kappa\@O_f(X)$ and $\@O(Y) \subseteq \@B_\kappa(Y)$ making $f$ continuous open, such that $\@O(X)$ $f$-fiberwise restricts to $\@B_\kappa\@O_f(X)$.
\end{proposition}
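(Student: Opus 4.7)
The plan is to mirror the classical implication \cref{thm:fib-bov-qpol}\cref{thm:fib-bov-qpol:overt}$\implies$\cref{thm:fib-bov-qpol:open}: start from a topological realization given by \cref{thm:loc-fib}, refine $\@O(Y)$ so as to open up the $\kappa$-Borel images of basic opens of $\@O(X)$, then adjoin their preimages back to $\@O(X)$ and verify that $f$ becomes $\kappa$-open.

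Concretely, first invoke \cref{thm:loc-fib} to obtain initial compatible standard $\kappa$-topologies $\@O(X)_0 \subseteq \@B_\kappa\@O_f(X)$ and $\@O(Y)_0 \subseteq \@B_\kappa(Y)$ making $f : X_0 \to Y_0$ continuous, with $\@O(X)_0$ $f$-fiberwise restricting to $\@B_\kappa\@O_f(X)$. Next, fix a $\kappa$-ary generating set for $\@O(X)_0$ and close it under finite meets to obtain a $\kappa$-ary basis $\@U \subseteq \@O(X)_0$. By $\kappa$-Borel-overtness, $f(U) \in \@B_\kappa(Y)$ exists for each $U \in \@U$; using \cref{def:loc-compat} (the localic analog of \cref{it:qpol-dis}), choose a finer compatible standard $\kappa$-topology $\@O(Y) \subseteq \@B_\kappa(Y)$ with $\@O(Y)_0 \cup \{f(U) \mid U \in \@U\} \subseteq \@O(Y)$. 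Finally, let $\@O(X) \subseteq \@B_\kappa\@O_f(X)$ be the $\kappa$-subframe generated by $\@O(X)_0 \cup f^*(\@O(Y))$; equivalently, $\@O(X)$ is the pushout in $\!{\kappa Frm}$ of $\@O(X)_0 \xleftarrow{f^*} \@O(Y)_0 \hookrightarrow \@O(Y)$, so that the refined $\kappa$-locale $X'$ with $\@O(X') := \@O(X)$ is the standard $\kappa$-locale pullback $X_0 \times_{Y_0} Y$.

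Three things then need checking. \emph{Compatibility} of $\@O(X)$ follows because the free functor $\!{\kappa Frm} \to \!{\kappa Bool}$ preserves pushouts and $\@B_\kappa(Y_0) = \@B_\kappa(Y)$ canonically: the free $\kappa$-Boolean completion of the pushout above is the pushout of $\@B_\kappa\@O_f(X) \xleftarrow{f^*} \@B_\kappa(Y) \xrightarrow{=} \@B_\kappa(Y)$, which is just $\@B_\kappa\@O_f(X)$. \emph{Fiberwise restriction} follows by an analogous pushout composition: pushing $\@O(X)$ out along $\@O(Y) \hookrightarrow \@B_\kappa(Y)$ recovers the fiberwise restriction of $\@O(X)_0$, which is $\@B_\kappa\@O_f(X)$ by hypothesis. \emph{Openness} of $f : X' \to (Y, \@O(Y))$ reduces, by distributivity and preservation of $\kappa$-ary joins by $f$, to checking basic opens of the form $U \cap f^*(V)$ with $U \in \@U$ and $V \in \@O(Y)$; by Frobenius reciprocity $f(U \cap f^*(V)) = f(U) \cap V \in \@O(Y)$, as required.

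The main obstacle is verifying compatibility of the enlarged topology, which ultimately rests on the pushout preservation property of $\!{\kappa Frm} \to \!{\kappa Bool}$ together with the fact that refining $\@O(Y)_0$ to $\@O(Y)$ introduces no new $\kappa$-Borel structure on $Y$. Everything else---fiberwise restriction and openness---is routine once these pushout computations are in place, since Frobenius reciprocity on $\kappa$-frames is part of the very definition of openness recalled in \cref{def:loc-im}.
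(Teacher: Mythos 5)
Your proposal is correct and follows essentially the same route as the paper: the paper's proof of \cref{thm:loc-fib-bov} is literally ``Same as \cref{thm:fib-bov-qpol}'', i.e., the classical argument of \cref{thm:fib-bov-qpol}\cref{thm:fib-bov-qpol:overt}$\implies$\cref{thm:fib-bov-qpol:open} (start from \cref{thm:loc-fib}, refine $\@O(Y)$ to open up the images $f(U)$ of basic opens, pull back to realize $X'$ as $X_0 \times_{Y_0} Y$, and conclude openness by Frobenius reciprocity). Your write-up merely makes explicit the localic bookkeeping (pushout preservation by the free functor $\!{\kappa Frm} \to \!{\kappa Bool}$ for compatibility) that the paper leaves implicit.
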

\begin{proof}
Same as \cref{thm:fib-bov-qpol}.
\end{proof}

\begin{lemma}[cf.\ \cref{it:fib-bp-diff}]
\label{thm:loc-fib-bov-psneg}
Let $f : X -> Y$ be a fiberwise $\kappa$-based $\kappa$-Borel-overt bundle of $\kappa$-locales.
Then for any $U \in \@{BO}_{\kappa,f}(X)$ and $\kappa$-ary $\kappa$-Borel fiberwise basis $\@W \subseteq \@{BO}_{\kappa,f}(X)$,
\begin{align*}
(\neg U)^\circ_f := \bigcup_{W \in \@W} (W \setminus f^*(f(W \cap U)))
\end{align*}
is the \defn{$f$-fiberwise interior} of $\neg U$, i.e., the largest $\infty$-Borel $f$-fiberwise open set disjoint from $U$.
\end{lemma}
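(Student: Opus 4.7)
The plan is to verify separately that the proposed formula $(\neg U)^\circ_f$ is an $\infty$-Borel $f$-fiberwise open set disjoint from $U$, and then that it contains every such set.

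For the first property, I would observe that each summand $W \setminus f^*(f(W \cap U))$ lies in $\@{BO}_{\kappa,f}(X)$: by $\kappa$-Borel-overtness, $f(W \cap U) \in \@B_\kappa(Y)$ exists, and its pullback along $f^* : \@B_\kappa(Y) \to \@{BO}_{\kappa,f}(X)$ is ``fiberwise clopen'' in the sense of admitting a complement in $\@{BO}_{\kappa,f}(X)$, so $W \setminus f^*(f(W \cap U)) = W \cap \neg f^*(f(W \cap U))$ is fiberwise open, and the $\kappa$-ary join over $\@W$ is too. For disjointness from $U$, Frobenius reciprocity \cref{eq:fib-frob} gives $W \cap U \subseteq f^*(f(W \cap U))$, so each summand meets $U$ trivially, and distributivity then yields $(\neg U)^\circ_f \cap U = \emptyset$.

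For maximality, I would take an arbitrary $A \in \@{BO}_{\infty,f}(X)$ with $A \cap U = \emptyset$ and aim to write it as $\bigcup_{W \in \@W} (f^*(B_W) \cap W)$ for suitable $B_W \in \@B_\infty(Y)$, after which the argument closes routinely: for each $W$, disjointness from $U$ combined with Frobenius yields $B_W \cap f(W \cap U) = \emptyset$, whence $f^*(B_W) \cap W \subseteq W \setminus f^*(f(W \cap U))$, and joining over $W$ gives $A \subseteq (\neg U)^\circ_f$. The representation itself rests on the fiberwise basis hypothesis: closure of $\@W$ under finite meets (up to $\kappa$-ary joins, which is the definition of basis) together with absorption of pullbacks $f^*(B)$ (which act by rescaling in the $\@B_\kappa(Y)$-module structure) imply that $\@W$ generates $\@{BO}_{\kappa,f}(X)$ as a $\@B_\kappa(Y)$-suplattice; passing along the forgetful functor to $\infty$-Borel bundles (cf.\ the final paragraph of \cref{def:loc-fib}), which is a free cocompletion, $\@W$ likewise generates $\@{BO}_{\infty,f}(X)$ as a $\@B_\infty(Y)$-suplattice, and terms sharing a common $W$ may be amalgamated by unioning their $Y$-coefficients.

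The main obstacle will be the representation lemma in the second paragraph: one must recognize the forgetful functor between $\kappa$-Borel and $\infty$-Borel bundles as a free cocompletion and verify that generating sets are preserved. Everything else is a point-free transcription of the classical computation behind \cref{it:fib-bp-diff}, with the role of ``fiberwise restriction meets the fiber nontrivially'' played by the $\kappa$-Borel image operation $f(-)$ and Frobenius reciprocity.
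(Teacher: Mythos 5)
Your proposal is correct and follows essentially the same route as the paper's proof: disjointness comes from the containment $W \cap U \subseteq f^*(f(W \cap U))$ (which is really the unit of the image/preimage adjunction from \cref{def:loc-im} rather than Frobenius reciprocity as you label it), and maximality comes from expanding an arbitrary $\infty$-Borel fiberwise open set over the basis $\@W$ as $\bigcup_{W}(f^*(B_W) \cap W)$ and pushing disjointness from $U$ through $f$ to get $B_W \subseteq \neg f(W \cap U)$. The only difference is that you spell out why the basis representation persists at the $\infty$-Borel level, a step the paper uses silently.
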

\begin{proof}
It is disjoint from $U$, since for each $W$, we have $W \cap U \setminus f^*(f(W \cap U)) = \emptyset$ since $W \cap U \subseteq f^*(f(W \cap U))$.
Let $V \in \@{BO}_{\infty,f}(X)$ also be disjoint from $U$.
Then $V = \bigcup_{W \in \@W} (f^*(B_W) \cap W)$ for some $B_W \in \@B_\infty(Y)$, so each
$f^*(B_W) \cap W \cap U = \emptyset$, i.e.,
$W \cap U \subseteq f^*(\neg B_W)$, i.e.,
$f(W \cap U) \subseteq \neg B_W$, i.e.,
$B_W \subseteq \neg f(W \cap U)$, whence
$V = \bigcup_{W \in \@W} (f^*(B_W) \cap W)
\subseteq \bigcup_{W \in \@W} (\neg f^*(f(W \cap U)) \cap W)$.
\end{proof}

\begin{definition}
Let $f : X -> Y$ be a $\kappa$-Borel bundle of $\kappa$-locales.
We say that $A \in \@B_\kappa(X)$ is \defn{$f$-fiberwise dense} if it is dense in the usual sense with respect to the global $\kappa$-topology $\@{BO}_{\kappa,f}(X)$, i.e., for every $\emptyset \ne U \in \@{BO}_{\kappa,f}(X)$, we have $A \cap U \ne \emptyset$.
\end{definition}

\begin{remark}
\label{rmk:loc-fib-dense-pb}
By \cite[2.12]{Cpettis} (which is stated for $\kappa = \infty$ but works equally well for all $\kappa$), if $A$ as above is $f$-fiberwise dense, then it remains so after pulling back along any $\kappa$-Borel map $Z -> Y$.

A different, but related, notion of ``fiberwise density'', for a continuous locale map $f : X -> Y$ and sublocale $A \subseteq X$, is defined by Johnstone in \cite{Jgpd}.
In the case $\kappa = \infty$, our notion is precisely the pullback-stable strengthening of Johnstone's; see \cite[\S2]{Cpettis}.
\end{remark}

\begin{remark}
\label{rmk:loc-fib-dense-abs}
Our notion of ``$f$-fiberwise dense'' is \emph{not} stable under increasing $\kappa$ for general $\kappa$-Borel sets.
Indeed, the related notion of ``$\kappa$-Borel surjection'' is not stable \cite[4.4.5]{Cborloc}: there exist $\kappa < \lambda$ and a continuous map $f : X -> Y$ between $\kappa$-locales which is $\kappa$-Borel surjective, but not $\lambda$-Borel surjective.
By ``fiberwise adjoining a least element $\bot$ in the specialization order'' to $X$, i.e., passing to the scone $X^\bot_Y$ over $Y$ (see \cite[C3.6.3]{Jeleph}), we obtain a fiberwise dense $\kappa$-Borel set $X \subseteq X^\bot_Y$ which is no longer fiberwise dense when regarded as a $\lambda$-Borel set.

However, for a $\kappa$-Borel fiberwise \emph{$\kappa$-open} set in a \emph{fiberwise $\kappa$-based} $\kappa$-Borel-\emph{overt} bundle $X -> Y$, being fiberwise dense or not \emph{is} stable under increasing $\kappa$, by \cref{thm:loc-fib-bov-psneg}.
\end{remark}

By the usual Baire category \cref{thm:loc-baire}, applied to $\@{BO}_{\kappa,f}(X)$,

\begin{theorem}[fiberwise Baire category theorem]
For any $\kappa$-Borel bundle of $\kappa$-locales $f : X -> Y$, the intersection of $<\kappa$-many $f$-fiberwise dense $U \in \@{BO}_{\kappa,f}(X)$ is still $f$-fiberwise dense.
\qed
\end{theorem}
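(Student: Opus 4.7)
The plan is to observe that this statement is, up to notation, an immediate specialization of the localic Baire category theorem (\cref{thm:loc-baire}). Unwinding \cref{def:loc-fib}, a $\kappa$-Borel bundle of $\kappa$-locales $f : X -> Y$ is by definition nothing more than a $\kappa$-locale $X$ equipped with a $\kappa$-continuous map to the $\kappa$-Borel locale $Y$ regarded as a discrete $\kappa$-locale; the notation $\@{BO}_{\kappa,f}(X)$ is synonymous with $\@O_\kappa(X)$, and we are merely emphasizing its role as the $\kappa$-frame of $\kappa$-Borel fiberwise $\kappa$-open sets. Consequently, by the definition preceding the theorem, an $f$-fiberwise dense $U \in \@{BO}_{\kappa,f}(X)$ is exactly a dense $\kappa$-open set in the $\kappa$-locale $X$.

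Thus the claim reduces to: in any $\kappa$-locale $X$, the intersection of $<\kappa$-many dense $\kappa$-open sets is dense. First I would note that every $\kappa$-open set $U \in \kappa\Sigma^0_1(X)$ is in particular $\kappa\Pi^0_2(X)$: indeed, $\neg U \in \neg \kappa\Sigma^0_1(X) \subseteq \kappa\Sigma^0_2(X)$ by construction of $\@N_\kappa$, whence $U = \neg\neg U \in \kappa\Pi^0_2(X)$ in the ambient $\kappa$-Boolean algebra $\@B_\kappa(X)$. Then the statement follows by directly invoking \cref{thm:loc-baire} applied to the $\kappa$-locale $X$: the intersection of $<\kappa$-many dense $\kappa\Pi^0_2$ sets is dense (and again $\kappa\Pi^0_2$, though we do not need the latter).

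There is essentially no obstacle to overcome; the point of the setup is precisely that point-free fiberwise topology is not ``fiberwise'' in any genuinely indexed sense, but is simply topology inside the total $\kappa$-locale whose $\kappa$-frame is $\@{BO}_{\kappa,f}(X)$. The only ``work'' is confirming the inclusion $\kappa\Sigma^0_1 \subseteq \kappa\Pi^0_2$ so as to match the statement of \cref{thm:loc-baire}, and this is a trivial consequence of the Boolean ambient algebra \cref{cvt:loc-forget}. This is exactly the kind of reduction that makes the localic framework well-suited for generalizing the classical fiberwise arguments of \cref{sec:fib,sec:fib-bor} without any recourse to reasoning about individual fibers.
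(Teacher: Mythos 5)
Your proposal is correct and takes exactly the paper's route: the paper proves this theorem simply by applying the localic Baire category theorem (\cref{thm:loc-baire}) to the global $\kappa$-topology $\@{BO}_{\kappa,f}(X)$, which is precisely your reduction. Your additional remark that $\kappa$-open sets are $\kappa\Pi^0_2$ (so that dense opens fall under the hypotheses of \cref{thm:loc-baire}) is the routine detail the paper leaves implicit.
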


\begin{definition}
For a $\kappa$-Borel bundle of $\kappa$-locales $f : X -> Y$, we call a $\kappa$-Borel set $A \in \@B_\kappa(X)$ \defn{$f$-fiberwise comeager} if it contains a $\kappa$-ary intersection of $\kappa$-Borel $f$-fiberwise dense open sets, and \defn{$f$-fiberwise meager} if $\neg A$ is $f$-fiberwise comeager.

The notations $\subseteq^*_f$ and $=^*_f$ have their usual meanings (\cref{def:fib-baire}).

The \defn{$\kappa$-Borel Baire-categorical image} $\exists^*_f(A)$ of $A \in \@B_\kappa(X)$ is the smallest $B \in \@B_\kappa(Y)$ such that $A \subseteq^*_f f^*(B)$, if it exists.
We also put $\forall^*_f(A) := \neg \exists^*_f(\neg A)$, if it exists.
\end{definition}

We now verify that $\exists^*_f$ obeys the obvious $\kappa$-localic analogs of all of the properties from \cref{sec:fib}, at least for a $\kappa$-Borel-overt bundle $f : X -> Y$, which we assume $f$ to be in the following discussion.

\begin{proof}[Proof of \cref{it:fib-baire-im}]
If $A \in \@B_\kappa(X)$ and $f(A) \in \@B_\kappa(Y)$ exists, then $A \subseteq^*_f f^*(f(A))$, whence $\exists^*_f(A) \subseteq f(A)$ assuming $\exists^*_f(A)$ exists (in fact it always does, by \cref{thm:loc-fib-baire-borel} below).
If $A \in \@{BO}_{\kappa,f}(X)$, then for any $B \in \@B_\kappa(Y)$ such that $A \subseteq^*_f f^*(B)$, $A \setminus f^*(B)$ is $f$-fiberwise open and $f$-fiberwise meager, hence empty, whence $f(A) \subseteq B$; this shows $f(A) = \exists^*_f(A)$.
\end{proof}

\cref{it:fib-bp-open} now follows.
We have \cref{it:fib-baire-union} (for $\kappa$-ary unions), since $\exists^*_f$ is defined as a left adjoint (with respect to the preorder $\subseteq^*_f$ on $\@B_\kappa(X)$).
We clearly have \cref{it:fib-bp-union},
and \cref{it:fib-bp-diff} holds for the same reason as before, using the formula for fiberwise interior from \cref{thm:loc-fib-bov-psneg};
whence \cref{it:fib-baire-diff} follows, using \cref{it:fib-baire-im} and Frobenius reciprocity for images.
As before, by induction we now have

\begin{proposition}[cf.\ \cref{thm:fib-baire-borel}]
\label{thm:loc-fib-baire-borel}
Let $f : X -> Y$ be a $\kappa$-continuous $\kappa$-open map between $\kappa$-locales, such that $\@O_\kappa(X)$ is $\kappa$-generated as an $\@O_\kappa(Y)$-algebra.
Then
\begin{enumerate}[label=(\alph*)]
\item (fiberwise Baire property)
For any $A \in \kappa\Sigma^0_\xi(X)$, there is a $U_A = \bigcup_i (f^*(B_i) \cap U_i) \in \@{BO}_{\kappa,f}(X)$, where $B_i \in \kappa\Sigma^0_\xi(Y)$ and $U_i \in \@O(X)$, such that $A =^*_f U_A$.
\item
Thus for any $A \in \kappa\Sigma^0_\xi(X)$, $\exists^*_f(A)$ exists and is in $\kappa\Sigma^0_\xi(Y)$.
\end{enumerate}
In particular, this holds for a fiberwise $\kappa$-based $\kappa$-Borel-overt bundle of $\kappa$-locales $f : X -> Y$.
\qed
\end{proposition}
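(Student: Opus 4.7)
The plan is to prove (a) and (b) simultaneously by induction on $\xi < \kappa$, with (a) at level $\xi$ feeding into (b) at level $\xi$, which then feeds into (a) at level $\xi+1$. For the base case $\xi = 1$, a global $\kappa$-open set $A \in \@O_\kappa(X)$ is already $f$-fiberwise $\kappa$-open, so I take $U_A := A = f^*(Y) \cap A$, giving (a); then (b) is immediate from \cref{it:fib-bp-open} together with $\exists^*_f(A) = f(A) \in \@O_\kappa(Y)$ by $\kappa$-openness of $f$.

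For the inductive step, fix $\xi \ge 2$ and $A \in \kappa\Sigma^0_\xi(X)$, and write $A = \bigcup_i (A_i \setminus A'_i)$ as a $\kappa$-ary union with $A_i, A'_i \in \kappa\Sigma^0_{\zeta_i}(X)$, $\zeta_i < \xi$, per the Selivanov-style definition in \cref{def:loc-borhier}. Apply the inductive hypothesis (a) to each $A_i, A'_i$ to obtain $U_{A_i}, U_{A'_i} \in \@{BO}_{\kappa,f}(X)$ of the claimed form and level $\zeta_i$. Using the hypothesis that $\@O_\kappa(X)$ is $\kappa$-generated as an $\@O_\kappa(Y)$-algebra, fix a $\kappa$-ary fiberwise basis $\@W \subseteq \@O_\kappa(X)$; then by \cref{it:fib-bp-diff},
\begin{equation*}
A_i \setminus A'_i =^*_f U_{A_i \setminus A'_i} := \bigcup_{W \in \@W} \bigl(W \cap U_{A_i} \setminus f^*(f(W \cap U_{A'_i}))\bigr),
\end{equation*}
and \cref{it:fib-bp-union} then gives $A =^*_f U_A := \bigcup_i U_{A_i \setminus A'_i} \in \@{BO}_{\kappa,f}(X)$.

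I must then verify that $U_A$ has the required form. Each $W \cap U_{A'_i}$ is $f$-fiberwise $\kappa$-open of level $\zeta_i$, so by the inductive hypothesis (b), $f(W \cap U_{A'_i}) \in \kappa\Sigma^0_{\zeta_i}(Y)$; writing $U_{A_i} = \bigcup_j (f^*(B_{ij}) \cap V_{ij})$ and distributing, each summand of $U_A$ becomes $f^*(B_{ij} \cap \neg f(W \cap U_{A'_i})) \cap (W \cap V_{ij})$, with the scalar factor in $\kappa\Sigma^0_\xi(Y)$ and the vector factor in $\@O_\kappa(X)$, as required. Finally, (b) at level $\xi$ follows from (a): by \cref{it:fib-bp-open}, $\exists^*_f(A) = \exists^*_f(U_A) = f(U_A)$, which by Frobenius reciprocity \cref{it:fib-baire-frob} equals $\bigcup_{i,j}(B_{ij}' \cap f(V_{ij}'))$ with $B_{ij}' \in \kappa\Sigma^0_\xi(Y)$ and $f(V_{ij}') \in \@O_\kappa(Y)$, hence lies in $\kappa\Sigma^0_\xi(Y)$.

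The routine part is the induction; the one subtle point is ensuring the fiberwise basis $\@W$ can be taken inside $\@O_\kappa(X)$ rather than merely inside $\@{BO}_{\kappa,f}(X)$, so that the sets $W \cap V_{ij}$ appearing in the final expression are \emph{globally} $\kappa$-open in $X$; this is precisely what the hypothesis on $\@O_\kappa(X)$ being $\kappa$-generated as an $\@O_\kappa(Y)$-algebra buys. The ``in particular'' assertion about fiberwise $\kappa$-based $\kappa$-Borel-overt bundles then reduces to the previous case by invoking \cref{thm:loc-fib-bov} to pick compatible $\kappa$-topologies on $X, Y$ realizing the bundle as a $\kappa$-continuous $\kappa$-open map.
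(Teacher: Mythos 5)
Your proof is correct and follows exactly the route the paper intends: the paper leaves this as an induction on $\xi$ using \cref{it:fib-bp-union}, \cref{it:fib-bp-diff}, \cref{it:fib-baire-im} and Frobenius reciprocity, which is precisely what you have carried out, including the key observation that the fiberwise basis $\@W$ can be taken inside $\@O_\kappa(X)$ so that the resulting rectangles have globally $\kappa$-open second factors. The only cosmetic difference is the final clause: the Borel-overt case is meant as a literal instance of the hypotheses (take $\@O_\kappa(Y) := \@B_\kappa(Y)$ and $\@O_\kappa(X) := \@{BO}_{\kappa,f}(X)$, which is $\kappa$-continuous and $\kappa$-open by \cref{def:loc-fib-bov}), so the detour through \cref{thm:loc-fib-bov} — which assumes standardness not present in the ``in particular'' clause — is unnecessary.
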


\begin{corollary}[Beck--Chevalley condition]
\label{thm:loc-fib-baire-bc}
Let $f : X -> Y$ be as above.
For a pullback as in \cref{diag:fib-pb} along a $\kappa$-Borel map $g : Z -> Y$, for $A \in \@B_\kappa(X)$, we have
\begin{equation*}
g^*(\exists^*_f(A)) = \exists^*_{\pi_1}(\pi_2^*(A)).
\end{equation*}
\end{corollary}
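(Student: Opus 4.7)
The plan is to reduce to the Beck--Chevalley condition for ordinary $\kappa$-Borel images, via the fiberwise Baire property.

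First I would verify that the pullback $\pi_1 : Z \times_Y X \to Z$ is again a fiberwise $\kappa$-based $\kappa$-Borel-overt bundle, so that $\exists^*_{\pi_1}$ even makes sense and so that \cref{thm:loc-fib-baire-borel} applies on the pullback. Indeed, the $\kappa$-frame $\@{BO}_{\kappa,\pi_1}(Z \times_Y X)$ is the pushout of $\@{BO}_{\kappa,f}(X)$ along $g^* : \@B_\kappa(Y) \to \@B_\kappa(Z)$ in $\!{\kappa Frm}$, so a $\kappa$-Borel fiberwise $\kappa$-open basis $\@W \subseteq \@{BO}_{\kappa,f}(X)$ pulls back to such a basis $\pi_2^*(\@W)$. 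Borel-overtness of $\pi_1$ follows from pullback-stability of $\kappa$-Borel images for $f$ (\cref{def:loc-im}): for any basic $\pi_2^*(W)$, the image $\pi_1(\pi_2^*(W)) = g^*(f(W))$ exists in $\@B_\kappa(Z)$.

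Next, apply the fiberwise Baire property (\cref{thm:loc-fib-baire-borel}(a)) to obtain $U_A \in \@{BO}_{\kappa,f}(X)$ with $A =^*_f U_A$. The key step is to show that pullback preserves fiberwise meagerness of $A \triangle U_A$, so that $\pi_2^*(A) =^*_{\pi_1} \pi_2^*(U_A)$. Writing $A \triangle U_A$ as contained in a $\kappa$-ary intersection $\bigcap_i \neg D_i$ of $\kappa$-Borel $f$-fiberwise nowhere dense sets $D_i$, it suffices to show that each $\pi_2^*(D_i)$ is $\pi_1$-fiberwise nowhere dense; equivalently, that the fiberwise interior $(\neg D_i)^\circ_f$ (cf.\ \cref{thm:loc-fib-bov-psneg}), which is $f$-fiberwise dense, remains $\pi_1$-fiberwise dense after pullback. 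This is exactly the pullback-stability of fiberwise density of $\kappa$-Borel fiberwise $\kappa$-open sets noted in \cref{rmk:loc-fib-dense-pb}.

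Once this reduction is done, the rest is immediate: by \cref{it:fib-baire-im} for the Borel-overt bundles $f$ and $\pi_1$, we have $\exists^*_f(A) = \exists^*_f(U_A) = f(U_A)$ and $\exists^*_{\pi_1}(\pi_2^*(A)) = \exists^*_{\pi_1}(\pi_2^*(U_A)) = \pi_1(\pi_2^*(U_A))$, and the classical Beck--Chevalley condition for $\kappa$-Borel images (the pullback-stability built into \cref{def:loc-im}) yields
\begin{equation*}
g^*(\exists^*_f(A)) = g^*(f(U_A)) = \pi_1(\pi_2^*(U_A)) = \exists^*_{\pi_1}(\pi_2^*(A)).
\end{equation*}
The main obstacle is the middle paragraph: making sure the meagerness witness for $A \triangle U_A$ transports cleanly under pullback. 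Everything hinges on the pullback-stability of fiberwise density, which is precisely the strengthening of Johnstone's notion discussed in \cref{rmk:loc-fib-dense-pb}; this is why that point-free subtlety was isolated earlier in the section.
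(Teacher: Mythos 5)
Your proposal is correct and follows essentially the same route as the paper: replace $A$ by a $\kappa$-Borel fiberwise open $U_A$ via the fiberwise Baire property, transport $A =^*_f U_A$ to $\pi_2^*(A) =^*_{\pi_1} \pi_2^*(U_A)$ using the pullback-stability of fiberwise density from \cref{rmk:loc-fib-dense-pb}, and conclude by the Beck--Chevalley condition for $\kappa$-Borel images of fiberwise open sets. The paper's proof is a one-line version of exactly this; your extra verifications (that the pullback bundle is again fiberwise $\kappa$-based and $\kappa$-Borel-overt, and the unwinding of meagerness into dense open witnesses) are details the paper leaves implicit.
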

\begin{proof}
If $A =^*_f U_A \in \@{BO}_{\kappa,f}(X)$, then $\pi_2^*(A) =^*_{\pi_1} \pi_2^*(U_A)$ by \cref{rmk:loc-fib-dense-pb}; now apply $\exists^*_{\pi_1}$.
\end{proof}

We thus get \cref{it:fib-baire-bc}, from which Frobenius reciprocity \cref{it:fib-baire-frob} and \cref{it:fib-baire-surj} follow.

\begin{remark}
For a fiberwise $\kappa$-based $\kappa$-Borel-overt bundle $f : X -> Y$, the notion of $f$-fiberwise meager $\kappa$-Borel $A \in \@B_\infty(X)$ is stable under increasing $\kappa$, by \cref{rmk:loc-fib-dense-abs}.
Hence so are $=^*_f$, $\subseteq^*_f$.
It follows that $\exists^*_f$ is also preserved under increasing $\kappa$, since it clearly is for fiberwise $\kappa$-open sets.
\end{remark}

\begin{theorem}[Kuratowski--Ulam; cf.\ \cref{thm:kuratowski-ulam}]
\label{thm:loc-kuratowski-ulam}
Let $X --->{f} Y --->{g} Z$ be $\kappa$-Borel maps between $\kappa$-Borel locales, such that $g \circ f : X -> Z$ and $g : Y -> Z$ are $\kappa$-Borel-overt bundles of $\kappa$-locales, $f$ is fiberwise $\kappa$-continuous and $\kappa$-open over $Z$ (i.e., $f, f^*$ map between $\@{BO}_{\kappa,g \circ f}(X), \@{BO}_{\kappa,g}(Y)$), and $\@{BO}_{\kappa,g \circ f}(X)$ has a $\kappa$-ary fiberwise basis $\@W$.
Then
\begin{equation*}
\exists^*_g \circ \exists^*_f = \exists^*_{g \circ f} : \@B_\kappa(X) --> \@B_\kappa(Z).
\end{equation*}
\end{theorem}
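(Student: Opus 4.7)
The plan is to transcribe the classical Kuratowski--Ulam proof (cf.\ \cite[8.41]{Kcdst}, \cite[7.6]{Cqpol}, \cite[A.1]{MTrep}) into the point-free setting, using the fiberwise Baire property (\cref{thm:loc-fib-baire-borel}) to reduce to fiberwise $\kappa$-open sets, where the equation becomes functoriality of $\kappa$-Borel image.

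First I would verify the preliminary that $f : X \to Y$ itself inherits the structure of a fiberwise $\kappa$-based $\kappa$-Borel-overt bundle of $\kappa$-locales over $Y$: its fiberwise frame $\@{BO}_{\kappa,f}(X)$ is obtained by pushing out $\@{BO}_{\kappa,g \circ f}(X)$ along $g^* : \@B_\kappa(Z) \to \@B_\kappa(Y)$; the given basis $\@W$ remains a fiberwise basis over $Y$, and $\kappa$-Borel-overtness of $f$ over $Y$ follows from the fiberwise $\kappa$-openness of $f$ over $Z$ via Frobenius reciprocity, since a typical $f$-fiberwise open $\bigcup_i (f^*(B_i) \cap W_i)$ has $f$-image $\bigcup_i (B_i \cap f(W_i)) \in \@B_\kappa(Y)$. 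Consequently $\exists^*_f : \@B_\kappa(X) \to \@B_\kappa(Y)$ is well-defined on all $\kappa$-Borel sets. The base case on a fiberwise $\kappa$-open $U \in \@{BO}_{\kappa,g \circ f}(X) \subseteq \@{BO}_{\kappa,f}(X)$ is then a triple application of \cref{it:fib-baire-im}, using $f(U) \in \@{BO}_{\kappa,g}(Y)$:
\begin{equation*}
\exists^*_g(\exists^*_f(U)) = g(f(U)) = (g \circ f)(U) = \exists^*_{g \circ f}(U).
\end{equation*}

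For a general $A \in \@B_\kappa(X)$, apply the fiberwise Baire property for $g \circ f$ to write $A =^*_{g \circ f} U$ for some $U \in \@{BO}_{\kappa, g \circ f}(X)$, so $\exists^*_{g \circ f}(A) = \exists^*_{g \circ f}(U)$ is immediate. It remains to show $\exists^*_g(\exists^*_f(A)) = \exists^*_g(\exists^*_f(U))$; since $\exists^*_f$ preserves $\kappa$-ary unions, we have $\exists^*_f(A) \triangle \exists^*_f(U) \subseteq \exists^*_f(A \triangle U)$, so this follows from the key lemma that $\exists^*_f$ carries $(g \circ f)$-fiberwise meager sets to $g$-fiberwise meager sets. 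The key lemma is the main obstacle, as it contains the real content of Kuratowski--Ulam. I would prove it by reducing, via $\kappa$-ary unions and the definition of meager, to the claim that for every $(g \circ f)$-fiberwise dense $V \in \@{BO}_{\kappa,g \circ f}(X)$, $\exists^*_f(\neg V)$ is $g$-fiberwise meager. Using the explicit $f$-fiberwise interior formula from \cref{thm:loc-fib-bov-psneg} together with Frobenius reciprocity for $f$, one has
\begin{equation*}
\exists^*_f(\neg V) = \bigcup_{W \in \@W} (f(W) \setminus f(W \cap V)) \in \@{BO}_{\kappa,g}(Y),
\end{equation*}
and each summand must be shown $g$-fiberwise meager (not in general $g$-fiberwise nowhere dense, as already seen classically when $f = 1_{\#R}$ and $V = \#R \setminus \#Q$). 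This reduces, by pullback along the inclusion $f(W) \hookrightarrow Y$ and the Beck--Chevalley condition (\cref{thm:loc-fib-baire-bc}), to the sublemma that for the restricted Borel-overt bundle $f|W : W \to f(W)$ and the fiberwise dense $\kappa$-open $V \cap W \subseteq W$, the complement of $f(W \cap V)$ in $f(W)$ is meager along the map $g|f(W) : f(W) \to g(f(W))$; this is precisely one half of the Kuratowski--Ulam assertion itself, which in the classical case is proved by a Banach-category-style argument exploiting that a continuous open map pulls back meager sets to meager sets (immediate from \cref{it:fib-baire-im} and Baire category). The same argument transcribes verbatim to the localic setting using the tools of this subsection.
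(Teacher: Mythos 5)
Your skeleton coincides with the paper's own proof: the preliminary that $f$ is itself a fiberwise $\kappa$-based Borel-overt bundle over $Y$, the computation $\exists^*_g(\exists^*_f(U)) = g(f(U)) = \exists^*_{g \circ f}(U)$ for fiberwise open $U$ via \cref{it:fib-baire-im}, the reduction of the general case (through the fiberwise Baire property and preservation of $\kappa$-ary unions) to the key lemma that $\exists^*_f$ sends $(g \circ f)$-fiberwise meager sets to $g$-fiberwise meager sets, and the formula $\exists^*_f(\neg V) = \bigcup_{W \in \@W} (f(W) \setminus f(W \cap V))$ from \cref{thm:loc-fib-bov-psneg} and \cref{it:fib-baire-diff} are all exactly what the paper does. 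The problem is at the one remaining step, which is where the actual content of Kuratowski--Ulam lives.

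Having reduced to a fiberwise dense \emph{open} $V \in \@{BO}_{\kappa, g \circ f}(X)$, your parenthetical claim that the summands $f(W) \setminus f(W \cap V)$ are ``not in general $g$-fiberwise nowhere dense'' is false, and your example does not arise: the irrationals are not open, so they are not a possible value of $V$ after your own reduction. Each summand \emph{is} $g$-fiberwise nowhere dense, and proving this is precisely how the paper concludes: one shows that $f(W \cap V)$ is $g$-fiberwise dense in $f(W)$ by a short Frobenius computation --- if $V' \in \@{BO}_{\kappa,g}(Y)$ satisfies $\emptyset = V' \cap f(W \cap V) = f(f^*(V') \cap W \cap V)$, then $f^*(V') \cap W$ is a fiberwise open set contained in $\neg V$, hence empty by density of $V$, whence $V' \cap f(W) = f(f^*(V') \cap W) = \emptyset$ --- so each summand is a fiberwise open set minus a dense fiberwise open subset of it, hence fiberwise nowhere dense. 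Your proposal never supplies this computation. Instead it defers the summand estimate to a sublemma that you yourself describe as ``precisely one half of the Kuratowski--Ulam assertion itself,'' which as stated is circular (that half \emph{is} the key lemma under proof, applied to the restricted bundle $W \to f(W) \to g(f(W))$), and then appeals to an unspecified ``Banach-category-style argument'' said to transcribe verbatim. A reflection-of-meagerness argument along $f|W$ could indeed be made to work, but its inputs would be the observation that $(f|W)^*(f(W) \setminus f(W \cap V)) \subseteq W \setminus V$ is fiberwise meager together with the fact that fiberwise open surjections reflect meagerness of sets with the fiberwise Baire property, neither of which appears in your write-up. As it stands, the crux of the proof is missing.
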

\begin{proof}
We follow the proof of \cite[7.6]{Cqpol}.
First, we show that if $A \in \@B_\kappa(X)$ is $(g \circ f)$-fiberwise meager, then $\exists^*_f(A)$ is $g$-fiberwise meager.
By \cref{it:fib-baire-union}, we may assume $A$ is $(g \circ f)$-fiberwise $\kappa$-closed nowhere dense, i.e., $\neg A \in \@{BO}_{\kappa,g \circ f}(X)$, and the $(g \circ f)$-fiberwise interior $A^\circ_f$ (which exists by \cref{thm:loc-fib-bov-psneg}) is empty.
It follows that for each $W \in \@W$, $f(W \setminus A)$ is $g$-fiberwise dense in $f(W)$, since if $V \in \@{BO}_{\kappa,g}(Y)$ with $\emptyset = V \cap f(W \setminus A) = f(f^*(V) \cap W \setminus A)$, then $f^*(V) \cap W \subseteq A$ is fiberwise open, hence empty, whence $V \cap f(W) = f(f^*(V) \cap W) = \emptyset$.
Thus by \cref{it:fib-baire-diff},
\begin{align*}
\exists^*_f(A) = \bigcup_{W \in \@W} (f(W) \setminus f(W \setminus A))
\end{align*}
is a $g$-fiberwise meager $\kappa\Sigma^0_2$ set.
Now to complete the proof: let $A \in \@B_\kappa(X)$ be arbitrary, and let $A =^*_{g \circ f} U \in \@{BO}_{\kappa,g \circ f}(X)$; then by the first part of the proof, $\exists^*_f(A) =^*_g \exists^*_f(U) = f(U) \in \@{BO}_{\kappa,g}(Y)$, whence $\exists^*_g(\exists^*_f(A)) = \exists^*_g(f(U)) = g(f(U)) = \exists^*_{g \circ f}(A)$, using \cref{it:fib-baire-im} three times.
\end{proof}

\subsection{Linear quantifiers}
\label{sec:loc-lin}

\begin{definition}[cf.\ \cref{def:lin,def:fib-lin}; \cite{JTloc}]
\label{def:loc-lin}
A \defn{linear map} between $\kappa$-suplattices is another name for a $\kappa$-suplattice homomorphism.
If $\@R, \@S, \@T$ are $\kappa$-suplattices with bilinear actions $\@R \times \@S -> \@S$ and $\@R \times \@T -> \@T$, an \defn{$\@R$-linear map} $\@S -> \@T$ is an $\@R$-equivariant linear map.

In particular, if $f : X -> Z$ and $g : Y -> Z$ are $\kappa$-continuous maps between $\kappa$-locales, so that $f^* : \@O_\kappa(Z) -> \@O_\kappa(X)$ and $g^* : \@O_\kappa(Z) -> \@O_\kappa(Y)$ are ``algebras'', hence ``modules'', over $\@O_\kappa(Z)$, then we have the notion of $\@O_\kappa(Z)$-linear map $\phi : \@O_\kappa(X) -> \@O_\kappa(Y)$, where equivariance amounts to the Frobenius reciprocity law $\phi(f^*(W) \cap U) = g^*(W) \cap \phi(U)$.
\end{definition}

\begin{remark}[\cite{JTloc}]
\label{rmk:loc-lin-surj}
If $f : X -> Y$ is a $\kappa$-continuous map with an $\@O_\kappa(Y)$-linear retraction $\phi : \@O_\kappa(X) -> \@O_\kappa(Y)$ of $f^*$, then $f$ is a pullback-stable epimorphism in $\!{\kappa Loc}$, i.e., a $\kappa$-Borel surjection.
This is because every pullback $\pi_1 : Z \times_Y X -> Z$ of $f$ along $g : Z -> Y$ also has an $\@O_\kappa(Z)$-linear retraction $\phi_1$ of $\pi_1^*$, defined via the Beck--Chevalley condition
\begin{equation*}
\phi_1(W \times_Y U) := W \cap g^*(\phi(U))
\end{equation*}
using the universal property of $\@O_\kappa(Z \times_Y X)$ as the $\kappa$-suplattice tensor product of $\@O_\kappa(Z), \@O_\kappa(X)$ over $\@O_\kappa(Y)$.
Taking $\@O_\kappa(Z) := \@B_\kappa(Y)$ shows that $f^* : \@B_\kappa(Y) -> \@B_\kappa(X)$ is injective, i.e., $f(X) = Y$.
\end{remark}

\begin{remark}[cf.\ \cref{thm:lin-lowpow,thm:fib-lin-lowpow,thm:fib-lin-supp}]
A linear map $\@O(X) -> \@O(Y)$ extends uniquely to a frame homomorphism from the free frame over $\@O(X)$ \emph{as a suplattice} to $\@O(Y)$; this frame corresponds to the \defn{lower powerlocale} $\@F(X)$, thus the linear map is equivalently a continuous map $Y -> \@F(X)$.
See \cite{Vlog}, \cite{Spowloc} for more information on powerlocales.

Similarly, define the \defn{fiberwise lower powerlocale} $\@F_Z(X)$ of $f : X -> Z$ by taking $\@O(\@F_Y(X))$ to be the free $\@O(Y)$-algebra generated by $\@O(X)$ as an $\@O(Y)$-module.
Then an $\@O(Z)$-linear map $\@O(X) -> \@O(Y)$ is the same thing as a continuous map $Y -> \@F_Z(X)$ over $Z$.
By \cite[3.3]{BFpowloc} (see also \cite{Vpowloc}), for $Y = Z$, the $\@O(Y)$-linear maps $\phi : \@O(X) -> \@O(Y)$ also correspond to fiberwise closed sublocales of $X$ to which the restriction of $f$ is open.
\end{remark}

Using these correspondences, we may give a localic transcription of the proof of \cref{thm:qpol-linquot} and hence \cref{thm:qpol-linquot-borel}.
However, it is easier to bypass powerlocales and just reason algebraically:

\begin{proposition}[cf.\ \cref{thm:qpol-linquot-borel}]
Let $f : X -> Y$ be a $\kappa$-Borel map from a standard $\kappa$-locale to a standard $\kappa$-Borel locale, and let $\phi : \@B_\kappa(X) -> \@B_\kappa(Y)$ be a $\@B_\kappa(Y)$-linear retraction of $f^*$.
Suppose that $f^*(\phi(\@O(X))) \subseteq \@O(X)$, and that $\phi(\@O(X))$ generates $\@B_\kappa(Y)$.
Then $\@O(Y) := \phi(\@O(X))$ is a compatible standard $\kappa$-topology on $Y$ making $f$ continuous.
\end{proposition}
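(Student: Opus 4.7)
The plan is to adapt the classical proof of \cref{thm:qpol-linquot-borel} to the point-free setting, with extra care needed to establish compatibility (free generation of $\@B_\kappa(Y)$) rather than mere generation.

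First, I would verify that $\@O(Y) := \phi(\@O(X))$ is a $\kappa$-subframe of $\@B_\kappa(Y)$: linearity of $\phi$ gives closure under $\kappa$-ary joins; $\phi(X) = \phi(f^*(Y)) = Y$ by the retraction property gives the top element; and for binary meets, the $\@B_\kappa(Y)$-linearity (Frobenius reciprocity) yields
\begin{equation*}
\phi(U) \cap \phi(V) = \phi(f^*(\phi(U)) \cap V),
\end{equation*}
where $f^*(\phi(U)) \cap V \in \@O(X)$ by the hypothesis $f^*(\phi(\@O(X))) \subseteq \@O(X)$, so the right-hand side lies in $\phi(\@O(X))$. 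With this $\kappa$-topology on $Y$, call the resulting $\kappa$-locale $Y'$, the map $f$ becomes a $\kappa$-continuous map $X \to Y'$ by construction, and the restriction $\phi_0 := \phi|_{\@O(X)} : \@O(X) \to \@O(Y')$ is then an $\@O(Y')$-linear retraction of $f^* : \@O(Y') \to \@O(X)$ (Frobenius reciprocity survives the restriction precisely because the hypothesis $f^*(\@O(Y')) \subseteq \@O(X)$ keeps all the relevant terms inside $\@O(X)$).

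The key step is showing that $Y'$ is a standard $\kappa$-locale and that the canonical $\kappa$-Boolean homomorphism $\pi : \@B_\kappa(Y') \to \@B_\kappa(Y)$ extending the inclusion $\@O(Y') \hookrightarrow \@B_\kappa(Y)$ is an isomorphism. For the isomorphism, $\pi$ is surjective by the generating hypothesis; for injectivity, I would apply \cref{rmk:loc-lin-surj} twice: the original $\@B_\kappa(Y)$-linear retraction $\phi$ makes $f^* : \@B_\kappa(Y) \to \@B_\kappa(X)$ injective, and the $\@O(Y')$-linear retraction $\phi_0$ makes $f'^* : \@B_\kappa(Y') \to \@B_\kappa(X)$ injective. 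Since $f'^* = f^* \circ \pi$ and $f'^*$ is injective, $\pi$ must be injective as well. For the standardness of $Y'$ as a $\kappa$-locale, I would invoke the localic analog of \cref{thm:qpol-linquot}: the existence of the $\@O(Y')$-linear retraction $\phi_0$ exhibits $Y'$ as a $\kappa$-open $\kappa$-Borel surjective image of the $\kappa\Pi^0_2$ support sublocale $\supp(\phi_0) \subseteq X$ (using the point-free correspondence between $\@O(Y')$-linear quantifiers and fiberwise closed sublocales with open restrictions, the localic analog of \cref{thm:fib-lin-supp}), and such images of standard $\kappa$-locales remain standard by the point-free version of \cref{it:qpol-openquot}.

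The main obstacle will be the localic version of \cref{thm:qpol-linquot} — i.e., ensuring that in the point-free setting the support $\supp(\phi_0)$ is genuinely a standard $\kappa$-sublocale of $X$ (which requires writing its defining intersection $\bigcap_{U \in \@O(X)}(U \Rightarrow f^*(\phi_0(U)))$ as a $\kappa$-ary intersection using a $\kappa$-ary generator of $\@O(X)$), and then transferring standardness across a $\kappa$-open $\kappa$-Borel surjective quotient. Assuming the requisite localic machinery for support sublocales and $\kappa$-open quotients has been set up in \cref{sec:loc-lin} (in the same spirit as \crefrange{thm:fib-lin-supp}{thm:qpol-linquot}), the rest of the argument is a direct transcription of the classical proof, with ``separates points'' replaced by the cleaner algebraic criterion that $\pi$ is bijective.
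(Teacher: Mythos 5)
Your proposal is correct, and its first and last steps coincide with the paper's proof: the verification that $\phi(\@O(X))$ is a $\kappa$-subframe via the Frobenius identity $\phi(U) \cap \phi(V) = \phi(f^*(\phi(U)) \cap V)$, and the compatibility argument showing that the canonical map $\pi : \@B_\kappa(Y') \to \@B_\kappa(Y)$ is bijective (surjective by the generating hypothesis, injective because $f^* \circ \pi = f'^*$ is injective by \cref{rmk:loc-lin-surj} applied to the restricted retraction $\phi_0$ --- note only this second application of \cref{rmk:loc-lin-surj} is actually needed). Where you diverge is the step you correctly identify as the main obstacle: showing that $\@O(Y') = \phi(\@O(X))$ is $\kappa$-presented as a frame. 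You propose to route this through the localic analogs of \cref{thm:fib-lin-supp} and \cref{it:qpol-openquot}, realizing $Y'$ as an open quotient of the support sublocale $\supp(\phi_0) \subseteq X$; but the paper deliberately bypasses this machinery (it is never developed in \cref{sec:loc-lin}, which explicitly says it is ``easier to bypass powerlocales and just reason algebraically''), and instead observes directly that $\phi(\@O(X))$ is a suplattice retract of the $\kappa$-presented $\@O(X)$ (via $f^*$ and $\phi$), hence $\kappa$-presented as a $\kappa$-suplattice, hence $\kappa$-presented as a frame by the posite construction. Your detour is mathematically sound in principle, but the localic open-quotient theorem it relies on is itself proved by exactly this suplattice-retract argument ($\@O(Y)$ is a suplattice retract of $\@O(X)$ via $f^* \dashv f(-)$ for an open surjection), so the direct argument is strictly shorter and requires no discussion of supports, their $\kappa\Pi^0_2$ definability, or openness of the restricted map; if you did want to pursue your route, each of those would need a point-free proof that the paper does not supply.
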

\begin{proof}
$\phi(\@O(X))$ is a $\kappa$-generated $\kappa$-subsuplattice of $\@B_\kappa(Y)$, hence is a subsuplattice, and is closed under finite meets because $\phi$ is a $\@B_\kappa(Y)$-linear retraction (see \cref{eq:qpol-linquot-borel}), hence is a subframe.
Since $\phi(\@O(X))$ is a suplattice retract of the $\kappa$-presented $\@O(X)$, it is $\kappa$-presented as a suplattice, hence also as a frame (because of the posite construction; see e.g., \cite[2.6.8]{Cborloc}).
Since $f^* : \phi(\@O(X)) -> \@O(X)$ has a $\phi(\@O(X))$-linear retraction $\phi$, it extends to an injective homomorphism between the free $\kappa$-Boolean algebras by \cref{rmk:loc-lin-surj}.
Thus the frame inclusion $\phi(\@O(X)) -> \@B_\kappa(Y)$ also extends to an injective $\kappa$-Borel homomorphism from the free $\kappa$-Boolean algebra over $\phi(\@O(X))$, since its composite with $f^* : \@B_\kappa(Y) -> \@B_\kappa(X)$ is injective; but it is also surjective by assumption.
\end{proof}

\begin{corollary}[cf.\ \cref{thm:qpol-baire-subtop}]
Let $f : X ->> Y$ be a $\kappa$-Borel surjection from a standard $\kappa$-locale to a standard $\kappa$-Borel locale, and suppose $X$ is a standard $\kappa$-Borel-overt bundle of standard $\kappa$-locales over $Y$ with another compatible fiberwise $\kappa$-topology $\@B_\kappa\@O_f(X) \subseteq \@B_\kappa(X)$, which has a $\kappa$-ary fiberwise basis $\@W \subseteq \@O(X)$ (thus $\@B_\kappa\@O_f(X)$ is coarser than the fiberwise restriction of $\@O(X)$).
If $f^*(\exists^*_f(\@O(X))) \subseteq \@O(X)$, then $\@O(Y) := \exists^*_f(\@O(X))$ is a compatible standard $\kappa$-topology on $Y$.
\end{corollary}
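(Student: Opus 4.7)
The plan is to apply the preceding proposition (the localic version of \cref{thm:qpol-linquot-borel}) with $\phi := \exists^*_f$ taken with respect to the given coarser fiberwise topology $\@B_\kappa\@O_f(X)$. This reduces the statement to verifying three conditions: that $\phi$ is a $\@B_\kappa(Y)$-linear retraction of $f^*$; that $f^*(\phi(\@O(X))) \subseteq \@O(X)$, which is exactly the hypothesis; and that $\phi(\@O(X))$ generates $\@B_\kappa(Y)$ as a $\kappa$-Boolean algebra.

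For the first condition, I would invoke the properties of $\exists^*_f$ developed in \cref{sec:loc-fib} for $\kappa$-Borel-overt bundles of $\kappa$-locales: preservation of $\kappa$-ary joins (as a left adjoint) and Frobenius reciprocity. That $\phi$ is a retraction, i.e., $\phi(f^*(B)) = B$, reduces via Frobenius to $\phi(X) = Y$; this in turn follows because any $C \in \@B_\kappa(Y)$ with $f^*(C)$ fiberwise meager must vanish: $f^*(C) \in \@{BO}_{\kappa,f}(X)$ is itself fiberwise open in the coarser topology, hence by fiberwise Baire category equals $\emptyset$ in $\@B_\kappa(X)$, so $C = \emptyset$ by $\kappa$-Borel surjectivity of $f$.

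The main work lies in showing that $\phi(\@O(X))$ generates $\@B_\kappa(Y)$. Since $\phi$ is surjective (being a retraction), it suffices to prove $\phi(\@B_\kappa(X)) \subseteq \langle \phi(\@O(X)) \rangle$, the $\kappa$-Boolean subalgebra generated. I will show by transfinite induction on $\xi < \kappa$ that $\phi(\kappa\Sigma^0_\xi(X)) \subseteq \langle \phi(\@O(X)) \rangle$, where $\kappa\Sigma^0_\xi(X)$ is with respect to the finer topology $\@O(X)$. The base case $\xi = 1$ is by definition. For the successor step, any element of $\kappa\Sigma^0_{\zeta+1}(X)$ is a $\kappa$-ary join of sets of the form $U \setminus V$ with $U, V \in \kappa\Sigma^0_\zeta(X)$; combining the fiberwise Baire property from \cref{thm:loc-fib-baire-borel} with the fiberwise interior formula \cref{thm:loc-fib-bov-psneg} (both applied to the coarser topology) yields the localic analog of \cref{it:fib-baire-diff}:
\begin{equation*}
\phi(U \setminus V) = \bigcup_{W \in \@W} \bigl(\phi(W \cap U) \setminus \phi(W \cap V)\bigr).
\end{equation*}
Since $\@W \subseteq \@O(X)$, we have $W \cap U, W \cap V \in \kappa\Sigma^0_\zeta(X)$, so the inductive hypothesis places both $\phi(W \cap U)$ and $\phi(W \cap V)$ in $\langle \phi(\@O(X)) \rangle$, and the $\kappa$-ary join stays there. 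The limit case is immediate as $\kappa\Sigma^0_\xi(X)$ at limit $\xi < \kappa$ is the directed union of lower stages.

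The main subtlety — and this is where the hypothesis $\@W \subseteq \@O(X)$ becomes crucial — is the interplay of two fiberwise topologies: the Baire-categorical machinery $\exists^*_f$ is computed in the coarser topology $\@B_\kappa\@O_f(X)$, while the Borel hierarchy being inducted over refers to the finer topology $\@O(X)$. The containment $\@W \subseteq \@O(X)$ ensures that the fiberwise basis elements used to decompose via the fiberwise interior formula are simultaneously $f$-fiberwise open in the coarser sense (so the formula applies) and open in the finer sense (so $W \cap U$ remains in $\kappa\Sigma^0_\zeta$ when $U$ does), allowing the induction to close. Once all three conditions are verified, the preceding proposition yields that $\@O(Y) := \phi(\@O(X))$ is a compatible standard $\kappa$-topology on $Y$ making $f$ continuous.
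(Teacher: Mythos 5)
Your proposal is correct and follows essentially the same route as the paper: reduce to the preceding proposition by checking that $\exists^*_f$ is a $\@B_\kappa(Y)$-linear retraction of $f^*$ (via Frobenius reciprocity, \cref{it:fib-baire-im}, and fiberwise Baire category plus $\kappa$-Borel surjectivity), and then establish generation of $\@B_\kappa(Y)$ by running the induction of \cref{thm:loc-fib-baire-borel} through the difference formula \cref{it:fib-baire-diff} with the fiberwise basis $\@W \subseteq \@O(X)$. Your explicit identification of why $\@W \subseteq \@O(X)$ is needed to close the induction across the two fiberwise topologies is exactly the point the paper's terser proof is relying on.
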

\begin{proof}
Since $\phi := \exists^*_f$ is $\@B_\kappa(Y)$-linear by \cref{it:fib-baire-frob} (which holds by \cref{thm:loc-fib-baire-bc}), and a retraction of $f^*$ by \cref{it:fib-baire-im}, by the preceding result, we need only check that $\exists^*_f(\@O(X))$ generates $\@B_\kappa(Y)$.
For that, it is enough to check that $\exists^*_f : \@B_\kappa(X) ->> \@B_\kappa(Y)$ lands in the $\kappa$-Boolean subalgebra generated by $\exists^*_f(\@O(X))$.
This follows from the inductive proof of \cref{thm:loc-fib-baire-borel}, using the formulas \cref{it:fib-baire-union,it:fib-baire-diff} and the fiberwise basis $\@W$ for $\@B_\kappa\@O_f(X)$ contained in $\@O(X)$.
\end{proof}

\subsection{Localic groupoid actions}
\label{sec:loc-gpd}

By a \defn{standard $\kappa$-localic groupoid}, we mean an internal groupoid $G = (G_0, G_1, \sigma, \tau, \mu, \iota, \nu)$ in the category $\!{Loc}_\kappa$ of standard $\kappa$-locales.
When $\kappa = \omega_1$, this reduces to the notion of quasi-Polish groupoid by \cref{thm:loc-stdsigma}.
Other notions from \cref{sec:gpd-prelim} such as \defn{open} groupoid, \defn{standard $\kappa$-(Borel) $G$-locale} $p : X -> G_0$, and the \defn{$\alpha$-fiberwise topology} $\@{BO}_\alpha(X)$, as well as the \defn{Vaught transform} $U * A$ from \cref{sec:gpd-vaught}, can now be straightforwardly internalized in $\!{Loc}_\kappa$ or $\!{\kappa BorLoc}_\kappa$, and obey the same properties as before, with proofs using the preceding subsections in place of \crefrange{sec:fib}{sec:fib-lin-baire}.
(Of course, countable unions before are here replaced with $\kappa$-ary ones.)

\Cref{def:gpd-orbtop} of the \defn{orbitwise topology} $\@O_G(X)$ refers to points.
However, all uses of this notion in \cref{sec:gpd} were via \cref{it:gpd-orbtop}, or equivalently its Borel version \cref{it:gpd-orbtop-borel}, since all sets we were working with were Borel.
We can therefore take this as the point-free definition: a $\kappa$-Borel $A \in \@B_\kappa(X)$ is \defn{orbitwise open} if $\alpha^*(A) \in \@B_\kappa(G \times_{G_0} X)$ is $\pi_2$-fiberwise open, which means (by definition of pullback locale) that $\alpha^*(A) \in \@O(G) \otimes_{G_0} \@B_\kappa(X)$.

Every result from \cref{sec:gpd-realiz,sec:gpd-eqvar,sec:gpd-struct} now generalizes essentially verbatim, with the same proof.
In particular, we obtain each result from \cref{sec:grp-realiz,sec:grp-eqvar,sec:grp-struct} generalized to localic groups; the results of \cref{sec:grp-0d-reg} also generalize to localic groups.
We will not repeat every statement in the localic context.
Rather, we only point out the minor changes and new subtleties that arise:
\begin{itemize}

\item
Of course, ``countable'' should be replaced everywhere with ``$\kappa$-ary''.

\item
In \cref{thm:gpd-fib-realiz,thm:gpd-fib-realiz-borel}, \cref*{thm:gpd-fib-realiz:trans} refers to translates by individual groupoid elements $g \in G$, hence should be omitted.
(It is possible to interpret ``generating all $G$-translates $g \cdot A$'' internally as in \cref{thm:loc-kunugui-novikov}; but that theorem says that \cref*{thm:gpd-fib-realiz:rect} is an equivalent point-free condition.)

\item
Likewise, in \cref{def:gpd-action} of \defn{standard $\kappa$-Borel(-overt) $G$-bundle of standard $\kappa$-locales}, the condition ``each morphism $g \in G$ acts via a homeomorphism'' should be interpreted internally, to mean that $\alpha^*(\@B_\kappa\@O_p(X)) \subseteq \@B_\kappa\@O_{\pi_1}(G \times_{G_0} X)$, i.e., ``$(g,x) |-> gx$ is $\pi_1$-fiberwise continuous''.
With this definition, the proof of \cref{thm:gpd-top-realiz} remains the same.

\item
In \cref{thm:gpd-lowpow-univ}, instead of a ``universal $T_0$ second-countable $G$-space $\@F_\tau(G)^\#N_{G_0}$'', we get a universal $\kappa$-based $G$-locale $\@F_\tau(G)^{\kappa_0}_{G_0}$, assuming $\kappa = \kappa_0^+$ is a successor cardinal.
If $\kappa$ is a limit cardinal (hence, being also regular, is weakly inaccessible), then we instead get a $\kappa$-ary family of universal $\kappa$-based $G$-locales, one for each $\kappa_0$ in some cofinal family below $\kappa$.


\item
In \cref{thm:gpd-lowpow-trans}, rather than prove that ``the left translation action is continuous'' (which refers to translating an individual closed set by an individual groupoid element), we \emph{define} the left translation action as a locale map via the formula in \cref{thm:grp-lowpow-trans}, and then check that it is an action with the desired properties.

\item
In \cref{def:etale}, ``countable-to-1'' should of course be replaced by ``$\kappa$-ary-to-1''.
To make sense of this in a point-free manner, we can simply require the $\kappa$-ary analog of the conclusion of the Lusin--Novikov uniformization theorem, i.e., there is a $\kappa$-ary cover of the domain of $p_i$ by $\kappa$-Borel sets to which $p_i$ restricts to a monomorphism.

\item
In \cref{sec:grp-0d-reg}, the results work as stated, yielding zero-dimensional, respectively regular, topological realizations of standard $\kappa$-Borel $G$-locales.
However, we point out that for non-second-countable locales, complete regularity is a more natural separation axiom than mere regularity.
We would thus expect a strengthening of \cref{thm:grp-realiz-dis-0d-reg} yielding a completely regular topological realization.
We will not pursue such a strengthening here, for it seems likely that it would require something akin to the Birkhoff--Kakutani metrization theorem, thus placing it closer to \cite{Hbeckec} and further from the point-free spirit of this paper.

\end{itemize}

\bigskip\noindent
Department of Mathematics \\
University of Michigan \\
Ann Arbor, MI 48109, USA \\
Email: \nolinkurl{ruiyuan@umich.edu}

\end{document}